\numberwithin{equation}{section}
\begin{document}

\newcommand\A{\mathbb{A}}
\newcommand\C{\mathbb{C}}
\newcommand\G{\mathbb{G}}
\newcommand\N{\mathbb{N}}
\newcommand\T{\mathbb{T}}
\newcommand\sE{{\mathcal{E}}}
\newcommand\sF{{\mathcal{F}}}
\newcommand\sG{{\mathcal{G}}}
\newcommand\GL{{\mathrm{GL}}}
\newcommand\Ham{\mathbb{H}}
\newcommand\HH{{\mathrm H}}
\newcommand\mM{{\mathrm M}}
\newcommand\Pone{\mathbb{P}}
\newcommand\Qbar{{\bar{\Q}}}
\newcommand\sQ{{\mathcal{Q}}}
\newcommand{\Q}{\mathbb{Q}}
\newcommand{\Z}{\mathbb{Z}}
\newcommand{\Zbar}{{\bar{\Z}}}
\newcommand{\R}{\mathbb{R}}
\newcommand{\F}{\mathbb{F}}
\newcommand\gP{\mathfrak{P}}
\newcommand\Gal{{\mathrm {Gal}}}
\newcommand\Hom{{\mathrm {Hom}}}
\newcommand\soc{{\mathrm {soc} \> }}
\newcommand\Sym{{\mathrm {Sym}}}
\newcommand{\legendre}[2] {\left(\frac{#1}{#2}\right)}
\newcommand\iso{{\> \simeq \>}}
\newcommand\dott{{
         {\begin{tiny}
         \begin{xymatrix}{
                      \\ 
                     \\ 
          \bullet }
        \end{xymatrix} 
        \end{tiny}} }} 
\newcommand\linee{{
        {\begin{tiny}
        \begin{xymatrix}{
         \bullet  \ar@{-}[d] \\ 
         \bullet  \ar@{-}[d] \\
         \bullet} 
       \end{xymatrix}
       \end{tiny}} }}  
\newcommand\diamondd{{
        \begin{tiny}
        \begin{xymatrix}{
         & \bullet  \ar@{-}[dl] \ar@{-}[dr] &  \\ 
         \bullet  \ar@{-}[dr] &   &   \bullet  \ar@{-}[dl] \\
         & \bullet & } 
       \end{xymatrix}
       \end{tiny} }}

\newtheorem{thm}{Theorem}[section]
\newtheorem{theorem}[thm]{Theorem}
\newtheorem{cor}[thm]{Corollary}
\newtheorem{conj}[thm]{Conjecture}
\newtheorem{prop}[thm]{Proposition}
\newtheorem{lemma}[thm]{Lemma}
\theoremstyle{definition}
\newtheorem{definition}[thm]{Definition}
\theoremstyle{remark}
\newtheorem{remark}[thm]{Remark}
\newtheorem{example}[thm]{Example}
\newtheorem{claim}[thm]{Claim}

\newtheorem{lem}[thm]{Lemma}

\theoremstyle{definition}
\newtheorem{dfn}{Definition}

\theoremstyle{remark}

\theoremstyle{remark}
\newtheorem*{fact}{Fact}

\title[]{Reductions of Galois representations for slopes in $(1,2)$}

\author{Shalini Bhattacharya}
\address{School of Mathematics, Tata Institute of Fundamental Research, Homi Bhabha Road, Mumbai-5, India}
\email{shalini@math.tifr.res.in, eghate@math.tifr.res.in}

\author{Eknath Ghate}

\begin{abstract}
  We describe the semisimplification of the mod $p$ reduction of certain crystalline two 
  dimensional 
  local Galois representations of slopes in $(1,2)$ and all weights. The proof uses the mod $p$
  Local Langlands Correspondence for $\mathrm{GL}_2(\Q_p)$. 
  We also give a complete description of the submodules generated by the second 
  highest monomial in the mod $p$ symmetric power representations of 
  ${\mathrm{GL}}_2(\F_p)$.  
\end{abstract}

\subjclass[2010]{Primary: 11F80} 
\keywords{Reductions of Galois representations, Local Langlands Correspondence, Hecke operators.}

\maketitle

\section{Introduction}

Let $p$ be an odd prime. 
In this paper we study the reductions of two dimensional crystalline $p$-adic representations of the local
Galois group $G_{\Q_p}$. The answer is known when the weight $k$ is smaller than $2p+1$  \cite{Edixhoven92}, 
\cite{Breuil03a}, \cite{[Br03]} or when the slope is greater than 
$\lfloor \frac{k-2}{p-1} \rfloor$ \cite{Berger-Li-Zhu04}. 
The answer is also known if the slope is small, that is, in the range $(0,1)$ 
\cite{[BG09]}, \cite{Ganguli}, \cite{Buzzard-Gee13}. 
Here we treat the next range of fractional slopes $(1,2)$, for all weights $k \geq 2$.

Let $E$ be a finite extension field of $\Q_p$ and let $v$ be the valuation of $\bar\Q_p$ 
normalized so that $v(p) = 1$. Let $a_p \in E$ with $v(a_p) > 0$ and let $k \geq 2$. 
Let $V_{k,a_p}$ be the irreducible crystalline representation of $G_{\Q_p}$ with Hodge-Tate weights $(0,k-1)$ such that 
$D_\mathrm{cris}(V_{k,a_p}^*) = D_{k,a_p}$, 
where $D_{k,a_p}  = E e_1 \oplus E e_2$ is the filtered $\varphi$-module as defined in \cite[\S 2.3]{Berger11}.
Let $\bar{V}_{k,a_p}$ be the semisimplification of the reduction of ${V}_{k,a_p}$, thought of as a
representation over $\bar\F_p$. 

Let $\omega = \omega_1$ and $\omega_2$ denote the fundamental characters of level 1 and 2 
respectively, and let $\mathrm{ind}(\omega_2^{a})$ denote the  representation of $G_{\Q_p}$ obtained
by inducing the character $\omega_2^a$ from $G_{\Q_{p^2}}$. 
Let $\mathrm{unr}(x)$ be the unramified character of $G_{\Q_p}$ taking (geometric) Frobenius at $p$ to $x \in \bar\F_p^\times$. 
Then, {\it a priori}, $\bar{V}_{k,a_p}$ is isomorphic either to  
$\mathrm{ind}(\omega_2^a) \otimes \mathrm{unr}(\lambda)$ or $\mathrm{unr}(\lambda) \omega^a \oplus \mathrm{unr}(\mu) \omega^b$, for some
$a$, $b \in \Z$ and $\lambda$, $\mu \in \bar{\F}_p^\times$. The former representation is irreducible on $G_{\Q_p}$ when $(p+1)\nmid a$, whereas the latter is reducible on $G_{\Q_p}$.  
The following theorem describes $\bar{V}_{k,a_p}$ when $1 < v(a_p) < 2$. Since the answer is known completely for
weights $k \leq 2p+1$, we shall assume 
that $k \geq 2p+2$.

\begin{theorem}[] 
\label{theorem-intro}
Let $p \geq 3$. Let $1 < v(a_p) < 2$ and $k \geq 2p + 2$. 
Let $r = k-2 \equiv b \mod (p-1)$, with $2 \leq b \leq p$.   
When $b = 3$ and $v(a_p) = \frac{3}{2}$, assume that \begin{equation}\tag{$\star$}\label{hyp} v(a_p^2 - {r-1 \choose  2}(r-2)p^3) = 3.\end{equation}
Then, $\bar{V}_{k,a_p}$ has the following shape on $G_{\Q_p}$:
\begin{eqnarray*}
  b = 2 & \implies &
               \begin{cases} 
                  \mathrm{ind}(\omega_2^{b+1}), & \text{if } p \nmid r(r-1) \\
                  \mathrm{ind}(\omega_2^{b+p}), & \text{if } p \> | \> r(r-1),
               \end{cases} \\
  3 \leq b \leq p-1 & \implies & 
               \begin{cases} 
                  \mathrm{ind}(\omega_2^{b+p}), & \text{if } p \nmid r-b \\
                  \mathrm{ind}(\omega_2^{b+1}), & \text{if } p \> | \> r-b,
               \end{cases} \\
    b = p & \implies &
               \begin{cases} 
                  \mathrm{ind}(\omega_2^{b+p}), & \text{if } p^2 \nmid r-b  \\
                  \mathrm{unr}(\sqrt{-1}) \omega \oplus \mathrm{unr}(-\sqrt{-1}) \omega , & \text{if } p^2 \> | \> r-b.
               \end{cases} 
\end{eqnarray*}
\end{theorem}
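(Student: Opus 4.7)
The overall plan is to apply the compatibility between the $p$-adic and mod $p$ Local Langlands Correspondences for $\GL_2(\Q_p)$, in the style of Breuil, Berger, and Buzzard--Gee. To $V_{k,a_p}$ one attaches a unitary Banach space representation $\Pi_{k,a_p}$ of $\GL_2(\Q_p)$ whose locally algebraic vectors are the quotient of the compact induction $\mathrm{ind}_{KZ}^{G} \Sym^r E^2$ (with $r = k-2$, $K = \GL_2(\Z_p)$ and $Z = \Q_p^\times$) by the image of the operator $T - a_p$, where $T$ is the standard Hecke operator. Compatibility of the two correspondences guarantees that the semisimplification of the mod $p$ reduction of $\Pi_{k,a_p}$ corresponds via mod $p$ LLC to $\bar V_{k,a_p}$, so it suffices to identify the Jordan--H\"older factors of the reduction of a suitable integral model of $\Pi_{k,a_p}$.

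I would then pick an explicit element $v \in \mathrm{ind}_{KZ}^{G} \Sym^r \bar\Z_p^2$ and extract information from the relation $(T - a_p) v = 0$. In the slope range $(0,1)$ this requires only $T v$ modulo $p$; in the range $(1,2)$ it requires $T v$ modulo $p^2$, and equivalently $T^2 v$ modulo $p^3$. Writing $T = T^+ + T^-$ via the standard coset decomposition and expanding in the monomial basis of $\Sym^r$, the higher-order contributions are forced into the $\GL_2(\F_p)$-submodule $X \subseteq V_r = \Sym^r \bar\F_p^2$ generated by the second-highest monomial $x^{r-1}y$. The independent structural result promised in the abstract --- the exact description of this submodule --- is precisely what allows one to quotient out the irrelevant pieces and isolate a single Serre weight in each case.

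Once $X$ and the quotient $V_r / X$ are controlled in each residue class $b \equiv r \pmod{p-1}$, one projects the relation $T v \equiv a_p v$ into $V_r / X$ (after dividing by the appropriate power of $p$), and shows that in most cases this forces $\bar\Pi_{k,a_p}^{\mathrm{ss}}$ to have a unique irreducible constituent. Running the mod $p$ LLC backwards on that Serre weight yields the Galois representation $\mathrm{ind}(\omega_2^{b+1})$ or $\mathrm{ind}(\omega_2^{b+p})$ named in the theorem. In the reducible endpoint $b = p$, $p^2 \mid r-b$, two non-supersingular constituents survive, and the eigenvalues $\pm\sqrt{-1}$ emerge from a quadratic whose coefficients are tracked through the Hecke computation.

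The main obstacle will be the Hecke-operator calculation itself: bookkeeping $T v$ and $T^2 v$ to the required $p$-adic precision, and simplifying the resulting binomial sums via standard $p$-adic congruences. The borderline subcase $b = 3$, $v(a_p) = 3/2$ illustrates the difficulty sharply: here the contributions from $a_p^2$ and from $\binom{r-1}{2}(r-2) p^3$ are of the same $p$-adic size, and without the hypothesis $(\star)$ one cannot exclude the possibility that they cancel and collapse the leading coefficient. Verifying in each of the stated subcases that the coefficient of the surviving monomial has precisely the valuation predicted by the divisibility condition on $r$, $r-1$, or $r-b$ is, I expect, where the bulk of the technical work lies.
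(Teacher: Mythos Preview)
Your high-level strategy matches the paper's: compatibility of $p$-adic and mod $p$ LLC reduces the problem to computing $\bar\Theta_{k,a_p}$ as a quotient of $\mathrm{ind}_{KZ}^G V_r$, and the submodule $X_{r-1} = \langle X^{r-1}Y\rangle$ is indeed the key structural ingredient. But there are two genuine gaps in your outline that the paper has to work hard to fill.

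First, the quotient you need is not $V_r/X_{r-1}$ but $Q = V_r/(X_{r-1}+V_r^{**})$, where $V_r^{**}$ is the submodule of polynomials divisible by $\theta^2$ for $\theta = X^pY-XY^p$. The Buzzard--Gee input (\cite[Rem.~4.4]{[BG09]}) is precisely that $v(a_p)>1$ forces $\mathrm{ind}_{KZ}^G X_{r-1}\subset X_{k,a_p}$ while $v(a_p)<2$ forces $\mathrm{ind}_{KZ}^G V_r^{**}\subset X_{k,a_p}$; both are needed to cut $V_r$ down to something with at most three Jordan--H\"older factors. Without $V_r^{**}$ you are left with a quotient whose length grows with $r$, and no amount of Hecke bookkeeping will isolate a single weight. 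The bulk of the structural work in the paper is determining how $X_{r-1}$ and $V_r^{**}$ intersect, which governs whether $Q$ has one, two, or three factors.

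Second, your plan to ``project the relation $Tv\equiv a_p v$ into $V_r/X$'' underestimates what is required. Generically $Q$ has two factors (and sometimes three), so one cannot simply read off a single Serre weight; one must actively \emph{eliminate} the surplus factors. The paper does this by constructing, case by case, \emph{non-integral} functions $f\in\mathrm{ind}_{KZ}^G\Sym^r\bar\Q_p^2$ with denominators involving $p$ and $a_p$, chosen so that $(T-a_p)f$ becomes integral and its reduction generates the induced module of the factor one wants to kill. The combinatorial lemmas producing integers $\alpha_j,\beta_j,\gamma_j$ with prescribed congruences are exactly what is needed to make these denominators cancel. Your integral choice $v\in\mathrm{ind}_{KZ}^G\Sym^r\bar\Z_p^2$ will not do this: the reduction of $(T-a_p)v$ lands in $X_{k,a_p}$ automatically, but you have no control over \emph{which} factor it hits. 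Finally, even after elimination, when the surviving factor is $V_{p-2}\otimes D^n$ one must construct further functions to show the map factors through the cokernel of $T$ (irreducible case) or of $T^2+1$ (the reducible case $b=p$, $p^2\mid r-b$); the eigenvalues $\pm\sqrt{-1}$ come from this last step, not from an earlier quadratic as you suggest.
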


Using the theorem, and known results for $2 \leq k \leq 2p+1$, we obtain:

\begin{cor}
  Let $p \geq 3$. If $k \geq 2$ is even and $v(a_p)$ lies in $(1,2)$, then $\bar{V}_{k,a_p}$ is irreducible. 
\end{cor}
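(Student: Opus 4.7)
My plan is to split the argument into two regimes according to the size of $k$: first the small-weight range $2 \le k \le 2p+1$, where the answer is already known from the literature, and then the range $k \ge 2p+2$, where Theorem \ref{theorem-intro} applies directly. The main work is in the second regime, and it rests on a single parity observation together with a trivial check on exponents of $\omega_2$.

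For $2 \le k \le 2p+1$ with $k$ even, I would simply invoke the classical computations of \cite{Edixhoven92} and \cite{Breuil03a} together with the slope bound of \cite{Berger-Li-Zhu04}, which between them describe $\bar{V}_{k,a_p}$ explicitly and allow one to read off irreducibility whenever $k$ is even and $v(a_p)\in(1,2)$.

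For $k \ge 2p+2$, the argument runs as follows. Since $k$ is even, $r = k-2$ is even, and because $p$ is odd (so $p-1$ is even), the defining congruence $r \equiv b \pmod{p-1}$ with $2 \le b \le p$ forces $b$ to be even. In particular $b \ne p$, so the reducible sub-case of Theorem \ref{theorem-intro} never occurs; and since $b \ne 3$, the extra hypothesis \eqref{hyp} is vacuous. It follows that $\bar{V}_{k,a_p} \cong \mathrm{ind}(\omega_2^a)$ for some $a \in \{b+1,\, b+p\}$ with $b$ even and $2 \le b \le p-1$.

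The remaining step is to verify that $(p+1) \nmid a$, which guarantees irreducibility of $\mathrm{ind}(\omega_2^a)$ on $G_{\Q_p}$. For $a = b+1$ one has $3 \le a \le p < p+1$, so divisibility fails trivially. For $a = b+p$ one writes $a \equiv b-1 \pmod{p+1}$ with $1 \le b-1 \le p-2$, so again $a \not\equiv 0 \pmod{p+1}$. I do not expect any real obstacle here: the entire content of the corollary is the parity observation that rules out the single reducible branch $b=p$ of Theorem \ref{theorem-intro}, after which the irreducibility check is immediate.
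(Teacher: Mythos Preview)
Your proof is correct and follows essentially the same approach as the paper's. The paper's argument is just that Theorem~\ref{theorem-intro} allows reducibility only when $b=p$ or $b=3$ (the latter being where \eqref{hyp} may fail), and since $b$ has the same parity as $k$, even $k$ rules these out; you make the same parity observation, and additionally spell out the verification that $(p+1)\nmid b+1$ and $(p+1)\nmid b+p$ for even $2\le b\le p-1$, which the paper leaves implicit.
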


It is in fact conjectured \cite[Conj. 4.1.1]{Buzzard-Gee15} that if $k$ is even and $v(a_p)$ is non-integral,
then the reduction $\bar V_{k,a_p}$ is irreducible on $G_{\Q_p}$. This follows for slopes in $(0,1)$ by \cite{[BG09]}. 
Theorem~\ref{theorem-intro} shows that $\bar{V}_{k,a_p}$ can be reducible on $G_{\Q_p}$ for slopes in $(1,2)$ only when 
$b=p$ or $b=3$ (or both).
Since $k$ is clearly odd in these cases, the corollary follows.

Let $\rho_f : \mathrm{Gal}(\Qbar/\Q) \rightarrow \mathrm{GL}_2(E)$ denote the global Galois representation attached to a 
primitive cusp form $f = \sum a_n q^n \in S_k(\Gamma_0(N))$ of (even) weight $k \geq 2$ and level $N$ coprime to $p$. 
It is known that $\rho_f |_{G_{\Q_p}}$ is isomorphic to $V_{k,a_p}$ at least if $a_p^2 \neq 4 p^{k-1}$. This condition is 
always true for slopes in $(1,2)$ unless $k = 4$ and $a_p = \pm 2 p^{\frac{3}{2}}$ and is expected to hold generally,
so we assume it. 
We obtain:

\begin{cor}
  \label{modular}
  Let $p \geq 3$. If the slope  of $f$ at $p$ lies in $(1,2)$, then $\bar\rho_f |_{G_{\Q_{p}}}$ is irreducible.
\end{cor}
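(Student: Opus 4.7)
The plan is to reduce this statement to the preceding corollary on $\bar V_{k,a_p}$, using the standard comparison between the local restriction of the global Galois representation attached to $f$ and the crystalline representation determined by its $p$-th Fourier coefficient.

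First I would note that any primitive $f \in S_k(\Gamma_0(N))$ has even weight $k$, so the hypothesis of the previous corollary on $\bar V_{k,a_p}$ is satisfied. Next, by the paragraph preceding the statement, the running assumption $a_p^2 \neq 4 p^{k-1}$---which is maintained throughout because, for slopes in $(1,2)$, it can fail only in the exceptional case $k = 4$, $a_p = \pm 2 p^{3/2}$, which the authors explicitly hypothesize away---supplies the identification $\rho_f|_{G_{\Q_p}} \simeq V_{k,a_p}$. Since the slope of $f$ at $p$ is by definition $v(a_p)$, the hypothesis $v(a_p) \in (1,2)$ feeds directly into the previous corollary and yields that $\bar V_{k,a_p}$ is irreducible on $G_{\Q_p}$.

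To finish, I would invoke the elementary observation that a two-dimensional representation whose semisimplification is irreducible must itself be irreducible: otherwise it would contain a one-dimensional subrepresentation and its semisimplification would split as a direct sum of characters, contradicting irreducibility. Applying this to $\bar\rho_f|_{G_{\Q_p}}$, whose semisimplification is by definition $\bar V_{k,a_p}$, gives the conclusion.

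The main---indeed only---subtle point is the excluded pair $(k, a_p) = (4, \pm 2 p^{3/2})$, where the comparison $\rho_f|_{G_{\Q_p}} \simeq V_{k,a_p}$ is not directly known; the authors handle it by assuming the comparison throughout, as is expected to hold generally. Once this is granted, no further computation is needed: the corollary is a one-line consequence of its predecessor plus the semisimplification observation.
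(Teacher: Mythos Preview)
Your proposal is correct and matches the paper's intended argument: the paper does not write out a proof for this corollary at all, merely stating ``We obtain:'' after the paragraph establishing $\rho_f|_{G_{\Q_p}} \simeq V_{k,a_p}$ and the evenness of $k$, so the deduction from the preceding corollary is exactly as you describe. Your additional remark about semisimplification (that a two-dimensional representation with irreducible semisimplification is itself irreducible) is a useful clarification that the paper leaves implicit.
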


\begin{remark} 
Here are several remarks. 
\begin{itemize}
  \item Theorem~\ref{theorem-intro} treats all weights for slopes in the range $(1,2)$, subject to a hypothesis.  
        It builds on  
        \cite[Thm. 2]{[GG15]}, which treated weights less than $p^2-p$. 
  \item The hypothesis \eqref{hyp} in the theorem applies only when 
        $b = 3$ and $v(a_p) = \frac{3}{2}$ and is mild in the sense that it holds whenever 
        the unit $\frac{a_p^2}{p^3}$ and ${r-1 \choose 2}(r-2)$ have distinct reductions in $\bar\F_p$.  
  \item The theorem agrees with all previous results for  
        weights $2<k \leq 2p+1$ described in \cite[Thm. 5.2.1]{Berger11} 
        when specialized to slopes in $(1,2)$. It could therefore be stated for all weights $k > 2$. We note
        that \eqref{hyp} is satisfied for weights $k \leq 2p+1$, except possibly for $k = 5$. 

  \item When $b = p$ and $p^2 \> | \> r-b$, the theorem shows that $\bar{V}_{k,a_p}$ is always reducible 
        if $p \geq 5$ 
        (and under the hypothesis \eqref{hyp} when $p = 3$). This is a new phenomenon not 
        occurring for slopes in $(0,1)$. 
        When $b = 3$, $v(a_p) = \frac{3}{2}$ and \eqref{hyp} fails, we expect that $\bar{V}_{k,a_p}$ might also be reducible in
        some cases, by analogy with the main result of \cite{Buzzard-Gee13}.
  \item Fix $k,a_p$ and  $b=b(k)$  as in Theorem~\ref{theorem-intro}. 
        Then the theorem implies the following local constancy result: 
        for any other weight $k^\prime\geq 2p+2$ with $k^\prime \equiv k\mod p^{1+v(b)}(p-1)$, 
        the reduction $\bar V_{k^\prime,a_p}$ is isomorphic to $\bar V_{k,a_p}$, 
        except possibly if $v(a_p)=\frac{3}{2}$ and $b = 3$. 
        We refer to \cite[Thm. B]{Berger12} for a general local 
        constancy result for any positive slope.
\end{itemize}
\end{remark}

The proof of Theorem~\ref{theorem-intro} uses the $p$-adic and mod $p$ Local Langlands Correspondences 
due to Breuil, Berger, Colmez, Dospinescu, Pa\v{s}k\=unas \cite{Breuil03a}, \cite{[Br03]}, \cite{Berger-Breuil10}, \cite{Colmez10}, 
\cite{Colmez-Dospinescu-Paskunas14} and an important compatibility between them with respect 
to the process of reduction \cite{Berger10}. The general strategy is due to Breuil and Buzzard-Gee and is outlined in 
\cite{[Br03]}, \cite{[BG09]}, \cite{[GG15]}. We briefly recall it now and explain several 
new obstacles we must surmount along the way.

Let $G = \mathrm{GL}_2(\Q_p)$, $K = \mathrm{GL}_2(\Z_p)$ be the standard maximal compact subgroup of
$G$ and $Z = \Q_p^*$ be the center of $G$.  Consider the locally algebraic representation of $G$ 
given by compact induction
\begin{eqnarray*}
   \Pi_{k, a_p} = \frac{ \mathrm{ind}_{{KZ}}^{G} \Sym^r \bar\Q_p^2 }{T-a_p},
\end{eqnarray*}
where $T$ is the Hecke operator, and consider the lattice in $\Pi_{k,a_p}$ given by 
\begin{equation}
\label{definetheta}  
  \Theta_{k, a_p} := \mathrm{image} \left( \mathrm{ind}_{KZ}^{G} \Sym^r \bar\Z_p^2 
                                           \rightarrow \Pi_{k, a_p} \right) 
  \iso \frac{ \mathrm{ind}_{KZ}^{G} \Sym^r \bar\Z_p^2 }{(T-a_p)(\mathrm{ind}_{KZ}^{G} \Sym^r \bar\Q_p^2) \cap \mathrm{ind}_{KZ}^{G} \Sym^r \bar\Z_p^2 }.
\end{equation}
It is known that the semisimplification of the reduction of this lattice satisfies 
$\bar\Theta_{k,a_p}^\mathrm{ss} \iso LL(\bar{V}_{k,a_p})$, where $LL$ is the (semi-simple) 
mod $p$ Local Langlands Correspondence of Breuil \cite{[Br03]}. 
One might require here the conditions $a_p^2 \neq 4p^{k-1}$ and $a_p \neq \pm (1+p)p^{(k-2)/2}$, but these 
clearly hold  if $k \geq 2p+2$ and $v(a_p) < 2$, see \cite{Berger-Breuil10}. 
By the injectivity of the mod $p$ Local Langlands correspondence,  $\bar\Theta_{k,a_p}^\mathrm{ss}$ determines 
$\bar{V}_{k,a_p}$ completely, and so it suffices to compute $\bar{\Theta}_{k,a_p}$. 

Let  $V_r = \Sym^r \bar{\F}_p^2$
be the usual symmetric power representation of $\Gamma:=\GL_2(\F_p)$ (hence of $KZ$, with $p \in Z$ acting trivially). 
Clearly there is a surjective map
\begin{eqnarray}
    \label{natural-map}
    \mathrm{ind}_{{KZ}}^{G} V_r \twoheadrightarrow \bar\Theta_{k,a_p},
\end{eqnarray}
for $r = k-2$. 
Write $X_{k,a_p}$ for the kernel. 
A model for $V_r$ is the space of  all homogeneous polynomials of degree $r$ in the two variables $X$ and $Y$ over 
$\bar{\F}_p$ with the standard action of $\Gamma$. 
Let  $X_{r-1} \subset V_r$ be the $\Gamma$- (hence $KZ$-) submodule generated by $X^{r-1}Y$. 
Let $V_r^*$ and $V_r^{**}$ be the submodules of $V_r$ consisting of polynomials divisible by $\theta$ and $\theta^2$ respectively,
for $\theta  := X^p Y - X Y^p$. 
 If $r \geq 2p+1$, then Buzzard-Gee have shown \cite[Rem. 4.4]{[BG09]}:
\begin{itemize}  
  \item   $v(a_p) > 1 \implies \mathrm{ind}_{KZ}^{G} \> X_{r-1} \subset X_{k,a_p}$,
  \item   $v(a_p) < 2 \implies \mathrm{ind}_{KZ}^{G} \> V_r^{**} \subset X_{k,a_p}$.
\end{itemize}
The  proof of Theorem~\ref{theorem-intro} for $k = 2p+2$ is known (cf. \cite[\S 2]{[GG15]}) and involves slightly different techniques, so for the
rest of this introduction assume that $r \geq 2p+1$. 
It follows that when $1 < v(a_p) <2$, the map (\ref{natural-map}) 
induces a surjective map $\mathrm{ind}_{KZ}^{G} \> Q \twoheadrightarrow \bar\Theta_{k,a_p}$,
where  
\begin{eqnarray*}
  Q := \frac{V_r}{X_{r-1} + V_r^{**}}.
\end{eqnarray*}  

To proceed further, one needs to understand the `final quotient' $Q$. It is not hard to see that 
{\it a priori} $Q$ has up to 3 Jordan-H\"older factors as a $\Gamma$-module.
The exact structure of $Q$ is derived in  \S \ref{a=1} to \S \ref{a=3..} by giving 
a complete description of the submodule $X_{r-1}$ and understanding to what extent it intersects with 
$V_r^{**}$. 
When $0 < v(a_p) < 1$, the relevant `final quotient' in \cite{[BG09]} is 
always irreducible allowing the authors to compute the reduction (up to separating out some reducible 
cases) using the useful general result \cite[Prop. 3.3]{[BG09]}.
When $1 < v(a_p) < 2$,  we show 
$Q$ is irreducible if and only if 
\begin{itemize}
  \item $b = 2$, $p \nmid r(r-1)$ or $b = p$, $p \nmid r-b$ 
\end{itemize}
and we obtain $\bar{V}_{k,a_p}$ immediately in these cases (Theorem \ref{thm1}). 

Generically, the quotient $Q$ has length 2 when $1 < v(a_p) < 2$. In fact,
we show that $Q$ has exactly two Jordan-H\"older factors, say $J$ and $J^\prime$, in the cases complementary to those
above
\begin{itemize}
  \item $b = 2$, $p \>|\>  r(r-1)$ or $b = p$, $p \> | \> r-b$,
\end{itemize}
as well as in the generic case
\begin{itemize}
  \item $3 \leq b \leq p-1$ and $p \nmid r-b$.
\end{itemize}
We now use the Hecke operator $T$  to `eliminate' one of $J$ or $J^\prime$. Something similar was done 
in \cite{[Br03]} and \cite{[GG15]} for bounded weights. 
That this can be done for all weights is one of the new contributions of this paper (see \S \ref{elimination}). 
It involves constructing certain rational functions 
$f\in\mathrm{ind}_{KZ}^G\Sym^r\bar\Q_p^2$, such that $(T-a_p)f
\in  \mathrm{ind}_{KZ}^G\Sym^r\bar\Z_p^2$ is integral, with reduction mapping to a simple function in 
say $\mathrm{ind}_{KZ}^G \,J^\prime$ that generates this last space of functions as a $G$-module.  
As $(T-a_p)f$  lies in the 
denominator of the expression \eqref{definetheta} describing $\bar\Theta_{k,a_p}$,
its reduction 
lies in $X_{k,a_p}$. Thus we obtain a surjection 
$\mathrm{ind}_{KZ}^G \, J \twoheadrightarrow\bar{\Theta}_{k,a_p}$  
and can apply \cite[Prop. 3.3]{[BG09]} again. 
For instance, let $$J_1=V_{p-b+1}\otimes D^{b-1} \text{ and } J_2=V_{p-b-1}\otimes D^b,$$ 
where $D$ denotes the determinant character. Then in 
the latter (generic) case above,  $Q \cong J_1 \oplus J_2$  is a direct sum. 
We construct a function $f$ which eliminates $J^\prime=J_2$ so that $J = J_1$ survives, 
showing that $\bar{V}_{k,a_p}\cong\mathrm{ind}(\omega_2^{b+p})$ (Theorem \ref{elimination1}).

The situation is more complicated when $Q$ has $3$ Jordan-H\"older factors, namely $J_0 = V_{b-2} \otimes D$, 
in addition to $J_1$  and $J_2$ above.
That this happens at all came as a surprise to us since it did not happen in the range 
of weights considered in \cite{[GG15]}. We show that this happens for 
the first time when $r = p^2-p+3$, 
and in general whenever 
\begin{itemize}
  \item $3 \leq b \leq p-1$ and $p \> | \> r-b$. 
\end{itemize}
This time we construct functions $f$ killing $J_0$ and 
$J_1$ (except when $b = 3$, $v(a_p) = \frac{3}{2}$ and $v(a_p^2-p^3) > 3$), so that $J_2$ survives instead,
and the reduction becomes $\mathrm{ind}(\omega_2^{b+1})$ (Theorems~\ref{elimination3}, \ref{elimination6}).  
Since $J_2$ was also the `final quotient' in \cite{[BG09]}, 
the reduction in these cases is the same as the generic answer obtained for slopes in $(0,1)$. 

As a final twist in the tale, we remark that even though one can eliminate all but 1 Jordan-H\"older factor $J$, 
one  needs to further separate out the reducible cases 
when $J = V_{p-2} \otimes D^n$, for some $n$. This happens in three cases:
\begin{itemize}
  \item 
        $b = 3$, $p \nmid r-b$,
  \item $b = p = 3$, $p \> || \> r-b$,
 \item  
        $b = p$, $p^2 \> | \> r-b$. 
\end{itemize}
In \S\ref{sep} we construct additional functions and use them to show that the map 
$\mathrm{ind}_{KZ}^G \, J \twoheadrightarrow \bar\Theta_{k,a_p}$  factors either through the cokernel of 
$T$ or the cokernel of $T^2-cT+1$, for some $c \in \bar\F_p$, and then apply the mod $p$ Local Langlands Correspondence directly to compute 
$\bar{V}_{k,a_p}$, as was done in \cite{[Br03]}, \cite{Buzzard-Gee13}.
In the first two cases, we show that the map above factors through the cokernel of $T$ so that the 
reducible case never occurs. We work under the assumption \eqref{hyp}, namely if $v(a_p)=\frac{3}{2}$, then
$v(a_p^2 - {r-1 \choose 2}(r-2)p^3)$ is equal to 3, which is the generic sub-case 
(Theorem~\ref{reducible1}).
On the other hand, in the third case we show that if $p \geq 5$ or if $p=3=b$ and \eqref{hyp}
holds, then the map factors through the cokernel of $T^2+1$, so that 
$\bar{V}_{k,a_p}$ is reducible and is as in  Theorem~\ref{theorem-intro} (Theorem~\ref{reducible2}).

One of the key ingredients that go into the proof of Theorem~\ref{theorem-intro} is  
a complete description of the 
structure of the submodule $X_{r-1}$ of $V_r$. 
We give its structure now as the result might be of some independent interest. 
To avoid technicalities, we state the following theorem in a weaker form than what we actually prove.
Let $ M:=M_2(\F_p)$ and consider 
$V_r$ as an $M$-module, with the obvious extension of the action of $\Gamma=\GL_2(\F_p)$ on it. 

\begin{theorem}
  \label{structure-of-Xrminusone}
  Let $p \geq 3$. Let $r \geq 2p+1$ and let $X_{r-1} =  \langle X^{r-1}Y \rangle$ be the $M$-submodule of 
  $V_r$ generated by the second highest monomial. Then $2 \leq \mathrm{length} \> X_{r-1} \leq 4$ 
  as an $M$-module. More precisely, if $2 \leq b \leq p-1$, then 
  $X_{r-1}$ fits into the exact sequence of $M$-modules
  \begin{eqnarray*}  
    V_{p-b+1} \otimes D^{b-1} \oplus V_{p-b-1} \otimes D^{b} \rightarrow X_{r-1} \rightarrow 
    V_{b-2} \otimes D \oplus V_{b} \rightarrow 0,
  \end{eqnarray*}
  and if $b = p$, then $X_{r-1}$ fits into the exact sequence of $M$-modules
  \begin{eqnarray*}
     V_{1}\otimes D^{p-1} \rightarrow X_{r-1} \rightarrow W \rightarrow 0,
  \end{eqnarray*}
  where $W$ is quotient of the length $3$ module $V_{2p-1}$.
\end{theorem}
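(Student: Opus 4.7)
The approach is to study $X_{r-1}$ via the natural short exact sequence
\[
0 \to X_{r-1}\cap V_r^* \to X_{r-1} \to \bar X_{r-1} \to 0,
\]
where $\bar X_{r-1} = (X_{r-1}+V_r^*)/V_r^*$ is the image in $V_r/V_r^*$. Since $\theta = X^pY-XY^p$ is an $M$-semi-invariant with character $D$, one has $V_r^*\cong V_{r-p-1}\otimes D$ as $M$-modules, and iterating the relation $X^aY^{r-a}\equiv X^{a-(p-1)}Y^{r-a+(p-1)}\pmod{V_r^*}$ (valid for $a\ge p$) gives $V_r/V_r^*$ a basis of reduced monomials $\{X^r,Y^r\}\cup\{X^aY^{r-a}:1\le a\le p-1\}$, under which $X^{r-1}Y\equiv X^{b-1}Y^{r-b+1}$. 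To compute $\bar X_{r-1}$, I would act on the class of $X^{b-1}Y^{r-b+1}$ by $\left(\begin{smallmatrix}1&0\\c&1\end{smallmatrix}\right)$ for $c\in\F_p$ and Vandermonde-interpolate in $c$ to recover the monomials $X^{b-1-j}Y^{r-b+1+j}$ for $0\le j\le b-1$, and act by $\left(\begin{smallmatrix}1&c\\0&1\end{smallmatrix}\right)$, reducing modulo $V_r^*$, to produce $X^r$. A direct coefficient calculation in $\gamma\cdot X^{b-1}Y^{r-b+1}\bmod V_r^*$, using Lucas' theorem, then shows that the coefficients of the ``forbidden'' monomials $X^aY^{r-a}$ with $b\le a\le p-1$ vanish identically modulo $p$, so these never appear. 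This pins down $\bar X_{r-1}$ as a $(b+1)$-dimensional $M$-module isomorphic to $V_b$ for $2\le b\le p-1$; for $b=p$, the analogous calculation yields instead a quotient of the length-$3$ module $V_{2p-1}$.

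For the submodule $V_r^*\cap X_{r-1}$, the plan is to apply Vandermonde interpolation to the family $\left(\begin{smallmatrix}1&0\\c&1\end{smallmatrix}\right)\cdot X^{r-1}Y$ (and a transposed family): each coefficient of $c^i$ lies in $X_{r-1}$, and those coefficients whose reduction modulo $V_r^*$ vanishes are explicit elements $\theta g\in X_{r-1}\cap V_r^*$ with $g\in V_{r-p-1}$. Tracking the resulting $g$'s and sorting them into the Jordan--H\"older filtration of $V_{r-p-1}\otimes D$ identifies the $M$-submodule they generate as the image of the map $V_{p-b+1}\otimes D^{b-1}\oplus V_{p-b-1}\otimes D^b\to X_{r-1}$ appearing in the theorem, and a final count shows that the cokernel inside $X_{r-1}\cap V_r^*$ is exactly the single remaining factor $V_{b-2}\otimes D$. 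Combined with $\bar X_{r-1}\cong V_b$ from above, this assembles the claimed exact sequence with total length at most $4$. The case $b=p$ is handled in parallel: the bottom $A=V_1\otimes D^{p-1}$ is simpler (only one Jordan--H\"older factor), but the top now requires a separate analysis of the length-$3$ module $V_{2p-1}$.

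The principal obstacle is the delicate mod-$p$ binomial-coefficient bookkeeping that governs both (i) the identical vanishing of the coefficients of the ``forbidden'' monomials in $\bar X_{r-1}$, and (ii) the correct matching of each $\theta g$ produced by interpolation with its Jordan--H\"older factor inside $V_{r-p-1}\otimes D$. Because the Jordan--H\"older structure of $V_{r-p-1}$ is sensitive to the base-$p$ expansion of $r$, the two ranges $2\le b\le p-1$ and $b=p$ have to be treated separately; the latter case is structurally harder because the indecomposable length-$3$ module $V_{2p-1}$ at the top must be disentangled by hand.
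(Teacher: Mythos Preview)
Your approach differs substantially from the paper's. The paper does not filter $X_{r-1}$ by $V_r^*$ directly. Instead, the key device is the $M$-linear surjection $\phi\colon X_{r'}\otimes V_1 \twoheadrightarrow X_{r-1}$ (where $r'=r-1$), combined with an induction on the congruence class $a$ of $r$ modulo $p-1$. Glover's result gives $X_{r'}/X_{r'}^*\cong V_{a-1}$, and by the inductive step $X_{r'}^*$ is either $0$ or isomorphic to $V_{p-a}\otimes D^{a-1}$. Tensoring the short exact sequence $0\to X_{r'}^*\to X_{r'}\to V_{a-1}\to 0$ by $V_1$ and using the $M$-module splittings $V_{a-1}\otimes V_1\cong V_{a-2}\otimes D\oplus V_a$ and $V_{p-a}\otimes V_1\cong V_{p-a-1}\otimes D\oplus V_{p-a+1}$ from Glover immediately yields the claimed exact sequence. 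The case $b=p$ is handled first and serves as the base of the induction.

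Your route has a genuine gap in step~(2). You propose to extract elements $\theta g\in X_{r-1}^*$ by Vandermonde interpolation and then ``sort them into the Jordan--H\"older filtration of $V_{r-p-1}\otimes D$'', but $V_{r-p-1}$ has length growing with $r$, and you give no mechanism for showing that the submodule your $g$'s generate is a quotient of $V_{p-b+1}\otimes D^{b-1}\oplus V_{p-b-1}\otimes D^b$ rather than something larger or unrelated; this is precisely what the paper's map $\phi$ restricted to $X_{r'}^*\otimes V_1$ supplies for free. Moreover, your filtration only produces $X_{r-1}/(\text{image})$ as an extension of $V_b$ by $V_{b-2}\otimes D$, and you still owe the argument that this extension is $M$-split; the paper gets the splitting directly from the decomposition of $V_{a-1}\otimes V_1$. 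Finally, your ``final count'' presupposes $\dim X_{r-1}$, which is not constant: it takes the values $2a$, $a+p+2$, or $2p+2$ depending on the digit sum $\Sigma_p(u-1)$ where $r=p^nu$ with $p\nmid u$, and the paper devotes several separate lemmas to establishing this. Without that input, the count cannot close.
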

Theorem~\ref{structure-of-Xrminusone} is 
proved for representations defined over $\F_p$ in \S\ref{a=1}  and \S\ref{structure} using results of Glover   
\cite{[G78]}. Here we have stated the corresponding result after extending scalars to $\bar\F_p$. We recall that
$V_r^*$ is the largest singular $M$-submodule of $V_r$ \cite[(4.1)]{[G78]}. It is the $M$-module structure 
of $X_{r-1}$ given in the theorem rather than just the $\Gamma$-module structure that plays a key role  
in understanding how $X_{r-1}$ intersects with $V_r^*$ and $V_r^{**}$. 

A more precise description of the structure of $X_{r-1}$ can be found in Propositions~\ref{JHfactors1} and \ref{JHfactors}. 
There we show that the  Jordan-H\"older factors in Theorem~\ref{structure-of-Xrminusone} that actually occur in $X_{r-1}$
are completely determined by the sum of the $p$-adic digits of an integer related to $r$.
As a corollary, we 
obtain the following curious formula for the dimension of $X_{r-1}$ in all cases.

\begin{cor}
Let $p \geq 3$ and let $r \geq 2p+1$. Write $r = p^n u$, with $p \nmid u$. Set $\delta = 0$ if $r = u$ and 
$\delta = 1$ otherwise. Let $\Sigma$ be the sum of the $p$-adic digits of $u -1$ in its base $p$ 
expansion. Then
  \begin{eqnarray*}
    \dim X_{r-1} = \begin{cases}
                    2 \Sigma + 2 + \delta(p+1-\Sigma), & \text{if } \Sigma \leq p - 1 \\
                    2p + 2,                            & \text{if } \Sigma > p - 1.  
                  \end{cases}
  \end{eqnarray*}
\end{cor}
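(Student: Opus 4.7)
The plan is to deduce the dimension formula by summing the dimensions of the Jordan--H\"older constituents of $X_{r-1}$ that actually occur, as specified by the finer Propositions~\ref{JHfactors1} and~\ref{JHfactors} refining Theorem~\ref{structure-of-Xrminusone}. For $2\le b\le p-1$, the four candidate constituents $V_{p-b+1}\otimes D^{b-1}$, $V_{p-b-1}\otimes D^{b}$, $V_{b-2}\otimes D$, $V_{b}$ have dimensions $p-b+2$, $p-b$, $b-1$, $b+1$, which sum to $2p+2$; for $b=p$, the factor $V_{1}\otimes D^{p-1}$ of dimension $2$ together with the quotient $W$ of $V_{2p-1}$ again contributes at most $2p+2$. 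Thus the ``saturated'' value $2p+2$ in the corollary corresponds to the regime where every candidate factor appears, and the main task is to count the surviving factors when $\Sigma\le p-1$.

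To do so, I would invoke the refined propositions, whose appearance criteria are naturally stated as divisibility conditions on integers like $r$, $r-b$, or $r(r-1)$ by powers of $p$, and translate each such criterion into a condition on the base-$p$ expansion of $u-1$ via Kummer's theorem. Writing $r=p^n u$ with $p\nmid u$, the digits of $u-1$ measure carries in the binomial coefficients $\binom{r}{j}$ that govern the image of $X^{r-1}Y$ under the natural projections of $V_r$ onto each candidate factor, and the digit sum $\Sigma$ controls how many such projections vanish. Splitting into $\delta=0$ (i.e.\ $n=0$) and $\delta=1$ (i.e.\ $n\ge1$) accounts for the fact that when $n\ge 1$ the lowest $n$ digits of $u-1$ are all forced to equal $p-1$ by borrows: this is what produces the correction $\delta(p+1-\Sigma)$ and yields dimensions $2\Sigma+2$ respectively $\Sigma+p+3$ in the two regimes. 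Once $\Sigma>p-1$, there are enough carries that every candidate appears and the dimension saturates at $2p+2$.

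The principal obstacle is the $b=p$ case, where Theorem~\ref{structure-of-Xrminusone} presents $X_{r-1}$ only indirectly via the length $\leq 3$ quotient $W$ of $V_{2p-1}$; pinning down precisely which constituents of $V_{2p-1}$ survive in $W$, and their dimensions, requires Glover's detailed description of the submodule lattice of symmetric powers over $M$, combined with a direct check that the resulting count still fits the uniform formula. A secondary subtlety is to ensure that a single expression covers all of $b\in\{2,\ldots,p\}$: the digit-sum conditions on $u-1$ must be matched against the polynomial divisibility conditions of Propositions~\ref{JHfactors1} and~\ref{JHfactors} across boundary values of $b$, absorbing the degeneracies (e.g.\ at $b=2$, where $V_{b-2}\otimes D$ collapses, or at $b=p$, where the shape of the exact sequence changes) into the same clean formula in $\Sigma$ and $\delta$.
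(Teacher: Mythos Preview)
Your approach is the paper's: the corollary is obtained by reading off the Jordan--H\"older factors of $X_{r-1}$ from Propositions~\ref{JHfactors1} and~\ref{JHfactors} and summing their dimensions. However, you have made the argument harder than it is by misremembering the content of those propositions. Their case distinctions are \emph{already} phrased in terms of $\Sigma=\Sigma_p(u-1)$ and whether $p\mid r$ (i.e., $\delta$); the divisibility conditions on $r-b$ or $r(r-1)$ that you mention govern the structure of $Q$, not of $X_{r-1}$, so no Kummer translation is needed. Likewise, Proposition~\ref{JHfactors1} already pins down $W$ in each sub-case (it is $V_{2p-1}$, or $V_{2p-1}/V_{2p-1}^*$ of dimension $p+1$), so no further Glover analysis is required for $b=p$. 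Your claim that ``the lowest $n$ digits of $u-1$ are all forced to equal $p-1$'' is false---you are thinking of $r-1$, not $u-1$---but this slip is harmless once you use the propositions as stated.

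Concretely, for $2\le a\le p-1$ the three cases of Proposition~\ref{JHfactors} give $\dim X_{r-1}=2a$, $a+p+2$, $2p+2$; for $a=1$ the three cases of Proposition~\ref{JHfactors1} give $2p$, $2p+2$, $p+3$. A direct check shows each matches $2\Sigma+2+\delta(p+1-\Sigma)$ when $\Sigma\le p-1$ and $2p+2$ when $\Sigma>p-1$, using that $\Sigma\equiv a-1\pmod{p-1}$ forces $\Sigma\in\{a-1\}\cup\{n\ge p+a-2\}$ for $a\ge 2$, and that for $a=1$ with $p\mid r$ one has $u=1$ (so $\Sigma=0$) exactly when $r$ is a $p$-power.
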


\section{Basics}
\subsection{Hecke operator $T$ }
  \label{Hecke}
Recall $G = \GL_2(\Q_p)$ and $KZ = \GL_2(\Z_p) \Q_p^*$ is 
the standard compact mod center subgroup of $G$.
Let $R$ be a $\Z_p$-algebra and let $V = \Sym^r R^2\otimes D^s$ be the usual 
symmetric power representation of $KZ$ twisted by a power of the determinant character $D$ (with $p \in Z$ acting trivially), 
modeled on homogeneous polynomials
of degree $r$ in the variables $X$, $Y$ over $R$. For $g \in G$, $v \in V$, let $[g,v] \in \mathrm{ind}_{KZ}^{G} V$ be the
function with support in ${KZ}g^{-1}$ given
by 
$$g' \mapsto
\begin{cases}
  g'g \cdot v  \ & \text{ if } g' \in {KZ}g^{-1} \\
   0                  & \text{ otherwise.}
\end{cases}
$$ Any function in $\mathrm{ind}_{KZ}^G V$ is a finite linear combination of functions of the form $[g,v]$, for $g\in G$ and $v\in V$.
The Hecke operator  $T$ is defined by its action on these elementary functions via
\begin{equation}\label{T} T([g,v])=\underset{\lambda\in\F_p}\sum\left[g\left(\begin{smallmatrix} p & [\lambda]\\
                                                 0 & 1
                                                \end{smallmatrix}\right),\:v\left(X, -[\lambda]X+pY\right)\right]+\left[g\left(\begin{smallmatrix} 1 & 0\\
                                                                                                                                                   0 & p
                                                \end{smallmatrix}\right),\:v(pX,Y)\right],\end{equation}
where $[\lambda]$ denote the Teichm\"uller representative of $\lambda \in \F_p$.                                                
We will always denote the Hecke operator acting on $\mathrm{ind}_{KZ}^G V$ for various choices of 
$R=\bar\Z_p$, $\bar\Q_p$ or $\bar\F_p$ and for different values of $r$ and $s$ by $T$ as the underlying space will be clear from the context.                                               

\subsection{The mod $p$ Local Langlands Correspondence}

Let $V$ be a weight, i.e., an irreducible representation of $\GL_2(\F_p)$, thought of as a 
representation of $KZ$ by
inflating to $\GL_2(\Z_p)$ and making $p \in \Q_p^*$ act trivially. Let $V_r = \Sym^r \bar\F_p^2$ be the $r$-th 
symmetric power of the standard two-dimensional representation of $\GL_2(\F_p)$ on $\bar\F_p^2$. The set of weights  $V$
is exactly the set of modules $V_r \otimes D^i$, for $0 \leq r \leq p-1$ and $0 \leq i \leq p-2$. 
For $0 \leq r \leq p-1$, $\lambda \in \bar{\F}_p$ and $\eta : \Q_p^* \rightarrow \bar\F_p^*$ a smooth character, let
\begin{eqnarray*}
  \pi(r, \lambda, \eta) & := & \frac{\mathrm{ind}_{KZ}^{G} V_r}{T-\lambda} \otimes (\eta\circ \mathrm{det})
\end{eqnarray*}
be the smooth admissible representation of ${G}$, where $\mathrm{ind}_{KZ}^G$ is compact
induction, and  $T$ is the Hecke operator defined above; it generates the Hecke algebra 
$\mathrm{End}_{G} (\mathrm{ind}_{{KZ}}^{G} \> V_r) = \bar{\F}_p[T]$.
With this notation, Breuil's semisimple mod $p$ Local Langlands Correspondence 
\cite[Def. 1.1]{[Br03]} 
is given by:
\begin{itemize} 
  \item  $\lambda = 0$: \quad
             $\mathrm{ind}(\omega_2^{r+1}) \otimes \eta \:\:\overset{LL}\longmapsto\:\: \pi(r,0,\eta)$,
  \item $\lambda \neq 0$: \quad
             $\left( \omega^{r+1} \mathrm{unr}(\lambda) \oplus \mathrm{unr}(\lambda^{-1}) \right) \otimes \eta 
                   \:\:\overset{LL}\longmapsto\:\:  \pi(r, \lambda, \eta)^\mathrm{ss} \oplus  \pi([p-3-r], \lambda^{-1}, \eta \omega^{r+1})^\mathrm{ss}$, 
\end{itemize}
where $\{0,1, \ldots, p-2 \} \ni [p-3-r] \equiv p-3-r \mod p-1$.   
It is clear from the classification of  smooth admissible 
irreducible representations of ${G}$ by Barthel-Livn\'e \cite{Barthel-Livne94} and Breuil \cite{Breuil03a},  that this correspondence  is not 
surjective. However, the map ``${LL}$'' above is an injection and so it is enough to know ${LL}(\bar V_{k,a_p})$ to determine $\bar{V}_{k,a_p}$.

\subsection{Modular representations of $M$ and $\Gamma$}\label{basics}
 In order to make use of results in Glover \cite{[G78]}, let us abuse notation a bit and let $V_r$ be the space of
homogeneous polynomials $F(X,Y)$  in two variables $X$ and $Y$ of degree $r$ with coefficients in the finite field $\F_p$,
rather than in $\bar\F_p$. For the next few sections (up to \S\ref{a=3..}) we similarly
consider all subquotients of $V_r$ as representations defined over $\F_p$. This is not so serious, as once we have established
the structure of $X_{r-1}$ or $Q$ over $\F_p$, it immediately implies the corresponding result over $\bar\F_p$, by extension 
of scalars. The matrix 
algebra $M=\mathrm{M}_2(\mathbb{F}_p)$ acts on $V_r$ by the formula $$\left( \begin{smallmatrix} a & b\\
                                                             c & d
                                                            \end{smallmatrix} \right) \cdot F(X,Y) = F(aX+cY, bX+dY).$$
One has to be careful with the notation  while using the results of \cite{[G78]} as Glover indexed the symmetric power representations by dimension, instead of the degree of the
polynomials involved. In this paper, $V_r$ always has dimension $r+1$.
                                                           
We denote the set of singular matrices by $N \subseteq M$.  
An $\mathbb{F}_p[M]$-module $V$ is called \textquoteleft singular\textquoteright, if  each matrix $t\in N$ annihilates $V$,
i.e., if $t\cdot V=0$, for all $ t\in N$. The largest singular submodule of an arbitrary $\F_p[M]$-module $V$ is denoted by $V^*$. Note that any 
$M$-linear map must take a singular submodule (of its domain) to a singular submodule (of the range).  This simple observation will be very useful for us.

Let $X_r$ and $X_{r-1}$ be the $\mathbb{F}_p[M]$-submodules of $V_r$ generated by the  monomials $X^r$ and $X^{r-1}Y$ respectively. 
One can check that $X_r \subset X_{r-1}$ and are spanned  by the sets
$\{X^r, (kX+Y)^r : k\in\mathbb{F}_p\}$ and  $\{X^r, Y^r, X(kX+Y)^{r-1}, (X+lY)^{r-1}Y : k, l \in \mathbb{F}_p\}$ respectively 
\cite[Lem. 3]{[GG15]}. Thus we have $\dim X_r\leq p+1$ and
$\dim X_{r-1}\leq 2p+2$.  We will describe the explicit structure of the modules $X_r$ and $X_{r-1}$, according to 
the different congruence classes $a\in\{1,2,\ldots,p-1\}$ with $r\equiv a\mod (p-1)$. It will also be convenient to 
use the representatives $b \in \{2,\ldots, p-1,p\}$ of the congruence classes of $r \mod (p-1)$.

For $s\in\mathbb{N}$, we denote the sum of the $p$-adic digits of $s$ in its base $p$ expansion by $\Sigma_p(s)$. It is easy to see that 
$\Sigma_p(s)\equiv s\mod (p-1)$,  for any $s\in\mathbb{N}$. Let us write $r=p^nu$, where $n=v(r)$ and hence 
$p\nmid u$. In the study of the module $X_{r-1}$, the sum $\Sigma_p(u-1)$ plays a key role. For $r\equiv a \mod (p-1)$, 
observe that the sum $\Sigma_p(u-1)\equiv a-1\mod (p-1)$, therefore it 
varies discreetly over the infinite set $\{a-1,\, p+a-2,\,2p+a-3,\cdots\}$.

Let $\theta = \theta(X,Y)$ denote the special polynomial
$X^pY-XY^p$.  For $r\geq p+1$, 
we know  that \cite[(4.1)]{[G78]}
$$V_r^*:=\{F \in V_r : \theta \mid F\} \cong\begin{cases}
                                             0, &\text{if } r\leq p\\ 
                                             V_{r-p-1}\otimes D, &\text{if } r\geq p+1
                                             \end{cases}$$
                                            is the largest singular submodule of $V_r$. 
We define  $V_r^{**}$, another important submodule of $V_r$, by 
$$V_r^{**} := \{F \in V_r : \theta^2 \mid F\} \cong
                                              \begin{cases} 
                                              0, &\text{ if } r<2p+2\\
                                              V_{r-2p-2}\otimes D^2,  &\text{ if } r\geq 2p+2.  
                                              \end{cases}$$
Note that $V_r^{**}$ is obviously {\it{not}} the largest 
singular submodule of $V_r^*$. 

Next we introduce the submodules $X_r^*:=X_r\cap V_r^*$, $X_r^{**}:=X_r\cap V_r^{**}$, $X_{r-1}^*:=X_{r-1}\cap V_r^*$ and $X_{r-1}^{**}:=X_{r-1}\cap V_r^{**}$.
It follows that $X_r^*$ and $X_{r-1}^*$ are the largest singular submodules inside $X_r$ and $X_{r-1}$ respectively.
The group $\mathrm{GL}_2(\mathbb{F}_p)\subseteq M$ is denoted by $\Gamma$. For $r\geq 2p+1$, we will study  the $\Gamma$-module structure of 
$$Q:=\dfrac{V_r}{X_{r-1}+V_r^{**}}\, .$$ We will be particularly interested in the bottom row of the following commutative diagram of $M$-linear (hence also $\Gamma$-linear) maps:

\begin{eqnarray}\label{bigdiadram}
    \xymatrix{
    & 0 \ar[d] & 0 \ar[d] & 0 \ar[d] \\
    0 \ar[r] & \dfrac{X_{r-1}^*}{X_{r-1}^{**}}\ar[d]\ar[r] & \dfrac{X_{r-1}}{X_{r-1}^{**}} \ar[d] \ar[r] & \dfrac{X_{r-1}}{X_{r-1}^*} \ar[d]\ar[r] & 0 \\
    0 \ar[r] & \dfrac{V_r^*}{V_r^{**}} \ar[r]\ar[d]  & \dfrac{V_r}{V_r^{**}} \ar[d]\ar[r] & \dfrac{V_r}{V_r^*} \ar[r]\ar[d] & 0 \\
    0 \ar[r] & \dfrac{V_r^*}{V_r^{**}+X_{r-1}^*} \ar[r]\ar[d] & Q \ar[r]\ar[d] & \dfrac{V_r}{V_r^*+X_{r-1}} \ar[d]\ar[r] & 0. \\
               &0 & 0 & 0}
\end{eqnarray}

\begin{prop}\label{es1}
Let $p\geq 3$ and $r\geq p$ with $r\equiv a\mod (p-1)$, for $1\leq a\leq p-1$. Then the $\Gamma$-module structure of $V_r/V_r^*$ is 
given by
\begin{eqnarray}
\label{ses*} 
    0\rightarrow V_{a}\rightarrow\dfrac{V_r}{V_r^*}\rightarrow V_{p-a-1}\otimes D^{a}\rightarrow 0.
\end{eqnarray} 
The sequence splits as a sequence of $\,\Gamma$-modules if and only if $a=p-1$.
\end{prop}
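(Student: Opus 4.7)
The plan is to identify $V_r/V_r^*$ with a natural space of functions on $\F_p^2\setminus\{0\}$ and then to extract the structure of the sequence from that identification.

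I would first observe that $\theta$ vanishes identically on $\F_p^2$, so evaluation of polynomials at points defines a $\Gamma$-equivariant map $V_r \to \mathcal{F}$, where $\mathcal{F}$ is the space of functions $\phi: \F_p^2\setminus\{0\} \to \F_p$ satisfying $\phi(\lambda v) = \lambda^a \phi(v)$ for $\lambda\in\F_p^*$. This map factors through $V_r^*$, and since $\dim V_r/V_r^* = p+1 = |\mathbb{P}^1(\F_p)| = \dim \mathcal{F}$ (the latter counts orbits of $\F_p^*$ on $\F_p^2\setminus\{0\}$), the induced map is a $\Gamma$-equivariant isomorphism. Applying the same evaluation to $V_a$ itself yields a $\Gamma$-equivariant injection $V_a\hookrightarrow \mathcal{F}$, injective because no non-zero polynomial of degree $a\leq p-1<\deg\theta$ is divisible by $\theta$. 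This realizes the required submodule.

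To identify the quotient I would compute $\mathcal{F}^U$, the invariants under the upper unipotent subgroup (which in the paper's convention $(g\phi)(v)=\phi(g^Tv)$ amount to functions constant on the $U^-$-orbits). A short orbit-count shows $\mathcal{F}^U$ is two-dimensional, spanned by the restriction of $X^a$ and the function $\psi$ supported on the axis $\{x=0\}$ with $\psi(0,y)=y^a$; these are $T$-eigenvectors of weights $(t,s)\mapsto t^a$ and $(t,s)\mapsto s^a$ respectively. Since $V_a^U=\langle X^a\rangle$, the image of $\psi$ in $\mathcal{F}/V_a$ is a non-zero $U$-invariant of $T$-weight $s^a$. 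By the classification of irreducible $\GL_2(\F_p)$-representations in characteristic $p$, the unique irreducible of dimension $p-a$ carrying a $U$-fixed vector of $T$-weight $s^a$ is $V_{p-1-a}\otimes D^a$; as a non-zero submodule of the $(p-a)$-dimensional quotient $\mathcal{F}/V_a$, it must exhaust it.

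The splitting criterion is the delicate point, and is where I expect the main obstacle. For $a=p-1$ the constant function $\phi\equiv 1$ is well-defined precisely because $\lambda^{p-1}=1$ for $\lambda\in\F_p^*$; it is manifestly $\Gamma$-invariant, and spans a direct complement $V_0=V_{p-1-a}\otimes D^a$ to $V_{p-1}$ in $\mathcal{F}$. For $1\leq a\leq p-2$ I would rule out splitting by Frobenius reciprocity: identify $\mathcal{F}\cong \mathrm{Ind}_{B^-}^\Gamma \chi_a$, where $\chi_a$ is the character $b\mapsto b_{11}^a$ of the opposite Borel $B^-$ (which is the stabilizer of the line through $e_1$ under the paper's transposed action). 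Then $\mathrm{Hom}_\Gamma(V_{p-1-a}\otimes D^a,\mathcal{F})$ reduces to $\mathrm{Hom}_{B^-}((V_{p-1-a}\otimes D^a)_{U^-},\chi_a)$; the one-dimensional $U^-$-coinvariant has $T$-weight $t^{p-1-a}(ts)^a = s^a$, whereas $\chi_a$ has weight $t^a$, and these differ as characters of $T(\F_p)$ whenever $a\not\equiv 0\pmod{p-1}$. Hence the Hom vanishes, no $\Gamma$-equivariant section of the projection onto $V_{p-1-a}\otimes D^a$ can exist, and the sequence is non-split.
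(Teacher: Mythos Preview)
Your approach is correct and takes a genuinely different route from the paper's. The paper proceeds entirely by citation: Glover's periodicity isomorphism \cite[(4.2)]{[G78]} reduces the question to $r=a+p-1$, Breuil's \cite[Lem.~5.3]{[Br03]} then supplies the exact sequence, and the splitting behaviour is read off from the injectivity of $V_{p-1}$ together with the known uniserial structure of $V_{a+p-1}$ (citing \cite[(6.4)]{[G78]} or \cite[Thm.~5]{[GG15]}). Your argument is instead self-contained: you realise $V_r/V_r^*$ concretely as the principal series $\mathcal{F}\cong\mathrm{Ind}_{B^-}^{\Gamma}\chi_a$ via evaluation at points of $\F_p^2\setminus\{0\}$, and analyse it directly. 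The paper's route is shorter; yours makes the structure transparent without external input, and the Frobenius-reciprocity computation you give for non-splitting is particularly clean.

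There is one soft spot in your identification of the quotient. You exhibit a non-zero $U$-fixed vector of $T$-weight $s^a$ in $\mathcal{F}/V_a$ and then assert that $V_{p-1-a}\otimes D^a$ occurs as a submodule. But a $U$-fixed vector of a given weight need not generate an irreducible of that highest weight; you have not shown that $(\mathcal{F}/V_a)^U$ is one-dimensional, and since $H^1(U,V_a)\neq 0$ for $a<p-1$ the naive count from $0\to V_a^U\to\mathcal{F}^U\to(\mathcal{F}/V_a)^U$ does not close. The simplest fix is to reuse your own Frobenius reciprocity in the dual direction:
\[
\Hom_{\Gamma}(\mathcal{F},\,V_m\otimes D^n)\;=\;\Hom_{B^-}(\chi_a,\,V_m\otimes D^n)\;=\;\Hom_{T}\bigl(\chi_a,\,(V_m\otimes D^n)^{U^-}\bigr),
\]
and $(V_m\otimes D^n)^{U^-}=\langle Y^m\rangle\otimes D^n$ has $T$-weight $t^{n}s^{m+n}$, which equals $t^a$ only for $(m,n)=(p-1-a,a)$ when $1\le a\le p-2$. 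Hence the cosocle of $\mathcal{F}$ is exactly $V_{p-1-a}\otimes D^a$, of dimension $p-a=\dim(\mathcal{F}/V_a)$; this forces $V_a$ to be the unique maximal proper submodule and gives $\mathcal{F}/V_a\cong V_{p-1-a}\otimes D^a$ on the nose.
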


\begin{proof} For $r\geq p$, we obtain that $V_r/V_r^*\cong V_{a+p-1}/V_{a+p-1}^*$,  using \cite[(4.2)]{[G78]}.
The exact sequence then follows from  \cite[Lem. 5.3]{[Br03]}. Note that it must split when $a=p-1$, as $V_{p-1}$ is an injective $\Gamma$-module.
The fact that it is non-split for the other congruence classes can be derived from the $\Gamma$-module structure of $V_{a+p-1}$ (see, e.g., \cite[ (6.4)]{[G78]} or \cite[Thm. 5]{[GG15]}). 
 %
 %
 \end{proof}

\begin{prop}\label{es2}
 Let $p\geq 3$ and $2p+1\leq r\equiv a\mod (p-1)$, with $1\leq a\leq p-1$, the $\Gamma$-module structure of $V_r^*/V_r^{**}$ is given by
 \begin{eqnarray}
 \label{ses**} 0\rightarrow V_{p-2}\otimes D\rightarrow\dfrac{V_r^*}{V_r^{**}}\rightarrow V_{1}\rightarrow 0,  &\text{if } a=1,\\
 \label{ses**2}0\rightarrow V_{p-1}\otimes D\rightarrow\dfrac{V_r^*}{V_r^{**}}\rightarrow V_{0}\otimes D\rightarrow 0,  &\text{if } a=2,\\
 \label{ses**3}0\rightarrow V_{a-2}\otimes D\rightarrow\dfrac{V_r^*}{V_r^{**}}\rightarrow V_{p-a+1}\otimes D^{a-1}\rightarrow 0,  &\text{if } 3\leq a\leq p-1,
\end{eqnarray} and the sequences split if and only if $a=2$.
\end{prop}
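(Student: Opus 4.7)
The plan is to reduce the statement to Proposition~\ref{es1} via multiplication by $\theta$. Concretely, the map of $\F_p$-vector spaces $F\mapsto \theta F$ gives an $M$-linear isomorphism $V_{r-p-1}\otimes D \iso V_r^*$, because $\theta(aX+cY, bX+dY) = \det\bigl(\begin{smallmatrix}a&b\\c&d\end{smallmatrix}\bigr)\cdot\theta(X,Y)$ under the action of $M$. Similarly multiplication by $\theta^2$ gives $V_{r-2p-2}\otimes D^2\iso V_r^{**}$. Taking the quotient, one obtains a $\Gamma$-equivariant isomorphism
\begin{equation*}
  \frac{V_r^*}{V_r^{**}} \iso \frac{V_{r-p-1}}{V_{r-p-1}^*}\otimes D.
\end{equation*}

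Let $r' = r-p-1$. Since $r\geq 2p+1$ we have $r'\geq p$, so Proposition~\ref{es1} applies to $V_{r'}/V_{r'}^*$. Note that $r' \equiv a-2 \pmod{p-1}$; letting $a'\in\{1,\dots,p-1\}$ be the representative of $a-2$, we get $a'=p-2$ when $a=1$, $a'=p-1$ when $a=2$, and $a'=a-2$ when $3\leq a\leq p-1$. Plugging into the exact sequence (\ref{ses*}) and tensoring with $D$ (using that $D^{p-1}$ is trivial on $\Gamma$ so that $D^{p}=D$), one reads off the three claimed short exact sequences. For example, when $a=1$, Proposition~\ref{es1} gives $0\to V_{p-2}\to V_{r'}/V_{r'}^*\to V_1\otimes D^{p-2}\to 0$, and the twist by $D$ yields (\ref{ses**}); the cases $a=2$ and $3\leq a\leq p-1$ are analogous.

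For the splitting statement, tensoring with the character $D$ preserves the property of being split or non-split. By Proposition~\ref{es1}, the sequence for $V_{r'}/V_{r'}^*$ splits precisely when $a'=p-1$, which happens exactly in the case $a=2$. Hence (\ref{ses**2}) splits while (\ref{ses**}) and (\ref{ses**3}) do not.

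The proof is essentially formal once one has the $\theta$-multiplication isomorphism and Proposition~\ref{es1} in hand; no step is a real obstacle. The only point requiring a little care is bookkeeping of determinant twists mod $p-1$ to confirm that the resulting Jordan--H\"older factors on the right-hand sides match the ones listed in the statement, particularly for the boundary cases $a=1$ and $a=2$ where the exponent of $D$ wraps around.
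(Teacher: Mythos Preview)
Your proof is correct and takes essentially the same approach as the paper: both reduce to Proposition~\ref{es1} via the isomorphism $V_r^*/V_r^{**}\cong (V_{r-p-1}/V_{r-p-1}^*)\otimes D$ coming from multiplication by $\theta$ (the paper cites \cite[(4.1)]{[G78]} for this), then track the congruence class of $r-p-1$ and the determinant twist. Your write-up is in fact more explicit about the bookkeeping of the twists and the splitting statement than the paper's terse version.
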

\begin{proof}
 We use \cite[(4.1)]{[G78]} to get that $V_r^*/V_r^{**}\cong (V_{r-p-1}/V_{r-p-1}^*)\otimes D$. 
Since $p\leq r-p-1$ by hypothesis, we apply Proposition \ref{es1} to deduce the
 $\Gamma$-module structure of $(V_{r-p-1}/V_{r-p-1}^*)\otimes D$.
\end{proof}

The following lemma will be used many times throughout the article, to determine if certain polynomials $F \in V_r$ 
are divisible by 
$\theta$ or $\theta^2$. We skip the proof since it is elementary. 
\begin{lemma}\label{elelemma}
 Suppose $F(X,Y)=\underset{\substack{0\leq j\leq r}}\sum c_j\cdot X^{r-j}Y^j\in \mathbb{F}_p[X,Y]$ is such that $c_j\neq 0$ implies that 
 $j\equiv a \mod (p-1)$, for some fixed $a \in\{1,2,\cdots,p-1\}$. Then
 \begin{enumerate}
  \item[(i)] $F\in V_r^*$ if and only if $c_0 = c_r =0$ and $\underset{j}\sum c_j=0$ in $\mathbb{F}_p$.
  \item[(ii)] $F\in V_r^{**}$ if and only if $c_0 = c_1 = c_{r-1} = c_r= 0$ and $\underset{j}\sum c_j = \underset{j}\sum jc_j= 0$ in $\mathbb{F}_p$.
 \end{enumerate}
\end{lemma}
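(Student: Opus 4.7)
The plan is to exploit the complete factorization
\[
\theta \;=\; XY(X^{p-1}-Y^{p-1}) \;=\; XY\prod_{\lambda\in\F_p^*}(X-\lambda Y)
\]
of $\theta$ into $p+1$ pairwise coprime linear forms over $\F_p$. Divisibility by $\theta$ is then equivalent to divisibility by each of these $p+1$ linear factors separately, and divisibility by $\theta^2$ is equivalent to divisibility by the square of each factor. Geometrically, $F\in V_r^*$ iff $F$ vanishes along every $\F_p$-rational line through the origin, while $F\in V_r^{**}$ iff $F$ vanishes to order $2$ along each such line.

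For part (i), inspecting $F(X,0)=c_0 X^r$ and $F(0,Y)=c_r Y^r$ gives $Y\mid F \Leftrightarrow c_0=0$ and $X\mid F\Leftrightarrow c_r=0$. For the $p-1$ remaining lines, I set $Y=\lambda X$ with $\lambda\in\F_p^*$ to get $F(X,\lambda X)=X^r\sum_j c_j\lambda^j$. A priori this yields $p-1$ independent linear conditions on the $c_j$, but the support hypothesis provides the key collapse: if $c_j\neq 0$ then $j\equiv a\pmod{p-1}$, and so by Fermat $\lambda^j=\lambda^a$ for every $\lambda\in\F_p^*$. Hence $\sum_j c_j\lambda^j=\lambda^a\sum_j c_j$, and the whole system reduces to the single condition $\sum_j c_j=0$.

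For part (ii), I repeat this strategy to first order. Divisibility by $Y^2$ forces $c_0=c_1=0$, and divisibility by $X^2$ forces $c_{r-1}=c_r=0$, by direct inspection. To test divisibility by $(Y-\lambda X)^2$ for $\lambda\in\F_p^*$, I substitute $Y=\lambda X+S$ and Taylor-expand in $S$; vanishing to order $2$ is equivalent to the vanishing of the $S^0$ and $S^1$ coefficients, which are $X^r\sum_j c_j\lambda^j$ and $X^{r-1}\sum_j jc_j\lambda^{j-1}$, respectively. The support hypothesis again collapses the first to $\lambda^a\sum_j c_j$ and the second to $\lambda^{a-1}\sum_j jc_j$, reducing the entire family to the two equations $\sum_j c_j=0$ and $\sum_j jc_j=0$.

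There is no genuine obstacle here; the only nuance worth a line is to verify that the boundary conditions $c_0=c_r=0$ in (i) and $c_0=c_1=c_{r-1}=c_r=0$ in (ii) are genuinely independent of the single/double sum conditions, rather than being automatic consequences of the congruence $j\equiv a\pmod{p-1}$. A brief case inspection suffices: for instance when $a=p-1$ the index $j=0$ satisfies the congruence, so $c_0$ is not forced to vanish by support alone and must be imposed by hand.
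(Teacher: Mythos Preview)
The paper does not prove this lemma, stating only ``We skip the proof since it is elementary.'' Your argument is correct and is exactly the natural elementary proof: factor $\theta = XY\prod_{\lambda\in\F_p^*}(X-\lambda Y)$ into $p+1$ pairwise coprime linear forms, test (double) divisibility factor by factor, and use Fermat's little theorem together with the support hypothesis $j\equiv a\pmod{p-1}$ to collapse the $p-1$ evaluations at $Y=\lambda X$ into the single condition $\sum_j c_j=0$ (respectively, at first order, the pair $\sum_j c_j=0$ and $\sum_j jc_j=0$).

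One small comment on your closing paragraph: the question of whether the boundary conditions $c_0=c_r=0$ (or $c_0=c_1=c_{r-1}=c_r=0$) are \emph{independent} of the sum conditions is not actually relevant to the correctness of the ``if and only if''. If for a particular value of $a$ some of the listed conditions happen to be automatic from the support hypothesis, the equivalence still holds; the statement is simply slightly redundant in those cases. So that paragraph can be dropped without loss.
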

\subsection{Reduction of binomial coefficients}
In this article, the mod $p$ reduction of binomial coefficients plays a very important role. We will repeatedly use the following theorem and often 
refer to it as Lucas' theorem, as it was proved by E. Lucas in 1878.  
\begin{theorem}\label{lucas}
 For any prime $p$, let $m$ and $n$ be two non-negative integers with  $p$-adic expansions $m=m_kp^k+m_{k-1}p^{k-1}+\cdots +m_0$ and $n=n_kp^k+n_{k-1}p^{k-1}+\cdots +n_0$ respectively.
 Then 
 $\binom{m}{n}\equiv \binom{m_k}{n_k}\cdot\binom{m_{k-1}}{n_{k-1}}\cdots\binom{m_0}{n_0}\mod p,$ with the convention that $\binom{a}{b}=0$, if $b>a$.
\end{theorem}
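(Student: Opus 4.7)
The plan is to prove Lucas' theorem via a generating function identity in $\F_p[X]$. The key input is the Frobenius identity $(1+X)^p \equiv 1 + X^p \pmod p$, which follows from the fact that $p \mid \binom{p}{j}$ for every $1 \leq j \leq p-1$. Iterating this relation, I would deduce that $(1+X)^{p^i} \equiv 1 + X^{p^i} \pmod p$ for all $i \geq 0$.

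Next, writing the base-$p$ expansion $m = \sum_{i=0}^k m_i p^i$ with $0 \leq m_i \leq p-1$, I would use the previous congruence to compute
$$(1+X)^m \;=\; \prod_{i=0}^{k}\bigl((1+X)^{p^i}\bigr)^{m_i} \;\equiv\; \prod_{i=0}^{k}(1+X^{p^i})^{m_i} \pmod{p}.$$
Extracting the coefficient of $X^n$ on both sides then gives the identity
$$\binom{m}{n} \;\equiv\; \sum_{(j_0,\ldots,j_k)}\;\prod_{i=0}^{k}\binom{m_i}{j_i} \pmod p,$$
where the sum ranges over tuples with $0 \leq j_i \leq m_i$ and $\sum_{i} j_i p^i = n$.

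The decisive step is then to invoke the uniqueness of the base-$p$ expansion: since every $j_i$ lies in $\{0,1,\ldots,p-1\}$, the equation $n = \sum j_i p^i$ forces $j_i$ to equal the $i$-th base-$p$ digit $n_i$ of $n$. Hence at most one tuple contributes to the sum, yielding precisely $\prod_i \binom{m_i}{n_i}$.

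The only real bookkeeping obstacle is handling the degenerate cases consistently with the stated convention $\binom{a}{b} = 0$ for $b > a$. If $n > m$ (so that in particular some $n_i > m_i$, or $n$ has more digits than $m$), then no admissible tuple exists and the right-hand sum is empty, matching $\binom{m}{n} = 0$ on the left. Similarly, if $n_i > m_i$ for some individual $i$, the factor $\binom{m_i}{n_i} = 0$ on the right matches the vanishing on the left, since the constraint $0 \leq j_i \leq m_i < n_i$ again makes the tuple inadmissible. With this convention in place, the identity of coefficients gives Lucas' theorem in full generality.
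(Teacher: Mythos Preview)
Your argument is correct and is in fact the standard generating-function proof of Lucas' theorem. The paper itself does not prove this statement: it is stated without proof as a classical result attributed to E.~Lucas (1878), and is simply used throughout as a tool for computing binomial coefficients mod $p$. So there is nothing to compare against; your proof supplies what the paper omits.
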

The following elementary congruence mod $p$ will also be used in the text. For any $i \geq 0$,
\begin{eqnarray*}
\underset{k=0}{\overset{p-1}{\sum}} k^i \equiv
                                              \begin{cases}
                                               -1, & \text{ if } i=n(p-1), \text{ for some } n \geq 1, \\
                                               0,  & \text{ otherwise (including the case $i=0$, as $0^0=1$)}.
                                              \end{cases}
\end{eqnarray*}
This follows from the following frequently used fact in characteristic zero. For any $i \geq 0$,
\begin{equation}
\label{identity}
\underset{\lambda\in\F_p}\sum[\lambda]^i = 
                                             \begin{cases}
                                               p, &\text{if }i=0,\\
                                               p-1, &\text{if}\: i=n(p-1)\: \text{for some}\: n\geq 1,\\
                                               0, & \text{if } (p-1)\nmid i,
                                             \end{cases}
\end{equation} 
where $[\lambda] \in \Z_p$ is the Teichm\"uller representative of $\lambda \in \F_p$.

We now state some important congruences, leaving the proofs to the reader as exercises. These technical lemmas are used in 
checking the criteria given in Lemma \ref{elelemma}, and also in constructing functions $f\in\mathrm{ind}_{KZ}^G\,\mathrm{Sym}^r\bar{\Q}_p^2$ with 
certain desired properties (cf. \S \ref{combinatorial}, \S\ref{elimination} and \S \ref{sep}).
\begin{lemma}
\label{comb1}
 For $r\equiv a\mod (p-1)$ with $1\leq a\leq p-1$, we have $$S_r:=\sum_{\substack{0\,<j\,<\,r,\\ j\,\equiv\, a\mod(p-1)}}\binom{r}{j}\equiv 0\mod p.$$
 Moreover, we have $\dfrac{1}{p}S_r \equiv \dfrac{a-r}{a}\mod p$,  for $p>2$.
\end{lemma}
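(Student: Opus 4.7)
My plan is to use a character sum over the $(p-1)$st roots of unity in $\Z_p$, realized as the Teichm\"uller lifts $[\lambda]$ for $\lambda \in \F_p^*$, to pick out the terms in the binomial expansion of $(1+[\lambda])^r$ indexed by $j \equiv a \pmod{p-1}$. Since $[\lambda]^{p-1} = 1$, a direct consequence of \eqref{identity} is
$$\sum_{\lambda\in\F_p^*}[\lambda]^{-a}(1+[\lambda])^r \;=\; (p-1)\,T_a,\qquad T_a:=\sum_{\substack{0\leq j\leq r\\ j\equiv a\,(\mathrm{mod}\,p-1)}}\binom{r}{j}.$$
The only indices in $T_a$ outside the range $0<j<r$ are $j=r$ (always) and $j=0$ (only when $a=p-1$), so $T_a-S_r\in\{1,2\}$, and it suffices to compute $T_a\bmod p^2$.

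The heart of the calculation is to write $r=a+m(p-1)$ and expand $(1+[\lambda])^r=(1+[\lambda])^a\bigl((1+[\lambda])^{p-1}\bigr)^m$ modulo $p^2$. Using $\binom{p}{k}\equiv p(-1)^{k-1}/k\pmod{p^2}$ for $1\leq k\leq p-1$ together with $[\lambda]^p=[\lambda]$, I would first show $(1+[\lambda])^p\equiv(1+[\lambda])+pL([\lambda])\pmod{p^2}$ with $L(x):=\sum_{k=1}^{p-1}(-1)^{k-1}x^k/k$; dividing by the unit $1+[\lambda]$ (for $\lambda\neq-1$) gives $(1+[\lambda])^{p-1}\equiv 1+pc(\lambda)\pmod{p^2}$ where $c(\lambda)\equiv L(\lambda)/(1+\lambda)\pmod p$. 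Combining with $(1+pc)^m\equiv 1+mpc\pmod{p^2}$ yields
$$(p-1)T_a\equiv A_0+mp\,D\pmod{p^2},$$
where $A_0:=\sum_{\lambda}[\lambda]^{-a}(1+[\lambda])^a$ is the $r=a$ specialization and evaluates by orthogonality to $p-1$ (resp.\ $2(p-1)$) for $a\leq p-2$ (resp.\ $a=p-1$), and $D\equiv\sum_{\lambda\in\F_p^*}\lambda^{-a}(1+\lambda)^{a-1}L(\lambda)\pmod p$; the $\lambda=-1$ term vanishes in every sum that matters, the edge case $a=1$ being handled by $L(-1)=-\sum_{k=1}^{p-1}1/k\equiv 0\pmod p$.

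To evaluate $D\bmod p$, expand $(1+\lambda)^{a-1}L(\lambda)$ as $\sum\binom{a-1}{i}\tfrac{(-1)^{k-1}}{k}\lambda^{i+k}$ and sum termwise; for $0\leq i\leq a-1$ and $1\leq k\leq p-1$ the exponent $i+k-a$ lies strictly between $-(p-1)$ and $p-1$, so only $i+k=a$ survives, leaving
$$D\equiv-\sum_{l=0}^{a-1}\binom{a-1}{l}\frac{(-1)^l}{l+1}=-\frac{1}{a}\pmod p$$
by the elementary identity $\sum_{l=0}^{n}\binom{n}{l}(-1)^l/(l+1)=\int_0^1(1-x)^n\,dx=1/(n+1)$ applied with $n=a-1$. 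Substituting back gives $T_a\equiv (T_a)|_{r=a}+mp/a\pmod{p^2}$, so $S_r\equiv mp/a\pmod{p^2}$, yielding both $p\mid S_r$ and $\tfrac{1}{p}S_r\equiv m/a=(r-a)/(a(p-1))\equiv(a-r)/a\pmod p$. The main obstacle is the closed-form mod-$p^2$ expansion of $(1+[\lambda])^{p-1}$ and the clean identification of the character sum $D$ with $-1/a$; everything else is bookkeeping.
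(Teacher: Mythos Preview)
Your argument is correct. The paper itself does not give a proof of this lemma; it states explicitly that these congruences are ``left to the reader as exercises'', so there is nothing to compare against beyond checking soundness.

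All the steps hold. The orthogonality relation $\sum_{\lambda\in\F_p^*}[\lambda]^{j-a}=p-1$ or $0$ is exact (the $[\lambda]$ are genuine $(p-1)$st roots of unity in $\Z_p$), so $(p-1)T_a=\sum_\lambda[\lambda]^{-a}(1+[\lambda])^r$ and $A_0=(p-1)\cdot(T_a)|_{r=a}$ are exact identities, not merely congruences. Your mod-$p^2$ expansion $(1+[\lambda])^{p-1}\equiv 1+pL([\lambda])/(1+[\lambda])$ for $\lambda\neq -1$ is the standard consequence of $\binom{p}{k}\equiv(-1)^{k-1}p/k\pmod{p^2}$, and the $\lambda=-1$ bookkeeping is handled correctly: for $a\geq 2$ the factor $(1+[-1])^{a-1}=0$ kills that term, and for $a=1$ you use $\sum_{k=1}^{p-1}1/k\equiv 0\pmod p$ (the inverses permute $\{1,\dots,p-1\}$, whose sum is $p(p-1)/2$). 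The evaluation of $D$ via the range bound $-(p-1)<i+k-a<p-1$ and the beta-integral identity $\sum_{l=0}^{n}\binom{n}{l}(-1)^l/(l+1)=1/(n+1)$ is clean. In the last line, dividing $(p-1)S_r\equiv -mp/a\pmod{p^2}$ through by $p-1$ uses $1/(p-1)\equiv -1\pmod p$, which you apply implicitly; this is fine.

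One small remark: you use $p>2$ only in the step $\sum_{k=1}^{p-1}1/k\equiv 0$ and in inverting $a\in\{1,\dots,p-1\}$ mod $p$; the first congruence $S_r\equiv 0\pmod p$ already follows from the mod-$p$ shadow of your argument without any lift, and holds for $p=2$ as well.
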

\begin{lemma}\label{comb3a}
 Let  $r\equiv b\mod (p-1)$, with $2\leq b\leq p$.  Then we have
  $$T_r:=\sum_{\substack{0<\,j\,<\,r-1,\\ j\equiv\, b-1\mod p-1}} \binom{r}{j}\equiv (b-r)\mod p.$$
  \end{lemma}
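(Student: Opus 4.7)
The plan is a standard roots-of-unity filter (valid in $\F_p$ since $\F_p^\times$ consists of all $(p-1)$-th roots of unity) combined with Fermat's little theorem, reducing the arithmetic-progression binomial sum to a small closed-form expression.

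First, I would introduce the full congruence-class sum
$$U_r := \sum_{\substack{0 \leq j \leq r \\ j \equiv b-1 \pmod{p-1}}} \binom{r}{j},$$
so that $T_r$ differs from $U_r$ only by the terms at $j \in \{0,\, r-1,\, r\}$. Using $r \equiv b \pmod{p-1}$, a direct check shows that $j = r-1$ always lies in the congruence class (contributing $\binom{r}{r-1} = r$), $j = r$ never does (since $b \not\equiv b-1 \pmod{p-1}$ for $p \geq 3$), and $j = 0$ lies in it if and only if $b = p$. Hence $T_r = U_r - r$ when $2 \leq b \leq p-1$, and $T_r = U_r - 1 - r$ when $b = p$.

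Second, to compute $U_r$ modulo $p$, expand $(1+\lambda)^r = \sum_j \binom{r}{j}\lambda^j$, multiply by $\lambda^{-(b-1)}$, and sum over $\lambda \in \F_p^\times$. The mod $p$ version of \eqref{identity}, namely $\sum_{\lambda \in \F_p^\times} \lambda^i \equiv -1$ when $(p-1) \mid i$ and $0$ otherwise, kills all terms except those with $j \equiv b-1 \pmod{p-1}$, giving
$$U_r \equiv -\sum_{\lambda \in \F_p^\times} \lambda^{-(b-1)} (1+\lambda)^r \pmod{p}.$$
By Fermat's little theorem $(1+\lambda)^{p-1} = 1$ for $\lambda \neq -1$, so $(1+\lambda)^r = (1+\lambda)^b$ when $2 \leq b \leq p-1$, whereas $(1+\lambda)^r = 1+\lambda$ when $b = p$ (because then $r \equiv 1 \pmod{p-1}$); at $\lambda = -1$ both sides vanish.

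For $2 \leq b \leq p-1$, expanding $(1+\lambda)^b$ by the binomial theorem and reapplying the $\sum_\lambda \lambda^i$ identity leaves only the $k = b-1$ summand (the other exponents $k-(b-1) \in [-(b-1),\,1]$ contain no nonzero multiple of $p-1$ since $b \leq p-1$), yielding $U_r \equiv \binom{b}{b-1} = b \pmod{p}$, and hence $T_r \equiv b - r$. For $b = p$, the sum collapses to $\sum_{\lambda \in \F_p^\times} (1+\lambda) = (p-1) + 0 \equiv -1 \pmod{p}$, so $U_r \equiv 1$ and $T_r \equiv 1 - 1 - r \equiv -r \equiv p - r = b - r \pmod{p}$. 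The only mildly delicate points are the boundary-term bookkeeping and the separate treatment of $b = p$ through the Frobenius identity $(1+\lambda)^p = 1 + \lambda$; otherwise the argument is a routine application of the roots-of-unity filter.
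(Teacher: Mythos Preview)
Your proof is correct. The paper leaves Lemma~\ref{comb3a} (along with Lemmas~\ref{comb1} and~\ref{comb4}) as an exercise for the reader, so there is no proof in the paper to compare against; your roots-of-unity filter combined with Fermat's little theorem is exactly the kind of clean argument one would expect, and the boundary bookkeeping and the separate treatment of $b=p$ are handled accurately.
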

\begin{lemma}\label{comb4}
 Let $ p\geq 3$, $r\equiv 1\mod (p-1)$, i.e., $b=p$ with the notation as above. 
 If $p\mid r$, then
 $$S_r:=\sum_{\substack{1<\,j\,<\,r,\\ j\equiv\, 1\mod (p-1)}} \binom{r}{j} = \sum_{\substack{0<\,j\,<\,r-1,\\ j\equiv\,0\mod (p-1)}} \binom{r}{j} \equiv (p-r)\mod p^2.$$
 \end{lemma}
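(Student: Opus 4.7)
The plan is to reduce everything to the already-established Lemma \ref{comb1} (with $a=1$), using two simple observations: a symmetry identification of the two sums, and the fact that $p \mid r$ upgrades Lemma \ref{comb1}'s mod-$p$ statement on $\frac{1}{p}S_r$ to a mod-$p^2$ statement on $S_r$.

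First I would dispose of the equality of the two sums by the substitution $j \mapsto r-j$. Since $\binom{r}{j} = \binom{r}{r-j}$, and since $r \equiv 1 \mod (p-1)$ forces $j \equiv 1 \mod (p-1) \Leftrightarrow r-j \equiv 0 \mod (p-1)$, the change of variable sends the summation condition $1 < j < r$, $j \equiv 1 \mod (p-1)$ bijectively onto $0 < r-j < r-1$, $r-j \equiv 0 \mod (p-1)$. So both sums equal the same quantity $T_r$.

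Next, to compute $T_r \mod p^2$, I would compare $T_r$ with the sum $S_r$ of Lemma \ref{comb1} applied to $a = 1$, namely $S_r = \sum_{0 < j < r,\, j \equiv 1 \mod (p-1)} \binom{r}{j}$. The only index in $S_r$ but not in $T_r$ is $j = 1$, contributing $\binom{r}{1} = r$; hence $T_r = S_r - r$. By Lemma \ref{comb1} (with $a=1$), $S_r$ is divisible by $p$ and $\frac{1}{p} S_r \equiv 1 - r \mod p$, so
\[
S_r \;\equiv\; p(1-r) \;=\; p - pr \pmod{p^2}.
\]

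Finally, the hypothesis $p \mid r$ makes $pr \equiv 0 \mod p^2$, so $S_r \equiv p \mod p^2$, and therefore
\[
T_r \;=\; S_r - r \;\equiv\; p - r \pmod{p^2},
\]
which is exactly the claim. There is essentially no obstacle here beyond correctly bookkeeping the single missing $j=1$ term; the main content is imported wholesale from Lemma \ref{comb1}, with $p \mid r$ doing the rest.
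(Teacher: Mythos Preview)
Your argument is correct. The symmetry $j\mapsto r-j$ cleanly handles the equality of the two sums, and the reduction to Lemma~\ref{comb1} with $a=1$, together with the hypothesis $p\mid r$ to kill the $pr$ term, gives exactly $p-r\bmod p^2$. The paper itself leaves this lemma as an exercise for the reader, so there is no proof in the text to compare against; your approach is the natural one and is likely what the authors had in mind.
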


\section{$r\equiv 1\mod (p-1)$}
\label{a=1}
 
In this section, we compute the Jordan-H\"older (JH) factors of $Q$ as a $\Gamma$-module, when $r\equiv 1\mod (p-1)$. This is the case 
$a=1$ and $b=p$, with the notation above. 

\begin{lemma}
\label{star elt}
 Let $p\geq 3$,  $r>1$ and let $r\equiv 1\mod (p-1)$. 
 \begin{enumerate}
 \item[(i)] If $p\nmid r$, then $X_r^*/ X_r^{**}\cong V_{p-2}\otimes D$, as a $\Gamma$-module.
 \item[(ii)] If $p\mid r$, then $X_r^*/ X_r^{**}= 0$.
\end{enumerate}
\end{lemma}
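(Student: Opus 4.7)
My plan is to exploit the structure of $V_r^*/V_r^{**}$ given by Proposition~\ref{es2}. For $a=1$, that proposition gives the non-split exact sequence
\begin{equation*}
  0 \to V_{p-2}\otimes D \to V_r^*/V_r^{**} \to V_1 \to 0,
\end{equation*}
so the only $\Gamma$-submodules of $V_r^*/V_r^{**}$ are $0$, $V_{p-2}\otimes D$, and the whole module. Since $X_r^*/X_r^{**}$ embeds into $V_r^*/V_r^{**}$, the question is to decide which of these three it equals.

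The first step is a dimension bound ruling out the largest option. Writing a general element of $X_r$ as $F = cX^r + \sum_{k\in\F_p} c_k(kX+Y)^r$ and using $x^r = x$ for $x\in\F_p$ (since $r\equiv 1\bmod(p-1)$), evaluation on the $p+1$ lines through the origin in $\F_p^2$ yields $F(X,0) = (c+t)X^r$, $F(0,Y) = sY^r$, and $F(\alpha Y, Y) = (\alpha(c+t)+s)Y^r$, where $s := \sum_k c_k$ and $t := \sum_{k\ne 0} c_k\,k$. Hence $F\in V_r^*$ is equivalent to the two linear conditions $s=0$ and $c+t=0$. These two functionals are independent on $X_r$ (test on $X^r$ and $Y^r$), so $\dim X_r^* = \dim X_r - 2 \le (p+1)-2 = p-1 < p+1 = \dim V_r^*/V_r^{**}$. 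Thus $X_r^*/X_r^{**}$ is either $0$ or $V_{p-2}\otimes D$, and I only need to test whether $X_r^* \subseteq V_r^{**}$.

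For part (i), when $p\nmid r$, I would exhibit a specific element of $X_r^*\setminus V_r^{**}$. Set $F_0 := -\sum_{k\in\F_p^\times}(kX+Y)^r - Y^r \in X_r$. By the identity \eqref{identity}, this expands as $F_0 = \sum_{j\equiv 1\bmod(p-1),\,1\le j\le r-p+1} \binom{r}{j}\,X^{r-j}Y^j$, with support in the single residue class $j\equiv 1 \bmod (p-1)$. Its coefficients sum to $S_r\equiv 0 \bmod p$ by Lemma~\ref{comb1}, so Lemma~\ref{elelemma}(i) gives $F_0\in V_r^*$. However $c_1 = \binom{r}{1} = r \not\equiv 0 \bmod p$ by hypothesis, so Lemma~\ref{elelemma}(ii) forces $F_0\notin V_r^{**}$. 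Hence $X_r^*/X_r^{**}\ne 0$, and it must equal $V_{p-2}\otimes D$.

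For part (ii), when $p\mid r$, write $r = p^n u$ with $n\ge 1$ and $p\nmid u$. The main obstacle to overcome — and the crucial insight — is to notice that every $F\in X_r$ is actually a polynomial in $X^{p^n}$ and $Y^{p^n}$: since $a^{p^n}=a$ for $a\in\F_p$, iterated Frobenius gives $(aX+cY)^r = ((aX+cY)^{p^n})^u = (aX^{p^n}+cY^{p^n})^u$. Write $F = G(X^{p^n},Y^{p^n})$ for some homogeneous $G$ of degree $u$. Then $F\in V_r^*$ (vanishing on all $p+1$ lines in $\F_p^2$) translates into $G(X',Y')$ vanishing at all $p+1$ points of $\mathbb{P}^1(\F_p)$, which by unique factorisation is equivalent to $\theta(X',Y')\mid G(X',Y')$. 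Substituting back and using the Frobenius identity $\theta(X^{p^n},Y^{p^n}) = \theta(X,Y)^{p^n}$ yields $\theta^{p^n}\mid F$. Since $p^n \ge p \ge 3 > 2$, this forces $\theta^2\mid F$, i.e., $F\in V_r^{**}$. Therefore $X_r^*\subseteq V_r^{**}$ and $X_r^*/X_r^{**} = 0$, completing the proof.
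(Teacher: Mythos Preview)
Your proof is correct and follows essentially the same route as the paper: for (i) you exhibit (up to sign) the same element $\sum_{k\in\F_p}(kX+Y)^r\in X_r^*\setminus V_r^{**}$, and for (ii) you use the same Frobenius substitution $F=G(X^{p^n},Y^{p^n})$ to reduce to $\theta\mid G$ in $V_u$, whence $\theta^{p^n}\mid F$. The only minor gap is that Proposition~\ref{es2} requires $r\ge 2p+1$, so your trichotomy of submodules does not literally apply when $r=2p-1$; but there $V_r^{**}=0$ and $V_r^*/V_r^{**}\cong V_{p-2}\otimes D$ is already irreducible, so your witness $F_0$ settles it directly (the paper treats this value of $r$ separately in exactly this way). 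Your direct evaluation argument giving $\dim X_r^*\le p-1$ is a clean self-contained alternative to the paper's appeal to \cite[(4.5)]{[G78]} for the same bound.
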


\begin{proof}
 We have
 \begin{enumerate}
 \item[(i)] Consider the polynomial $F(X,Y)=\underset{k\in\mathbb{F}_p}\sum (kX+Y)^r\in X_r$. 
 We have 
 $$F(X,Y)=\underset{j=0}{\overset{r}\sum} \binom{r}{j}\cdot\underset{k\in\mathbb{F}_p}\sum k^{r-j}\cdot X^{r-j}Y^j\equiv
 \underset{\substack{0\leq j\,<\,r,\\ j\equiv\, 1\mod p-1}}\sum - \binom{r}{j}\cdot X^{r-j}Y^j \mod p.$$ 
 The sum of the coefficients of $F(X,Y)$ is congruent to
 $0\mod p$, by Lemma \ref{comb1}. Applying Lemma \ref{elelemma}, we get that $F(X,Y)\in V_r^*$. 
 As $p\nmid r$, the coefficient of $X^{r-1}Y$ in $F(X,Y)$ is $-r \not\equiv 0\mod p$. Hence 
 $F(X,Y)\notin V_r^{**}$, and so $F(X,Y)$ has non-zero image in $X_r^*/ X_r^{**}$. For $r=2p-1$, we have $0\neq X_r^*/X_r^{**}\subseteq V_r^*/V_r^{**}\cong V_{p-2}\otimes D$, which is irreducible
 and the result follows. 
 If $r\geq 3p-2$, then $V_r^*/V_r^{**}$ has dimension $p+1$, but 
  \cite[(4.5)]{[G78]} implies that $\dim X_r^*<\dim X_r\leq p+1$. So we have $0\neq X_r^*/X_r^{**}\subsetneq V_r^*/V_r^{**}$. 
 Now it follows from Proposition \ref{es2}  that $X_r^*/X_r^{**}\cong V_{p-2}\otimes D$. $\vspace{.05in}$

 \item[(ii)] Write $r=p^nu$, where $n\geq 1$ and $p\nmid u$. The map $\iota:X_u\rightarrow X_r$, 
 defined by $\iota (H(X,Y)) := H(X^{p^n},Y^{p^n})$, is a well-defined $M$-linear surjection from $X_u$ to $X_r$. It is also an injection, 
 as $H(X^{p^n},Y^{p^n}) = H(X,Y)^{p^n} \in \mathbb{F}_p[X,Y]$. Hence the $M$-isomorphism $\iota : X_u\rightarrow X_r$ must take $X_u^*$, 
 the largest singular submodule of $X_u$, isomorphically to $X_r^*$. 
 
 If $u=1$, then $X_r^*\cong X_u^*=0$, so $X_r^*=X_r^{**}$ follows trivially.
 If $u>1$, then as  $ p\nmid u\equiv r\equiv 1\mod (p-1)$, we get $u\geq 2p-1$ and $V_u^*\cong V_{u-p-1}\otimes D$. 
 For any $F \in X_r^*$, we have $F = \iota(H)$, for some $H \in X_u^*$. Writing $H = \theta H^\prime$ with $H^\prime\in V_{u-p-1}$,
 we get $F = \iota(H) = (\theta H^\prime)^{p^n}$. As $n\geq 1$, clearly $\theta^2$ divides  $F$. 
 Therefore $X_r^*\subseteq V_r^{**}$, equivalently $X_r^*=X_r^{**}$.
 \end{enumerate}
\end{proof}

The $p$-adic expansion of $r-1$ will play an important role in our study of the module $X_{r-1}$. Write 
\begin{equation}
 \label{C}r-1=r_mp^m+r_{m-1}p^{m-1}+\cdots r_ip^i,
\end{equation} where  $r_j\in\{0,1,\cdots ,p-1\}$, $m \geq i$ and $r_m$, $r_i \neq 0$. 
If $i>0$, then we let $r_j =0$, for $0\leq j\leq i-1$.  

With the notation introduced in Section \ref{basics},  we have $a=1$, so $\Sigma_p(r-1)\equiv 0\mod (p-1)$. Excluding the case $r=1$, note that the smallest possible value of $\Sigma_p(r-1)$ is $p-1$.
Also recall  that  the dimension of $X_{r-1}$ is bounded above by $2p+2$ and  a standard generating set is given by 
$\{X^r, Y^r, X(kX+Y)^{r-1},(X+lY)^{r-1}Y : k,l \in \mathbb{F}_p\}$, over $\mathbb{F}_p$.  
\begin{lemma}\label{relation}
 For $p\geq 2$, if $p\leq r\equiv 1\mod(p-1)$ and $\,\Sigma=\Sigma_p(r-1)=p-1$, then 
 \begin{eqnarray*}
  \underset{k=0}{\overset{p-1}{\sum}} X(kX+Y)^{r-1} \equiv -X^r \hspace{.2in} \text{and} \hspace{.2in}
  \underset{l=0}{\overset{p-1}{\sum}} (X+lY)^{r-1}Y \equiv -Y^r \mod p.
 \end{eqnarray*}
 As a consequence, $\dim X_{r-1}\leq 2p$.
 \end{lemma}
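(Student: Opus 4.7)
The two identities are symmetric under swapping $X \leftrightarrow Y$ (and relabeling $k \leftrightarrow l$), so the plan is to prove the first in detail; the second will follow by the obvious symmetric argument. The strategy for $\sum_k X(kX+Y)^{r-1} \equiv -X^r$ is to expand by the binomial theorem, swap the order of summation, use the standard character-sum identity modulo $p$ to cut the sum down to indices $j$ divisible by $p-1$, and finally use Lucas' theorem together with the hypothesis $\Sigma_p(r-1)=p-1$ to kill every such $j$ except $j=0$.

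Concretely, I would first write
\[
\sum_{k=0}^{p-1} X(kX+Y)^{r-1} = \sum_{j=0}^{r-1}\binom{r-1}{j}\Bigl(\sum_{k=0}^{p-1} k^{r-1-j}\Bigr) X^{r-j}Y^j.
\]
By the identity recalled before Lemma~\ref{comb1} (specifically the mod $p$ reduction of \eqref{identity}), the inner sum is $\equiv -1 \pmod p$ precisely when $r-1-j$ is a positive multiple of $p-1$, and $\equiv 0$ otherwise. Since $r \equiv 1 \pmod{p-1}$, this selects exactly those $j$ with $0 \leq j < r-1$ and $j \equiv 0 \pmod{p-1}$. So modulo $p$ the identity reduces to
\[
\sum_{k=0}^{p-1} X(kX+Y)^{r-1} \equiv -\!\!\sum_{\substack{0 \leq j < r-1 \\ j \equiv 0 \bmod (p-1)}}\!\!\binom{r-1}{j} X^{r-j}Y^j.
\]

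The main step, and the only nontrivial one, is to show that every index $j$ with $0 < j < r-1$ and $j \equiv 0 \pmod{p-1}$ contributes $\binom{r-1}{j} \equiv 0 \pmod p$. By Lucas' theorem (Theorem~\ref{lucas}), non-vanishing of $\binom{r-1}{j}$ mod $p$ requires the $p$-adic digits of $j$ to be dominated digit-by-digit by those of $r-1$; in particular $\Sigma_p(j) \leq \Sigma_p(r-1) = p-1$. Combined with $\Sigma_p(j) \equiv j \equiv 0 \pmod{p-1}$, this forces $\Sigma_p(j) \in \{0, p-1\}$. The case $\Sigma_p(j) = 0$ means $j = 0$, excluded. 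The case $\Sigma_p(j) = p-1 = \Sigma_p(r-1)$ combined with the digit-wise domination forces the digits of $j$ to coincide with those of $r-1$, i.e.\ $j = r-1$, also excluded. Only $j = 0$ survives, giving $-\binom{r-1}{0}X^r = -X^r$ as required.

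Finally, for the dimensional consequence, I would recall from \S\ref{basics} that $X_{r-1}$ is spanned over $\F_p$ by the $2p+2$ elements $\{X^r,\, Y^r,\, X(kX+Y)^{r-1},\, (X+lY)^{r-1}Y : k,l \in \F_p\}$. The two identities just proved express $X^r$ and $Y^r$ as $\F_p$-linear combinations of the remaining $2p$ generators, whence $\dim X_{r-1} \leq 2p$. The only real obstacle is the digit-sum step using Lucas; everything else is bookkeeping.
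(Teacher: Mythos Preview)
Your proof is correct and follows essentially the same approach as the paper: expand by the binomial theorem, use the character-sum identity to reduce to indices $j\equiv 0\pmod{p-1}$, and then apply Lucas' theorem together with the digit-sum hypothesis $\Sigma_p(r-1)=p-1$ to show that only the extreme term survives. The only cosmetic differences are your choice of index ($j$ versus the paper's $s=r-1-j$) and your phrasing of the symmetry as $X\leftrightarrow Y$ where the paper applies the matrix $w=\left(\begin{smallmatrix}0&1\\1&0\end{smallmatrix}\right)$.
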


 \begin{proof}
  It is enough to show one of the congruences, since the other will then follow by applying the matrix 
$w=\left(\begin{smallmatrix}0&1\\ 1&0\end{smallmatrix}\right)$ to it.
  We compute that $$F(X,Y)=\underset{k=0}{\overset{p-1}{\sum}} X(kX+Y)^{r-1}\equiv\underset{\substack{0<s<r\\s\equiv 0\mod p-1}}{\sum} - \binom{r-1}{s}\cdot X^{s+1}Y^{r-1-s}\mod p.$$
 We claim that if $0< s < r-1$ and $s\equiv 0\mod (p-1)$, then $\binom{r-1}{s}\equiv 0\mod p$. The claim implies that
$F(X,Y)\equiv -\binom{r-1}{r-1}\cdot X^r\equiv -X^r\mod p$, as required.
 
 \underline{Proof of claim}: Let $ s = s_m p^m + \cdots +s_1p + s_0\,$
 be the $p$-adic expansion of $s <r-1$,
 where $m$ is as in the expansion \eqref{C} above. 
 Since $ s \equiv 0 \mod (p-1)$, we have $\,\Sigma_p(s)\equiv 0\mod (p-1)$ too.
 If  $\binom{r-1}{s} \not \equiv 0 \mod p$, then by  Lucas' theorem  $0 \leq s_j\leq r_j$, for all $j$. Taking the sum, 
 we get that $0 \leq \Sigma_p(s) \leq \Sigma = p-1$. But since $s>0$, $\Sigma_p(s)$ has to
 be a strictly positive multiple of $p-1$, and so it is $p-1$. Hence $s_j = r_j$, for all $j \leq m$, and we have $s = r-1$, 
 which is a contradiction.
 \end{proof}

We observe that $p\mid r$ if and only if $r_0=p-1$ in \eqref{C}.  Therefore if $\Sigma=r_0+\cdots r_m=p-1$, 
then the condition $p\mid r$ is equivalent to $r=p$. Our next proposition treats the case $\Sigma=p-1$, and to avoid
the possibility of $p$ dividing $r$, we exclude the case $r=p$. The fact that $p\nmid r$ will be used crucially in the proof. 
This does not matter, as eventually we wish to compute $Q$ for $r\geq 2p+1$.
 
\begin{prop}\label{prop1}
 For $p\geq 2$, if $p< r\equiv 1\mod (p-1)$ and $\,\Sigma=\Sigma_p(r-1)=p-1$, then 
 \begin{enumerate}\item[(i)] $X_{r-1}\cong V_{2p-1}$ as an $M$-module, and the $M$-module structure of $X_r$ is given by
 $$0\rightarrow V_{p-2}\otimes D\rightarrow X_r\rightarrow V_1\rightarrow 0.$$
  \item[(ii)] $X_{r-1}^*=X_r^*\cong V_{p-2}\otimes D$ \text{and } $X_{r-1}^{**}=X_r^{**}=0$. 
  \item[(iii)] For $r>2p$, $Q$ has only one JH factor $V_1$, as a $\,\Gamma$-module.
 \end{enumerate}
\end{prop}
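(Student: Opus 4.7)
I would prove the three parts in the order (i), (ii), (iii), after extracting two preliminary facts.

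\textbf{Preliminary facts.} First, $X_r^{**}=0$: since Lemma~\ref{star elt}(i) identifies $X_r^*/X_r^{**}\cong V_{p-2}\otimes D$ as an irreducible $M$-module, $X_r^{**}$ is either $0$ or all of $X_r^*$. The generator $F(X,Y):=\sum_{k\in\F_p}(kX+Y)^r\in X_r^*$ from the proof of Lemma~\ref{star elt}(i) has coefficient $-r\not\equiv 0\bmod p$ at $X^{r-1}Y$ (because $p\nmid r$ under our hypothesis), so Lemma~\ref{elelemma}(ii) gives $F\notin V_r^{**}$, forcing $X_r^{**}=0$ and hence $X_r^*\cong V_{p-2}\otimes D$. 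Second, Lemma~\ref{relation} eliminates $X^r$ and $Y^r$ from the standard $(2p+2)$-element generating set of $X_{r-1}$, leaving $\{X(kX+Y)^{r-1},(X+lY)^{r-1}Y:k,l\in\F_p\}$, so $\dim X_{r-1}\leq 2p$.

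\textbf{Part (i).} With $X_r^*\cong V_{p-2}\otimes D$ of dimension $p-1$, the induced injection $X_r/X_r^*\hookrightarrow V_r/V_r^*$ has image of dimension $2$: the classes of $X^r$ and $Y^r$ are linearly independent modulo $V_r^*$ because no nonzero combination $\alpha X^r+\beta Y^r$ is divisible by $\theta$. By Proposition~\ref{es1} (case $a=1$) the non-split sequence for $V_r/V_r^*$ has $V_1$ as its unique two-dimensional submodule, so the image equals $V_1$ and the filtration of $X_r$ follows. For $X_{r-1}\cong V_{2p-1}$, I would construct an $M$-linear surjection $\varphi:V_{2p-1}\twoheadrightarrow X_{r-1}$ sending the second-highest monomial $X^{2p-2}Y$ to $X^{r-1}Y$. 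Well-definedness amounts to checking that the defining $M$-relations of the cyclic module $V_{2p-1}$ transfer to $X_{r-1}$, which reduces to mod $p$ vanishings of $\binom{r-1}{s}$ enabled precisely by the hypothesis $\Sigma_p(r-1)=p-1$ via Lucas' theorem (in the spirit of Lemma~\ref{relation}). Combining $\dim X_{r-1}\leq 2p=\dim V_{2p-1}$ with a matching lower bound, obtained either by exhibiting $2p$ linearly independent vectors in $X_{r-1}$ or by showing $X_{r-1}/X_{r-1}^*$ maps onto all of $V_r/V_r^*$, then forces $\varphi$ to be an isomorphism. This identification is the main obstacle in the proof.

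\textbf{Parts (ii) and (iii).} Given (i), the socle of $X_{r-1}\cong V_{2p-1}$ is $V_{2p-1}^*\cong V_{p-2}\otimes D$ of dimension $p-1$; combined with $X_r^*\subseteq X_{r-1}^*$ and $\dim X_r^*=p-1$, this forces $X_r^*=X_{r-1}^*$. Then $X_{r-1}^{**}=X_{r-1}\cap V_r^{**}\subseteq X_r^*\cap V_r^{**}=X_r^{**}=0$, completing (ii). For (iii), the bottom row of diagram~\eqref{bigdiadram} reads $0\to V_r^*/(V_r^{**}+X_{r-1}^*)\to Q\to V_r/(V_r^*+X_{r-1})\to 0$. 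The right-hand term vanishes because the image of $X_{r-1}$ in $V_r/V_r^*$ has dimension $2p-(p-1)=p+1=\dim V_r/V_r^*$, so $V_r=V_r^*+X_{r-1}$. By Proposition~\ref{es2}~\eqref{ses**}, $V_r^*/V_r^{**}$ is a non-split extension of $V_1$ by $V_{p-2}\otimes D$, and the singular irreducible submodule $X_{r-1}^*\cong V_{p-2}\otimes D$ embeds onto its socle, leaving $Q\cong V_1$, as required.
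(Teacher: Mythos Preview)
Your outline for (ii) and (iii) is essentially the paper's argument. Two points deserve comment.

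\textbf{The preliminary fact has a small logical slip.} From Lemma~\ref{star elt}(i) you know $X_r^*/X_r^{**}\cong V_{p-2}\otimes D$; this says the \emph{quotient} is irreducible, not that $X_r^*$ itself is, so it does not by itself force $X_r^{**}\in\{0,X_r^*\}$. The clean fix is a dimension count: $\dim X_r\leq p+1$, while $X_r/X_r^*\cong V_1$ has dimension $2$ (by \cite[(4.5)]{[G78]}), so $\dim X_r^*\leq p-1$; since $\dim X_r^*/X_r^{**}=p-1$, this forces $X_r^{**}=0$ and $X_r^*\cong V_{p-2}\otimes D$. Exhibiting $F\notin V_r^{**}$ is then unnecessary for this step (though the same $F$ reappears crucially below).

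\textbf{Part (i) is where your route diverges from the paper's, and your version leaves the hard step open.} You propose a surjection $\varphi:V_{2p-1}\twoheadrightarrow X_{r-1}$ determined by the cyclic generator, and then appeal to a ``matching lower bound'' $\dim X_{r-1}\geq 2p$ to upgrade it to an isomorphism. But neither of your suggested routes to that lower bound is free: exhibiting $2p$ independent vectors is essentially the whole statement, and showing $X_{r-1}/X_{r-1}^*$ surjects onto $V_r/V_r^*$ still requires producing a specific element of $X_{r-1}$ hitting the top factor $V_{p-a-1}\otimes D^a$. The paper avoids a lower-bound argument entirely. It defines $\eta:V_{2p-1}\to X_{r-1}$ on the explicit basis $\{S(kS+T)^{2p-2},\,(S+lT)^{2p-2}T\}$ (so well-definedness is automatic), checks $M$-linearity directly, and then proves \emph{injectivity} using the key observation that the $\Gamma$-socle $V_{2p-1}^*\cong V_{p-2}\otimes D$ is simple: if $\ker\eta\neq 0$ it would have to contain $V_{2p-1}^*$, but the element $H=\sum_k\left(\begin{smallmatrix}k&1\\1&0\end{smallmatrix}\right)\cdot S^{2p-1}\in V_{2p-1}^*$ maps under $\eta$ to the polynomial $F$ from Lemma~\ref{star elt}(i), which lies in $X_r^*\setminus X_r^{**}$ and in particular is nonzero. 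Combined with $\dim X_{r-1}\leq 2p$ from Lemma~\ref{relation}, this gives the isomorphism in one stroke. The socle argument is the idea your sketch is missing; it recycles exactly the element $F$ you already computed and makes the dimension lower bound unnecessary.
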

\begin{proof}
 It is easy to check that $\{S(kS+T)^{2p-2}, (S+lT)^{2p-2}T : k, l \in\mathbb{F}_p\}$ gives a basis of $V_{2p-1}$ over $\mathbb{F}_p$. 
 We define an $\mathbb{F}_p$-linear map
 $\eta:V_{2p-1}\rightarrow X_{r-1}$, by $\eta\left(S(kS+T)^{2p-2}\right) = X(kX+Y)^{r-1}$ and 
 $\eta\left((S+lT)^{2p-2}T\right) = (X+lY)^{r-1}Y$, for $k,l\in\mathbb{F}_p$. We
 claim that the map $\eta$ is in fact an $M$-linear injection. By  Lemma \ref{relation} we then have
 \begin{equation}\label{X^r}\eta(S^{2p-1})=X^r,\: \:\eta(T^{2p-1})=Y^r.\end{equation}
 The $M$-linearity can be checked on the basis elements of $V_{2p-1}$ above by an elementary computation which uses the fact that 
 $r-1\equiv 0 \mod (p-1)$ and \eqref{X^r}, so we leave it to the reader.
 
 As a $\Gamma$-module, $\text{soc}(V_{2p-1})=V_{2p-1}^*\cong V_{p-2}\otimes D$ is irreducible.  
 Therefore if $\ker \eta \neq 0$, then it must
 contain the submodule $V_{2p-1}^*$. Consider 
 $H(S,T) = \underset{k=0}{\overset{p-1}{\sum}} \left( \begin{smallmatrix}
                                                                    k & 1\\
                                                                    1 & 0
                                               \end{smallmatrix} \right) 
  \cdot S^{2p-1}=(S^pT-ST^p)S^{p-2}\in V_{2p-1}^*$.
  By $M$-linearity, we have  $\eta(H) = F(X,Y) \in X_r^*\setminus X_r^{**}$, where $F$ is as in the proof of Lemma \ref{star elt} (i). 
  In particular, this shows that $H\notin \ker \eta$. As $V_{2p-1}^*\nsubseteq \ker \eta$, we have $\ker \eta =0$.   
   
  Thus $\eta:V_{2p-1}\rightarrow X_{r-1}$ is an injective $M$-linear map. 
  By Lemma \ref{relation}, $\dim X_{r-1} \leq 2p=\dim V_{2p-1}$, forcing $\eta$ to be an isomorphism.
  Therefore  the largest singular submodule $X_{r-1}^*$ inside $X_{r-1}$ has to be isomorphic to $V_{2p-1}^*
  \cong V_{p-2}\otimes D$, the largest singular submodule of $V_{2p-1}$. 
  Then Lemma \ref{star elt} (i) implies that $X_r^*$ is a non-zero submodule of $X_{r-1}^*\cong V_{p-2}\otimes D$, which is irreducible. 
  So we must have $X_r^*=X_{r-1}^*$.
  Again by Lemma \ref{star elt} (i),
  $X_{r-1}^{**} (\supseteq X_r^{**})$ is a proper submodule of $X_{r-1}^*$. 
  Hence $X_{r-1}^{**} =X_r^{**}=0$.
   
  Since $\dim (X_{r-1}/X_{r-1}^*) =p+1= \dim (V_r/V_r^*)$, the rightmost module in the bottom row 
  of Diagram \eqref{bigdiadram} is $0$.
  As the dimension of $X_{r-1}^{*}/X_{r-1}^{**}$ is $p-1$, the leftmost module must have dimension $2$. 
  It has to be $V_1$, as the short exact sequence \eqref{ses**} does not split for $p\geq 3$. 
  For $p=2$ and $r\geq 5$, the only two-dimensional quotient of $V_r^*/V_r^{**}$ is $V_1$, 
  as one checks that $V_r^*/V_r^{**}\cong V_1\oplus V_0$.
  Hence we get $Q \cong V_{1}$ as a $\Gamma$-module.  
\end{proof}

The next lemma about the dimension of $X_{r-1}$ is a special case of Lemma~ \ref{dim7}, 
proved at the beginning of Section \ref{structure}.    
\begin{lemma}
\label{dim1}
 For $p\geq 2$, suppose $ p\nmid r\equiv 1\mod (p-1)$. If $\,\Sigma=\Sigma_p(r-1) > p-1$, 
 then $\dim X_{r-1} =2p+2$.
\end{lemma}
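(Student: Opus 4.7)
The plan is to show directly that the $2p+2$ standard generators
\[
  \{X^r,\, Y^r,\; X(kX+Y)^{r-1},\; (X+lY)^{r-1}Y : k, l \in \F_p\}
\]
of $X_{r-1}$ are linearly independent over $\bar\F_p$. Suppose one has a relation
\[
  \alpha X^r + \beta Y^r + \sum_{k \in \F_p} a_k\, X(kX+Y)^{r-1} + \sum_{l \in \F_p} b_l\, (X+lY)^{r-1}Y = 0.
\]
Expanding via the binomial theorem and collecting the coefficient of $X^{r-j}Y^j$ gives, for $1 \leq j \leq r-1$, the equations
\[
  E_j:\qquad \binom{r-1}{j} A_{r-1-j} + \binom{r-1}{j-1} B_{j-1} = 0,
\]
where $A_i := \sum_k a_k k^i$ and $B_i := \sum_l b_l l^i$ (using $0^0 = 1$), while the boundary cases $j=0$ and $j=r$ simply read $\alpha = -A_{r-1}$ and $\beta = -B_{r-1}$. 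Since the Vandermonde matrix attached to $\F_p^\times$ is invertible, it suffices to show $A_c = B_c = 0$ for each residue $c \in \{0,1,\dots,p-1\}$.

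The quantities $A_{r-1-j}$ and $B_{j-1}$ depend only on $j \bmod (p-1)$ away from the boundary where the exponent drops to $0$, so I would group the equations $E_j$ by this residue. For each fixed class $c$, the idea is to exhibit two indices $j_1 \neq j_2$ in the class, both in the range $1 \leq j_i \leq r-1$, for which $\binom{r-1}{j_i}$ and $\binom{r-1}{j_i-1}$ are all nonzero mod $p$ and the $2 \times 2$ coefficient matrix
\[
  \begin{pmatrix} \binom{r-1}{j_1} & \binom{r-1}{j_1-1} \\ \binom{r-1}{j_2} & \binom{r-1}{j_2-1} \end{pmatrix}
\]
is non-singular. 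The hypothesis $\Sigma_p(r-1) \geq 2(p-1)$ is exactly what makes this possible via Lucas' theorem: with at least $2(p-1)$ total base-$p$ digits in $r-1$ to distribute, one can partition them in multiple ways to produce $j$'s in each prescribed residue class, and the side condition $p \nmid r$ (equivalently $r_0 \leq p - 2$) lets us arrange each $j_i$ to have nonzero last digit, so that passage from $j$ to $j-1$ just decrements the last digit and preserves the Lucas non-vanishing condition on both binomials.

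The main obstacle is the combinatorial bookkeeping in passing from $\binom{r-1}{j}$ to $\binom{r-1}{j-1}$, since subtracting $1$ in base $p$ can trigger a cascade of borrows that push digits of $j-1$ past the digits of $r-1$. Handling the awkward residue classes in which no such $j$ with nonzero last digit is available requires descending to the lowest nonzero digit of $r-1$ and repeating the argument there; the weak bound $\Sigma_p(r-1) \geq 2(p-1)$ turns out to be just enough to always leave a residual digit for maneuvering. Once these combinatorial arrangements are established, the Vandermonde inversion forces all $a_k$ and $b_l$ (and hence $\alpha, \beta$) to vanish, giving $\dim X_{r-1} = 2p+2$. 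As the excerpt indicates, this statement can alternatively be extracted as a direct specialization ($b = p$, so $a = 1$) of the more general dimension formula Lemma \ref{dim7} proved at the start of \S\ref{structure}.
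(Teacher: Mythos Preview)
Your approach is essentially the same as the paper's: the paper proves this lemma precisely by invoking Lemma~\ref{dim7}, whose proof follows the linear-independence-via-Vandermonde strategy you outline, with the key equations \eqref{key} matching your $E_j$ after the reindexing $j=t+1$.

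One tactical difference worth noting: you propose to handle each residue class by finding \emph{two} indices $j_1,j_2$ with all four binomial entries nonzero and a nondegenerate $2\times2$ system. The paper's proof of Lemma~\ref{dim7} is in most subcases simpler: it produces a \emph{single} index $t'$ (constructed by shifting digits of $t$ up to the lowest nonzero slot $i$ of $r-1$, e.g.\ $t'=tp^i$ when $r_0=0$, or a sum of high digits when $r_i\le t\le p-1$) for which $\binom{r-1}{t'+1}\equiv 0$ but $\binom{r-1}{t'}\not\equiv 0 \pmod p$, so that \eqref{key} kills $D_t$ outright. The full $2\times2$ determinant argument is only needed in the subcase $r_0\neq 0$, $1\le t<r_0$. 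This avoids the ``cascade of borrows'' obstacle you flag, since arranging one binomial to \emph{vanish} via Lucas is easier than arranging four to be simultaneously nonzero with a nondegenerate determinant. Your sketch is correct in spirit, but if you try to carry it through as stated you will likely find yourself forced into the paper's one-equation shortcut anyway.
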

  

\begin{lemma}\label{dim3}
 For any $r$, if $\dim X_{r-1} = 2p+2$, then $\dim X_r =p+1$.
\end{lemma}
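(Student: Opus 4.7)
The plan is to argue by contrapositive: assuming $\dim X_r \leq p$, I will exhibit a nontrivial linear relation among the standard $2p+2$ generators of $X_{r-1}$, which forces $\dim X_{r-1} \leq 2p+1$.

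First I would recall from \cite[Lem. 3]{[GG15]} that $X_r$ is spanned by the $p+1$ elements $\{X^r\} \cup \{(kX+Y)^r : k \in \F_p\}$, while $X_{r-1}$ is spanned by the $2p+2$ elements $\{X^r,\,Y^r\} \cup \{X(kX+Y)^{r-1} : k \in \F_p\} \cup \{(X+lY)^{r-1}Y : l \in \F_p\}$. If $\dim X_r \leq p$, then there exist scalars $c_\infty,\,c_k \in \F_p$ (for $k \in \F_p$), not all zero, such that
\[
c_\infty X^r + \sum_{k\in\F_p} c_k (kX+Y)^r = 0 \quad \text{in } V_r.
\]

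The key identity is the elementary binomial splitting $(kX+Y)^r = kX\cdot(kX+Y)^{r-1} + Y\cdot(kX+Y)^{r-1}$. For $k \neq 0$ I would rewrite the second term using $(kX+Y) = k(X+k^{-1}Y)$, which gives $Y(kX+Y)^{r-1} = k^{r-1}(X+k^{-1}Y)^{r-1}Y$; for $k=0$ the term $(0\cdot X + Y)^r$ is simply $Y^r$. Substituting into the relation above yields
\[
c_\infty X^r + c_0 Y^r + \sum_{k\neq 0} c_k\bigl[\,k\,X(kX+Y)^{r-1} + k^{r-1}(X+k^{-1}Y)^{r-1}Y\,\bigr] = 0,
\]
which is a linear relation in $V_r$ among the standard generators of $X_{r-1}$ listed above (with some zero coefficients on $XY^{r-1}$ and $X^{r-1}Y$).

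The remaining step is to verify that this relation is nontrivial. If $c_\infty \neq 0$ (resp.\ $c_0 \neq 0$) this is immediate from the $X^r$ (resp.\ $Y^r$) coefficient. Otherwise $c_k \neq 0$ for some $k \neq 0$, and then the coefficient $k c_k$ of $X(kX+Y)^{r-1}$ is nonzero. I expect this bookkeeping to be the only subtle point, and it is handled by the above case split. Hence $\dim X_{r-1} \leq 2p+1$, proving the contrapositive and the lemma.
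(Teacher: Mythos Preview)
Your proof is correct and follows essentially the same argument as the paper: both proceed by contrapositive, take a nontrivial dependence among the $p+1$ generators of $X_r$, split $(kX+Y)^r = kX(kX+Y)^{r-1} + k^{r-1}(X+k^{-1}Y)^{r-1}Y$ for $k\neq 0$, and read off a nontrivial dependence among the $2p+2$ generators of $X_{r-1}$. Your version is slightly more explicit in justifying nontriviality, but the substance is identical.
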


\begin{proof}
 Suppose $X_r$ has dimension smaller than $p+1$. Then the standard spanning set of $X_r$ 
 is linearly dependent,
 i.e., there exist constants  $A, c_k\in\mathbb{F}_p$, for $k \in \{0,1,\ldots, p-1 \}$, not all zero, such that 
 $AX^r+\underset{k=0}{\overset{p-1}\sum}c_k(kX+Y)^r=0$, which implies that
 \begin{eqnarray*}
   AX^r + c_0Y^r + \underset{k=1}{\overset{p-1}\sum}kc_kX(kX+Y)^{r-1} + \underset{k=1}{\overset{p-1}\sum} c_kk^{r-1}(X+k^{-1}Y)^{r-1}Y = 0.
 \end{eqnarray*}
 But this shows that the standard spanning set $\{X^r, Y^r, X(kX+Y)^{r-1},(X+lY)^{r-1}Y : k,l \in \mathbb{F}_p\}$ of $X_{r-1}$ is linearly dependent, 
 contradicting the hypothesis $\dim X_{r-1}=2p+2$.
\end{proof}
 
For any $r$, let us set $r^\prime:=r-1$. 
The  trick introduced in \cite{[GG15]} of using the structure of $X_{r'}\subseteq V_{r'}$ to study  $X_{r-1}\subseteq V_r$ via the map $\phi$ described below,
turns out to be very useful in general.

\begin{lemma}\label{onto}
  There exists an $M$-linear surjection $\phi:X_{r^\prime}\otimes V_1\twoheadrightarrow X_{r-1}$.
\end{lemma}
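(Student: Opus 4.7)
The plan is to define $\phi$ as polynomial multiplication. Namely, let $\phi : V_{r'} \otimes V_1 \to V_r$ be the $\F_p$-linear map determined by $\phi(F \otimes L) = L \cdot F$, and then restrict it to $X_{r'} \otimes V_1$. This is $M$-equivariant because the $M$-action on $\F_p[X,Y]$ is by linear substitution of the variables, which commutes with multiplication: for $m = \left(\begin{smallmatrix} a & b \\ c & d \end{smallmatrix}\right) \in M$ one has
$$m \cdot (LF)(X,Y) = (LF)(aX+cY,bX+dY) = (m \cdot L)(X,Y) \cdot (m \cdot F)(X,Y),$$
so $\phi$ intertwines the diagonal action on $X_{r'} \otimes V_1$ with the action on $V_r$.

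Next I would verify $\phi(X_{r'} \otimes V_1) \subseteq X_{r-1}$ on an $\F_p$-spanning set. Using the standard spanning set $\{X^{r-1}, (kX+Y)^{r-1} : k \in \F_p\}$ of $X_{r'}$ and the basis $\{X,Y\}$ of $V_1$, the four types of products are: $X(kX+Y)^{r-1}$ and $X^{r-1}Y = Y \cdot X^{r-1}$, which are already among the standard generators of $X_{r-1}$; $X^r = X \cdot X^{r-1}$, obtained as $\left(\begin{smallmatrix} 1 & 1 \\ 0 & 0 \end{smallmatrix}\right) \cdot X^{r-1}Y$, hence in $X_{r-1}$; and $Y(kX+Y)^{r-1}$, which for $k \neq 0$ is a nonzero scalar multiple of the standard generator $(X + k^{-1}Y)^{r-1}Y$, and for $k = 0$ equals $Y^r = \left(\begin{smallmatrix} 0 & 1 \\ 1 & 0 \end{smallmatrix}\right) \cdot X^r$, which lies in $X_{r-1}$ by the previous case.

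Surjectivity is then immediate: $\phi(X^{r-1} \otimes Y) = X^{r-1}Y$ is the $M$-generator of $X_{r-1}$, so the image of $\phi$, being an $M$-submodule of $X_{r-1}$ containing this element, must coincide with $X_{r-1}$. There is no substantive obstacle; the entire argument is a direct check that the natural polynomial-multiplication map has the claimed source, target, and surjectivity.
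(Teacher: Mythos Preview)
Your proof is correct and follows essentially the same approach as the paper: both define $\phi$ as the restriction of the polynomial-multiplication map $V_{r'}\otimes V_1\to V_r$ and conclude surjectivity from the fact that $X^{r-1}Y$ lies in the image. The only cosmetic difference is that the paper checks the image lands in $X_{r-1}$ by noting that $X_{r'}\otimes V_1$ is $M$-generated by $X^{r'}\otimes X$ and $X^{r'}\otimes Y$ (mapping to $X^r\in X_r\subset X_{r-1}$ and $X^{r-1}Y$), whereas you verify this on a full $\F_p$-spanning set; your direct verification of $M$-linearity replaces the paper's citation of Glover \cite[(5.1)]{[G78]}.
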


\begin{proof}
 The map $\phi_{r^\prime,1}: V_{r^\prime}\otimes V_1\twoheadrightarrow V_r$ sending $u\otimes v\mapsto uv$, 
 for $u\in V_{r^\prime}$ and $v\in V_1$, is $M$-linear by \cite[(5.1)]{[G78]}.
 Let $\phi$ be its restriction to the $M$-submodule 
 $X_{r^\prime}\otimes V_1\subseteq V_{r^\prime}\otimes V_1$. 
 The module $X_{r^\prime}\otimes V_1$ is generated by 
 $X^{r^\prime}\otimes X$ and $X^{r^\prime}\otimes Y$, which map to $X^r$ and $X^{r-1}Y\in X_{r-1}$ respectively.
 So the image of $\phi$ lands in $X_{r-1}\subseteq V_r$. 
 The surjectivity  follows 
 as $X^{r-1}Y$ generates $X_{r-1}$.
\end{proof}
 
\begin{lemma}\label{Xrprime*}
 For $p\geq 3$, if $r\equiv 1\mod (p-1)$ with $\Sigma_p(r^\prime)>p-1$, then
 \begin{enumerate}
 \item[(i)] $X_{r^\prime}^{**}=X_{r^\prime}^*$ has dimension $1$ over $\mathbb{F}_p$. In fact, it is $M$-isomorphic to $D^{p-1}$.
 \item[(ii)] $\phi(X_{r^\prime}^*\otimes V_1)\subseteq V_r^{**}$ and $\phi(X_{r^\prime}^*\otimes V_1)\cong V_1\otimes D^{p-1}$. 
 \end{enumerate}
\end{lemma}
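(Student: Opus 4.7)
The plan is to exhibit one explicit generator of $X_{r^\prime}^{**}$ and then use a Vandermonde argument, fueled by the digit-sum hypothesis, to show no other element of $X_{r^\prime}^*$ is linearly independent of it. Part (ii) will then follow formally from part (i).

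The natural candidate is the symmetric sum
$$G := X^{r^\prime}+\sum_{k=0}^{p-1}(kX+Y)^{r^\prime}\in X_{r^\prime}.$$
Since $r^\prime\equiv 0\pmod{p-1}$, expanding via the binomial theorem and applying \eqref{identity} collapses $G$ to $-\sum_{\substack{0<j<r^\prime\\(p-1)\,\mid\, j}}\binom{r^\prime}{j}X^{r^\prime-j}Y^j$. I would then verify $G\in V_{r^\prime}^{**}$ using Lemma~\ref{elelemma}(ii): the support condition $(p-1)\mid j$ automatically forces $c_0=c_1=c_{r^\prime-1}=c_{r^\prime}=0$ for $p\geq 3$; the vanishing $\sum c_j\equiv 0\pmod p$ is Lemma~\ref{comb1} in class $a=p-1$; and $\sum j c_j\equiv 0\pmod p$ follows from $j\binom{r^\prime}{j}=r^\prime\binom{r^\prime-1}{j-1}$ combined with Lemma~\ref{comb1} applied to $r^\prime-1$ in class $a=p-2$. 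Since $(p-1)\mid\Sigma_p(r^\prime)$, the hypothesis $\Sigma_p(r^\prime)>p-1$ forces $\Sigma_p(r^\prime)\geq 2(p-1)$, and Lucas' theorem then produces some $j$ with base-$p$ digits bounded by those of $r^\prime$ and digit sum exactly $p-1$, giving a surviving monomial and so $G\neq 0$.

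To see $X_{r^\prime}^*\subseteq\F_p\cdot G$, I would write a general $H=AX^{r^\prime}+BY^{r^\prime}+\sum_{k=1}^{p-1}c_k(kX+Y)^{r^\prime}\in X_{r^\prime}$ and impose $H\in V_{r^\prime}^*$. For each residue $s\in\{1,\ldots,p-2\}$ the same digit-picking argument produces some $j\equiv s\pmod{p-1}$ with $\binom{r^\prime}{j}\not\equiv 0\pmod p$ and $0<j<r^\prime$, yielding the equation $\sum_{k=1}^{p-1}c_k k^{p-1-s}=0$. These $p-2$ equations, together with Vandermonde non-degeneracy on $\F_p^*$, force $c_1=\cdots=c_{p-1}=:c$; the vanishing of $[X^{r^\prime}]H$ and $[Y^{r^\prime}]H$ then forces $A=B=c$, so $H=cG$. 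Since $\F_p\cdot G\subseteq X_{r^\prime}^{**}\subseteq X_{r^\prime}^*\subseteq\F_p\cdot G$, the chain collapses and $X_{r^\prime}^{**}=X_{r^\prime}^*=\F_p\cdot G$ is one-dimensional. To identify the $M$-module structure: $G$ is $\Gamma$-invariant, since $\mathrm{diag}(a,1)$ merely relabels the sum over $k\in\F_p$ and fixes $X^{r^\prime}$ via $(p-1)\mid r^\prime$, upper-unipotent $\left(\begin{smallmatrix}1&1\\0&1\end{smallmatrix}\right)$ relabels $k\mapsto k+1$, and the swap $w=\left(\begin{smallmatrix}0&1\\1&0\end{smallmatrix}\right)$ preserves the collapsed expression for $G$ by the symmetry $\binom{r^\prime}{j}=\binom{r^\prime}{r^\prime-j}$; meanwhile singular matrices annihilate $G$ since $G\in V_{r^\prime}^*$. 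These two facts together identify $\langle G\rangle\cong D^{p-1}$ as an $M$-module.

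For (ii), writing $G=\theta^2 H^\prime$ with $H^\prime\in V_{r^\prime-2p-2}$ makes $GX,GY$ visibly divisible by $\theta^2$, so $\phi(X_{r^\prime}^*\otimes V_1)\subseteq V_r^{**}$. Since $X_{r^\prime}^*\otimes V_1\cong V_1\otimes D^{p-1}$ is irreducible for $p\geq 3$ and $\phi(G\otimes X)=GX\neq 0$, the $M$-linear map $\phi$ restricts to an isomorphism onto its image. The main obstacle is the combinatorial Vandermonde step: I must ensure that the hypothesis $\Sigma_p(r^\prime)>p-1$ produces, for each nonzero residue class $s\bmod(p-1)$, an index $j\equiv s$ with $\binom{r^\prime}{j}\not\equiv 0\pmod p$ and $0<j<r^\prime$. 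This digit-mass bound is precisely what is sharp enough to make the bookkeeping go through, and explains why the hypothesis on $\Sigma_p(r^\prime)$ enters the statement.
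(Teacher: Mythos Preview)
Your construction of $G$ and the verification that $0\neq G\in X_{r^\prime}^{**}$ are correct and agree with the paper; the identification with $D^{p-1}$ and part (ii) are also fine.

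The gap is in your Vandermonde step for $X_{r^\prime}^*\subseteq\F_p\cdot G$. Membership in $V_{r^\prime}^*$ (i.e.\ divisibility by $\theta$) does not force any individual coefficient of $H$ to vanish, so exhibiting a single $j\equiv s\pmod{p-1}$ with $\binom{r^\prime}{j}\not\equiv 0$ cannot by itself yield $\sum_k c_k k^{p-1-s}=0$. Relatedly, the hypothesis $\Sigma_p(r^\prime)>p-1$ plays no role in bounding $\dim X_{r^\prime}^*$; it is needed only to ensure $G\neq 0$, so your final paragraph misidentifies where the hypothesis enters.

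The paper closes this step structurally rather than by coefficient-matching: by \cite[(4.5)]{[G78]} one has $X_{r^\prime}/X_{r^\prime}^*\cong V_{p-1}$, hence $\dim X_{r^\prime}^*=\dim X_{r^\prime}-p\leq (p+1)-p=1$, and then $0\neq G\in X_{r^\prime}^{**}\subseteq X_{r^\prime}^*$ forces $X_{r^\prime}^{**}=X_{r^\prime}^*=\F_p\cdot G$. If you want an elementary repair along your own route, use that $\theta\mid H$ is equivalent to $H$ vanishing on every $\F_p$-rational line through the origin. Evaluating your expression for $H$ at $Y=0$, at $X=0$, and at $X=cY$ for each $c\in\F_p^*$, and using $(p-1)\mid r^\prime$ so that nonzero $(p-1)$-th powers collapse to $1$, gives $A=B=-\sum_k c_k$ and $c_{-c^{-1}}=A$ for every $c\in\F_p^*$; hence all $c_k$ equal $A$ and $H=A\cdot G$ directly, with no Vandermonde or digit-sum input needed at this step.
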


\begin{proof}
 Consider $F(X,Y):=X^{r^\prime}+\underset{k\in\mathbb{F}_p}\sum(kX+Y)^{r^\prime}\in X_{r^\prime}\subseteq V_{r^\prime}$. 
 It is easy to see that $$F(X,Y)\equiv -\underset{\substack{0<j<r^\prime\\j\equiv 0\mod (p-1)}}\sum\binom{r^\prime}{j}X^{r^\prime-j}Y^j\mod p.$$
 Using Lemmas \ref{elelemma} and \ref{comb1} 
 we check that $F(X,Y)\in V_{r'}^{**}$, for $p\geq 3$. Since $\Sigma_p(r^\prime)>p-1$ 
 or equivalently $\Sigma_p(r^\prime)\geq 2p-2$,  using Lucas' theorem one can show that at least one of the 
 coefficients $\binom{r^\prime}{j}$ above is non-zero  mod $p$. So we have $0\neq F(X,Y)\in X_{r^\prime}^{**}\subseteq X_{r^\prime}^*$.
   Since $r^\prime \equiv p-1\mod (p-1)$, \cite[(4.5)]{[G78]} gives the following
 short exact sequence of $M$-modules:
 \begin{equation}\label{ses1}
  0\rightarrow X_{r^\prime}^*\rightarrow X_{r^\prime}\rightarrow V_{p-1}\rightarrow 0.
 \end{equation}
 As $\dim X_{r^\prime} \leq p+1$ and $X_{r^\prime}^{**}\neq 0$, we must have $\dim X_{r'}^{**}=\dim X_{r^\prime}^* =1$.
 Hence $X_{r'}^{**}=X_{r^\prime}^*\cong D^n$ for some $n\geq 1$. Checking the action of diagonal matrices on
 $F(X,Y)$, we get $n=p-1$.
 
 As $X_{r'}^{**}=X_{r^\prime}^*$, each element of $X_{r'}^*$ is divisible by $\theta^2$.
 Therefore it follows from the definition of the map $\phi$ that $\phi(X_{r^\prime}^*\otimes V_1)\subseteq V_r^{**}$.
 For any non-zero $F\in X_{r^\prime}^*$, note
 that $\phi(F\otimes X)=FX\neq 0$. 
 We know that $X_{r^\prime}^*\otimes V_1\cong V_1\otimes D^{p-1}$ is irreducible of
 dimension $2$ and its image under $\phi$ is non-zero.  
 Hence $\phi(X_{r^\prime}^*\otimes V_1)\cong V_1\otimes D^{p-1}\subseteq X_{r-1}$. 
\end{proof}

\begin{prop}\label{prop2}
 Let $p\geq 3$, $r> 2p$ and $p\nmid r \equiv 1\mod(p-1)$. If $\,\Sigma =\Sigma_p(r-1)> p-1$, then
 \begin{enumerate}
 \item[(i)] The $M$-module structures of $X_{r-1}$ and $X_{r}$ are  given by the   exact sequences
   \begin{eqnarray*}
        & 0\rightarrow V_1\otimes D^{p-1}\rightarrow X_{r-1}  \rightarrow V_{2p-1}\rightarrow 0, \\
        & 0\rightarrow V_{p-2}\otimes D\rightarrow X_r  \rightarrow V_1\rightarrow 0.
   \end{eqnarray*}
 \item[(ii)] $X_r^*\cong V_{p-2}\otimes D$ and  $X_{r-1}^*\cong V_1\otimes D^{p-1}\oplus V_{p-2}\otimes D$. 
 \item[(iii)]$X_r^{**}=0$ and $X_{r-1}^{**}\cong V_1\otimes D^{p-1}$. 
 \item[(iv)] $Q\cong V_1$ as a $\Gamma$-module.
 \end{enumerate}
\end{prop}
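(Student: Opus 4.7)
The plan is to establish the $M$-module structure of $X_r$ (easy), then of $X_{r-1}$ (the main content), and deduce (iv) from the bottom row of diagram~\eqref{bigdiadram}. First, Lemma~\ref{dim1} gives $\dim X_{r-1}=2p+2$, and Lemma~\ref{dim3} then forces $\dim X_r=p+1$. For $X_r$, Glover's formula \cite[(4.5)]{[G78]} provides the short exact sequence $0\to X_r^*\to X_r\to V_1\to 0$ (since $r\equiv 1\bmod(p-1)$), so $\dim X_r^*=p-1$. Combined with Lemma~\ref{star elt}(i), which gives $X_r^*/X_r^{**}\cong V_{p-2}\otimes D$ of dimension $p-1$, we conclude $X_r^{**}=0$ and $X_r^*\cong V_{p-2}\otimes D$. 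This settles the second sequence in (i) and the $X_r$-parts of (ii) and (iii).

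For $X_{r-1}$ the key tool is the surjection $\phi:X_{r'}\otimes V_1\twoheadrightarrow X_{r-1}$ from Lemma~\ref{onto}, with $r'=r-1$. Since $\dim X_{r'}\otimes V_1\leq 2(p+1)=\dim X_{r-1}$, $\phi$ must be an $M$-isomorphism and $\dim X_{r'}=p+1$. Tensoring the exact sequence $0\to X_{r'}^*\to X_{r'}\to V_{p-1}\to 0$ (from \cite[(4.5)]{[G78]}, as used in the proof of Lemma~\ref{Xrprime*}) with $V_1$ and pushing through $\phi$ yields $0\to\phi(X_{r'}^*\otimes V_1)\to X_{r-1}\to V_{p-1}\otimes V_1\to 0$, with submodule $V_1\otimes D^{p-1}$ by Lemma~\ref{Xrprime*}(ii). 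The main obstacle is to identify $V_{p-1}\otimes V_1$ with $V_{2p-1}$ as $M$-modules. For this I would write down the explicit map $F\otimes G\mapsto F\cdot G^p$: it is well-defined and additive because $G\mapsto G^p$ is $\F_p$-linear (freshman's dream), and it is $M$-equivariant because $(aX+cY)^p=aX^p+cY^p$ holds for $a,c\in\F_p$. Taking $G\in\{X,Y\}$ shows the image spans $V_{2p-1}$, so the map is an $M$-isomorphism by dimension count.

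For (iii), $V_1\otimes D^{p-1}=\phi(X_{r'}^*\otimes V_1)\subseteq X_{r-1}^{**}$ by Lemma~\ref{Xrprime*}(ii). Because $X_r^*\cap V_r^{**}=X_r^{**}=0$, the image of $X_r^*$ in $X_{r-1}/\phi(X_{r'}^*\otimes V_1)\cong V_{2p-1}$ is a nonzero singular submodule and so exhausts the socle $V_{2p-1}^*\cong V_{p-2}\otimes D$. Any element of $X_{r-1}^{**}$ projecting nontrivially to $V_{2p-1}$ would land in this socle and hence differ from a nonzero element of $X_r^*$ by an element of $\phi(X_{r'}^*\otimes V_1)\subseteq V_r^{**}$, forcing $X_r^*$ to meet $V_r^{**}$ nontrivially, a contradiction; hence $X_{r-1}^{**}=V_1\otimes D^{p-1}$. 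The induced sequence $0\to V_1\otimes D^{p-1}\to X_{r-1}^*\to V_{p-2}\otimes D\to 0$ is then split by the inclusion $X_r^*\hookrightarrow X_{r-1}^*$, which proves (ii). Finally, for (iv), the right-hand column of diagram~\eqref{bigdiadram} vanishes since $\dim(X_{r-1}/X_{r-1}^*)=p+1=\dim(V_r/V_r^*)$, and the left-hand column reduces to the quotient of $V_r^*/V_r^{**}$ by its unique $V_{p-2}\otimes D$ submodule, which is $V_1$ by Proposition~\ref{es2}.
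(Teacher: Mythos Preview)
Your proof is correct and follows essentially the same route as the paper: use Lemma~\ref{dim1} and Lemma~\ref{dim3} to pin down the dimensions, identify $\phi$ as an isomorphism, tensor the sequence for $X_{r'}$ with $V_1$, and then analyze the singular pieces via $X_r^*$ and Lemma~\ref{Xrprime*}. The paper simply cites \cite[(5.3)]{[G78]} for the $M$-isomorphism $V_{p-1}\otimes V_1\cong V_{2p-1}$, whereas you write down the explicit map $F\otimes G\mapsto F\cdot G^p$; this is a pleasant self-contained alternative but not a different strategy. Your treatment of (ii) and (iii) is also slightly reordered---you first bound $X_{r-1}^{**}$ by a contradiction argument through the socle of $V_{2p-1}$, then deduce the splitting, while the paper first establishes the direct sum $X_{r-1}^*=\phi(X_{r'}^*\otimes V_1)\oplus X_r^*$ and then reads off $X_{r-1}^{**}$ as the intersection with $V_r^{**}$---but the underlying facts ($\phi(X_{r'}^*\otimes V_1)\subseteq V_r^{**}$ and $X_r^*\cap V_r^{**}=0$) are identical.
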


\begin{proof}
 By Lemma \ref{dim1},  $\dim X_{r-1} =2p+2$, so by Lemma \ref{onto}, we must have $\dim X_{r^\prime}\otimes V_1 \geq 2p+2$. 
 This forces $X_{r^\prime}$ to have its highest possible dimension, namely, $p+1$. 
 Thus the $M$-map $\phi:X_{r^\prime}\otimes V_1\twoheadrightarrow X_{r-1}$ is actually an isomorphism.
 Tensoring the short exact sequence \eqref{ses1}  by $V_1$, we get the exact sequence
 \begin{equation*}
  0\rightarrow X_{r^\prime}^*\otimes V_1\rightarrow X_{r^\prime}\otimes V_1\rightarrow V_{p-1}\otimes V_1\rightarrow 0. 
 \end{equation*}
 The middle module is $M$-isomorphic to $X_{r-1}$, and the rightmost module is 
 $M$-isomorphic to $V_{2p-1}$, by \cite[(5.3)]{[G78]}. Thus the exact sequence reduces to
 \begin{equation}
 0\rightarrow X_{r^\prime}^*\otimes V_1\rightarrow X_{r-1}\rightarrow V_{2p-1}\rightarrow 0.
 \end{equation}
 Using Lemma \ref{Xrprime*} (i), we get the short exact sequence
 $$0\rightarrow V_1\otimes D^{p-1}\rightarrow X_{r-1}\rightarrow V_{2p-1}\rightarrow 0.$$

 Since $M$-linear maps must take singular submodules to singular submodules, the above sequence gives rise to the following exact sequence
 \begin{equation}
  0\rightarrow  V_1\otimes D^{p-1}\rightarrow X_{r-1}^*\rightarrow V_{2p-1}^*\cong V_{p-2}\otimes D.
 \end{equation}
 The rightmost module above is irreducible, so the map $X_{r-1}^*\rightarrow V_{p-2}\otimes D$ is either the zero map or it is a surjection.   

 By Lemma \ref{dim3}, $\dim X_r = p+1$ and so by \cite[(4.5)]{[G78]}, we have $\dim X_r^*=p-1$. 
 By Lemma \ref{star elt} (i), we get $X_r^{**}=0$ and $X_r^*\cong V_{p-2}\otimes D$ must be a JH factor of $X_{r-1}^*$. 
 Therefore the rightmost map above must be onto, as otherwise $X_{r-1}^*\cong X_{r^\prime}^*\otimes V_1\cong V_1\otimes D^{p-1}$. 
 So we have
 \begin{equation}
  0\rightarrow X_{r^\prime}^*\otimes V_1 \cong V_1\otimes D^{p-1}\rightarrow X_{r-1}^*\rightarrow  V_{p-2}\otimes D\rightarrow 0.
 \end{equation}
 Thus $X_{r-1}^*$ has two JH factors, of dimensions $2$ and $p-1$ respectively. Moreover, since  $X_r^*\cong V_{p-2}\otimes D$ is a submodule of 
 $X_{r-1}^*$, the sequence above must split, and we must have $$X_{r-1}^*=\phi(X_{r^\prime}^*\otimes
 V_1)\oplus X_r^*\cong V_1\otimes D^{p-1}\oplus V_{p-2}\otimes D.$$ 

 Knowing the structure of $X_{r-1}^*$ as above, next we want to see how the submodule $X_{r-1}^{**}$ sits inside it. 
 By Lemma \ref{Xrprime*}, we have $\phi(X_{r^\prime}^*\otimes V_1)\subseteq V_r^{**}$, on the other hand $X_r\cap V_r^{**}=X_r^{**}=0$. 
 Therefore $X_{r-1}^{**}=X_{r-1}^*\cap V_r^{**}=\phi(X_{r^\prime}^*\otimes V_1)\cong V_1\otimes D^{p-1}$  has dimension $2$.

 Now we count the dimension  $\dim Q = 2p+2 - \dim X_{r-1} +\dim X_{r-1}^{**} = 2$. 
 The final statement $Q\cong V_1$ follows from Diagram \eqref{bigdiadram} as in the proof of Proposition \ref{prop1}.
\end{proof}

Thus we know  $Q$ is isomorphic to  $V_1$, whenever $r$ is prime to $p$. Next we treat the case  $p$ divides $r$. 
Since $r\equiv 1\mod (p-1)$, we see that $r$ can be a pure $p$-power.
We will show that $Q$ has two JH factors as a $\Gamma$-module, irrespective of whether $r$ is a $p$-power or not. 
The following result about $\dim X_{r-1} $ when $p\mid  r$ is stated without proof,
as it follows from 
the more general Lemma \ref{gendim2}  in Section \ref{structure}. 
\begin{lemma}\label{dim2}
  Let $p\geq 2$ and $ r\equiv 1\mod (p-1)$. If $p\mid r$ but $r$ is not a pure $p$-power, then $\dim X_{r-1} =2p+2$.
\end{lemma}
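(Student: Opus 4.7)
The plan is to show directly that the standard spanning set
$$\{X^r,\ Y^r,\ X(kX+Y)^{r-1},\ (X+lY)^{r-1}Y : k,l \in \F_p\}$$
of $X_{r-1}$ is linearly independent over $\F_p$; since $\dim X_{r-1}\leq 2p+2$ always and this set has $2p+2$ elements, independence yields the claim.

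First I would pin down the size of $\Sigma:=\Sigma_p(r-1)$ to confirm that we are in the regime complementary to Lemma \ref{relation}. Writing $r=p^nu$ with $n\geq 1$, $p\nmid u$ and (by hypothesis) $u>1$, the congruence $r\equiv p^n\equiv 1\pmod{p-1}$ forces $u\equiv 1\pmod{p-1}$. Combined with $p\nmid u$ and $u>1$, this yields $u\geq 2p-1$ and hence $\Sigma_p(u-1)\geq p-1$. Since $r-1=p^n(u-1)+(p^n-1)$, the base-$p$ expansion of $r-1$ consists of $n$ digits equal to $p-1$ at the bottom followed by the digits of $u-1$, so $\Sigma=n(p-1)+\Sigma_p(u-1)\geq 2(p-1)>p-1$. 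In particular the cancellations of Lemma \ref{relation} are unavailable to shrink the spanning set.

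Next, I would assume a linear relation $AX^r+BY^r+\sum_k c_kX(kX+Y)^{r-1}+\sum_l d_l(X+lY)^{r-1}Y=0$ and extract the coefficient of $X^{r-j}Y^j$ to obtain, for each $1\leq j\leq r-1$,
$$\binom{r-1}{j}\sum_{k\in\F_p} c_k\, k^{r-1-j}\ +\ \binom{r-1}{j-1}\sum_{l\in\F_p} d_l\, l^{j-1}\equiv 0\pmod p,$$
together with the boundary conditions $A+\sum_k c_k k^{r-1}=0$ and $B+\sum_l d_l\, l^{r-1}=0$. The strategy is to use Lucas' theorem (Theorem \ref{lucas}) to locate indices $j$ where exactly one of $\binom{r-1}{j}$ and $\binom{r-1}{j-1}$ vanishes modulo $p$, thereby decoupling the sums $S_m:=\sum_k c_k k^m$ from the sums $T_m:=\sum_l d_l\, l^m$. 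Since the bottom $n\geq 1$ base-$p$ digits of $r-1$ are all $p-1$, Lucas gives $\binom{r-1}{j}\not\equiv 0\pmod p$ for every $0\leq j<p^n$, while the presence of a nonzero higher digit (forced by $u>1$) produces values of $j$ where one binomial vanishes and its neighbour does not. Combining the resulting equations with the orthogonality identity \eqref{identity}, one obtains $S_m=T_m=0$ for all $m\in\{0,1,\dots,p-1\}$; the Vandermonde matrix $(k^m)_{k,m\in\F_p}$ then forces $c_k=d_l=0$ for every $k,l$, and the boundary equations give $A=B=0$.

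The main obstacle is the combinatorial bookkeeping: Lucas' theorem does not directly furnish a paired index $j$ with binomial coefficients of opposite vanishing behaviour, and one must carefully analyse the ``join'' between the bottom all-$(p-1)$ block of $r-1$ and its first nontrivial higher digit, where carries in the subtraction $(r-1)-j$ make exactly one of $\binom{r-1}{j-1}$ and $\binom{r-1}{j}$ vanish. A systematic treatment, uniform across the various congruence classes and in whether or not $p$ divides $r$, is exactly what the authors package into Lemma \ref{gendim2} of \S\ref{structure}; once that is in hand, the present statement falls out as the subcase $r\equiv 1\pmod{p-1}$ with $p\mid r$ and $r$ not a pure $p$-power.
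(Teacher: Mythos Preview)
Your proposal is correct and aligns with the paper's approach: the paper states this lemma without proof, deferring to the more general Lemma~\ref{gendim2}, which is exactly the reduction you make in your final paragraph. Your additional content---the computation that $\Sigma_p(r-1)=n(p-1)+\Sigma_p(u-1)\geq 2(p-1)$ and the sketch of the linear-independence argument via Lucas' theorem---is a faithful outline of what goes into Lemma~\ref{gendim2} itself (whose proof the paper also omits, pointing back to the template of Lemma~\ref{dim7}), so you have essentially reconstructed the intended argument rather than found an alternative one.
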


\begin{lemma}\label{dim4}
 For $p\geq 2$ and $r=p^n$ with $n\geq 2$, $\dim X_{r-1}=p+3$.
\end{lemma}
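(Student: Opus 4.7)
The plan is to establish $\dim X_{r-1} = p+3$ by matching upper and lower bounds. The upper bound comes from eliminating $p-1$ of the $2p+2$ standard generators, and the lower bound is a linear independence check using Lucas' theorem and a Vandermonde argument.

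For the upper bound, the key identity is that in characteristic $p$ with $r = p^n$, the Frobenius gives $(kX+Y)^r = k^{p^n}X^r + Y^r = kX^r + Y^r$ for $k \in \F_p$. Splitting $(kX+Y)^r = (kX+Y)(kX+Y)^{r-1}$ yields
\[
k \cdot X(kX+Y)^{r-1} + Y(kX+Y)^{r-1} = kX^r + Y^r.
\]
For $k \in \F_p^*$, since $p-1$ divides $r-1=p^n-1$ we have $k^{r-1}=1$, so $Y(kX+Y)^{r-1} = k^{r-1}(X + k^{-1}Y)^{r-1}Y = (X+k^{-1}Y)^{r-1}Y$. Setting $l = k^{-1}$ gives the relation
\[
(X+lY)^{r-1}Y = l^{-1}X^r + Y^r - l^{-1} X(l^{-1}X + Y)^{r-1}
\]
valid for each $l \in \F_p^*$. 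These $p-1$ relations express the generators $(X+lY)^{r-1}Y$, $l \in \F_p^*$, in terms of the remaining $p+3$ generators $\{X^r,\; Y^r,\; X^{r-1}Y,\; X(kX+Y)^{r-1} : k \in \F_p\}$ (note that $k=0$ gives $XY^{r-1}$, and $l=0$ gives $X^{r-1}Y$). Hence $\dim X_{r-1} \leq p+3$.

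For the lower bound, I will show these $p+3$ elements are linearly independent by working in the monomial basis $e_j := X^{r-j}Y^j$ of $V_r$. Since all base-$p$ digits of $r-1 = p^n-1$ equal $p-1$, Lucas' theorem gives $\binom{r-1}{j} \equiv (-1)^{\Sigma_p(j)} \not\equiv 0 \pmod p$ for every $0 \leq j \leq r-1$, so
\[
X(kX+Y)^{r-1} = \sum_{j=0}^{r-1} (-1)^{\Sigma_p(j)} k^{r-1-j} e_j.
\]
Suppose $\alpha e_0 + \delta e_1 + \gamma e_r + \sum_{k \in \F_p} \beta_k X(kX+Y)^{r-1} = 0$. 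For each interior index $2 \leq j \leq r-2$ (where the $\alpha,\delta,\gamma,\beta_0$-terms do not contribute), the coefficient of $e_j$ gives $\sum_{k \in \F_p^*} \beta_k k^{r-1-j} = 0$. As $j$ varies, the exponent $m = r-1-j$ runs over $\{1,2,\dots,r-3\}$; reducing modulo $p-1$ (since $k^{p-1}=1$), these exponents cover every residue class in $\{0,1,\dots,p-2\}$ because $r-3 \geq p^2-3 \geq p-1$ for $n\geq 2$. The resulting system $\sum_{k \in \F_p^*} \beta_k k^s = 0$ for $s=0,1,\dots,p-2$ has invertible Vandermonde matrix, forcing $\beta_k = 0$ for all $k \in \F_p^*$. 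The coefficients of $e_0$, $e_1$, $e_{r-1}$, $e_r$ then immediately yield $\alpha = \delta = \beta_0 = \gamma = 0$.

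Combining the two bounds gives $\dim X_{r-1} = p+3$. I do not anticipate a real obstacle here; the only mild care is checking that the exponents $m \in \{1,\dots,r-3\}$ truly exhaust all residues modulo $p-1$, which requires $n \geq 2$ (the hypothesis) and holds uniformly in $p$, including the degenerate case $p=2$ where the Vandermonde system reduces to the single equation $\beta_1 = 0$.
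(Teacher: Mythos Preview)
Your proof is correct and essentially follows the paper's approach: reduce the standard $2p+2$ generators of $X_{r-1}$ to the $p+3$ elements $\{X^r, Y^r, X^{r-1}Y, X(kX+Y)^{r-1} : k \in \F_p\}$, then prove their linear independence via Lucas' theorem and a Vandermonde argument. The only cosmetic difference is in the upper bound: the paper invokes the Bruhat decomposition $\Gamma = B \sqcup BwB$ together with $r = p^n$ to produce this smaller spanning set, whereas you obtain the same reduction by the direct Frobenius identity $(kX+Y)^r = kX^r + Y^r$; the linear independence step is identical in both.
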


\begin{proof}
 We know that $\Gamma=B\sqcup BwB$, where $B\subseteq \Gamma $ is the subgroup of upper-triangular matrices, and 
 $w = \left( \begin{smallmatrix}
                    0 & 1\\
                    1 & 0
             \end{smallmatrix} \right)$.
 Using this decomposition and the fact that $r=p^n$, one can see that the $\mathbb{F}_p[\Gamma]$-span of $X^{r-1}Y\in V_r$ 
 is generated by the set $\{X^r, X^{r-1}Y, Y^{r}, X(kX+Y)^{r-1} : k \in \mathbb{F}_p\}$ over 
 $\mathbb{F}_p$. We will show that this generating set is linearly independent. Suppose that
 $$AX^r+BY^r+DX^{r-1}Y+\underset{k=0}{\overset{p-1}\sum}c_k X(kX+Y)^{r-1}=0,$$
 where $A,B,D,c_k\in \mathbb{F}_p$ for each $k$. Clearly, it is enough to show that $c_k=0$, for each $k\in \mathbb{F}_p^*$. 
 Since $r-1=p^n-1$ for some $n\geq 2$,
 Lucas' theorem says that $\binom{r-1}{i}\not\equiv 0\mod p$, for $0\leq i\leq r-1$. As $r-p\geq 2$,  
 equating the coefficients of $X^iY^{r-i}$ in both sides for $2\leq i\leq p$, we get that 
 $\underset{k=1}{\overset{p-1}\sum}c_kk^{i-1}\equiv 0\mod p$.
 The non-vanishing of the Vandermode determinant now shows that $c_k=0$, for all $k\in\mathbb{F}_p^*$.
\end{proof}

\begin{remark}
 Note that the proof does not work for $r=p$, since we need $r-p\geq 2$. Also the lemma  is trivially false for $r=p$, because 
 then  $X_{r-1}\subseteq V_r$ must have dimension $\leq p+1$.
\end{remark}

The next proposition describes the structure of $Q$ for $p\mid r$. Note that if $p<r$ is a multiple of $p$, then for $r^\prime=r-1$, 
we have $\Sigma_p(r^\prime)> p-1$, so we can apply Lemma \ref{Xrprime*}.

\begin{prop}
\label{prop4} 
 For $p\geq 3$, let $r\, (>p)$ be a multiple of $p$ such that $r\equiv 1\mod (p-1)$. 
 \begin{enumerate}
  \item[(i)] If $r=p^n$ with $n\geq 2$, then $X_r^* = X_r^{**}=0$ and $X_{r-1}^*=X_{r-1}^{**}$  has dimension $2$.
  \item[(ii)] If $r$ is not a pure $p$-power, then $X_r^*=X_r^{**}\cong V_{p-2}\otimes D$ and $X_{r-1}^*=X_{r-1}^{**}$  has dimension $p+1$.
  \item[(iii)] In either case, $Q$ is a non-trivial extension of $V_1$ by $V_{p-2}\otimes D$, as a $\Gamma$-module.
 \end{enumerate}
\end{prop}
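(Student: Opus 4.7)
My plan is to treat (i) and (ii) separately, each yielding $X_{r-1}^{*} = X_{r-1}^{**}$, and then deduce (iii) uniformly from the bottom row of Diagram \eqref{bigdiadram}. For (i), with $r = p^n$ and $n \geq 2$: the $M$-isomorphism $\iota : X_1 \to X_r$ from the proof of Lemma \ref{star elt}(ii) forces $X_r^{*} = 0$, since $X_1 \cong V_1$ has trivial singular submodule, hence $X_r^{**} = 0$ as well. For $X_{r-1}$, Lemma \ref{dim4} gives $\dim X_{r-1} = p+3$, and the condition $\Sigma_p(r-1) = n(p-1) > p-1$ lets me invoke Lemma \ref{Xrprime*}(ii) to produce a $2$-dimensional singular submodule $\phi(X_{r'}^{*} \otimes V_1) \cong V_1 \otimes D^{p-1}$ inside $X_{r-1}^{**}$. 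For the matching upper bound $\dim X_{r-1}^{*} \leq 2$, I will show that the natural injection $X_{r-1}/X_{r-1}^{*} \hookrightarrow V_r/V_r^{*}$ is a surjection. Since $V_r/V_r^{*}$ has length $2$ with socle $V_1$ by Proposition \ref{es1}, and the image of $X_r$ is $2$-dimensional and thus fills this socle exactly, it suffices to exhibit an element of $X_{r-1}$ mapping outside the socle. The element $X^{r-1}Y$ works: decomposing $X^{r-1}Y - aX^r - bY^r$ along residue classes of $Y$-degree modulo $p-1$ and applying Lemma \ref{elelemma}(i) to each piece shows it never lies in $V_r^{*}$. This gives $\dim X_{r-1}^{*} = 2 = \dim X_{r-1}^{**}$.

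For (ii), with $r = p^n u$, $u > 1$, $p \nmid u$: Lemma \ref{star elt}(ii) gives $X_r^{*} = X_r^{**}$, and since $u \equiv 1 \mod (p-1)$ with $u \geq 2p-1$ is not a pure $p$-power, applying either Proposition \ref{prop1} (if $\Sigma_p(u-1) = p-1$) or Proposition \ref{prop2} (if $\Sigma_p(u-1) > p-1$) to $X_u$ gives $X_u^{*} \cong V_{p-2} \otimes D$; transporting through $\iota$ yields $X_r^{**} \cong V_{p-2} \otimes D$. By Lemma \ref{dim2}, $\dim X_{r-1} = 2p+2$, which forces the surjection $\phi : X_{r'} \otimes V_1 \twoheadrightarrow X_{r-1}$ to be an isomorphism. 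The lower bound $\dim X_{r-1}^{**} \geq p+1$ then follows because $X_{r-1}^{**}$ contains both $\phi(X_{r'}^{*} \otimes V_1) \cong V_1 \otimes D^{p-1}$ (Lemma \ref{Xrprime*}) and $X_r^{**} \cong V_{p-2} \otimes D$, two non-isomorphic irreducibles whose sum is therefore direct. For the upper bound, the exact sequence $0 \to X_{r'}^{*} \otimes V_1 \to X_{r'} \otimes V_1 \to V_{p-1} \otimes V_1 \to 0$ yields $\dim (X_{r'} \otimes V_1)^{*} \leq 2 + \dim (V_{p-1} \otimes V_1)^{*}$, and the latter equals $p-1$ by identifying $(V_{p-1} \otimes V_1)^{*}$ with the kernel $V_{p-2} \otimes D$ of the multiplication map $V_{p-1} \otimes V_1 \twoheadrightarrow V_p$ (the kernel is singular via the $\det$-twist, while nothing singular can survive in $V_p$ because $V_p^{*} = 0$). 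Hence $\dim X_{r-1}^{*} \leq p+1$, and equality with $X_{r-1}^{**}$ holds.

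For (iii): in both cases $X_{r-1}^{*} = X_{r-1}^{**} \subseteq V_r^{**}$, so the leftmost entry of the bottom row of Diagram \eqref{bigdiadram} is $V_r^{*}/V_r^{**}$; and since $\dim X_{r-1}/X_{r-1}^{*} = p+1 = \dim V_r/V_r^{*}$ in both cases, the natural injection is an isomorphism and the rightmost entry vanishes. Hence $Q \cong V_r^{*}/V_r^{**}$, which by the $a = 1$ case of Proposition \ref{es2} is a non-split extension of $V_1$ by $V_{p-2} \otimes D$. The main obstacle is the upper bound on $\dim X_{r-1}^{*}$: in (i) handled by the residue-class splitting together with Lemma \ref{elelemma}, and in (ii) by the tensor-product structure enabled by $\phi$ being an isomorphism, together with the identification of $(V_{p-1} \otimes V_1)^{*}$.
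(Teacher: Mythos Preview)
Your proof is correct. Parts (ii) and (iii) follow essentially the paper's route (the paper refers back to the argument in Proposition~\ref{prop2} for (ii), and for (iii) uses the same dimension count together with the injection $V_r^*/V_r^{**}\hookrightarrow Q$, which is your observation that the leftmost entry in the bottom row of Diagram~\eqref{bigdiadram} is all of $V_r^*/V_r^{**}$).

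The genuine divergence is in part (i), in bounding $\dim X_{r-1}^{*}$ from above. The paper works structurally: it sets $W:=X_{r-1}/\phi(X_{r'}^{*}\otimes V_1)$, identifies $W$ with the $(p{+}1)$-dimensional quotient $V_{2p-1}/V_{2p-1}^{*}$ of $V_{p-1}\otimes V_1\cong V_{2p-1}$, and then shows $W^{*}=0$ by observing that the unique minimal submodule $(X_{2p-1}+V_{2p-1}^{*})/V_{2p-1}^{*}$ is non-singular (since the singular matrix $\left(\begin{smallmatrix}1&0\\0&0\end{smallmatrix}\right)$ fixes $X^{2p-1}$). From $W^{*}=0$ one reads off $X_{r-1}^{*}=\phi(X_{r'}^{*}\otimes V_1)$. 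Your argument instead attacks the embedding $X_{r-1}/X_{r-1}^{*}\hookrightarrow V_r/V_r^{*}$ directly: since $X_r=\langle X^r,Y^r\rangle$ already fills the $2$-dimensional socle $V_1$ of the uniserial module $V_r/V_r^{*}$, it suffices to check that $X^{r-1}Y\not\equiv aX^r+bY^r\bmod V_r^{*}$ for any $a,b$, which you do by the residue-class splitting (valid because multiplication by $\theta$ shifts the $Y$-degree residue class uniformly) together with Lemma~\ref{elelemma}(i). This is more elementary and self-contained; the paper's approach, on the other hand, yields the full $M$-module structure of $X_{r-1}$ recorded in Proposition~\ref{JHfactors1}(iii) as a by-product. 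Your upper bound in (ii), via $(X_{r'}\otimes V_1)^{*}$ and the identification of $(V_{p-1}\otimes V_1)^{*}$ with the kernel $V_{p-2}\otimes D$ of the multiplication map onto $V_p$, is really the same computation as the paper's, since $V_{p-1}\otimes V_1\cong V_{2p-1}$ and $V_{2p-1}^{*}\cong V_{p-2}\otimes D$.
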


\begin{proof}
 We have
 \begin{enumerate}
 \item [(i)] 
 By Lemma \ref{Xrprime*},  $\dim X_{r^\prime}^* =1$ and $\dim X_{r^\prime} =p+1$  by \cite[(4.5)]{[G78]}.
 By Lemma \ref{dim4}, $\dim X_{r-1} =p+3$.
 By Lemma \ref{onto}, we get a surjection $\phi:X_{r^\prime}\otimes V_1\twoheadrightarrow X_{r-1}$, with a non-zero kernel 
 of dimension $2(p+1)-(p+3)=p-1$. 
 
 Note that $W:=\dfrac{X_{r-1}}{\phi(X_{r^\prime}^*\otimes V_1)}$ is a quotient of $(X_{r^\prime}/X_{r^\prime}^*)\otimes V_1$, 
 which is $M$-isomorphic to $V_{2p-1}$ by \cite[(4.5), (5.3)]{[G78]}. We have 
 the exact sequence of $M$-modules
 $$0 \rightarrow  X_{r^\prime}^*\otimes V_1\xrightarrow{\phi}   X_{r-1}\rightarrow W \rightarrow  0.$$
 Restricting it to the maximal singular submodules, we get the exact sequence
 $$0\rightarrow X_{r^\prime}^*\otimes V_1\xrightarrow{\phi} X_{r-1}^*\rightarrow W^*,$$ where $W^*$ denotes the largest singular submodule of $W$. 
 By Lemmas \ref{dim4} and \ref{Xrprime*} (ii), we get $\dim W =(p+3)-2=p+1$.
 Being a $(p+1)$-dimensional quotient of $V_{2p-1}$, $W$ must be $M$-isomorphic to $V_{2p-1}/V_{2p-1}^*$. 

 By \cite[(4.6)]{[G78]}, $W$ has a unique non-zero minimal submodule, namely, $$W^\prime=\left(X_{2p-1}+V_{2p-1}^*\right)/V_{2p-1}^*.$$ 
 Note that the singular matrix $\left(\begin{smallmatrix}                                                          
                                        1 & 0 \\
                                        0 & 0
                                      \end{smallmatrix} \right)$
 acts trivially on  $X^{2p-1}$, which is non-zero in 
 $W^\prime$. Thus the unique minimal submodule $W^\prime$ is non-singular, so  $W^*=0$, giving us an $M$-isomorphism 
 $X_{r^\prime}^*\otimes V_1\xrightarrow{\phi} X_{r-1}^*.$
 Now by Lemma \ref{Xrprime*} (ii),
 $X_{r-1}^*=\phi(X_{r^\prime}^*\otimes V_1)=X_{r-1}^{**}$ has dimension 2.
              
 \item[(ii)] If $r=p^nu$ for some $n\geq 1$ and $p\nmid u\geq 2p-1$, then  $\dim X_{r-1} =2p+2$, by Lemma \ref{dim2}. 
 We have shown in the proof of Lemma \ref{star elt} (ii) that $X_r^*=X_r^{**}\cong X_u^*$, which is isomorphic to $V_{p-2}\otimes D$, as 
 $p\nmid u$ (cf. Propositions \ref{prop1} and \ref{prop2}).
 We proceed exactly as in the proof of Proposition \ref{prop2}, to get that
 $X_{r-1}^*\cong \phi(X_{r^\prime}^*\otimes V_1)\oplus X_r^*$ has dimension $p+1$. 
 By Lemma \ref{Xrprime*}, we know $\phi(X_{r^\prime}^*\otimes V_1)\subseteq V_r^{**}$. 
 Thus both the summands of $X_{r-1}^*$ are contained in $V_r^{**}$. Hence $X_{r-1}^{**}:=X_{r-1}^*\cap V_r^{**}=X_{r-1}^*$.
 
 \item[(iii)] Using part (i), (ii) above and Lemmas \ref{dim2}, \ref{dim4}, we count that $\dim (X_{r-1}/X_{r-1}^{**}) =p+1$. 
 Hence $\dim Q =2p+2 - \dim X_{r-1} + \dim X_{r-1}^{**} = p+1$.
 Since $X_{r-1}^*=X_{r-1}^{**}$, the natural map $V_r^*/V_r^{**}\rightarrow Q$ 
 is injective, 
 hence an isomorphism by dimension count.
 Now the $\Gamma$-module structure of $Q$ follows from the short exact sequence \eqref{ses**}.
 \end{enumerate}
\end{proof}

Note that in the course of studying the structure of $Q$, we have derived the complete structure of the $M$-submodule 
$X_{r-1}\subseteq V_r$ for $r\equiv 1\mod (p-1)$, summarized as follows:
\begin{prop}\label{JHfactors1} Let $p\geq 3$, $r>p$, and $r\equiv 1\mod (p-1)$. 
 \begin{enumerate}
 \item[(i)] If $\Sigma_p(r-1)=p-1$ (so $p\nmid r$), then $X_{r-1}\cong V_{2p-1}$ as $M$-module.
 \item[(ii)] If $r\neq p^n$ and $\Sigma_p(r-1)>p-1$, then we have a short exact sequence of $M$-modules
                    $$0\rightarrow V_1\otimes D^{p-1}\rightarrow X_{r-1}\rightarrow V_{2p-1}\rightarrow 0.$$
 \item[(iii)] If $r=p^n$ for some $n\geq 2$ (so $\Sigma_p(r-1)>p-1$), then we have a short exact sequence of $M$-modules
                    $$0\rightarrow V_1\otimes D^{p-1}\rightarrow X_{r-1}\rightarrow W\rightarrow 0,$$
 where $W\cong V_{2p-1}/V_{2p-1}^*$ is a non-trivial extension of $V_{p-2}\otimes D$ by $V_1$.
 \end{enumerate}
\end{prop}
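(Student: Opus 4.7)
The plan is to assemble Proposition \ref{JHfactors1} directly from the three individual structure theorems already proved: Propositions \ref{prop1}, \ref{prop2}, and \ref{prop4}. The trichotomy in the statement matches (after one observation) the trichotomy used to organize those results, so the proof is essentially bookkeeping plus the identification of the quotient $W$ in part (iii).

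First I would dispose of part (i). The hypothesis $\Sigma_p(r-1)=p-1$ together with $r>p$ already forces $p\nmid r$: writing $r-1=r_mp^m+\cdots+r_0$, divisibility $p\mid r$ is equivalent to $r_0=p-1$, which combined with $\Sigma_p(r-1)=p-1$ would force $r_j=0$ for $j\geq 1$, hence $r=p$, contradicting $r>p$. Hence the hypotheses of Proposition \ref{prop1}(i) apply and give $X_{r-1}\cong V_{2p-1}$ as an $M$-module.

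Next I would handle part (ii) by splitting on whether $p\mid r$. The sub-case $p\nmid r$ with $\Sigma_p(r-1)>p-1$ is exactly Proposition \ref{prop2}(i), which gives the stated short exact sequence verbatim. The sub-case $p\mid r$ with $r$ not a pure $p$-power needs a little more unpacking of the proof of Proposition \ref{prop4}(ii): the map $\phi:X_{r'}\otimes V_1\twoheadrightarrow X_{r-1}$ from Lemma \ref{onto} must be an isomorphism by the dimension count of Lemma \ref{dim2} (both sides have dimension $2p+2$, forcing $\dim X_{r'}=p+1$). Tensoring the sequence \eqref{ses1} with $V_1$, using $V_{p-1}\otimes V_1\cong V_{2p-1}$ from \cite[(5.3)]{[G78]} on the right, and using the identification $X_{r'}^*\otimes V_1\cong V_1\otimes D^{p-1}$ from Lemma \ref{Xrprime*}(i) on the left, yields the required short exact sequence.

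Finally, part (iii) is Proposition \ref{prop4}(i) together with the claim that $W\cong V_{2p-1}/V_{2p-1}^*$. The latter was already established inside the proof of Proposition \ref{prop4}(i): $W$ is a $(p+1)$-dimensional quotient of $V_{2p-1}$ (by dimension count using Lemmas \ref{dim4} and \ref{Xrprime*}(ii)), and by \cite[(4.5), (4.6)]{[G78]} the unique such quotient is $V_{2p-1}/V_{2p-1}^*$. The remaining claim that this module is a non-trivial extension of $V_{p-2}\otimes D$ by $V_1$ is immediate from Proposition \ref{es1} applied with $r=2p-1$ and $a=1$, which gives the non-split sequence $0\to V_1\to V_{2p-1}/V_{2p-1}^*\to V_{p-2}\otimes D\to 0$. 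There is no genuine obstacle beyond checking that the case-split covers every $r>p$ with $r\equiv 1\bmod(p-1)$, which is clear since exactly one of $\{\Sigma_p(r-1)=p-1,\; \Sigma_p(r-1)>p-1\text{ and }r\neq p^n,\; r=p^n\}$ holds in that range.
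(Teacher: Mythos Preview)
Your proposal is correct and matches the paper's approach exactly: the paper presents Proposition~\ref{JHfactors1} as a summary of the structure already derived in Propositions~\ref{prop1}, \ref{prop2}, and \ref{prop4}, with no separate proof given. Your bookkeeping is accurate, including the observation that $\Sigma_p(r-1)=p-1$ with $r>p$ forces $p\nmid r$, the unpacking of the $\phi$-isomorphism in the $p\mid r$, non-prime-power case, and the identification of $W$ via Proposition~\ref{es1}.
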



\section{Structure of $X_{r-1}$}
\label{structure}

In this section we will study the  $M$-module $X_{r-1}$, i.e., the submodule generated by $X^{r-1}Y$ inside $V_r$, for $r$ 
lying in any congruence class $a$ modulo $(p-1)$.
Recall that for $r\leq p-1$, the module $V_r$ is irreducible, therefore $X_r=X_{r-1}=V_r$. 
We begin with the simple lemma showing that outside this small range of $r$, the module $X_r$ is properly contained in $X_{r-1}$.

\begin{lemma}
\label{Xr n Xr-1}
 For any $p\geq 2$, if $r\geq p$, then $X_r\subsetneq X_{r-1}$.
\end{lemma}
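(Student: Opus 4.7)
The plan is to establish strictness by showing that $X^{r-1}Y$, although tautologically an element of $X_{r-1}$, does not lie in $X_r$. Recall from \S\ref{basics} that $X_r$ is spanned over $\mathbb{F}_p$ by $X^r$ and the polynomials $(kX+Y)^r$ with $k\in\mathbb{F}_p$, so arguing by contradiction I would assume an expression
$$X^{r-1}Y \;=\; AX^r + \sum_{k\in\mathbb{F}_p} c_k\,(kX+Y)^r, \qquad A, c_k \in \mathbb{F}_p,$$
and compare the coefficients of various monomials on both sides after expanding $(kX+Y)^r = \sum_j \binom{r}{j}\, k^j X^j Y^{r-j}$. The coefficient of $X^{r-1}Y$ yields the identity $r\sum_{k\in\mathbb{F}_p^{\times}} c_k k^{r-1} = 1$ in $\mathbb{F}_p$.

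If $p\mid r$, the left-hand side vanishes, contradicting $1\neq 0$ in $\mathbb{F}_p$; this disposes of every $r\geq p$ divisible by $p$, in particular $r=p$. In the remaining case $p\nmid r$ one has $r\geq p+1$, and writing the base-$p$ expansion $r = r_m p^m + \cdots + r_1 p + r_0$ with $r_0\neq 0$ forces some digit $r_j$ with $j\geq 1$ to be nonzero. Fix such a $j$ and consider the exponent $i := r - p^j$. Three features of $i$ are used: (i) $1 \leq i \leq r-2$ and $i \neq r-1$ (all clear from $p^j \geq 2$ and $r \geq p^j + 1$); (ii) $i \equiv r-1 \pmod{p-1}$, since $p^j\equiv 1 \pmod{p-1}$; and (iii) $\binom{r}{i} = \binom{r}{p^j} \equiv r_j \not\equiv 0 \pmod p$ by Lucas' theorem (Theorem~\ref{lucas}).

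Extracting the coefficient of $X^i Y^{r-i}$ in the assumed identity now gives $r_j \sum_{k\neq 0} c_k k^i = 0$ in $\mathbb{F}_p$. On the other hand, property (ii) together with $k^{p-1}=1$ for $k\in\mathbb{F}_p^{\times}$ implies $\sum_{k\neq 0} c_k k^i = \sum_{k\neq 0} c_k k^{r-1}$, a sum already pinned down to $r^{-1}$. Therefore $r_j\cdot r^{-1}=0$ in $\mathbb{F}_p$, contradicting $r_j\neq 0$. The main obstacle is producing the auxiliary exponent $i$ in the case $p\nmid r$; the key observation is that shifting $r-1$ by $p^j-1$ (a multiple of $p-1$) preserves the residue class modulo $p-1$ while changing the binomial coefficient to $r_j$, and the hypothesis $r\geq p$ is precisely what guarantees some $j\geq 1$ with $r_j\neq 0$.
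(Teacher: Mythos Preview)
Your proof is correct and follows essentially the same approach as the paper's. The paper shows that $XY^{r-1}\notin X_r$ by comparing the coefficients of $XY^{r-1}$ and $X^{p^j}Y^{r-p^j}$ in a putative expression $XY^{r-1}=\sum_k c_k(kX+Y)^r+dX^r$; you instead show $X^{r-1}Y\notin X_r$, which is the same argument after applying $w=\left(\begin{smallmatrix}0&1\\1&0\end{smallmatrix}\right)$. The paper's choice is marginally cleaner because the identity $k^{p^j}=k$ lets one avoid your case split on $p\mid r$, but this is a cosmetic difference only.
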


\begin{proof} 
 The inclusion follows from the fact that $X^r=\left(\begin{smallmatrix}
                                                          1 & 1\\
                                                           0 & 1
                                                   \end{smallmatrix}\right)\cdot X^{r-1}Y-X^{r-1}Y$.
 
 If $X_r= X_{r-1}$, then there exist coefficients
 $c_k,d\in\mathbb{F}_p$ such that 
 \begin{equation}
 \label{contra}
 XY^{r-1}=\underset{k\in\mathbb{F}_p}\sum c_k(kX+Y)^r +dX^r.
 \end{equation} 
 Let $r=r_mp^m+\cdots +r_1p+r_0$ be the $p$-adic expansion of $r$. As $r\geq p$, there exists some $j>0$ with $r_j\neq 0$. 
 Equating the coefficients of $XY^{r-1}$ and $X^{p^j}Y^{r-p^j}$ respectively in  both sides of \eqref{contra}, 
 we get $\binom{r}{1}\cdot\underset{k\in\mathbb{F}_p}\sum c_kk=1$, and
 $\binom{r}{p^j}\cdot\underset{k\in\mathbb{F}_p}\sum c_kk^{p^j}=0$. 
 But this implies that $\underset{k\in\mathbb{F}_p}\sum c_kk=0$, as $k^{p^{j}}=k$ for all $k$, and by Lucas' theorem
 $\binom{r}{p^j}\equiv\binom{r_j}{1}\not\equiv 0\mod p$. This is a contradiction.
\end{proof}
 
The following general lemma is valid for $r$ lying in any congruence class $a\mod (p-1)$, with $1\leq a\leq p-1$.  
\begin{lemma}
 \label{dim7}
 Let $p\geq 2$, $r\geq 2p+1$ and $ p\nmid r$. 
 If $\,\Sigma:=\Sigma_p(r-1) \geq p$, then $\dim X_{r-1} =2p+2$.
\end{lemma}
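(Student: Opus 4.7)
The upper bound $\dim X_{r-1}\leq 2p+2$ follows from the standard $\F_p$-spanning set
\[
\mathcal{B}:=\{X^r,\,Y^r,\,X(kX+Y)^{r-1},\,(X+lY)^{r-1}Y : k,l\in\F_p\},
\]
so the plan is to show that $\mathcal{B}$ is linearly independent under the hypotheses $p\nmid r$ and $\Sigma:=\Sigma_p(r-1)\geq p$. Suppose toward a contradiction that a non-trivial relation
\begin{equation*}
AX^r+BY^r+\sum_{k\in\F_p}c_k\,X(kX+Y)^{r-1}+\sum_{l\in\F_p}d_l\,(X+lY)^{r-1}Y=0 \qquad(\star)
\end{equation*}
holds. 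Introducing the moments $\alpha_m:=\sum_k c_k k^m$ and $\beta_m:=\sum_l d_l l^m$ (with $0^0=1$), expansion of $(\star)$ via the binomial theorem and comparison of the coefficient of $X^{r-j}Y^j$ yield $A=-\alpha_{r-1}$, $B=-\beta_{r-1}$, and, for every $1\leq j\leq r-1$,
\[
(E_j):\qquad \binom{r-1}{j}\alpha_{r-1-j}+\binom{r-1}{j-1}\beta_{j-1}=0.
\]

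Since $\alpha_m$ and $\beta_m$ depend only on $m\bmod(p-1)$ for $m\geq 1$, the equations $(E_j)$ with $j-1$ in a fixed residue class $t$ modulo $p-1$ all involve a single pair of unknown moments. For each $t\in\{0,1,\ldots,p-2\}$, I would aim to exhibit two indices $j_1\neq j_2$ in the class with non-vanishing determinant
\[
\Delta(j_1,j_2):=\binom{r-1}{j_1}\binom{r-1}{j_2-1}-\binom{r-1}{j_2}\binom{r-1}{j_1-1}\not\equiv 0\bmod p,
\]
which forces the two moments in that class to vanish. Handling the two boundary cases $(E_1)$ and $(E_{r-1})$ separately (which involve $\beta_0$ and $\alpha_0$ respectively) then gives $\alpha_m=\beta_m=0$ for every $m\geq 0$; the non-singular Vandermonde $(k^m)_{k,m=0}^{p-1}$ over $\F_p$ now yields $c_k=d_l=0$ for all $k,l$, hence $A=B=0$, contradicting the non-triviality of $(\star)$.

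The combinatorial heart of the proof is the construction of the witnessing pairs $(j_1,j_2)$ uniformly in $t$. Writing $r-1=\sum_{i\geq 0} r_i p^i$ in base $p$, Lucas' theorem expresses each binomial appearing in $\Delta$ as a product of digit-binomials. The hypothesis $p\nmid r$ forces the lowest digit $r_0\leq p-2$ and excludes the pure $p$-power case $r=p^n$ (where, by Lemma~\ref{dim4}, $\dim X_{r-1}$ genuinely drops to $p+3$), while $\Sigma\geq p$ ensures $r-1$ carries at least two nonzero digits of substantial combined weight. One then picks $j_1$ as a small representative of $t+1\bmod(p-1)$ supported on a single low non-zero digit of $r-1$, and $j_2=j_1+(p-1)p^i$ for a higher index $i$ with $r_i\neq 0$, so that all four binomial coefficients in $\Delta$ are non-zero and $\Delta(j_1,j_2)$ itself factors as a non-zero product of small digit-binomials. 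The main obstacle is this digit-level case analysis: one must check that no residue class $t$ is pathologically trapped by the shape of the digit sequence of $r-1$, and the threshold $\Sigma\geq p$ is precisely what makes this combinatorial construction always succeed; its failure ($\Sigma\leq p-1$) produces the explicit relations in $\mathcal{B}$ of Lemma~\ref{relation} and causes $\dim X_{r-1}$ to drop in line with the dimension formula stated in the introduction.
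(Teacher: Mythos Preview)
Your overall architecture matches the paper's proof: set up a putative linear relation among the $2p+2$ generators, pass to moment sums, extract the one-parameter family of linear equations $\binom{r-1}{t+1}C_{r-2-t}+\binom{r-1}{t}D_t=0$ indexed by $t$, and use Lucas' theorem to kill all the $D_t$, then invoke Vandermonde. So the strategy is sound and is the same as the paper's.

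The gap is in the combinatorial core, which you explicitly flag but then mis-sketch. Your proposed witnesses $j_1$ ``supported on a single low non-zero digit'' and $j_2=j_1+(p-1)p^i$, with the claim that all four binomials are nonzero and $\Delta(j_1,j_2)\neq 0$, fails already for the smallest interesting example $r-1=1+(p-1)p$ (so $r=p^2-p+2$, $p\nmid r$, $\Sigma=p$). In the residue class $j\equiv 1\bmod(p-1)$ with $2\leq j\leq r-1$, a direct Lucas computation shows that the \emph{only} $j$ with both $\binom{r-1}{j}$ and $\binom{r-1}{j-1}$ nonzero is $j=r-1$; in particular there is no pair with all four binomials nonzero. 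Your concrete choice $j_1=p$, $j_2=2p-1$ gives $\binom{r-1}{j_2}=\binom{r-1}{j_2-1}=0$ for $p\geq 5$, hence $\Delta=0$.

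What the paper does instead is not to insist on a $2\times2$ system with four nonzero entries, but to split according to whether $t<r_i$ or $t\geq r_i$, where $r_i$ is the lowest nonzero base-$p$ digit of $r-1$. In the second range, which is exactly where $\Sigma\geq p$ is used, one writes $t=r_i+s_{i+1}+\cdots+s_m$ with $0\leq s_j\leq r_j$ and takes the single index $t'=r_ip^i+\sum_{j>i}s_jp^j$; then $\binom{r-1}{t'+1}\equiv 0$ (because the bottom digit $r_0+1$ overflows, using $p\nmid r\Rightarrow r_0\leq p-2$) while $\binom{r-1}{t'}\not\equiv 0$, so one equation already gives $D_t=0$. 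In the first range $1\leq t<r_i$ one does need a genuine $2\times2$ argument, but with a different and explicit choice of $t'$ (namely $t'=tp^i$ if $r_0=0$, or $t'=t+p^{i'}-1$ for the least $i'>0$ with $r_{i'}\neq 0$ if $r_0\neq 0$), and the determinant is computed directly rather than asserted. Your sketch does not locate this dichotomy, and the specific witnesses you propose do not survive it.
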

  
\begin{proof}
 We claim that the spanning set $\{X^r, Y^r ,X(kX+Y)^{r-1}, (X+lY)^{r-1}Y : k,l \in\mathbb{F}_p\}$ of $X_{r-1}$ is linearly independent.  
 Suppose there exist constants 
 $A,B,c_k,d_l\in\mathbb{F}_p$ for  $k,l=0,1,\cdots,p-1$, satisfying the equation
 \begin{equation}\label{LI}
  AX^r+BY^r+\underset{k=0}{\overset{p-1}{\sum}}c_kX(kX+Y)^{r-1}+\underset{l=0}{\overset{p-1}\sum}d_l(X+lY)^{r-1}Y=0.
 \end{equation}
 We want to show that $A=B=c_k=d_l=0$, for all $k,l\in\mathbb{F}_p$. It is enough to show that $c_k=d_l=0$, for all $k,l\neq 0$, since that implies that 
 $AX^r+ BY^r+c_0XY^{r-1}+d_0X^{r-1}Y=0$, hence $A=B=c_0=d_0=0$ too. As the matrix $\left( \begin{smallmatrix}
                                                                                        0&1\\
                                                                                        1&0
                                                                                      \end{smallmatrix} \right)$ 
 flips the coefficients $c_k$'s to $d_l$'s in \eqref{LI}, it is enough to show that 
 $d_l=0$ for each $l=1,2,\cdots,p-1$.           
 Let us define for $i,j\geq 0$, $$C_i:=\underset{k=1}{\overset{p-1}\sum}c_kk^i,\: \: D_j=\underset{l=1}{\overset{p-1}\sum}d_ll^j.$$ 
 Note that $C_i,\: D_j$ depends only on
 the congruence classes of $i,j \mod (p-1)$. By the non-vanishing of Vandermode determinant, if  $D_1=D_2=\cdots D_{p-1}=0$, 
 then $(d_1,\cdots,d_{p-1})=(0,\cdots,0)$. Thus the proof reduces to showing that $D_t=0$, for $1\leq t\leq p-1$. 
  
 Comparing the coefficients of $X^{r-t-1}Y^{t+1}$ in both sides of \eqref{LI}, we get 
 \begin{equation}\label{key}
   \binom{r-1}{t+1}C_{r-2-t}+\binom{r-1}{t}D_t=0,\:\text{for } 1\leq t\leq r-3.
 \end{equation}

 Let $r-1=r_mp^m+\cdots+r_1p+r_0$, as in \eqref{C}, with $r_i$ being the rightmost non-zero coefficient. 
 Note that $0\leq r_0< p-1$,
 as by hypothesis $p\nmid r$. 

 We will first consider the $D_t$'s with $1\leq t<r_i$.
 Suppose $r_0=0$, then for $1\leq t< r_i$, choose $t^\prime :=tp^i$. Clearly $r-3\geq t^\prime\equiv t\mod (p-1)$, 
 so we apply it to equation \eqref{key}, 
 to get $D_{t^\prime}=D_t=0$, since $\binom{r-1}{t^\prime+1}\equiv 0\not\equiv\binom{r-1}{t^\prime}\mod p$, by Lucas' theorem.
 If $r_0\neq0$, i.e., $r_i=r_0$, let $i^\prime> 0$ be the smallest positive integer such that
 $r_{i^\prime}\neq 0$. 
 Then we choose $t^\prime:=t+p^{i^\prime}-1\equiv t\mod (p-1)$ and check that $t^\prime\leq r-3$. Now equation \eqref{key} gives us the following two linear equations:
 \begin{eqnarray*}
  \binom{r-1}{t+1}C_{r-2-t} + \binom{r-1}{t}D_t=0,\\
  \binom{r-1}{t^\prime+1}C_{r-2-t} + \binom{r-1}{t^\prime}D_t=0.
 \end{eqnarray*}
 The determinant of the corresponding matrix  is congruent to 
 $$\binom{r_0}{t+1}\binom{r_{i^\prime}}{1}\binom{r_0}{t-1}-\binom{r_0}{t}\binom{r_{i^\prime}}{1}\binom{r_0}{t} \equiv
   -\binom{r_{i^\prime}}{1}\binom{r_0}{t-1}\binom{r_0}{t}\cdot\dfrac{r_0+1}{t(t+1)} \not\equiv 0 \mod p,$$ 
 since we have $0<r_0+1, r_{i^\prime}\leq p-1$. This shows that $D_t=C_{r-2-t}=0$.

 Next we deal with the $D_t$'s, for $r_i\leq t\leq p-1$. As $\Sigma=r_i+r_{i+1}+\cdots+r_m\geq p$, we can write $t$ as $t=r_i+s_{i+1}+\cdots +s_{m}$ with 
 $0\leq s_j\leq r_j$, for all $j$. 
 Let us choose $t^\prime := r_ip^i+s_{i+1}p^{i+1}+\cdots+s_{m}p^{m}$, clearly $t^\prime\equiv t \mod (p-1)$. We observe that since
 $t=r_i+s_{i+1}+\cdots + s_m\leq p-1<p\leq \underset{j=i}{\overset{m}\sum} r_j=\Sigma$, there must exist at least one $j>i$, such that $s_j<r_j$. 
 This implies that $t^\prime\leq r-1-p^j\leq r-1-p^{i+1}\leq r-3$.
 By our choice of $t^\prime$, we have $t^\prime+1\equiv r_0+1\mod p$, with $0\leq r_0< r_0+1\leq p-1$, as $r_0<p-1$. 
 Hence $\binom{r-1}{t^\prime +1}\equiv 0\mod p$ by Lucas' theorem.
 Also note that $\binom{r-1}{t^\prime}\equiv\binom{r_i}{r_i}\binom{r_{i+1}}{s_{i+1}}\cdots\binom{r_m}{s_m}\not\equiv 0\mod p$. Now using \eqref{key}
 for $t^\prime$, we get $D_{t^\prime}=D_t=0$.

 Thus we conclude $D_t=0$ for all $t=1,2,\cdots, p-1$.
\end{proof}

\begin{lemma}
\label{gendim2}
  Let  $p\geq 2$, $ r=p^{n}u$ with $n\geq 1$ and $p\nmid u$, and say $r\equiv a\mod (p-1)$, with $1\leq a\leq p-1$. 
  If $\,\Sigma:=\Sigma_p(u-1)>a-1$, then  $\dim X_{r-1} =2p+2$.
\end{lemma}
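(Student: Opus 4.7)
I would adapt the argument in the proof of Lemma~\ref{dim7} to the present setting. Assume a linear dependence
\[
AX^r + BY^r + \sum_{k \in \F_p} c_k\, X(kX+Y)^{r-1} + \sum_{l \in \F_p} d_l\, (X+lY)^{r-1} Y = 0,
\]
set $C_i := \sum_{k \in \F_p^*} c_k k^i$ and $D_j := \sum_{l \in \F_p^*} d_l l^j$ (depending only on $i, j$ modulo $p-1$), and compare the coefficient of $X^{r-t-1}Y^{t+1}$ for $1 \leq t \leq r-3$ to obtain the identity \eqref{key}. By the symmetry $(X,Y) \leftrightarrow (Y,X)$ (which swaps the $c_k$ and $d_l$ roles), it is enough to show $D_{\bar t} = 0$ for every residue class $\bar t \in \{1, \ldots, p-1\}$.

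The essential new obstacle, not present in Lemma~\ref{dim7}, is that $p \mid r$. The elementary identity
\[
\binom{r-1}{t+1}\binom{r-1}{t'} - \binom{r-1}{t}\binom{r-1}{t'+1} \;=\; \binom{r-1}{t}\binom{r-1}{t'} \cdot \frac{r(t'-t)}{(t+1)(t'+1)}
\]
shows that the $2 \times 2$ determinant of binomial coefficients from any two equations in \eqref{key} in the same residue class $\bar t \bmod (p-1)$ carries a factor of $r$, and is therefore divisible by $p^n$. Thus the naive method of Lemma~\ref{dim7} will not conclude unless one either tunes the $p$-adic valuations so that $v_p((t+1)(t'+1)) - v_p(t'-t) = n$, or finds a single $t \equiv \bar t \pmod{p-1}$ with $\binom{r-1}{t} \not\equiv 0$ but $\binom{r-1}{t+1} \equiv 0 \pmod p$, whereupon \eqref{key} forces $D_{\bar t} = 0$ directly.

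I would pursue the latter strategy using the digit structure of $r - 1 = p^n u - 1$: its base-$p$ expansion starts with $n$ copies of $p-1$ at positions $0, \ldots, n-1$ and continues with the digits $u_k''$ of $u - 1$ at positions $\geq n$. Since $p \nmid u$, the digit $u_0'' = u_0' - 1$ at position $n$ satisfies $u_0'' \leq p - 2$, giving the first ``gap'' below $p-1$. Motivated by this, for each $\bar t$ I set
\[
t \;:=\; u_0'\, p^n - 1 + \sum_{k \geq 1} s_k\, p^{n+k}, \qquad 0 \leq s_k \leq u_k'', \qquad \sum_{k \geq 1} s_k \equiv \bar t - u_0'' \pmod{p-1}.
\]
Lucas' theorem then gives $\binom{r-1}{t} \not\equiv 0$ (the digits of $t$ are dominated by those of $r-1$), whereas $t + 1 = u_0'\, p^n + \sum_k s_k p^{n+k}$ has position-$n$ digit $u_0' > u_0'' = r_n$, so $\binom{r-1}{t+1} \equiv 0 \pmod p$, yielding $D_{\bar t} = 0$.

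The main remaining difficulty is to verify that such $(s_k)$ always exist. The hypothesis $\Sigma_p(u-1) > a - 1$ (equivalently $\Sigma_p(u-1) \geq a + p - 2$) provides the digit mass $\sum_{k \geq 1} u_k'' = \Sigma_p(u-1) - u_0''$ needed to represent the required residue $\bar t - u_0'' \pmod{p-1}$ in the generic cases. The hard part will be the tight boundary regime (smallest allowed $\Sigma_p(u-1)$ combined with large $u_0'$), in which a particular residue may be unattainable under the constraints $0 \leq s_k \leq u_k''$; for such $\bar t$ I would fall back on the $2 \times 2$ route by choosing a pair $(t_1, t_2) \equiv \bar t \pmod{p-1}$ with $v_p(t_1+1) + v_p(t_2+1) - v_p(t_2 - t_1) = n$ and binomials remaining $p$-adically nonzero, with the hypothesis again ensuring such pairs exist. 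Applying the same argument symmetrically gives $C_{\bar s} = 0$ as well, and combined with the elementary upper bound $\dim X_{r-1} \leq 2p + 2$ this yields the equality.
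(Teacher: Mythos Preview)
Your approach matches the paper's: its proof is the single sentence ``the basic idea is the same as that of Lemma~\ref{dim7}'', and adapting that argument is precisely what you do. You also correctly isolate the new obstacle: since $p\mid r$ the last digit of $r-1$ is $p-1$, so the $2\times 2$ determinant in the proof of Lemma~\ref{dim7} carries the factor $r_0+1=p$ (your identity exhibits the factor $r$ directly).

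That said, your single-$t$ construction is genuinely incomplete. Take $p=5$, $u=9$, $n=1$, so $r=45$, $a=1$, $\Sigma_p(u-1)=4>0$. Here $u_0''=3$, $u_1''=1$, and your $t$ has digits $(4,3,s_1)$ with $s_1\in\{0,1\}$, giving $t\equiv 3+s_1\pmod 4$; only $\bar t=3$ is reached (the choice $s_1=1$ yields the forbidden $t=r-1$), so $\bar t\in\{1,2,4\}$ are all missed. Your acknowledged ``tight boundary regime'' is thus not a corner case but the generic situation when $\Sigma$ takes its minimal value.

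The $2\times 2$ fallback does rescue this example --- for $\bar t=1$ one may take $t_1=9$, $t_2=29$, both with unit binomials by Lucas and $v_5(10)+v_5(30)-v_5(20)=1=n$ --- but note that neither $t_i$ is of your special shape (their digit at position $n=1$ is not $u_0''$). So the fallback requires a genuinely different recipe for $(t_1,t_2)$, not a pair drawn from your first construction, and as written it is only an existence claim. Producing that recipe uniformly in $(p,n,u,\bar t)$ is the substantive bookkeeping the paper is suppressing.
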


\begin{proof} We skip the details of this proof as the basic idea is the same as that of Lemma \ref{dim7}.
\end{proof}

\begin{remark}
 The two lemmas above are valid for all congruence classes $a\mod (p-1)$. Special cases for $a=1$ have been already used in Section \ref{a=1}. 
 Thus the proof of the structure of $X_{r-1}$ for 
 $a=1$, given in Section \ref{a=1}, becomes complete now. Next we will study the structure of $X_{r-1}$, for $r$ lying in higher congruence 
 classes, i.e., for $2\leq a\leq p-1$. The structure of $X_{r-1}$ for $r\equiv 1\mod (p-1)$ will be used as the first step of an inductive process. 
 Note that the condition $2\leq a\leq p-1$ implies that the prime $p$ under consideration is odd. 
\end{remark}

In spite of the lemmas above, the module $X_{r-1}$ often has small dimension, as we are going to show below. 
The next lemma is complementary to Lemma \ref{dim7}, for $a\geq 2$.
Note that the condition $\Sigma_p(r-1)=a-1$ forces $r$ to 
be prime to $p$.

\begin{lemma}
 \label{dim6} 
 Let $p\geq 3$, $r>p$ and  let $r\equiv a\mod (p-1)$ with $2\leq a\leq p-1$.
 If $\,\Sigma=\Sigma_p(r-1)=a-1$, then $\dim X_{r-1} =2a$. In fact,  $X_r\cong V_{a}$  and $X_{r-1}\cong V_{a-2}\otimes D\oplus V_{a}$ as $M$-modules.
\end{lemma}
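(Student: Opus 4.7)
Since $\Sigma_p(r-1)=a-1\leq p-2$, the last $p$-adic digit of $r-1$ is at most $a-1<p-1$, forcing $p\nmid r$ and hence $\Sigma_p(r)=a$. The plan is to apply the $M$-linear surjection $\phi:X_{r^\prime}\otimes V_1\twoheadrightarrow X_{r-1}$ of Lemma \ref{onto} with $r^\prime=r-1$, after first pinning down the structure of both $X_r$ and $X_{r^\prime}$.

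The key preliminary fact is that whenever $r\geq 1$ satisfies $\Sigma_p(r)=a$ with $1\leq a\leq p-1$, one has $X_r\cong V_a$ as $M$-modules. I would bound $\dim X_r\leq a+1$ using the spanning set $\{X^r,(kX+Y)^r:k\in\F_p\}$: expanding $(kX+Y)^r$ by the binomial theorem and discarding terms with vanishing binomial coefficients by Lucas' theorem, only $a+1$ distinct residues of $r-j$ modulo $p-1$ appear (indexed by $\Sigma_p(j)\in\{0,1,\dots,a\}$), so the powers of $k$ span a space of dimension at most $a+1$. On the other hand, Proposition \ref{es1} embeds $V_a\hookrightarrow V_r/V_r^*$, and the composition $X_r\hookrightarrow V_r\twoheadrightarrow V_r/V_r^*$ sends $X^r$ to a generator of this $V_a$ (since $\theta\nmid X^r$). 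A dimension count then yields $X_r\cong V_a$ and in particular $X_r\cap V_r^*=0$. Applying this to $r^\prime$, which satisfies $\Sigma_p(r^\prime)=a-1\leq p-2$, gives $X_{r^\prime}\cong V_{a-1}$, so by Clebsch--Gordan (valid since $a\leq p-1$), $X_{r^\prime}\otimes V_1\cong V_a\oplus V_{a-2}\otimes D$. The surjection $\phi$ then forces $\dim X_{r-1}\leq 2a$.

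It remains to show $\phi$ is an isomorphism. Since its source has exactly two non-isomorphic irreducible summands $V_a$ and $V_{a-2}\otimes D$, the kernel is $0$, one of these summands, or the whole source. The last is ruled out since $X_{r-1}\neq 0$. If $\ker\phi=V_{a-2}\otimes D$, then $X_{r-1}\cong V_a$ agrees with the submodule $X_r$, forcing $X_r=X_{r-1}$ and contradicting Lemma \ref{Xr n Xr-1}. If $\ker\phi=V_a$, then $X_{r-1}\cong V_{a-2}\otimes D$ would be a singular $M$-module (the twist by $D$ makes every matrix in $N$ act as zero), but $X_{r-1}\ni X^r$ is not annihilated by the singular matrix $\left(\begin{smallmatrix}1&1\\0&0\end{smallmatrix}\right)$, which in fact fixes $X^r$. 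Hence $\phi$ is an isomorphism, and $X_{r-1}\cong V_a\oplus V_{a-2}\otimes D$ has dimension $2a$. I expect the main obstacle to be the preliminary identification $X_r\cong V_a$: while the dimension bound via Lucas is routine, pinning down the image of $X_r$ inside $V_r/V_r^*$ as precisely the $V_a$ submodule from Proposition \ref{es1} (rather than having it land in the quotient $V_{p-a-1}\otimes D^a$) requires care, and may be more cleanly extracted from Glover's structural results \cite[(4.5)]{[G78]}.
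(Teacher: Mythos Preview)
Your proposal is correct and follows essentially the same route as the paper: bound $\dim X_{r'}\leq a$ using the digit-sum hypothesis, invoke \cite[(4.5)]{[G78]} (which you correctly identify as the clean way to get $X_{r'}/X_{r'}^*\cong V_{a-1}$, hence $X_{r'}\cong V_{a-1}$), apply Clebsch--Gordan and the surjection $\phi$ of Lemma~\ref{onto}, and finish with Lemma~\ref{Xr n Xr-1}. The only cosmetic difference is that the paper phrases the dimension bound via the factorization $(\alpha X+\beta Y)^{r'}=\prod_j(\alpha X^{p^j}+\beta Y^{p^j})^{r_j}$ over $\F_p$, which directly exhibits a polynomial of degree $a-1$ in $(\alpha,\beta)$, and then disposes of the injectivity of $\phi$ in one line (since $X_r\cong V_a$ already sits properly inside $X_{r-1}$), whereas you spell out the case analysis on $\ker\phi$; both arguments are equivalent.
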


\begin{proof}
 Write $r^\prime= r-1=r_mp^m+r_{m-1}p^{m-1}+\cdots r_1p+r_0$, where each $r_i\in\{0,1,\cdots, p-1\}$ and $r_0+\cdots +r_m=a-1$, by hypothesis. For any 
 $\alpha,\beta\in\mathbb{F}_p$, we have
 \begin{eqnarray*}
    (\alpha X+\beta Y)^{r^\prime} & = & (\alpha X+\beta Y)^{r_mp^m}\cdots(\alpha X+\beta Y)^{r_1p}\cdot(\alpha X+\beta Y)^{r_0}\\
            & \equiv & (\alpha X^{p^m}+\beta Y^{p^m})^{r_m}\cdots(\alpha X^p+\beta Y^p)^{r_1}\cdot(\alpha X+\beta Y)^{r_0}\mod p.
 \end{eqnarray*}
 Considering this as a polynomial in $\alpha$ and $\beta$, we get that
 $$(\alpha X+\beta Y)^{r^\prime}=\underset{i=0}{\overset{a-1}{\sum}}\alpha^i\beta^{a-1-i}F_i(X,Y),$$
 where the polynomials $F_i$ are independent of $\alpha$ and $\beta$. Hence $X_{r^\prime}$ is contained in the $\mathbb{F}_p$-span of the polynomials 
 $F_0, F_1,\ldots, F_{a-1}$. Thus $\mathrm{dim}(X_{r^\prime})\leq a$. But by \cite[(4.5)]{[G78]}, we know $X_{r^\prime}/X_{r^\prime}^*\cong V_{a-1}$, hence
 $X_{r^\prime}\cong V_{a-1}$ and $X_{r^\prime}^*=0$.
 By \cite[(5.2)]{[G78]} and Lemma \ref{onto}, we have the surjection
 \begin{equation}
 \label{ontomap}
   X_{r^\prime}\otimes V_1\cong V_{a-2}\otimes D\oplus V_{a}\xrightarrow{\phi} X_{r-1}.
 \end{equation}
 By \cite[(4.5)]{[G78]}, we know that $V_{a}$ is a quotient of $X_r$. 
 In fact, by an argument similar to the one just given for $X_{r'}$, we have $X_r\cong V_{a}$ and $X_r^*=0$, since $\Sigma_p(r)=a$.
 Now by Lemma \ref{Xr n Xr-1}, the surjection  in \eqref{ontomap} has to be 
 an isomorphism. 
\end{proof}


\begin{lemma}\label{X_r}
 For any $r>p$, let $r\equiv a\mod (p-1)$, for some $1\leq a\leq p-1$. 
 Then either $X_r^*=0$ or $X_r^*\cong V_{p-a-1}\otimes D^a$ as an $M$-module. 
\end{lemma}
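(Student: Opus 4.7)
The plan is to set up a short exact sequence $0 \to X_r^* \to X_r \to V_a \to 0$ giving a dimension bound, and then, when $X_r^* \neq 0$, to exhibit an explicit highest-weight vector in $X_r^*$ that identifies it with $V_{p-a-1} \otimes D^a$.

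First, I would show that $X_r/X_r^* \cong V_a$ as an $M$-module. Since $X_r$ is generated by $X^r$, its image in $V_r/V_r^*$ is the $M$-submodule generated by the image of $X^r$. Under the isomorphism $V_r/V_r^* \cong V_{a+p-1}/V_{a+p-1}^*$ used in the proof of Proposition~\ref{es1}, the image of $X^r$ corresponds to that of $X^{a+p-1}$, which generates the unique minimal submodule $V_a$ in the exact sequence~\eqref{ses*}. This yields $0 \to X_r^* \to X_r \to V_a \to 0$, and hence the dimension bound $\dim X_r^* \leq (p+1)-(a+1) = p-a = \dim(V_{p-a-1}\otimes D^a)$, using the general bound $\dim X_r \leq p+1$.

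Second, assuming $X_r^* \neq 0$, I would construct a candidate generator: $F(X,Y) = \sum_{k \in \F_p}(kX+Y)^r \in X_r$. Expanding via the binomial theorem and applying the mod-$p$ version of~\eqref{identity} to compute $\sum_k k^{r-j}$, one checks that $F$ is supported on monomials $X^{r-j}Y^j$ with $0 < j < r$ and $j \equiv a \mod (p-1)$. Lemma~\ref{comb1} together with Lemma~\ref{elelemma}(i) then gives $F \in V_r^*$, hence $F \in X_r^*$. A direct verification shows $F$ is fixed by the upper triangular unipotent $\left(\begin{smallmatrix}1 & 1\\ 0 & 1\end{smallmatrix}\right)$ (via the re-indexing $k \mapsto k+1$), and the torus $\mathrm{diag}(\alpha,\beta)$ acts on $F$ by the scalar $\beta^r = \beta^a$. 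Thus $F$ is a highest-weight vector of weight $\beta^a$ lying in $X_r^*$.

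Third, I would conclude by noting that any non-zero irreducible $M$-module with a highest-weight vector of weight $\beta^a$ and dimension at most $p-a$ must be $V_{p-a-1}\otimes D^a$ (whose highest vector $X^{p-a-1}\otimes e_{D^a}$ has weight $\alpha^{p-a-1}(\alpha\beta)^a = \beta^a$, since $\alpha^{p-1}=1$). Hence $\langle F\rangle_M$ is a cyclic quotient of $V_{p-a-1}\otimes D^a$, and combined with the dimension bound from Step~1, $X_r^* = \langle F\rangle_M \cong V_{p-a-1}\otimes D^a$. The case $p \mid r$ reduces to the case $p \nmid r$ via the Frobenius-type isomorphism $\iota: X_u \xrightarrow{\sim} X_r$ (with $r = p^n u$, $p \nmid u$) from the proof of Lemma~\ref{star elt}(ii). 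The main obstacle is the converse direction, namely that $F = 0$ forces $X_r^* = 0$: this requires arguing that the socle of any non-zero $X_r^*$ must contain a highest-weight vector of weight $\beta^a$, which by the cyclic structure of $X_r$ generated by $X^r$ must coincide (up to scalar) with $F$, so that $F \neq 0$.
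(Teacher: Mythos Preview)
Your approach differs from the paper's: rather than inducting on $a$ via the surjection $\phi:X_{r'}\otimes V_1\twoheadrightarrow X_{r-1}$ (using the inductive hypothesis $X_{r'}^*\cong V_{p-a}\otimes D^{a-1}$ to read off the JH factors of $X_{r-1}$ and then locate the $(p-a)$-dimensional piece $X_r^*$ among them), you attempt a direct highest-weight argument. This would be more self-contained if it worked, but your acknowledged ``main obstacle'' is a real gap that your sketch does not close.

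The problem is that your claim ``the highest-weight vector in the socle of $X_r^*$ must coincide with $F$'' presupposes that the vectors in $X_r$ fixed by the upper unipotent $\left(\begin{smallmatrix}1&1\\0&1\end{smallmatrix}\right)$ are exactly $\F_p X^r + \F_p F$. That holds precisely when the spanning set $\{X^r,(kX+Y)^r:k\in\F_p\}$ is linearly independent, i.e., when $\dim X_r=p+1$; but you have not ruled out the intermediate range $a+1<\dim X_r<p+1$, where this set has relations, $F$ may vanish, and $X_r^*$ could still be nonzero with some other unipotent-fixed generator of a different torus weight. Invoking ``the cyclic structure of $X_r$'' does not help: cyclic modules need not have simple socle. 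One honest repair is combinatorial: for $p\nmid r$ and $1\le a\le p-2$, Lucas' theorem shows $F=0$ exactly when $\Sigma_p(r)=a$, and a separate argument (as in the proof of Lemma~\ref{dim6}) then gives $X_r\cong V_a$, hence $X_r^*=0$. Two smaller points: for $a=p-1$ your $F$ has $X^r$-coefficient $-1$ and so is not in $V_r^*$ (use $X^r+\sum_k(kX+Y)^r$ instead, as in Lemma~\ref{Xrprime*}); and in Step~3 the direction is reversed --- $V_{p-a-1}\otimes D^a$ is the unique irreducible \emph{quotient} of $\langle F\rangle_M$, giving $\dim\langle F\rangle_M\ge p-a$, which combined with your upper bound yields the desired isomorphism.
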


\begin{proof} 
 The statement for $a=1$ is proved in Section \ref{a=1} (cf. Propositions \ref{prop1}, \ref{prop2}, \ref{prop4}). 
 Now we use the method of induction to prove it for $2\leq a\leq p-1$.
 So assuming the statement for all congruence classes less than $a$, we want to prove it for  $r\equiv a \mod (p-1)$.

 Recall that for $r=p^nu$ with $p\nmid u$, $X_{u}\cong X_r$, where the $M$-linear isomorphism is simply given by $F\mapsto F^{p^n}$, for any $F\in X_u$.
 So the largest singular submodules $X_{u}^*$ and $X_r^*$ are $M$-isomorphic. We also note that if 
 $u<p$, then $X_{u}=V_{u}$ and $X_r^*=X_{u}^*=0$. Thus without loss of generality 
 we may assume that $u>p$ and $p\nmid r$, i.e., $r=u>p$.  We denote $r^\prime:=r-1$, as usual. 
 As $p\nmid r$ and $\Sigma\equiv a-1\mod (p-1)$, $r$ satisfies the hypothesis of either Lemma \ref{dim6} or Lemma \ref{dim7}.
 So $\dim X_{r-1}$ is either $2a$ or $2p+2$ respectively.
 In the first case we know $X_r\cong V_{a}$, so $X_r^*=0$ by \cite[(4.5)]{[G78]}. 
 
 In the second case, Lemma \ref{dim7}, Lemma \ref{dim3} and \cite[(4.5)]{[G78]} together tell us that $\dim X_r^*=p-a$. 
 By Lemma \ref{onto}, we have the $M$-map 
 $\phi: X_{r^\prime}\otimes V_1\twoheadrightarrow X_{r-1}$. As $\dim X_{r-1}=2p+2$ in this case, $X_{r^\prime}$ must have  maximum possible dimension 
 $p+1$, and hence $ X_{r^\prime}^*\neq0$. By the induction hypothesis, we get $X_{r^\prime}^*\cong V_{p-a}\otimes D^{a-1}$. Counting dimensions we conclude
 that the map $\phi$ above must be an isomorphism. The short exact sequence
 $$0\longrightarrow X_{r^\prime}^*\otimes V_1\longrightarrow X_{r^\prime}\otimes V_1\longrightarrow V_{a-1}\otimes V_1\longrightarrow 0$$
 implies by \cite[(5.2)]{[G78]}, that we have
 $$0\longrightarrow V_{p-a-1}\otimes D^a\oplus V_{p-a+1}\otimes D^{a-1}\longrightarrow X_{r-1}\longrightarrow V_{a-2}\otimes D\oplus V_{a}\longrightarrow 0,$$
 giving the $M$-module structure of $X_{r-1}$.
 
 The obvious choice for the $(p-a)$-dimensional subspace $X_r^*\subseteq X_{r-1}$ is $V_{p-a-1}\otimes D^a$. In the 
 particular case when $2a=p+1$, we eliminate the possibility of $X_r^*$ being isomorphic to $V_{a-2}\otimes D$ by using the fact that $X_r$ is indecomposable as 
 an $M$-module \cite[(4.6)]{[G78]}, so $X_r$ cannot be $M$-isomorphic to $V_{a-2}\otimes D\oplus V_{a}$.  So $X_r^*\cong V_{p-a-1}\otimes D^a$, as desired.
\end{proof}

Recall that in the case $a=1$, we have $X_r^*/X_r^{**}\neq 0$ if and only if $p\nmid r$,  by Lemma \ref{star elt}. 
Our next lemma shows that for all the higher congruence classes, $X_r^*/X_r^{**}=0$ always.
 
\begin{lemma}
\label{fullcase}
Let  $p\geq 3$, $r\geq 2p$ and $r\equiv a\mod (p-1)$. If $\,2\leq a\leq p-1$, then  $X_r^*=X_r^{**}$.
\end{lemma}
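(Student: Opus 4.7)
The plan is to split into two cases based on whether $p \mid r$, combining the Frobenius-twist argument already used in the proof of Lemma~\ref{star elt}~(ii) with a determinant-twist comparison based on Proposition~\ref{es2}.

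First I would handle the case $p \mid r$. Writing $r = p^n u$ with $n \geq 1$ and $p \nmid u$, the $M$-linear isomorphism $\iota : X_u \to X_r$ given by $H(X,Y) \mapsto H(X,Y)^{p^n}$ restricts to an isomorphism $X_u^* \xrightarrow{\sim} X_r^*$. For any $H = \theta H' \in X_u^*$ one has $\iota(H) = \theta^{p^n} (H')^{p^n}$, which is divisible by $\theta^2$ since $p^n \geq p \geq 3$. Hence $X_r^* \subseteq V_r^{**}$, giving $X_r^* = X_r^{**}$ in this case.

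Next I would treat $p \nmid r$. Since $r \geq 2p$ and $p \nmid r$, in fact $r \geq 2p+1$, putting us in the hypothesis of Proposition~\ref{es2}. By Lemma~\ref{X_r}, either $X_r^* = 0$ (and we are done), or $X_r^* \cong V_{p-a-1} \otimes D^a$ is an irreducible $\Gamma$-module with determinant twist $D^a$. In the latter case I would study the natural $\Gamma$-linear composite
\[
  X_r^* \hookrightarrow V_r^* \twoheadrightarrow V_r^*/V_r^{**}
\]
and show it vanishes. By Proposition~\ref{es2}, the Jordan--H\"older factors of $V_r^*/V_r^{**}$ carry determinant twists $D$ and $D^{a-1}$; since $p \geq 3$ and $2 \leq a \leq p-1$, we have $a \not\equiv 1$ and $a \not\equiv a-1 \pmod{p-1}$, so $D^a$ agrees with neither. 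Hence the irreducible $X_r^*$ is not isomorphic to any JH factor of $V_r^*/V_r^{**}$, so cannot embed into it, forcing the composite to be zero. This gives $X_r^* \subseteq V_r^{**}$, and therefore $X_r^* = X_r^{**}$ as required.

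The one mildly subtle point is the borderline case $2a = p+1$, in which $\dim X_r^* = \dim(V_{a-2} \otimes D)$; here a naive dimension count would be inconclusive, and it is precisely the mismatch of determinant twists ($D^a$ versus $D$) that rules out the would-be embedding. Tracking determinant characters rather than dimensions is really the heart of the argument.
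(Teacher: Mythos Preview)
Your proof is correct and follows essentially the same approach as the paper's: identify $X_r^*$ via Lemma~\ref{X_r} and then argue that the irreducible module $V_{p-a-1}\otimes D^a$ cannot occur as a $\Gamma$-submodule of $V_r^*/V_r^{**}$ by inspecting the Jordan--H\"older factors from Proposition~\ref{es2}. Your determinant-twist comparison is exactly how one verifies the paper's terse assertion that ``$V_{p-a-1}\otimes D^a$ is not a $\Gamma$-submodule of $V_r^*/V_r^{**}$''.

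The only difference is that you split off the case $p\mid r$ and handle it by the Frobenius-twist argument of Lemma~\ref{star elt}~(ii). This is correct but redundant: Lemma~\ref{X_r} already applies uniformly to all $r>p$ (its proof absorbs the Frobenius reduction internally), so the paper simply invokes it once, after disposing of $r=2p$ by the observation $X_{2p}\cong V_2$, $X_{2p}^*=0$. Your version trades a one-line base case for an extra paragraph, but nothing is lost or gained mathematically.
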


\begin{proof} 
 For $r=2p$, one has $X_r\cong V_2$ and $X_r^*=0$, so there is nothing to prove. So assume $r>2p$. If $X_r^*/X_r^{**}\neq 0$, 
 then by Lemma \ref{X_r} we must have $X_r^*/X_r^{**}\cong V_{p-a-1}\otimes D^a$. But the short exact sequences \eqref{ses**2} and \eqref{ses**3}
 given by Proposition \ref{es2} tell us that $V_{p-a-1}\otimes D^a$ is not a $\Gamma$-submodule of $V_r^*/V_r^{**}$. This is a contradiction.
\end{proof}

The next lemma is complementary to Lemma \ref{gendim2} for $a\geq 2$ and is comparable to  Lemma \ref{dim4} for $a=1$. It generates examples of $X_{r-1}$ with relatively small dimension, 
under the hypothesis $p\mid r$.

\begin{lemma}
\label{dim8}
 Let $p\geq 3$,  $r>2p$ and let $p$ divide $ r\equiv a\mod (p-1)$ with  $2\leq a\leq p-1$. Write $r=p^n u $, where $n\geq 1$ and $p\nmid u$.
 If  $\,\Sigma_p(u-1)=a-1$, then $\dim X_{r-1} =a+p+2$.
\end{lemma}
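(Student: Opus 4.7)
The plan is to follow the blueprint of Lemma \ref{dim7}, which established linear independence of the $2p+2$ standard generators of $X_{r-1}$ by a Lucas-theorem analysis of the system of equations obtained from comparing coefficients. Here the crucial new feature is that $p\mid r$ forces the bottom $p$-adic digit of $r-1$ to be $p-1$, which makes the $2\times 2$ determinant argument of Lemma \ref{dim7} degenerate in certain cases, producing exactly $p-a$ non-trivial linear dependences among the generators.

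Concretely, I would posit a relation
$$AX^r + BY^r + \sum_{k\in\F_p} c_k\, X(kX+Y)^{r-1} + \sum_{l\in\F_p} d_l\, (X+lY)^{r-1}Y \;=\; 0,$$
expand, and compare the coefficients of $X^{r-t-1}Y^{t+1}$ for $1\leq t\leq r-3$ to obtain the basic system
$$\binom{r-1}{t+1}\, C_{r-t-2} + \binom{r-1}{t}\, D_t \;=\; 0,$$
with $C_i := \sum_{k=1}^{p-1}c_k k^i$ and $D_j := \sum_{l=1}^{p-1}d_l l^j$, each depending only on its index modulo $p-1$. The coefficient comparisons at the extreme monomials $X^r$, $Y^r$, $XY^{r-1}$, and $X^{r-1}Y$ will then express $A$, $B$, $c_0$, $d_0$ in terms of the $C_i$ and $D_j$, so the dimension of the solution space is controlled by the residue-class analysis of the basic system.

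Next I would examine the basic system residue class by residue class modulo $p-1$, using the base-$p$ expansion
$$r-1 \;=\; (p-1)\,\tfrac{p^n-1}{p-1} + (u_0-1)p^n + \sum_{j\ge 1}u_j p^{n+j},$$
whose digit sum is $n(p-1)+a-1$. For each class $c$, the equations with $t\equiv c\pmod{p-1}$ constrain only the pair $(C_{a-c-2},D_c)$. The key computation of Lemma \ref{dim7} yields the identity
$$\binom{r_0}{t+1}\binom{r_0}{t-1}-\binom{r_0}{t}^2 \;\equiv\; -\tfrac{r_0+1}{(t+1)(r_0-t+1)}\binom{r_0}{t}^2 \pmod p,$$
which vanishes because $r_0=p-1$. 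Thus any two equations whose indices $t,t'$ differ only in the zeroth digit yield a singular $2\times 2$ system, and one is forced to pick a $t'$ involving the higher digit positions $n,n+1,\ldots,n+k$. The sparsity condition $\Sigma_p(u-1)=a-1$ allows this rescue for exactly $a-1$ residue classes $c$, where a second, linearly independent equation can be manufactured via the non-zero higher digits; in those classes we obtain $C_{a-c-2}=D_c=0$. For the remaining $p-a$ residue classes, every admissible pair $(t,t')$ produces proportional coefficient vectors, so only one scalar relation between $C_{a-c-2}$ and $D_c$ survives, contributing a one-dimensional family of solutions per class.

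The main obstacle will be the combinatorial bookkeeping that isolates exactly the $a-1$ residue classes that can be rescued and verifies that for each of the other $p-a$ classes every available pair of equations is degenerate; this is essentially a careful tracking of which $t'\equiv c\pmod{p-1}$ with $t'\le r-3$ have binomial coefficient vector $(\binom{r-1}{t'+1},\binom{r-1}{t'})$ non-parallel to the one at $t$, using Lucas' theorem with the prescribed digit pattern. Once this case analysis is completed, the space of dependences has dimension exactly $p-a$, so $\dim X_{r-1}=(2p+2)-(p-a)=a+p+2$, as claimed.
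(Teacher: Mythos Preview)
Your approach is genuinely different from the paper's. The paper does not attack the linear system directly; instead it argues structurally via the surjection $\phi\colon X_{r'}\otimes V_1\twoheadrightarrow X_{r-1}$ of Lemma~\ref{onto}. Since $p\mid r$, one has $p\nmid r'$ and $\Sigma_p(r'-1)\geq p+a-3$, so the earlier results (Lemmas~\ref{dim7}, \ref{dim3} for $a\ge 3$; Propositions~\ref{prop1}, \ref{prop2} for $a=2$) force $\dim X_{r'}=p+1$, whence $X_{r'}^*\cong V_{p-a}\otimes D^{a-1}$ by Lemma~\ref{X_r}. Thus $X_{r'}\otimes V_1$ has the four Jordan--H\"older factors $V_{p-a-1}\otimes D^a$, $V_{p-a+1}\otimes D^{a-1}$, $V_{a-2}\otimes D$, $V_a$. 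On the other hand $X_r\cong X_u\cong V_a$ (using $\Sigma_p(u-1)=a-1$ and Lemma~\ref{dim6}), so Lemma~\ref{dim3} gives $\dim X_{r-1}<2p+2$ and exactly one factor must die under $\phi$. The paper then exhibits explicit witnesses (an element of $X_{r'}^*$ surviving in $X_{r-1}$, Lemma~\ref{mixed} for $V_{a-2}\otimes D$, and $X_r$ itself for $V_a$) to show that the three factors of total dimension $a+p+2$ survive, so $V_{p-a-1}\otimes D^a$ is the one that vanishes.

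Your route is more elementary in spirit but the combinatorial heart is not yet secured. Two points in particular need care. First, you must show not only that $a-1$ residue classes can be ``rescued'' to rank $2$, but also that the \emph{remaining} $p-a$ classes genuinely have rank $\le 1$: that is, for every $t,t'\equiv c\pmod{p-1}$ in those classes the vectors $\bigl(\binom{r-1}{t+1},\binom{r-1}{t}\bigr)$ are parallel. The observation that the particular $2\times 2$ determinant of Lemma~\ref{dim7} vanishes when $r_0=p-1$ is not enough, since Lemma~\ref{gendim2} (the complementary case $\Sigma_p(u-1)>a-1$, also with $r_0=p-1$) does achieve full rank by using other choices of $t'$; you would have to explain precisely why the hypothesis $\Sigma_p(u-1)=a-1$ blocks all such alternatives in exactly $p-a$ classes. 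Second, the identification of which $a-1$ classes are good is asserted but not pinned down. The paper's module-theoretic argument sidesteps both issues, and in addition delivers the $M$-module structure of $X_{r-1}$ recorded in Proposition~\ref{JHfactors}(ii), which is used later. If you complete your bookkeeping it would give an independent, purely combinatorial proof of the dimension count, but it would not directly recover that structural statement.
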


\begin{proof} 
 Since $p\mid r$, we have $p\nmid r^\prime:=r-1\equiv p-1 \mod p$. As $r>p$, we know that $\Sigma_p(r^\prime-1)$ is at least $ p-1$. Since
 $\Sigma_p(r^\prime-1)\equiv r^\prime-1\equiv a-2\mod (p-1)$, we have $\Sigma_p(r^\prime-1)\geq p+a-3$. If $a\geq 3$, then by Lemma \ref{dim7} and Lemma \ref{dim3} we 
 get $\dim X_{r^\prime} =p+1$.
 On the other hand if $a=2$, then 
 Propositions \ref{prop1} and \ref{prop2} show that $\dim X_{r'}=p+1$.                                     
 In any case, $X_{r^\prime}^*\neq 0$, 
 by \cite[(4.5)]{[G78]}.                                       
 Therefore $X_{r^\prime}^*\cong V_{p-a}\otimes D^{a-1}$, by
 Lemma \ref{X_r}. Now using \cite[(4.5), (5.2)]{[G78]}, we get 
 \begin{displaymath}
\xymatrix{
    0 \ar[r] &  V_{p-a-1}\otimes D^a\oplus V_{p-a+1}\otimes D^{a-1}\ar[r] & X_{r^\prime}\otimes V_1 \ar@{->>}[d]_{\phi} \ar[r] & V_{a-2}\otimes D\oplus V_{a} \ar[r] & 0. \\
     &   & X_{r-1}  &  \\   }
\end{displaymath}

 Since $r=p^nu$, we know that $X_{u}\cong X_r$. 
 As $\Sigma_p(u-1)=a-1$, we have $X_{u}\cong V_{a}$, by Lemma \ref{dim6} if $u>p$, and by the fact that $V_u=V_a$ is irreducible if $u<p$.  So 
 $X_r\cong V_{a}$ has dimension $a+1<p+1$, and Lemma \ref{dim3} implies that $\dim X_{r-1} <2p+2$. 
 Hence all of the four JH factors of $X_{r^\prime}\otimes V_1 $ given  above cannot 
 occur in the quotient $X_{r-1}$. We will show that the last three JH factors, i.e., $V_{p-a+1}\otimes D^{a-1}$, $V_{a-2}\otimes D$, and $V_{a}$ 
 always occur in $X_{r-1}$. Therefore  $V_{p-a-1}\otimes D^a$ must die in $X_{r-1}$. Adding the dimensions of the 
 JH factors we will get $\dim X_{r-1} =a+p+2$, as desired.  

 For $F(X,Y)\in V_{m}$, let $\delta_m(F):=F_X\otimes X+F_Y\otimes Y\in V_{m-1}\otimes V_1$, where $F_X, F_Y$ are the usual partial derivatives of $F$.
 It is shown on \cite[p. 449]{[G78]} that the injection $V_{p-a+1}\otimes D^{a-1}\hookrightarrow (V_{p-a}\otimes D^{a-1})\otimes V_1$
 can be given by the map $\frac{1}{p-a+1}\cdot \delta_{p-a+1}$ ($\bar{\phi}$ in the notation of \cite{[G78]}). 
 So the monomial $X^{p-a+1}\in V_{p-a+1}\otimes D^{a-1}$ maps to 
 the non-zero element $X^{p-a}\otimes X\in(V_{p-a}\otimes D^{a-1})\otimes V_1$.
 Let $F\in X_{r^\prime}^*$ be the image of  $X^{p-a}$ under the isomorphism $V_{p-a}\otimes D^{a-1}\overset{\sim}\rightarrow X_{r^\prime}^*$.
 Then under the composition of maps 
 $$V_{p-a+1}\otimes D^{a-1}\hookrightarrow  (V_{p-a}\otimes D^{a-1})\otimes V_1\overset{\sim}\longrightarrow X_{r^\prime}^*\otimes V_1 \xrightarrow{\phi} X_{r-1},$$ 
 the monomial $X^{p-a+1}\mapsto X^{p-a}\otimes X\mapsto F\otimes X\mapsto X\cdot F\neq 0$ in $X_{r-1}$.
 This shows that $V_{p-a+1}\otimes D^{a-1}$ must be a JH factor of $X_{r-1}$.

 Note that by  hypothesis $r\equiv 0\not\equiv a \mod p$.  We refer to Lemma \ref{mixed} in Section \ref{a=3..} 
 to show that $V_{a-2}\otimes D$ is a JH factor of $X_{r-1}$, if $3\leq a\leq p-1$.
For $a=2$,  the hypothesis implies that $r$ must be of the form $r=p^n(p^m+1)$ with $m+n\geq 2$. An elementary calculation using Lucas' theorem shows that in this case the $M$-linear map 
$X_{r-1}\twoheadrightarrow V_{0}\otimes D$, 
 sending $X^{r-1}Y\mapsto 1$, is  well-defined. Thus $V_{a-2}\otimes D$ is a JH factor of $X_{r-1}$ for all $a\geq 2$. 

Finally, $V_{a}$ is always a JH factor of $X_{r-1}$, by \cite[(4.5)]{[G78]}.
\end{proof}

We write $r=p^nu$, where $p\nmid u$ and $n\geq 0$ is an integer.  
Note that if $p\nmid r$, then $u$ simply equals $r$, and $n=0$. Clearly $u\equiv a\mod (p-1)$ as well, so the sum of the $p$-adic digits of $u-1$  
lies in the congruence class $a-1 \mod (p-1)$. We denote this sum by $\Sigma:=\Sigma_p(u-1)$.
Then the $M$-module structure of $X_{r-1}$ proved in this section can be summarized as follows:
 
\begin{prop}
\label{JHfactors} 
 Let $p\geq 3$, $r>2p$, and $r\equiv a\mod (p-1)$ with $2\leq a\leq p-1$. Then with the notation as  above, 
 \begin{enumerate}
 \item[(i)] If  $\,\Sigma=a-1$ and $p\nmid r$ (i.e., $n=0$), then $\dim X_{r-1} =2a$ and as an $M$-module
 $$X_{r-1}\cong V_{a-2}\otimes D\:\oplus\: V_{a}.$$
 \item[(ii)] If  $\,\Sigma=a-1$ and $p\mid r$ (i.e., $n>0$), then $\dim X_{r-1} =a+p+2$, and 
 we have the following exact sequence of $M$-modules:
 $$0\longrightarrow V_{p-a+1}\otimes D^{a-1}\longrightarrow X_{r-1}\longrightarrow V_{a-2}\otimes D\:\oplus\: V_{a}\rightarrow 0.$$
 \item[(iii)] If  $\,\Sigma>a-1$, or equivalently $\Sigma\geq p+a-2$, then $\dim X_{r-1} =2p+2$, 
 and we have the following exact sequence of $M$-modules:
   $$0\rightarrow V_{p-a-1}\otimes D^a\:\oplus\:V_{p-a+1}\otimes D^{a-1}\rightarrow X_{r-1}\rightarrow V_{a-2}\otimes D\:\oplus\: V_{a}\rightarrow 0.$$
 \end{enumerate}
\end{prop}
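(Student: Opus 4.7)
The plan is to assemble the proposition by appealing to the lemmas already established in this section. First, I would verify that the three cases partition all possibilities: since $\Sigma\equiv a-1\mod (p-1)$ and $u\geq 2$ (because $r>2p$ together with $2\leq a\leq p-1$ excludes $u=1$), the sum $\Sigma$ equals $a-1$ exactly when the base-$p$ digits of $u-1$ sum minimally, and otherwise $\Sigma\geq p+a-2$. Together with the dichotomy $n=0$ or $n\geq 1$, this gives exactly the three cases listed.

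Part (i) is then a direct restatement of Lemma \ref{dim6}. For part (iii), I would start by invoking Lemma \ref{dim7} (when $p\nmid r$) or Lemma \ref{gendim2} (when $p\mid r$) to conclude $\dim X_{r-1}=2p+2$. The surjection $\phi:X_{r'}\otimes V_1\twoheadrightarrow X_{r-1}$ from Lemma \ref{onto} has source of dimension at most $2(p+1)$, so this bound forces $\dim X_{r'}=p+1$ and $\phi$ to be an isomorphism; in particular $X_{r'}^*$ is non-zero, and since $r'\equiv a-1\mod (p-1)$, Lemma \ref{X_r} identifies $X_{r'}^*\cong V_{p-a}\otimes D^{a-1}$. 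Tensoring the short exact sequence $0\to X_{r'}^*\to X_{r'}\to V_{a-1}\to 0$ with $V_1$ and expanding via Glover's Clebsch--Gordan decomposition \cite[(5.2)]{[G78]},
\begin{align*}
V_{p-a}\otimes D^{a-1}\otimes V_1 &\cong V_{p-a-1}\otimes D^a\oplus V_{p-a+1}\otimes D^{a-1},\\
V_{a-1}\otimes V_1 &\cong V_{a-2}\otimes D\oplus V_a,
\end{align*}
yields precisely the claimed exact sequence for $X_{r-1}$.

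For part (ii), Lemma \ref{dim8} supplies the dimension $\dim X_{r-1}=a+p+2$. Running the same tensor construction as in (iii), the surjection from $X_{r'}\otimes V_1$ (of dimension $2p+2$) onto $X_{r-1}$ has kernel of dimension $p-a$. Since this is exactly $\dim(V_{p-a-1}\otimes D^a)$, and in our range no other subset of the four distinct JH factors sums to $p-a$, the vanishing composition factor must be $V_{p-a-1}\otimes D^a$. The remaining three factors then fit into the asserted extension of $V_{a-2}\otimes D\oplus V_a$ by $V_{p-a+1}\otimes D^{a-1}$.

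The main obstacle lies in case (ii): while the numerical matching above identifies the unique combinatorial possibility for the kernel, to turn this into a proof one must independently verify that the other three factors really do survive in $X_{r-1}$, since otherwise the kernel could absorb additional composition factors. This verification is exactly the content of the explicit realization arguments in the proof of Lemma \ref{dim8}, using Glover's derivation map $\delta$, Lemma \ref{mixed} (for $V_{a-2}\otimes D$ when $a\geq 3$), a direct Lucas-theorem computation (for $a=2$), and \cite[(4.5)]{[G78]} (for $V_a$); once these are in hand, the exact sequence in (ii) follows formally from the dimension count.
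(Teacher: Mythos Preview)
Your proposal is correct and mirrors the paper's approach exactly: the proposition is stated there as a summary (``the $M$-module structure of $X_{r-1}$ proved in this section can be summarized as follows''), with (i) coming from Lemma~\ref{dim6}, (ii) from Lemma~\ref{dim8}, and (iii) from the exact sequence derived inside the proof of Lemma~\ref{X_r} together with Lemma~\ref{gendim2} for the case $p\mid r$.

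One small correction: your parenthetical claim in (ii) that ``no other subset of the four distinct JH factors sums to $p-a$'' is not literally true for every $a$ (for instance $\dim(V_{a-2}\otimes D)=a-1=p-a$ when $2a=p+1$), so the dimension count alone does not pin down the kernel. You already recognize this in your final paragraph and correctly defer to the survival arguments inside Lemma~\ref{dim8}; just drop the misleading combinatorial remark.
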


\section{$r \equiv 2 \mod (p-1)$}
\label{a=2}

With the notation of Section \ref{basics}, let $a=2$. In particular, $p$ will denote an odd prime throughout this section. 
By Proposition \ref{es2}, we get  $V_r^*/V_r^{**}\cong V_{p-1}\otimes D \oplus V_0\otimes D$ as a $\Gamma$-module. 
For $r^\prime:=r-1$, we know by \cite[(4.5)]{[G78]} that $X_{r^\prime}/X_{r^\prime}^*\cong V_1.$ 
As usual by Lemma \ref{onto} and \cite[(5.2)]{[G78]}, we get the following commutative
diagram of $M$-modules:
\begin{eqnarray}
\label{smalldiagram}
\xymatrix{
    0 \ar[r] &  X_{r^\prime}^*\otimes V_1\ar@{->>}[d]\ar[r] & X_{r^\prime}\otimes V_1 \ar@{->>}[d]_{\phi} \ar[r] & V_{0}\otimes D\oplus V_{2} \ar@{->>}[d]\ar[r] & 0 \\
    0 \ar[r] & \phi(X_{r^\prime}^*\otimes V_1)\ar[r] & X_{r-1} \ar[r] & \dfrac{X_{r-1}}{\phi(X_{r^\prime}^*\otimes V_1)} \ar[r] & 0. \\
       }
\end{eqnarray}

\begin{lemma} 
\label{lem1}
 Let  $p\geq 3$, $r>2p$ and  $r\equiv 2 \mod(p-1)$. If $p\nmid r$, then there is a $M$-linear surjection $$X_{r-1}^*/X_{r-1}^{**}\twoheadrightarrow V_0\otimes D.$$
\end{lemma}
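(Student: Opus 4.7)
The plan is to exhibit a single concrete element of $X_{r-1}^*$ whose image under an $M$-equivariant projection onto the $V_0\otimes D$ summand of $V_r^*/V_r^{**}$ (from Proposition~\ref{es2}) is nonzero; since $V_0\otimes D$ is one-dimensional, this will force the induced $M$-linear composition $X_{r-1}^*/X_{r-1}^{**}\hookrightarrow V_r^*/V_r^{**}\twoheadrightarrow V_0\otimes D$ to be surjective. The element I would take is
$$F(X,Y) := X^{r-1}Y - XY^{r-1} = (1-w)\cdot X^{r-1}Y,$$
with $w := \left(\begin{smallmatrix}0 & 1\\ 1 & 0\end{smallmatrix}\right)\in\Gamma$; this lies in $X_{r-1}$ since both summands are in the $\Gamma$-orbit of the generator $X^{r-1}Y$.

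Next I would verify $F\in V_r^*$: factoring $F = XY(X^{r-2}-Y^{r-2})$ and invoking $(p-1)\mid(r-2)$ (from $r\equiv 2\bmod (p-1)$), one sees that $X^{p-1}-Y^{p-1}$ divides $X^{r-2}-Y^{r-2}$, so $\theta = XY(X^{p-1}-Y^{p-1})$ divides $F$; explicitly
$$F = \theta\cdot G,\qquad G := \sum_{k=0}^{m-1} X^{(m-1-k)(p-1)}\,Y^{k(p-1)},\qquad m := \tfrac{r-2}{p-1}\geq 3.$$
The core of the argument is then to describe an $M$-equivariant map $\psi\colon V_r^*\to V_0\otimes D$ directly by
$$\psi(\theta H) := -\sum_{[x:y]\in \mathbb{P}^1(\mathbb{F}_p)} H(x,y).$$
The congruence $r-p-1\equiv 0\bmod(p-1)$ makes the evaluation of a degree $r-p-1$ form well-defined on $\mathbb{P}^1(\mathbb{F}_p)$; since $\Gamma$ permutes $\mathbb{P}^1(\mathbb{F}_p)$ while $\gamma\cdot\theta = (\det\gamma)\,\theta$ for all $\gamma\in M$, the map $\psi$ is $\Gamma$-equivariant with target of type $V_0\otimes D$. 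The identity $t\cdot\theta = 0$ for singular $t\in M$ extends this to all of $M$, and $\psi$ kills $V_r^{**} = \theta^2\,V_{r-2p-2}$ because $\theta$ vanishes at every $\mathbb{F}_p$-point.

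Evaluating $\psi$ on $F$, one uses $\lambda^{p-1}=1$ for $\lambda\in\mathbb{F}_p^*$ to compute $G(1,0)=G(0,1)=1$ and $G(\lambda,1)=m$, whence
$$\psi(F) \equiv -\bigl(1+1+(p-1)m\bigr)\equiv m-2 \pmod p.$$
To conclude, I would observe that the hypothesis $p\nmid r$ gives $m-2\not\equiv 0\bmod p$: otherwise $m\equiv 2\bmod p$ would force $r-2\equiv 2(p-1)\equiv -2\bmod p$, i.e.\ $p\mid r$. Hence $\psi(F)\neq 0$, so the $M$-linear composition $X_{r-1}^*/X_{r-1}^{**}\hookrightarrow V_r^*/V_r^{**}\xrightarrow{\psi} V_0\otimes D$ is nonzero with one-dimensional target, hence surjective. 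The main subtlety will be verifying the $M$-equivariance of $\psi$ and that $\psi$ really maps (up to a nonzero scalar) onto the $V_0\otimes D$ summand identified in Proposition~\ref{es2}; the latter holds because $V_{p-1}\otimes D$ and $V_0\otimes D$ are non-isomorphic irreducible $M$-modules, so any nonzero $M$-homomorphism $V_r^*/V_r^{**}\to V_0\otimes D$ automatically kills the other summand.
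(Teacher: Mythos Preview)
Your proof is correct and follows essentially the same approach as the paper: both use the element $F=X^{r-1}Y-XY^{r-1}\in X_{r-1}^*$ and compute that its image in $V_0\otimes D$ is the unit $(r-2p)/(p-1)=m-2$. The only difference is that the paper obtains the projection $V_r^*/V_r^{**}\twoheadrightarrow V_0\otimes D$ by composing the isomorphisms $V_r^*/V_r^{**}\cong(V_{r-p-1}/V_{r-p-1}^*)\otimes D\cong(V_{2p-2}/V_{2p-2}^*)\otimes D$ from \cite[(4.1),(4.2)]{[G78]} with the explicit formula of \cite[Lem.~5.3(ii)]{[Br03]}, whereas you realize the same map directly as $\theta H\mapsto -\sum_{[x:y]\in\mathbb{P}^1(\mathbb{F}_p)}H(x,y)$; this makes your argument more self-contained at the cost of verifying the $M$-equivariance and the vanishing on $V_r^{**}$ by hand, both of which you do correctly.
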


\begin{proof}
 We will show  that the following  composition of maps is non-zero, hence onto:
 $$\dfrac{X_{r-1}^*}{X_{r-1}^{**}}\hookrightarrow\dfrac{V_r^*}{V_r^{**}}
 \cong\dfrac{V_{r-p-1}}{V_{r-p-1}^*}\otimes D\xrightarrow{\psi^{-1}\otimes\,\mathrm{id}}\dfrac{V_{2p-2}}{V_{2p-2}^*}\otimes D\twoheadrightarrow V_0\otimes D,$$
 where $\psi$ is the $M$-isomorphism in \cite[(4.2)]{[G78]} and the rightmost surjection is induced from the $\Gamma$-linear map in \cite[5.3(ii)]{[Br03]}. Note that the composition is automatically $M$-linear, 
 since both its domain and range are singular modules.
 
 Consider $F(X,Y)=X^{r-1}Y-XY^{r-1}=\theta(X,Y)\cdot F_1(X,Y)\in X_{r-1}^*$, where $$F_1(X,Y)=\underset{i=0}{\overset{(r-p-1)/(p-1)}{\sum}} X^{r-p-1-i(p-1)}Y^{i(p-1)}.$$
 Then the image of $F\in V_r^*/V_r^{**}$ in $\dfrac{V_{r-p-1}}{V_{r-p-1}^*}\otimes D$ is $F_1\otimes 1$.
 Further, 
 \begin{eqnarray*}
  \psi^{-1}(F_1)&=&\psi^{-1}(X^{r-p-1})+\left(\frac{r-p-1}{p-1}-1\right)\cdot\psi^{-1}(X^{r-2p}Y^{p-1})+\psi^{-1}(Y^{r-p-1})\\
  &=& X^{2p-2}+\frac{r-2p}{p-1}\cdot X^{p-1}Y^{p-1}+Y^{2p-2}=F_2,\text{ say, }  
  \end{eqnarray*} as an element of $ V_{2p-2}/V_{2p-2}^*$.
 Next we compute the image of $F_2\otimes 1$ in $V_0\otimes D$ under the map given in \cite[5.3(ii)]{[Br03]}, which is 
 $0+(\frac{r-2p}{p-1})\cdot 1+0=\frac{r-2p}{p-1}\not\equiv 0\mod p$, 
 as  $p\nmid r$. Thus the polynomial $F(X,Y)\in X_{r-1}^*/ X_{r-1}^{**}$ maps to a non-zero element in $V_0\otimes D$, as desired.
\end{proof}

\begin{lemma}\label{lem2}
Let $p\geq 3$, $r>2p$, and $r\equiv 2\mod (p-1)$. If $p\nmid r^\prime:=r-1$, then $\phi(X_{r^\prime}^*\otimes V_1)\subseteq X_{r-1}^*$, but 
 $\,\phi(X_{r^\prime}^*\otimes V_1)\nsubseteq X_{r-1}^{**}$.
\end{lemma}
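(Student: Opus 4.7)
My plan has two parts, matching the two assertions of the lemma.

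For the first inclusion, the argument is immediate. The map $\phi$ sends $F \otimes G$ to $F \cdot G$, and Lemma \ref{onto} guarantees $\mathrm{Im}(\phi) \subseteq X_{r-1}$. If $F \in X_{r'}^*$, then $\theta$ divides $F$ in $\F_p[X,Y]$, hence divides $F \cdot G$ for every $G \in V_1$. So $\phi(F \otimes G) \in X_{r-1} \cap V_r^* = X_{r-1}^*$, giving the stated containment.

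For the non-inclusion, my strategy is to exhibit a single pure tensor $F \otimes G \in X_{r'}^* \otimes V_1$ whose image is \emph{not} divisible by $\theta^2$. Since $r' = r-1 \equiv 1 \pmod{p-1}$, $p \nmid r'$, and $r' > 2p-1$ (because $r > 2p$), Lemma \ref{star elt}(i) applies to $X_{r'}$ and yields $X_{r'}^*/X_{r'}^{**} \cong V_{p-2} \otimes D$, which is nonzero. So I may fix an $F \in X_{r'}^* \setminus X_{r'}^{**}$ and write $F = \theta F_1$ with $F_1 \in V_{r-p-2}$; note $\theta \nmid F_1$, since otherwise $\theta^2 \mid F$, contradicting $F \notin X_{r'}^{**}$.

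It then suffices to find $G \in V_1$ with $\theta \nmid F_1 G$, for then $\theta^2 \nmid F\cdot G = \phi(F \otimes G)$, showing $\phi(F \otimes G) \notin V_r^{**} \supseteq X_{r-1}^{**}$. Here the key ingredient is the factorization
\begin{equation*}
\theta \;=\; X \cdot Y \cdot \prod_{k \in \F_p^*}(X - kY)
\end{equation*}
into $p+1$ pairwise non-associate linear primes in the UFD $\F_p[X,Y]$. If both $\theta \mid F_1 X$ and $\theta \mid F_1 Y$ were to hold, comparing prime factorizations would force $Y\prod_k(X-kY) \mid F_1$ and $X\prod_k(X-kY) \mid F_1$ respectively, and their l.c.m.\ $\theta$ would then divide $F_1$, contradicting $\theta \nmid F_1$. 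Hence either $G = X$ or $G = Y$ supplies the desired element. The main (and essentially only) substantive step is this short factorization argument; everything else is direct bookkeeping using Lemma \ref{onto}, Lemma \ref{star elt}, and the definitions of $V_r^*$, $V_r^{**}$.
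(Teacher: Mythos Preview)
Your proof is correct. The first inclusion is handled identically to the paper. For the non-inclusion, however, your argument differs from the paper's: the paper takes the explicit element $F(X,Y)=\sum_{k\in\F_p}(kX+Y)^{r'}\in X_{r'}^*$ already produced in the proof of Lemma~\ref{star elt}(i), and observes directly that the coefficient of $X^{r-1}Y$ in $\phi(F\otimes X)=F\cdot X$ equals $-r'\not\equiv 0\pmod p$, so $\theta^2\nmid F\cdot X$. Your route instead invokes only the \emph{conclusion} of Lemma~\ref{star elt}(i) to obtain some $F\in X_{r'}^*\setminus X_{r'}^{**}$, and then uses the factorization $\theta=XY\prod_{k\in\F_p^*}(X-kY)$ in the UFD $\F_p[X,Y]$ to show that at least one of $F\cdot X$, $F\cdot Y$ lies outside $V_r^{**}$. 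The paper's approach pins down a specific witness ($G=X$ always works for that particular $F$) with a single coefficient check, while yours is coordinate-free and would apply verbatim to any $F\in X_{r'}^*\setminus X_{r'}^{**}$ without further computation.
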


\begin{proof}
 The inclusion is obvious from the definition of the map $\phi$.   
 For the rest, consider $F(X,Y):=\underset{k=0}{\overset{p-1}{\sum}}(kX+Y)^{r^\prime}\in X_{r^\prime}^*$, as shown in the proof of Lemma \ref{star elt} (i).
 The coefficient of $X^{r-1}Y$ in  $\phi(F\otimes X)=F(X,Y)\cdot X$ is the same as the coefficient of $X^{r^\prime-1}Y$ in $F$, which is congruent to
 $-r^\prime\mod p$.  
 By hypothesis $p\nmid r^\prime$, so  $\theta^2$ cannot divide 
 $\phi(F\otimes X)$. Therefore $\phi(X_{r^\prime}^*\otimes V_1)\nsubseteq X_{r-1}^{**}$.
\end{proof}

Lemmas \ref{lem1} and \ref{lem2} will be  used to study  the $\Gamma$-module $Q$. If $p\nmid r(r-1)$, 
then we can use both the lemmas  simultaneously to prove the following simple structure of $Q$.

\begin{prop}\label{prop3}
 Let $p\geq 3$,  $r>2p$ and $r\equiv 2\mod (p-1)$. 
 If $p\nmid r(r-1)$, then  $Q\cong V_{p-3}\otimes D^2$ as a $\Gamma$-module. 
\end{prop}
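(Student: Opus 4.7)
The plan is to compute $Q$ via the bottom row of Diagram~(\ref{bigdiadram})
$$0 \to \frac{V_r^*}{V_r^{**} + X_{r-1}^*} \to Q \to \frac{V_r}{V_r^* + X_{r-1}} \to 0,$$
showing that the right end is $V_{p-3}\otimes D^2$ and the left end is zero.

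For the right end, the hypothesis $p\nmid r-1$ forces $\Sigma_p(r-1)\geq p$, since $\Sigma_p(r-1)=p-1$ would make $r-1$ a pure $p$-power. Then Lemma~\ref{dim7} gives $\dim X_{r-1}=2p+2$, and Proposition~\ref{JHfactors}(iii) (with $a=2$) supplies the $M$-linear exact sequence
$$0 \to V_{p-3}\otimes D^2 \oplus V_{p-1}\otimes D \to X_{r-1} \to V_0\otimes D \oplus V_2 \to 0,$$
whose sub is singular as an $M$-module. Of the four Jordan-H\"older factors of $X_{r-1}$ as an $M$-module, only $V_2$ is non-singular. Since $V_r^*$ is the largest singular $M$-submodule of $V_r$, any $M$-submodule of $X_{r-1}$ whose JH factors are all singular lies in $V_r^*\cap X_{r-1}=X_{r-1}^*$; I will apply this to the preimage in $X_{r-1}$ of the singular summand $V_0\otimes D$ of the quotient to conclude that $X_{r-1}^*$ contains all three singular JH factors and that $X_{r-1}/X_{r-1}^*\cong V_2$. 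Since $V_r/V_r^*$ has $V_2$ as its unique minimal $M$-submodule by Proposition~\ref{es1}, this identifies the image of $X_{r-1}$ in $V_r/V_r^*$ with this $V_2$, yielding
$$V_r/(V_r^* + X_{r-1}) \cong (V_r/V_r^*)/V_2 \cong V_{p-3}\otimes D^2.$$

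For the left end, Proposition~\ref{es2} (for $a=2$) gives the $\Gamma$-splitting $V_r^*/V_r^{**}\cong V_{p-1}\otimes D \oplus V_0\otimes D$, and I will show $X_{r-1}^*/X_{r-1}^{**}$ is all of $V_r^*/V_r^{**}$. Lemma~\ref{lem1} (using $p\nmid r$) surjects $X_{r-1}^*/X_{r-1}^{**}$ onto $V_0\otimes D$, giving one of the JH factors. For the $V_{p-1}\otimes D$ factor, Lemma~\ref{lem2} (using $p\nmid r-1$) ensures $\phi(X_{r'}^*\otimes V_1)\subseteq X_{r-1}^*$ has nonzero image in $X_{r-1}^*/X_{r-1}^{**}$. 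Applying Propositions~\ref{prop1} and \ref{prop2} to $r'=r-1$ (with $a'=1$ and $p\nmid r'$) gives $X_{r'}^*\cong V_{p-2}\otimes D$, and \cite[(5.2)]{[G78]} then gives
$$X_{r'}^*\otimes V_1 \cong V_{p-3}\otimes D^2 \oplus V_{p-1}\otimes D.$$
The summand $V_{p-3}\otimes D^2$ is not a JH factor of $V_r^*/V_r^{**}$, so its image must be killed in $X_{r-1}^*/X_{r-1}^{**}\hookrightarrow V_r^*/V_r^{**}$; the remaining nonzero image forces $V_{p-1}\otimes D$ to appear as a JH factor of $X_{r-1}^*/X_{r-1}^{**}$. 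A dimension count then gives $X_{r-1}^*/X_{r-1}^{**}=V_r^*/V_r^{**}$, so the left end vanishes.

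Combining both computations, $Q\cong V_{p-3}\otimes D^2$ as a $\Gamma$-module. The most delicate point is using Lemmas~\ref{lem1} and~\ref{lem2} in tandem to pin down both JH factors of $X_{r-1}^*/X_{r-1}^{**}$; the persistence of $V_{p-1}\otimes D$ in this quotient crucially requires the Section~\ref{a=1} description of $X_{r'}^*$ combined with the explicit construction of $\phi(X_{r'}^*\otimes V_1)$ in Lemma~\ref{lem2}.
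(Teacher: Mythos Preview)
Your argument is correct and follows the paper's approach closely: both use Diagram~\eqref{bigdiadram} together with Lemmas~\ref{lem1} and~\ref{lem2} to show the left end vanishes and the right end is $V_{p-3}\otimes D^2$. Two small points deserve tightening. First, the value to exclude is $\Sigma_p(r-1)=1$, not $p-1$: since $\Sigma_p(r-1)\equiv 1\pmod{p-1}$, the only value below $p$ is $1$, and $\Sigma_p(r-1)=1$ is what forces $r-1$ to be a $p$-power. Second, your claim that ``any $M$-submodule of $X_{r-1}$ whose JH factors are all singular lies in $X_{r-1}^*$'' is not an immediate consequence of $V_r^*$ being the largest singular submodule, since an extension of singular modules need not itself be singular; it does hold here because the unique minimal $M$-submodule of $V_r/V_r^*$ is the non-singular $V_2$ (cf.\ \cite[(4.6)]{[G78]}), so $V_0\otimes D$ cannot embed in $V_r/V_r^*$. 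The paper avoids this subtlety by invoking Lemma~\ref{lem1} already at that stage to place $V_0\otimes D$ inside $X_{r-1}^*$, and then computes $X_{r-1}^{**}\cong V_{p-3}\otimes D^2$ explicitly via Lemmas~\ref{X_r} and~\ref{fullcase} rather than arguing, as you do, that $V_{p-3}\otimes D^2$ must vanish in $X_{r-1}^*/X_{r-1}^{**}$ for lack of room in $V_r^*/V_r^{**}$.
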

\begin{proof}
 Since $p\nmid r(r-1)$, we have $\Sigma=\Sigma_p(r-1)>a-1=1$ with the notation of  Proposition \ref{JHfactors}, 
 as otherwise $\Sigma=1$, which forces $r-1$ to be a $p$-power.
 So by  Proposition \ref{JHfactors} (iii), $\dim X_{r-1} =2p+2$,
 and we have
 \begin{equation}
   0\longrightarrow V_{p-3}\otimes D^2\oplus V_{p-1}\otimes D\longrightarrow X_{r-1}\longrightarrow V_0\otimes D\oplus V_2\longrightarrow 0,
 \end{equation}  
 noting that here $V_{p-3}\otimes D^2\oplus V_{p-1}\otimes D=\phi(X_{r^\prime}^*\otimes V_1)\subseteq X_{r-1}$.
 
 Restricting this short exact sequence of $M$-modules to the largest singular submodules, we get the structure of $X_{r-1}^*$:
 \begin{equation}
 \label{JHfact}
   0\longrightarrow V_{p-3}\otimes D^2\oplus V_{p-1}\otimes D\longrightarrow X_{r-1}^*\longrightarrow V_0\otimes D\longrightarrow 0,
 \end{equation}
 where the last surjection follows from the fact that $V_0\otimes D$ is a JH factor of $X_{r-1}^*$, by Lemma \ref{lem1}.

 Next we want to compute the JH factors of $X_{r-1}^{**}$.  
 By Lemma \ref{dim3} and  \cite[(4.5)]{[G78]}, we have
 $\dim X_r^* =p-2$. 
 By Lemmas \ref{fullcase} and \ref{X_r}, we get that $X_r^{**}=X_r^*\cong V_{p-3}\otimes D^2$, hence this submodule of
 $\phi(X_{r^\prime}^*\otimes V_1)\,(\subseteq X_{r-1}^*)$ is in fact contained in $X_{r-1}^{**}$. 
 By Lemma \ref{lem2}, the other JH factor $V_{p-1}\otimes D$ of $\phi(X_{r^\prime}^*\otimes V_1)$ cannot 
 occur in $X_{r-1}^{**}$.  Since $V_0\otimes D$ occurs only once in $X_{r-1}$,  Lemma \ref{lem1} tells that $V_0\otimes D$ cannot be a JH factor of $X_{r-1}^{**}$.
 Thus we get $X_{r-1}^{**}\cong V_{p-3}\otimes D^2$.

 So now we know all the JH factors of $X_{r-1}$, $X_{r-1}^*$ and $X_{r-1}^{**}$. As $X_{r-1}^*/X_{r-1}^{**}$ has two JH factors $V_{p-1}\otimes D$ and
 $V_0\otimes D$, we have $X_{r-1}^*/X_{r-1}^{**}= V_r^*/V_r^{**}$, and the left module in the bottom row of Diagram \eqref{bigdiadram} vanishes. 
 On the other hand $X_{r-1}/X_{r-1}^*$ has only one JH factor $V_2$, so the short exact sequence \eqref{ses*} of $\Gamma$-modules implies that the rightmost module 
 in the bottom row of Diagram \eqref{bigdiadram} is $V_{p-3}\otimes D^2$. Therefore $Q$ is $\Gamma$-isomorphic to $V_{p-3}\otimes D^2$. 
\end{proof}

Next we will treat the case  $p\mid r(r-1)$. 
Note that if $p\mid (r-1)$, then $p\nmid r$ and we can still use Lemma \ref{lem1}. Similarly if $p\mid r$, then $p\nmid (r-1)$ and we can  use Lemma \ref{lem2}. 

\begin{prop}
\label{pdr1} 
  Let $p\geq 3$, $r>2p$ and $r\equiv 2\mod (p-1)$.
  If  $p\mid r-1$, then  $Q\cong V_{p-1}\otimes D\oplus V_{p-3}\otimes D^2$ as a $\Gamma$-module.
\end{prop}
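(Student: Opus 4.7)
The plan is to analyze the bottom row of Diagram \eqref{bigdiadram} and to verify that the resulting short exact sequence of $\Gamma$-modules splits. The hypothesis $p\mid r-1$ forces $p\nmid r$, so Lemma \ref{lem1} still applies and yields a surjection $X_{r-1}^*/X_{r-1}^{**}\twoheadrightarrow V_0\otimes D$. I split into two sub-cases according to $\Sigma:=\Sigma_p(r-1)$ (which satisfies $\Sigma\equiv 1\mod (p-1)$): either $\Sigma=1$, which, since $r>2p$, forces $r=p^n+1$ for some $n\geq 2$, or else $\Sigma\geq p$.

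In the first sub-case, Proposition \ref{JHfactors}(i) gives $X_{r-1}\cong V_0\otimes D\oplus V_2$ as $M$-modules, so $X_{r-1}^*=V_0\otimes D$ (the singular summand) and $X_{r-1}/X_{r-1}^*=V_2$; since $X_{r-1}^*$ is irreducible of dimension $1$, the surjection of Lemma \ref{lem1} must be an isomorphism, forcing $X_{r-1}^{**}=0$. In the second sub-case, Proposition \ref{JHfactors}(iii) gives $\dim X_{r-1}=2p+2$ together with the short exact sequence
$$0\to V_{p-3}\otimes D^2\oplus V_{p-1}\otimes D\to X_{r-1}\to V_0\otimes D\oplus V_2\to 0,$$
whose submodule equals $\phi(X_{r'}^*\otimes V_1)$ for $r':=r-1$; here $X_{r'}^*\cong V_{p-2}\otimes D$ by Lemma \ref{X_r}, as $\phi$ must be an isomorphism on dimensional grounds. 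Because $p\mid r'$, Lemma \ref{star elt}(ii) applied to $r'$ gives $X_{r'}^{**}=X_{r'}^*$, so every element of $X_{r'}^*$ is divisible by $\theta^2$ in $V_{r'}$; hence $\phi(X_{r'}^*\otimes V_1)\subseteq V_r^{**}$, and consequently $V_{p-3}\otimes D^2\oplus V_{p-1}\otimes D\subseteq X_{r-1}^{**}$. A dimension count combined with Lemma \ref{lem1} then forces equality, so $X_{r-1}^*/X_{r-1}^{**}\cong V_0\otimes D$ in this sub-case as well.

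With $X_{r-1}^*/X_{r-1}^{**}\cong V_0\otimes D$ and $X_{r-1}/X_{r-1}^*\cong V_2$ in both sub-cases, I invoke Diagram \eqref{bigdiadram}. By the split sequence \eqref{ses**2}, $V_r^*/V_r^{**}\cong V_{p-1}\otimes D\oplus V_0\otimes D$, and the non-zero map from $X_{r-1}^*/X_{r-1}^{**}$ into it detected by Lemma \ref{lem1} identifies the image with the $V_0\otimes D$-summand (since $V_{p-1}\otimes D$ contains no copy of $V_0\otimes D$). Hence the left column of the bottom row is $V_{p-1}\otimes D$. By the non-split sequence \eqref{ses*}, the image of $X_{r-1}/X_{r-1}^*=V_2$ in $V_r/V_r^*$ is the unique $V_2$-submodule, so the right column is $V_{p-3}\otimes D^2$. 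This yields the $\Gamma$-exact sequence
$$0\to V_{p-1}\otimes D\to Q\to V_{p-3}\otimes D^2\to 0.$$

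To conclude, I will use that $V_{p-1}$ is the mod $p$ Steinberg representation of $\Gamma=\GL_2(\F_p)$; since $\dim V_{p-1}=p$ equals the $p$-part of $|\Gamma|$, it is projective, hence also injective (as $\F_p[\Gamma]$ is a Frobenius algebra), and the same is true after the twist by $D$. The sequence above therefore splits, yielding $Q\cong V_{p-1}\otimes D\oplus V_{p-3}\otimes D^2$. The main obstacle will be the identification of $X_{r-1}^{**}$ in the second sub-case, which requires combining Proposition \ref{JHfactors}(iii) with Lemma \ref{star elt}(ii) applied at $r'=r-1$ in order to transport $\theta^2$-divisibility from $X_{r'}^*$ to $\phi(X_{r'}^*\otimes V_1)$.
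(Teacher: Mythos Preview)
Your proof is correct and follows essentially the same approach as the paper's: split into the two sub-cases $\Sigma_p(r-1)=1$ and $\Sigma_p(r-1)\geq p$, use Proposition~\ref{JHfactors} and Lemma~\ref{star elt}(ii) (applied to $r'=r-1$) together with Lemma~\ref{lem1} to compute $X_{r-1}^*/X_{r-1}^{**}\cong V_0\otimes D$ and $X_{r-1}/X_{r-1}^*\cong V_2$, then read off the bottom row of Diagram~\eqref{bigdiadram} and split using injectivity of $V_{p-1}\otimes D$. Your ``dimension count combined with Lemma~\ref{lem1}'' step is slightly terse---the paper makes explicit that $V_0\otimes D$ occurs only once as a JH factor of $X_{r-1}$ and hence cannot lie in $X_{r-1}^{**}$---but the argument is the same.
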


\begin{proof}
 If $r-1$ is a pure $p$-power, then by Proposition \ref{JHfactors} (i), $X_{r-1}\cong V_0\otimes D\oplus V_2$ as an $M$-module. Hence 
 $X_{r-1}^*\cong V_0\otimes D$, being the largest singular submodule of $X_{r-1}$. By Lemma \ref{lem1}, $X_{r-1}^{**}$ must be zero. In other words,
 $X_{r-1}/X_{r-1}^*\cong V_2$ and $X_{r-1}^*/X_{r-1}^{**}\cong V_0\otimes D$.
 
 If $r-1$ is not a pure $p$-power, then $\Sigma_p(r-1)\geq p$. By Proposition \ref{JHfactors} (iii), $\dim X_{r-1} =2p+2$ 
 and we have the exact sequence of $M$-modules 
 $$0\longrightarrow  V_{p-3}\otimes D^2\oplus V_{p-1}\otimes D\longrightarrow X_{r-1}\longrightarrow V_0\otimes D\oplus V_2\longrightarrow 0.$$
 Note that $ V_{p-3}\otimes D^2\oplus V_{p-1}\otimes D=\phi(X_{r^\prime}^*\otimes V_1)\subseteq X_{r-1}$, with the notation of Diagram \eqref{smalldiagram}.
 Similarly as in the proof of Proposition \ref{prop3}, we use Lemma \ref{lem1} to obtain
 $$0\longrightarrow \phi(X_{r^\prime}^*\otimes V_1)\longrightarrow X_{r-1}^*\longrightarrow V_0\otimes D\longrightarrow 0.$$
 As $p\mid r^\prime$, by Lemma \ref{star elt} (ii) we have $X_{r^\prime}^*=X_{r^\prime}^{**}$, therefore 
 $\phi(X_{r^\prime}^*\otimes V_1)=\phi(X_{r^\prime}^{**}\otimes V_1)$ must be contained inside $X_{r-1}^{**}$. On the other hand $V_0\otimes D$ occurs only 
 once in $X_{r-1}$, so Lemma \ref{lem1} implies that $V_0\otimes D$ cannot be a JH factor of $X_{r-1}^{**}$. 
 Thus $X_{r-1}^{**}=\phi(X_{r^\prime}^*\otimes V_1)\cong V_{p-3}\otimes D^2\oplus V_{p-1}\otimes D$. Thus again we get 
 $X_{r-1}/X_{r-1}^*\cong V_2$ and $X_{r-1}^*/X_{r-1}^{**}\cong V_0\otimes D$.
 
 Therefore by the  exact sequences \eqref{ses*} and \eqref{ses**2}, the bottom row of Diagram \eqref{bigdiadram} reduces to
 $$ 0\longrightarrow V_{p-1}\otimes D\longrightarrow Q\longrightarrow V_{p-3}\otimes D^2 \longrightarrow 0.$$
 Now the result follows as $V_{p-1}\otimes D$ is an injective $\Gamma$-module. 
\end{proof}

\begin{prop}
\label{pdr2}  
 Let $p\geq 3$,  $r>2p$ and $r\equiv 2\mod (p-1)$.
 If $p\mid r$, then the $\Gamma$-module structure of $Q$ is given by 
 $$0\longrightarrow V_0\otimes D\longrightarrow Q\longrightarrow V_{p-3}\otimes D^2\longrightarrow 0.$$
\end{prop}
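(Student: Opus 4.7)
My plan is to parallel the proof of Proposition~\ref{pdr1}, with Lemma~\ref{lem2} playing the role there played by Lemma~\ref{lem1}. The target is to establish
\[X_{r-1}/X_{r-1}^{*}\iso V_2 \qquad\text{and}\qquad X_{r-1}^{*}/X_{r-1}^{**}\iso V_{p-1}\otimes D,\]
after which the claimed extension for $Q$ falls out of the bottom row of Diagram~\eqref{bigdiadram} combined with the split sequence~\eqref{ses**2} and the non-split sequence~\eqref{ses*} for $a=2$.

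Write $r=p^{n}u$ with $n\geq 1$ (since $p\mid r$) and $p\nmid u$, and set $\Sigma:=\Sigma_{p}(u-1)$; since $\Sigma\equiv 1\pmod{p-1}$, either $\Sigma=1$ or $\Sigma\geq p$. Proposition~\ref{JHfactors} splits this into two cases: part (ii) gives $\dim X_{r-1}=p+4$ with $0\to V_{p-1}\otimes D\to X_{r-1}\to V_0\otimes D\oplus V_2\to 0$ when $\Sigma=1$, and part (iii) gives $\dim X_{r-1}=2p+2$ with $0\to V_{p-3}\otimes D^{2}\oplus V_{p-1}\otimes D\to X_{r-1}\to V_0\otimes D\oplus V_2\to 0$ when $\Sigma\geq p$, the left-hand submodule being $\phi(X_{r^\prime}^{*}\otimes V_1)$ as in the proof of Proposition~\ref{prop3}. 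Combining Lemmas~\ref{fullcase} and~\ref{X_r} with the $p^n$-power isomorphism $X_r\iso X_u$ and Lemma~\ref{dim6} shows $X_r^{**}=X_r^{*}$, equal to $0$ in the first case and to $V_{p-3}\otimes D^{2}$ in the second. The identification $X_{r-1}/X_{r-1}^{*}\iso V_2$ then follows in both cases, because $X_{r-1}/X_{r-1}^{*}$ embeds into $V_r/V_r^{*}$, which by Proposition~\ref{es1} has no $V_0\otimes D$ Jordan--H\"older factor, so the $V_0\otimes D$ summand of the right-hand quotient must already lie in $X_{r-1}^{*}$.

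Since $p\mid r$ forces $p\nmid r^\prime=r-1$, Lemma~\ref{lem2} applies and gives $\phi(X_{r^\prime}^{*}\otimes V_1)\subseteq X_{r-1}^{*}$ but $\phi(X_{r^\prime}^{*}\otimes V_1)\not\subseteq X_{r-1}^{**}$. In the case $\Sigma=1$, a Jordan--H\"older comparison (using that $V_{p-3}\otimes D^{2}$ is not among the JH factors of $X_{r-1}$) forces $\phi(X_{r^\prime}^{*}\otimes V_1)$ to be irreducible of type $V_{p-1}\otimes D$, and Lemma~\ref{lem2} puts $V_{p-1}\otimes D$ in $X_{r-1}^{*}\setminus X_{r-1}^{**}$. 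In the case $\Sigma\geq p$, one has $\phi(X_{r^\prime}^{*}\otimes V_1)\iso V_{p-3}\otimes D^{2}\oplus V_{p-1}\otimes D$, and the $V_{p-3}\otimes D^{2}$-summand is absorbed into $X_{r-1}^{**}$ via $X_r^{**}\subseteq X_{r-1}^{**}$, so Lemma~\ref{lem2} forces the complementary summand $V_{p-1}\otimes D$ to survive in $X_{r-1}^{*}/X_{r-1}^{**}$.

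The main obstacle is to rule out $V_0\otimes D$ as an additional Jordan--H\"older factor of $X_{r-1}^{*}/X_{r-1}^{**}$. Since \eqref{ses**2} splits, $V_r^{*}/V_r^{**}\iso V_{p-1}\otimes D\oplus V_0\otimes D$, and $X_{r-1}^{*}/X_{r-1}^{**}$ embeds into it, so one needs only to show that the projection of this embedding onto the $V_0\otimes D$-summand is zero. I would carry this out by evaluating the projection on the $\det$-character element $F=X^{r-1}Y-XY^{r-1}\in X_{r-1}^{*}$: the explicit computation in the proof of Lemma~\ref{lem1} gives the value $\tfrac{r-2p}{p-1}\bmod p$, which vanishes precisely because $p\mid r$. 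Since $V_0\otimes D$ is one-dimensional with $\det$-character and every $\det$-character contribution from $X_{r-1}^{*}$ can be pulled back to this one-dimensional space by $\Gamma$-linearity, the projection vanishes globally. Substituting $X_{r-1}/X_{r-1}^{*}\iso V_2$ and $X_{r-1}^{*}/X_{r-1}^{**}\iso V_{p-1}\otimes D$ into the bottom row of Diagram~\eqref{bigdiadram} and using Proposition~\ref{es1} then yields the desired exact sequence $0\to V_0\otimes D\to Q\to V_{p-3}\otimes D^{2}\to 0$.
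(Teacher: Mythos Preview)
Your overall architecture matches the paper's closely, and the reductions to the two targets
$X_{r-1}/X_{r-1}^{*}\iso V_{2}$ and $X_{r-1}^{*}/X_{r-1}^{**}\iso V_{p-1}\otimes D$
are correct in spirit. The place where the argument breaks down is the final paragraph, where you try to exclude $V_{0}\otimes D$ from $X_{r-1}^{*}/X_{r-1}^{**}$ by checking that the single element $F=X^{r-1}Y-XY^{r-1}$ projects to $0$ in $V_{0}\otimes D$. Knowing $\eta_{2}(F)=0$ for one element does \emph{not} force the $\Gamma$-map $\eta_{2}\colon X_{r-1}^{*}/X_{r-1}^{**}\hookrightarrow V_{r}^{*}/V_{r}^{**}\twoheadrightarrow V_{0}\otimes D$ to vanish, unless you know that the image of $F$ generates the domain. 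Your appeal to ``$\Gamma$-linearity'' and ``$\det$-character element'' does not supply this: in the hypothetical bad case $X_{r-1}^{*}/X_{r-1}^{**}=V_{p-1}\otimes D\oplus V_{0}\otimes D$, the computation $\eta_{2}(F)=0$ says exactly that the image of $F$ lies in the summand $V_{p-1}\otimes D$, which contains \emph{no} $\det$-eigenvector (it is irreducible of dimension $p$); so $F$ cannot detect the $V_{0}\otimes D$-summand at all, and other elements of $X_{r-1}^{*}$ could still project nontrivially.

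The paper closes this gap with an additional idea. Besides the ``external'' projection $\eta_{2}$ coming from the splitting of $V_{r}^{*}/V_{r}^{**}$, there is an ``internal'' surjection $\eta_{1}\colon X_{r-1}^{*}\twoheadrightarrow V_{0}\otimes D$ coming from the short exact sequence $0\to W\to X_{r-1}^{*}\to V_{0}\otimes D\to 0$ (which you have already derived). One computes $\eta_{1}(F)\neq 0$ directly, by tracing $F=\phi(X^{r'}\otimes Y-Y^{r'}\otimes X)$ through Diagram~\eqref{smalldiagram} to the element $X\otimes Y-Y\otimes X\in V_{0}\otimes D\subset V_{1}\otimes V_{1}$. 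Now argue by contradiction: if $\eta_{1}$ restricted to zero on $X_{r-1}^{**}$, then both $\eta_{1}$ and (in the bad case) $\eta_{2}$ would be nonzero maps $X_{r-1}^{*}/X_{r-1}^{**}\to V_{0}\otimes D$ with \emph{different} kernels (since $F$ lies in $\ker\eta_{2}$ but not in $\ker\eta_{1}$), forcing $V_{0}\otimes D$ to occur with multiplicity at least two in $X_{r-1}^{*}$ --- impossible. Hence $\eta_{1}|_{X_{r-1}^{**}}$ is surjective, so $V_{0}\otimes D$ is a JH factor of $X_{r-1}^{**}$ and is excised from $X_{r-1}^{*}/X_{r-1}^{**}$. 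Together with your Lemma~\ref{lem2} argument that $V_{p-1}\otimes D$ survives, this gives $X_{r-1}^{*}/X_{r-1}^{**}\iso V_{p-1}\otimes D$, and the rest of your proof goes through.
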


\begin{proof}
 Let us write $r=p^nu$, $n\geq 1$ and $p\nmid u$.
 Looking at Diagram \eqref{smalldiagram} and using Proposition \ref{JHfactors}, we get the exact sequence of $M$-modules
 \begin{eqnarray*}0\longrightarrow W\longrightarrow X_{r-1}\longrightarrow V_0\otimes D\oplus V_2\longrightarrow 0,\end{eqnarray*}
 where  $W=\phi(X_{r^\prime}^*\otimes V_1)=\begin{cases} V_{p-1}\otimes D, &\text{if } \Sigma_p(u-1)=1,\\
                                                                  V_{p-1}\otimes D\oplus V_{p-3}\otimes D^2, &\text{if } \Sigma_p(u-1)\geq p.  
                                                                \end{cases}$
                                                                
 Restricting the above sequence to the largest singular submodules, we get
 \begin{equation}
 \label{W} 
   0\longrightarrow W \longrightarrow X_{r-1}^*\longrightarrow V_0\otimes D\longrightarrow 0,
 \end{equation}
 where the last map has to be a surjection, as otherwise $V_0\otimes D$ will be a JH factor of $V_r/V_r^*$, which is not true by the sequence \eqref{ses*}.
 
 Now we claim that the restriction of the  map  $X_{r-1}^*\twoheadrightarrow V_0\otimes D$ in \eqref{W} to the submodule  $X_{r-1}^{**}$ is also non-zero, 
 and hence surjective.

 \underline{Proof of claim}: If not, then the above map factors like $\eta_1:X_{r-1}^*/X_{r-1}^{**}\twoheadrightarrow V_0\otimes D\subseteq V_1\otimes V_1$.
 We can check that the elements $X^{r^\prime},Y^{r^\prime}\in X_{r^\prime}$ map to $X,Y\in V_1$ respectively under the map
 $X_{r^\prime}\twoheadrightarrow X_{r^\prime}/X_{r^\prime}^*\cong V_1$. Hence for 
 $F(X,Y)=X^{r-1}Y-Y^{r-1}X=\phi(X^{r^\prime}\otimes Y- Y^{r^\prime}\otimes X)\in \phi(X_{r^\prime}\otimes V_1)$, we get  
 $\eta_1(F)=X\otimes Y-Y\otimes X\in V_1\otimes V_1$. Note that the non-zero element $X\otimes Y-Y\otimes X$ of 
 $V_1\otimes V_1\cong V_0\otimes D\oplus V_2$ actually belongs to
 $V_0\otimes D$, as it projects to  $XY-YX=0\in V_2$. So $\eta_1(F)\neq 0$.
 Now we consider the composition of maps $\eta_2:X_{r-1}^*/X_{r-1}^{**}\hookrightarrow V_r^*/V_r^{**}\twoheadrightarrow V_0\otimes D$, as described in the proof of Lemma \ref{lem1}. 
 If $\eta_2$  is the zero map, then the 
 short exact sequence \eqref{ses**} implies that $X_{r-1}^*/X_{r-1}^{**}\subseteq V_{p-1}\otimes D$, contradicting the existence of
 $\eta_1:X_{r-1}^*/X_{r-1}^{**}\twoheadrightarrow V_0\otimes D$. So $\eta_2$ is a non-zero map. But the calculation in the proof of 
 Lemma \ref{lem1} shows that $\eta_2(F)=0$, since $p\mid r$. Thus we end up with two different surjections 
 $\eta_1,\eta_2:X_{r-1}^*/X_{r-1}^{**}\twoheadrightarrow V_0\otimes D$,
 one containing $F$ in its kernel and the other not. This forces $V_0\otimes D$ to be a repeated JH factor of $X_{r-1}^*/X_{r-1}^{**}$,  
 contradicting the fact that it occurs only once in $X_{r-1}^*$.  Thus the claim is proved.

 If $\Sigma_p(u-1)\geq p$, then the extra JH factor $V_{p-3}\otimes D^2$ of $W$ is actually the submodule $X_r^*$ (cf. Lemma \ref{X_r}), 
 which equals $X_r^{**}$ by Lemma \ref{fullcase}. 
 So this JH factor is in fact contained in $X_{r-1}^{**}$. By Lemma \ref{lem2}, we know that 
 $W=\phi(X_{r^\prime}^*\otimes V_1)\nsubseteq X_{r-1}^{**}$. 
 Therefore    $$W_1:=W\cap X_{r-1}^{**}=\begin{cases}
                                                          0, &\text{ if } \Sigma_p(u-1)=1,\\
                                                          V_{p-3}\otimes D^2, &\text{ if } \Sigma_p(u-1)\geq p.
                                                         \end{cases}$$
 By the claim above, the structure of $X_{r-1}^{**}$ is now given by
 $$0\rightarrow W_1\rightarrow X_{r-1}^{**}\rightarrow V_0\otimes D\rightarrow 0.$$ 
 Hence we always have $X_{r-1}/X_{r-1}^*\cong V_2$, and $X_{r-1}^*/X_{r-1}^{**}\cong W/W_1\cong V_{p-1}\otimes D$. Next we use  the exact 
 sequences \eqref{ses*} and \eqref{ses**2}, and the bottom row of Diagram \eqref{bigdiadram} reduces to 
 $$0\longrightarrow V_0\otimes D\longrightarrow Q\longrightarrow V_{p-3}\otimes D^2\longrightarrow 0.$$
\end{proof}

\begin{remark}
  The short exact sequences in Proposition \ref{es2} split only when $a=2$, the case we are dealing with. 
 Note that all three possible non-zero submodules of $V_r^*/V_r^{**}\cong V_0\otimes D\oplus V_{p-1}\otimes D$ occur as $X_{r-1}^*/X_{r-1}^{**}$ 
 in the three distinct cases $p\nmid r(r-1)$, $p\mid (r-1)$ and $p\mid r$. 
\end{remark}

\section{$r\equiv a\mod (p-1)$, with $3\leq a\leq p-1$}
\label{a=3..}

In this section we will describe the $\Gamma$-module structure of $Q$ for $r\equiv a\mod (p-1)$ with $3\leq a\leq p-1$. 
Note that this bound on $a$ forces the prime $p$ to be at least $5$.

\begin{lemma}
\label{mixed}
 Let $p\geq 5$, $r>2p$, $r\equiv a\mod (p-1)$ with $3\leq a\leq p-1$. If $r\not\equiv a\mod p$, then
 $X_{r-1}^{*}/ X_{r-1}^{**}$ contains  $V_{a-2}\otimes D$ as a $\Gamma$-submodule. 
\end{lemma}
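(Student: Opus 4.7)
The plan is to reduce the statement to an existence claim and then exhibit an explicit witness.

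\textbf{Socle reduction.} For $3\le a\le p-1$, Proposition~\ref{es2} gives the non-split sequence
\begin{equation*}
0 \to V_{a-2}\otimes D \to V_r^*/V_r^{**} \to V_{p-a+1}\otimes D^{a-1} \to 0.
\end{equation*}
Because both composition factors are irreducible and the sequence does not split, $V_{a-2}\otimes D$ is the unique minimal $\Gamma$-submodule, i.e., the socle of $V_r^*/V_r^{**}$. Since $X_{r-1}^*/X_{r-1}^{**}$ embeds naturally into $V_r^*/V_r^{**}$, any non-zero $\Gamma$-submodule of the ambient module must contain its socle. Thus the lemma reduces to producing a single non-zero element in $X_{r-1}^*/X_{r-1}^{**}$, equivalently, an $F\in X_{r-1}$ with $\theta\mid F$ but $\theta^2\nmid F$.

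\textbf{Choice of the witness $F$.} I will build $F$ as an $\F_p$-linear combination of the standard generators of $X_{r-1}$, namely $X^r$, $Y^r$, $X(\lambda X+Y)^{r-1}$, $(X+\lambda Y)^{r-1}Y$ for $\lambda\in\F_p$. The natural candidate is a weighted trace
\begin{equation*}
F(X,Y) := \alpha X^r + \beta Y^r + \sum_{\lambda\in\F_p^\times}\lambda^{a-2}\, X(\lambda X+Y)^{r-1} + \sum_{\lambda\in\F_p^\times}\lambda^{p-a+1}(X+\lambda Y)^{r-1}Y,
\end{equation*}
with scalars $\alpha,\beta\in\F_p$ to be chosen. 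Using the sum identity \eqref{identity}, every monomial $X^{r-j}Y^j$ appearing in $F$ has its exponent $j$ in a single congruence class modulo $p-1$, so Lemma~\ref{elelemma} applies directly.

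\textbf{Verification via Lemma~\ref{elelemma}.} For $3\le a\le p-1$ the class of surviving $j$'s automatically avoids $j=0,r$, making $c_0=c_r=0$ (and absorbing $\alpha,\beta$ if needed as a safety). Computing $F(1,1)$ as a double sum and evaluating the inner $\sum_\lambda \lambda^i$ via \eqref{identity}, the outer sum collapses to a sum of $\binom{r-1}{*}$ over one $p-1$ class, which vanishes mod $p$ by Lemma~\ref{comb1} (or by a direct cancellation between the $X(\lambda X+Y)^{r-1}$ and $(X+\lambda Y)^{r-1}Y$ blocks). Hence $F\in V_r^*$. To show $F\notin V_r^{**}$, one computes $\sum_j j c_j=(\partial_Y F)(1,1)$; a short manipulation using \eqref{identity} and Lemma~\ref{comb3a} rewrites this quantity modulo $p$ as a non-zero unit times $(r-a)$. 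The hypothesis $r\not\equiv a\pmod p$ then forces $\sum_j j c_j\not\equiv 0\pmod p$, so $F\notin V_r^{**}$ by Lemma~\ref{elelemma}(ii).

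\textbf{Main obstacle.} The delicate point is the last computation in Step 3: arranging the weights on the two families of generators so that $F\in V_r^*$ unconditionally, yet the derivative sum $\sum_j j c_j$ reduces cleanly to a multiple of $(r-a)$ mod $p$. This requires careful use of Lucas' theorem on $\binom{r-1}{a-1}$ (and cognate binomials) and a case analysis depending on whether $p\mid r$ or not, since the last base-$p$ digit of $r-1$ behaves differently in the two cases. Once this identity is in hand, the hypothesis $r\not\equiv a\pmod p$ does exactly the work needed to keep the resulting sum non-zero in $\F_p$, completing the proof.
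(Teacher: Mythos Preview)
Your socle reduction is exactly right and matches the paper: since \eqref{ses**3} is non-split, $V_{a-2}\otimes D$ is the socle of $V_r^*/V_r^{**}$, so it suffices to exhibit any $F\in X_{r-1}^*\setminus X_{r-1}^{**}$.

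The gap is in your explicit witness. With your weights, the two families do \emph{not} land in a single congruence class of $Y$-degree. Expanding $\sum_{\lambda\in\F_p^\times}\lambda^{a-2}X(\lambda X+Y)^{r-1}$ via \eqref{identity} leaves only the terms with $j\equiv p+1-a\pmod{p-1}$, hence $Y$-degree $r-1-j\equiv 2a-3\pmod{p-1}$. Expanding $\sum_{\lambda\in\F_p^\times}\lambda^{p-a+1}(X+\lambda Y)^{r-1}Y$ leaves the terms with $j\equiv a-2\pmod{p-1}$, hence $Y$-degree $j+1\equiv a-1\pmod{p-1}$. For $3\le a\le p-1$ these two classes are distinct, so Lemma~\ref{elelemma} does not apply to your $F$ as stated, and your planned computation of $\sum_j c_j$ and $\sum_j jc_j$ does not go through in one stroke. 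You could in principle split $F$ into its two homogeneous pieces and argue on each, but then the ``direct cancellation between the two blocks'' you allude to disappears, and you are left doing twice the work.

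The paper's construction is both simpler and avoids this issue: take
\[
F(X,Y)=(a-1)\,X^{r-1}Y+\sum_{k\in\F_p}k^{p+1-a}(kX+Y)^{r-1}X \in X_{r-1},
\]
using only \emph{one} family of generators plus a single correction term. Every surviving monomial now has $Y$-degree $\equiv 1\pmod{p-1}$, so Lemma~\ref{elelemma} applies directly. The sum of coefficients vanishes by Lemma~\ref{comb3a}, giving $F\in V_r^*$. For $F\notin V_r^{**}$ there is no need to compute $\sum_j jc_j$: one simply reads off the coefficient $c_1$ of $X^{r-1}Y$, which is $(a-1)-\binom{r-1}{r-2}=a-r$, and this is non-zero precisely under the hypothesis $r\not\equiv a\pmod p$. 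No case analysis on $p\mid r$ is needed.
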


\begin{proof} 
 By the non-split short exact sequence \eqref{ses**3}, it is enough to show that the submodule $X_{r-1}^*/X_{r-1}^{**}$ of $V_r^*/V_r^{**}$ is non-zero.
 Consider the polynomial $$F(X,Y)=(a-1)X^{r-1}Y+\underset{k=0}{\overset{p-1}{\sum}}k^{p+1-a}(kX+Y)^{r-1}X\in X_{r-1}.$$
 We compute that
 $$F(X,Y)\equiv(a-1)X^{r-1}Y - \underset{\substack{0< j< r-1,\\ j\equiv a-2\mod (p-1)}}\sum\binom{r-1}{j}X^{j+1}Y^{r-j-1}\mod p.$$
 Clearly the monomials in $F(X,Y)$ have $Y$-degree congruent to $1\mod(p-1)$, i.e., they are scalar multiples of 
 $X^{a-1}Y^{r-a+1},\cdots,\,X^{r-p}Y^p\text{ and } X^{r-1}Y$. By Lemma \ref{comb3a}, we get
 $$\underset{\substack{0< j< r-1,\\ j\equiv a-2\mod (p-1)}}\sum\binom{r-1}{j}\equiv(a-1)-(r-1)+\binom{r-1}{r-2}= a-1\mod p.$$ 
 We apply Lemma \ref{elelemma} (i) to conclude that $F\in V_r^*$. 
 But $F\notin V_r^{**}$ as the coefficient of $X^{r-1}Y$ in $F(X,Y)$ equals $(a-1)-\binom{r-1}{r-2}=a-r\not\equiv 0\mod p$, by hypothesis. 
 So 
  $F$ maps to a non-zero element in $X_{r-1}^*/X_{r-1}^{**}$.
\end{proof}

\begin{lemma}
\label{exception}
 Let $p\geq 5$ and $r\equiv a\mod (p-1)$ with $3\leq a\leq p-1$. If $r\equiv a\mod p$, then we have $X_{r-1}\subseteq X_r+V_r^{**}$ and $X_{r-1}^*/X_{r-1}^{**}=0$.
\end{lemma}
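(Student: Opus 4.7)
The first assertion reduces to showing that $X^{r-1}Y \in X_r + V_r^{**}$, since $X_r + V_r^{**}$ is an $M$-submodule of $V_r$ and $X_{r-1}$ is the $M$-submodule generated by $X^{r-1}Y$. My plan is to exhibit an explicit element of $X_r$ congruent to $X^{r-1}Y$ modulo $V_r^{**}$.

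The candidate I will use is
$$F' := -\frac{1}{a}\sum_{k\in\F_p} k^{p-a}(kX+Y)^r \in X_r.$$
Expanding $(kX+Y)^r$ and applying the identity $\sum_{k\in\F_p} k^i \equiv -1 \pmod{p}$ when $i>0$ and $(p-1)\mid i$ (and $0$ otherwise) gives
$$F' \equiv \frac{1}{a}\sum_{\substack{0\leq j\leq r\\ j\equiv a-1 \mod (p-1)}}\binom{r}{j}\, X^j Y^{r-j}\pmod{p}.$$
The $j=r-1$ term has coefficient $r/a \equiv 1 \pmod{p}$ by the hypothesis $r\equiv a \pmod{p}$, so $F' = X^{r-1}Y + G$ with $G$ supported on $j \in \{a-1,\,a-1+(p-1),\,\dots,\,r-p\}$.

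My next step is to verify $G \in V_r^{**}$ via Lemma~\ref{elelemma}(ii), noting that the $Y$-exponents of $G$ lie in the congruence class $1 \mod (p-1)$. The vanishing of the coefficients at $X^r$, $X^{r-1}Y$, $XY^{r-1}$, $Y^r$ is immediate from the range of $j$ together with the constraints $a\neq 1,2$. The two remaining sum conditions reduce to
$$\sum_{\substack{0\leq j'\leq r\\ j'\equiv 1 \mod (p-1)}}\binom{r}{j'}\equiv r\pmod{p}\quad\text{and}\quad\sum_{\substack{0\leq j'\leq r\\ j'\equiv 1 \mod (p-1)}} j'\binom{r}{j'}\equiv r\pmod{p}.$$
I will verify both using the standard character-sum identity $\sum_{j\equiv c \mod (p-1)}\binom{r}{j}\equiv -\sum_{\lambda\in\F_p^*}\lambda^{-c}(1+\lambda)^r \pmod{p}$. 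Because $r\equiv a \mod (p-1)$, the factor $(1+\lambda)^r$ collapses to $(1+\lambda)^a$ for $\lambda\neq -1$ (with the $\lambda=-1$ term vanishing), and binomial expansion together with the power-sum formula yields $a$ for the first sum. For the second, rewriting $j'\binom{r}{j'}=r\binom{r-1}{j'-1}$ and reindexing reduces to the congruence class $0 \mod (p-1)$ for $\binom{r-1}{\cdot}$, which the same identity evaluates to $r \cdot 1 = r \equiv a$. The hypothesis $r\equiv a\pmod{p}$ then matches both sums to $r$ mod $p$, and this character-sum matching is the main technical obstacle.

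For the second assertion, given $F\in X_{r-1}^*$, the first part lets me decompose $F=F_1+F_2$ with $F_1\in X_r$ and $F_2\in V_r^{**}$. Then $F_1 = F - F_2 \in V_r^*$, so $F_1 \in X_r\cap V_r^* = X_r^* = X_r^{**}$ by Lemma~\ref{fullcase}, and thus $F_1 \in V_r^{**}$. Hence $F \in V_r^{**}\cap X_{r-1} = X_{r-1}^{**}$, giving $X_{r-1}^* \subseteq X_{r-1}^{**}$ and therefore $X_{r-1}^*/X_{r-1}^{**} = 0$.
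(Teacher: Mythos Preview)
Your proof is correct and follows essentially the same approach as the paper: both exhibit an explicit element of $X_r$ of the form $\sum_k k^n(kX+Y)^r$ that differs from a monomial generator of $X_{r-1}$ by something in $V_r^{**}$, and both deduce the second assertion identically via Lemma~\ref{fullcase}. The only cosmetic differences are that the paper takes $n=p-2$ (isolating $XY^{r-1}$) while you take $n=p-a$ (isolating $X^{r-1}Y$, the $w$-translate), and that the paper verifies the $V_r^{**}$ condition by citing Lemma~\ref{comb3a} whereas you reprove the needed congruences directly via character sums.
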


\begin{proof} The lemma is trivial for $r=a$ as $V_a$ is irreducible. Thus for each $a$, $r=p^2-p+a$ is the first non-trivial integer satisfying the hypotheses.
 We begin by expanding the following element of $X_r$: 
 $$\underset{k\in\mathbb{F}_p}\sum k^{p-2}(kX+Y)^r\equiv -rXY^{r-1}-\underset{\substack{0<j<r-1\\j\equiv a-1\mod (p-1)}}\sum\binom{r}{j}\cdot X^{r-j}Y^j\mod p,$$ 
 Next we claim that $F(X,Y):=\underset{\substack{0<j<r-1\\j\equiv a-1\mod (p-1)}}\sum\binom{r}{j}\cdot X^{r-j}Y^j$ is in fact contained in $V_r^{**}$,  implying that the monomial $-rXY^{r-1}$ is contained in $X_r+V_r^{**}$.
Since $r\equiv a\not\equiv 0\mod p$, we will have $X_{r-1}\subseteq X_r+V_r^{**}.$

\underline{Proof of the claim}: By Lemma \ref{comb3a}, the sum of the coefficients of $F(X,Y)$ is
$$\underset{\substack{0<j<r-1\\j\equiv a-1\mod (p-1)}}\sum\binom{r}{j}\equiv a-r\equiv 0\mod p,$$
by hypothesis.
Using the same lemma for $r-1$ (note that $2\leq a-1\leq p-2$), we get  
$$\underset{\substack{0<j<r-1\\j\equiv a-1\mod (p-1)}}\sum j\binom{r}{j}=\underset{\substack{0<j<r-1\\j\equiv a-1\mod (p-1)}}\sum r\binom{r-1}{j-1}\equiv r\cdot\left((a-1)-(r-1)\right)\equiv 0\mod p.$$
Then we apply Proposition \ref{elelemma} (ii) to prove the claim.

Thus we have showed $X_{r-1}\subseteq X_r+V_r^{**}$. Hence $X_{r-1}^*\subseteq X_{r-1}\cap(X_r^*+V_r^{**})$, which equals $X_{r-1}\cap(X_r^{**}+V_r^{**})$ by Lemma \ref{fullcase}. 
But $X_r^{**}\subseteq V_r^{**}$, so we have $X_{r-1}^*\subseteq X_{r-1}\cap V_r^{**}=X_{r-1}^{**} $. The reverse inclusion is obvious and therefore $X_{r-1}^*=X_{r-1}^{**}$.
 \end{proof}

\begin{remark}
  It was proved in \cite[Prop. 4]{[GG15]} that $X_{r-1} \not\subset X_r + V_r^{**}$, for $p \leq r \leq p^2-p+2$ and all primes $p$. 
  The lemma above shows that the upper bound is sharp, at least if $p \geq 5$, since the smallest non-trivial $r$ covered by 
  the lemma is $p^2-p+3$. As we show just below, the fact that $X_{r-1} \subset X_r + V_r^{**}$ causes $Q$ to have $3$ JH factors,
  leading to additional complications. 
\end{remark}

\begin{prop}\label{prop5}
 Let $p\geq 5$, $r>2p$ and $r\equiv a\mod (p-1)$ with $3\leq a\leq p-1$. The $\Gamma$-module structure of $Q$ is as follows.
 \begin{enumerate}
 \item[(i)] If $r\not\equiv a\mod p$, then
 $$0\longrightarrow V_{p-a+1}\otimes D^{a-1}\longrightarrow Q\longrightarrow V_{p-a-1}\otimes D^a\longrightarrow 0,$$
 and moreover the exact sequence above  is $\Gamma$-split.
 \item[(ii)] If  $r\equiv a\mod p$, then 
 $$0\longrightarrow V_r^*/V_r^{**}\longrightarrow Q\longrightarrow V_{p-a-1}\otimes D^a\longrightarrow 0,$$
 where $V_r^*/V_r^{**}$ is the non-trivial extension of  $V_{p-a+1}\otimes D^{a-1}$ by $V_{a-2}\otimes D$.  \end{enumerate}
\end{prop}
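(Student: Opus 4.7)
The plan is to read off $Q$ from the bottom row of Diagram \eqref{bigdiadram},
\begin{equation*}
0 \to \frac{V_r^*}{V_r^{**}+X_{r-1}^*} \to Q \to \frac{V_r}{V_r^*+X_{r-1}} \to 0,
\end{equation*}
by determining the two endpoints.

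For the right endpoint, the claim is $X_{r-1}/X_{r-1}^*\cong V_a$ in every sub-case of Proposition~\ref{JHfactors}: among the possible $M$-Jordan-H\"older factors $V_{p-a-1}\otimes D^a$, $V_{p-a+1}\otimes D^{a-1}$, $V_{a-2}\otimes D$, $V_a$ listed there, only $V_a$ is not singular as an $M$-module (the other three carry a strictly positive power of $D$ and so are annihilated by singular matrices), so the maximal singular submodule $X_{r-1}^*$ absorbs all JH factors except $V_a$. The embedding $X_{r-1}/X_{r-1}^*\hookrightarrow V_r/V_r^*$ then identifies this copy of $V_a$ with the submodule in the non-split sequence \eqref{ses*}, yielding $V_r/(V_r^*+X_{r-1})\cong V_{p-a-1}\otimes D^a$ in both cases.

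The left endpoint is where the case split enters, via the embedding $X_{r-1}^*/X_{r-1}^{**}\hookrightarrow V_r^*/V_r^{**}$ into the non-split extension \eqref{ses**3}. In case~(i), $r\not\equiv a\!\mod p$, Lemma~\ref{mixed} provides $V_{a-2}\otimes D\subseteq X_{r-1}^*/X_{r-1}^{**}$; on the other hand, Lemma~\ref{fullcase} applied to $r'=r-1$ (permitted since the residue $a-1\ge 2$) gives $X_{r'}^{**}=X_{r'}^*$, so $\phi(X_{r'}^*\otimes V_1)\subseteq V_r^{**}\cap X_{r-1}=X_{r-1}^{**}$. Combining the resulting lower bound on $\dim X_{r-1}^{**}$ with $\dim X_{r-1}^*$ read off from Proposition~\ref{JHfactors} in each sub-case forces $\dim X_{r-1}^*/X_{r-1}^{**}\leq a-1$, so the inclusion of $V_{a-2}\otimes D$ is an equality and the left endpoint is $V_{p-a+1}\otimes D^{a-1}$. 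In case~(ii), $r\equiv a\!\mod p$, Lemma~\ref{exception} gives $X_{r-1}^*=X_{r-1}^{**}$ at once, so the left endpoint is all of $V_r^*/V_r^{**}$. Assembling yields the two short exact sequences asserted in (i) and (ii).

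The remaining assertion is the $\Gamma$-splitting in case~(i). The plan is to show $\mathrm{Ext}^1_\Gamma(V_{p-a-1}\otimes D^a,\,V_{p-a+1}\otimes D^{a-1})=0$; after untwisting by $D^{1-a}$ this reads $\mathrm{Ext}^1_\Gamma(V_{p-a-1},\,V_{p-a+1}\otimes D^{-1})=0$. Among simple $\Gamma$-modules the nontrivial $\mathrm{Ext}^1$'s come, up to twist, from the principal-block non-split sequences \eqref{ses*} and \eqref{ses**3} (and their duals), and the pair here does not match that twist pattern. I expect this Ext-vanishing to be the main obstacle; a fallback is to exhibit an explicit $\Gamma$-equivariant section $V_{p-a-1}\otimes D^a\to Q$ using a concretely chosen polynomial in $V_r$, in the spirit of the polynomial constructions of Sections~\ref{a=1}--\ref{a=2}.
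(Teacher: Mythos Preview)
Your approach is the same as the paper's: compute the two ends of the bottom row of Diagram~\eqref{bigdiadram} using Proposition~\ref{JHfactors}, Lemma~\ref{fullcase} applied to $r'=r-1$, and the dichotomy Lemmas~\ref{mixed}/\ref{exception}.

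There is one genuine gap. The assertion that ``the maximal singular submodule $X_{r-1}^*$ absorbs all JH factors except $V_a$'' does not follow merely from those factors being singular: an $M$-module can have singular composition factors lying outside its maximal singular submodule. Indeed $V_r/V_r^*$ itself has the singular factor $V_{p-a-1}\otimes D^a$ as a quotient, yet $(V_r/V_r^*)^*=0$ since the sequence~\eqref{ses*} is non-split and its unique proper submodule $V_a$ is non-singular. The fix is exactly the embedding you invoke in the next sentence, but used the other way around: since $X_{r-1}/X_{r-1}^*\hookrightarrow V_r/V_r^*$, its JH factors lie among $\{V_a,\,V_{p-a-1}\otimes D^a\}$, while Proposition~\ref{JHfactors} shows $V_{p-a-1}\otimes D^a$ either does not occur in $X_{r-1}$ at all, or sits inside the singular submodule $\phi(X_{r'}^*\otimes V_1)\subseteq X_{r-1}^*$; hence $X_{r-1}/X_{r-1}^*\cong V_a$. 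The paper phrases this as: the surjection $X_{r-1}\twoheadrightarrow V_{a-2}\otimes D$ cannot factor through $X_{r-1}/X_{r-1}^*$ (as $V_{a-2}\otimes D$ is not a JH factor of $V_r/V_r^*$), so $X_{r-1}^*\to V_{a-2}\otimes D$ is onto.

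For the splitting in~(i), your Ext-vanishing instinct is correct; the paper simply invokes \cite[Cor.~5.6(i)]{[BP]}, which gives $\mathrm{Ext}^1_\Gamma(V_{p-a-1}\otimes D^a,\,V_{p-a+1}\otimes D^{a-1})=0$ directly, so no explicit section is needed.
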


\begin{proof} 
 Let $r^\prime:=r-1$. As explained in  Section \ref{structure} (cf. Proposition \ref{JHfactors}), we have 
 $$0\longrightarrow \phi(X_{r^\prime}^*\otimes V_1)\longrightarrow X_{r-1}\longrightarrow  V_{a-2}\otimes D\oplus V_{a}\rightarrow 0.$$
 Restricting the above $M$-linear maps to the largest singular submodules, we get
 $$0\longrightarrow \phi(X_{r^\prime}^*\otimes V_1)\longrightarrow X_{r-1}^*\longrightarrow V_{a-2}\otimes D\rightarrow 0.$$
 Indeed, the short exact sequence \eqref{ses*} shows that $V_{a-2}\otimes D$ is not  a JH factor of $V_r/V_r^*$, 
 so the surjection $X_{r-1}\twoheadrightarrow V_{a-2}\otimes D$ cannot factor through
 $X_{r-1}/X_{r-1}^*$, hence the rightmost map above is surjective.
 As $2\leq a-1\leq p-2$, by  Lemma \ref{fullcase} we have $X_{r^\prime}^*=X_{r^\prime}^{**}$. Hence by the definition of the $\phi$ map, 
 we get $\phi(X_{r^\prime}^*\otimes V_1)\subseteq X_{r-1}^{**}$. 

 If $r\not\equiv a\mod p$, then Lemma \ref{mixed}  implies that 
 $X_{r-1}^*/X_{r-1}^{**}$ must have exactly one JH factor, namely $V_{a-2}\otimes D$. 
 So we have $X_{r-1}/X_{r-1}^{*}\cong V_{a}$ and $X_{r-1}^*/X_{r-1}^{**}\cong V_{a-2}\otimes D$. Now  using the short exact sequences 
 \eqref{ses*} and \eqref{ses**3},  the bottom row of Diagram \eqref{bigdiadram} reduces to
 $$0\longrightarrow V_{p-a+1}\otimes D^{a-1}\longrightarrow Q\longrightarrow V_{p-a-1}\otimes D^a\longrightarrow 0.$$
 That the sequence splits follows from \cite[Cor. 5.6 (i)]{[BP]}, which implies that there exists no non-trivial extension between the $
 \Gamma$-modules $V_{p-a-1}\otimes D^a$ and $V_{p-a+1}\otimes D^{a-1}$. 

 On the other hand if $r\equiv a\mod p$, then $X_{r-1}/X_{r-1}^{*}\cong V_{a}$ as before, but  $X_{r-1}^*/X_{r-1}^{**}=0$, by Lemma \ref{exception}. 
 Now using the short exact sequence 
 \eqref{ses*},  the bottom row of Diagram \eqref{bigdiadram} reduces to
 $$0\longrightarrow V_r^*/V_r^{**}\longrightarrow Q\longrightarrow V_{p-a-1}\otimes D^a\longrightarrow 0,$$
 where the structure of $V_r^*/V_r^{**}$ is given by the exact sequence \eqref{ses**3}.
\end{proof}



\section{Combinatorial lemmas}
\label{combinatorial}

In this section we prove some technical lemmas which are used repeatedly in the next two sections. 
Only the first lemma is proved in detail as the techniques used to prove the others are similar.

\begin{lemma}
\label{comb2}
 Let $r\equiv a\mod(p-1)$ with $2\leq a\leq p-1$. Then one can choose integers $\alpha_j\in\mathbb{Z}$, 
 for all $j$ with $0<j<r$ and $j\equiv a\mod (p-1)$, such that 
 \begin{enumerate}
   \item[(i)] $\binom{r}{j}\equiv \alpha_j \mod p$, for all $j$ as above,
  
   \item[(ii)] $\underset{j}\sum \alpha_j\equiv 0\mod p^3$,
  
   \item[(iii)] $\underset{j\geq 1}\sum j\,\alpha_j\equiv 0\mod p^2$,
  
   \item[(iv)] $\underset{j\geq 2}\sum\binom{j}{2}\,\alpha_j \equiv \begin{cases}\binom{r}{2}\mod p, &\text{if } a=2,\\
                                                                    0\mod p, & \text{if } 3\leq a\leq p-1. 
                                                       \end{cases}$
 \end{enumerate}
\end{lemma}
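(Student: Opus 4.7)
The plan is to take $\alpha_j := \binom{r}{j}$ as a first approximation and then adjust by multiples of $p$ to achieve the higher-order congruences. Write $J := \{j \in \Z : 0 < j < r,\ j \equiv a \mod (p-1)\}$; since every $j \in J$ satisfies $j \ge a \ge 2$, the sums in (iii) and (iv) range over $j \in J$. With this choice, (i) is immediate.

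For (iv), use the identity $\binom{j}{2}\binom{r}{j} = \binom{r}{2}\binom{r-2}{j-2}$ to reduce the LHS to $\binom{r}{2} \sum_{j \in J} \binom{r-2}{j-2}$. If $3 \le a \le p-1$, Lemma~\ref{comb1} applied to $r - 2 \equiv a - 2 \mod (p-1)$ (valid as $1 \le a - 2 \le p - 3$) gives that the inner sum is $\equiv 0 \mod p$. If $a = 2$, the inner sum is $\sum_{0 \le j' \le r-p-1,\ (p-1)\mid j'} \binom{r-2}{j'}$; by the roots-of-unity filter, $(p-1)$ times the full sum $\sum_{0 \le j' \le r-2,\ (p-1)\mid j'} \binom{r-2}{j'}$ equals $\sum_{\lambda \in \F_p^*}(1+[\lambda])^{r-2}$, which reduces modulo $p$ (via $\mu = 1+\lambda$ and identity~\eqref{identity}) to $\sum_{\mu \in \F_p \setminus\{1\}} \mu^{r-2} \equiv -2 \mod p$; thus the full sum is $\equiv 2 \mod p$, and subtracting the top term $\binom{r-2}{r-2}=1$ yields $1 \mod p$. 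Either way, (iv) holds.

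Next, set $A := \sum_{j \in J} \binom{r}{j}$ and $B := \sum_{j \in J} j\binom{r}{j}$. Lemma~\ref{comb1} gives $A \equiv 0 \mod p$. Writing $j\binom{r}{j} = r\binom{r-1}{j-1}$ and reindexing, $B = r \sum_{0 < j' < r-1,\ j' \equiv a-1 \mod (p-1)} \binom{r-1}{j'}$; Lemma~\ref{comb1} applied to $r - 1 \equiv a - 1 \mod (p-1)$ shows this inner sum is $\equiv 0 \mod p$, so $B \equiv 0 \mod p$. Write $A = pA_1$ and $B = pB_1$ for integers $A_1, B_1$.

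Finally, I would replace $\alpha_j$ by $\alpha_j + p\beta_j$ for integers $\beta_j$. This preserves (i) and (iv) (the latter because $p\beta_j\binom{j}{2} \equiv 0 \mod p$), while (ii) and (iii) reduce to
\begin{equation*}
\sum_{j \in J} \beta_j \equiv -A_1 \mod p^2, \qquad \sum_{j \in J} j\beta_j \equiv -B_1 \mod p.
\end{equation*}
Assuming $|J| \ge 2$ (automatic once $r$ is moderately large), pick distinct $j_0, j_1 \in J$ with $j_0 \not\equiv j_1 \mod p$ -- possible since any two consecutive elements of $J$ differ by $p-1 \equiv -1 \mod p$ -- set $\beta_j = 0$ for all other $j$, and solve the resulting $2\times 2$ system, which is nonsingular modulo $p$ since $j_0 - j_1$ is a unit. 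The main obstacle is the residue computation underpinning (iv) in the case $a = 2$; once that identity is in hand, the remaining construction is elementary linear algebra over $\Z/p^2\Z$. Edge cases with $|J| \le 1$ occur only for very small $r$ and can be verified by hand.
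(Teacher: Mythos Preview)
Your argument is correct, and at heart it is the same idea as the paper's: start with $\alpha_j=\binom{r}{j}$ and then modify two entries by multiples of~$p$ to force (ii) and (iii). The paper simply makes the two entries explicit, taking $j_0=a$ and $j_1=ap$ (note $a\not\equiv ap\bmod p$), and writes down closed formulas for $\alpha_a$ and $\alpha_{ap}$ that achieve $\sum_j\alpha_j=0$ exactly and $\sum_j j\alpha_j\equiv 0\bmod p^2$. Your version abstracts this to ``pick any $j_0\not\equiv j_1\bmod p$ and solve'', which is a bit cleaner conceptually; the paper's explicit choice has the advantage that the verification of (i)--(iv) becomes a direct computation with Lemma~\ref{comb1}.

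Two small points worth tightening. First, your system is not literally a $2\times 2$ system over $\Z/p^2\Z$: one constraint is modulo $p^2$ and the other modulo $p$. A clean way to finish is to set $\beta_{j_0}+\beta_{j_1}=-A_1$ exactly (as integers), then choose $\beta_{j_1}\bmod p$ to satisfy $(j_1-j_0)\beta_{j_1}\equiv -B_1+j_0A_1\bmod p$; this is possible since $j_1-j_0$ is a unit mod~$p$. Second, for the case $a=2$ in (iv) you do not need the roots-of-unity computation: apply Lemma~\ref{comb1} to $r-2$ with the representative $p-1$ (since $r-2\equiv 0\equiv p-1\bmod(p-1)$) to get $\sum_{0<j'<r-2,\,(p-1)\mid j'}\binom{r-2}{j'}\equiv 0\bmod p$, then add back the term $j'=0$ to obtain $1\bmod p$. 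This is what the paper does, and it saves a page of identity-chasing. Finally, the edge cases $|J|\le 1$ correspond exactly to $r\le a+(p-1)$, and the paper disposes of all $r\le ap$ at once by observing via Lucas that $\binom{r}{j}\equiv 0\bmod p$ for every $j\in J$, so $\alpha_j=0$ works; you may want to quote this rather than leave it as a remark.
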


\begin{proof}
  For $r \leq ap$,  note that $\Sigma_p(r)=a$ and one can check using Lucas' theorem that $\binom{r}{j}\equiv 0\mod p$ for all the $j$'s listed above. 
  In this case we simply choose
  $\alpha_j=0$, for all $j$. So now assume $r>ap$, hence $j=a, ap$ are both contained in the list of $j$'s above. 
  Let  $a^\prime$ be a fixed  integer such that $a^\prime a
  \equiv 1\mod p^2$, and then let us choose $\alpha_j$'s for all $0<j<r$ with $j\equiv a\mod(p-1)$ as follows:
 
  \begin{eqnarray}
    \alpha_j &=& \binom{r}{j}, \text{ for}\:j\neq a, ap,\\
    \label{alpha_a}\alpha_a &=& - \sum_{\substack{a\,<\,j\,<\,r,\\ j\equiv a\mod(p-1)}} a^\prime j\,\binom{r}{j},\\
    \alpha_{ap} &=& 
    -\underset{j\neq a,\:ap}\sum\binom{r}{j}-\alpha_a.
  \end{eqnarray}
  We will show that this choice of $\alpha_j$'s satisfy  the properties {(i)}, {(ii)}, {(iii)}, and {(iv)}.
  \begin{enumerate}
  \item[(i)]: Note that $j\,\binom{r}{j}=r\,\binom{r-1}{j-1}$, for any $j\geq 1$. Using Lemma \ref{comb1} for $r-1$ and $r$ respectively, we obtain
  \begin{eqnarray*}\alpha_a &=& -\sum_{\substack{a<j<r,\\ j\equiv a\mod(p-1)}} a^\prime r\,\binom{r-1}{j-1}\:\overset{(\ref{comb1})}
  \equiv \:a^\prime r\,\binom{r-1}{a-1}= a^\prime a\,\binom{r}{a}\equiv\binom{r}{a}\mod p,\\
  \alpha_{ap} &{\overset{(\ref{comb1})}\equiv} & \binom{r}{a}+\binom{r}{ap}-\alpha_a\equiv\binom{r}{ap}\mod p.\end{eqnarray*}
  For $j\neq a,\, ap$, property (i) is trivially satisfied.
 
  \item[(ii)]: By  our choice of $\alpha_j$'s, note that in fact $\underset{j}\sum\alpha_j=0$.
 
  \item[(iii)]: Since $a^\prime a\equiv 1\mod p^2$, using equation \eqref{alpha_a} we get   
  $a\cdot\alpha_a\equiv- \underset{\substack{a<j<r,\\ j\equiv a\mod(p-1)}}\sum j\,\binom{r}{j}\mod p^2$.
  Since $\alpha_{ap}\equiv\binom{r}{ap}\mod p$, we have
  $ap\cdot\alpha_{ap}\equiv ap\binom{r}{ap}\mod p^2.$
  Using these two congruences we  conclude that  $\underset{j}\sum\, j\cdot\alpha_j\equiv 0\mod p^2$,  as desired.
 
  \item[(iv)]: We use  property (i) and Lemma \ref{comb1} for $r-2$ to get 
  \begin{eqnarray*}\underset{\substack{0<j<r,\\j\equiv a\mod (p-1)}}\sum\binom{j}{2}\alpha_j
                   & \equiv & \underset{\substack{0<j<r,\\j\equiv a\mod (p-1)}}\sum\binom{j}{2}\binom{r}{j}=
                            \underset{\substack{0<j<r,\\j\equiv a\mod (p-1)}}\sum\binom{r}{2}\cdot\binom{r-2}{j-2}\\
                   & \equiv &  \begin{cases}
                                  \binom{r}{2}\mod p, &\text{if } a=2,\\
                                             0\mod p, & \text{if } 3\leq a\leq p-1. 
                               \end{cases}
  \end{eqnarray*}
  \end{enumerate}
\end{proof}

\begin{lemma}\label{comb3}
 Let $r\equiv b\mod (p-1)$, 
 and $ 3\leq b\leq p$. If $p\mid (b-r)$, then one can choose  integers $\beta_j$,  for all 
 $j\equiv b-1\mod p-1 \text{ with } b-1\leq j< r-1$, satisfying:
 \begin{enumerate}
  \item[(1)] $\beta_j\equiv\binom{r}{j} \mod p$ for all $j$ as above,
  \item[(2)] $\underset{j\geq n}\sum \,\binom{j}{n}\,\beta_j\equiv 0\mod p^{3-n}$, for $n=0,1,\text{ and } 2$.
 \end{enumerate}
\end{lemma}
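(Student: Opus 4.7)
The plan is to mirror the strategy used in the proof of Lemma~\ref{comb2}, but now requiring two free parameters instead of one. Set $j_0 := b-1$ and $j_1 := b+p-2$, the two smallest indices in the summation range that are congruent to $b-1$ modulo $p-1$. The case $r = b$ makes the indexing set empty, so assume $r > b$; then the hypothesis $p \mid (b - r)$ forces $r \geq b+p$, so both $j_0, j_1$ lie in the range $b-1 \leq j < r-1$. We take $\beta_j := \binom{r}{j}$ for all $j \neq j_0, j_1$, and write $\beta_{j_i} = \binom{r}{j_i} + p\,\gamma_i$ with integers $\gamma_0, \gamma_1$ to be chosen so that the three congruences in~(2) hold; condition~(1) then holds automatically.

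The $n = 2$ congruence must hold without any help from the $\gamma_i$'s, since perturbing the $\beta_j$'s by multiples of $p$ does not affect a sum considered mod $p$. To verify it, we use the identity $\binom{j}{2}\binom{r}{j} = \binom{r}{2}\binom{r-2}{j-2}$ to rewrite the $n=2$ sum as $\binom{r}{2} \cdot \Sigma$, where $\Sigma := \sum \binom{r-2}{l}$ is taken over $l := j-2$ in the residue class $b - 3 \mod (p-1)$. For $b \geq 4$, the sum $\Sigma$ is precisely the one produced by Lemma~\ref{comb3a} applied with $r$ replaced by $r-2$ and $b$ by $b-2$, yielding $\Sigma \equiv (b-2)-(r-2) = b-r \equiv 0 \mod p$. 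For $b = 3$, the residue class $b-3 \equiv 0$ coincides with $p-1 \mod (p-1)$, so Lemma~\ref{comb3a} applies instead with $b_{\mathrm{new}} = p$, and a short check shows that the boundary discrepancies between the two summation ranges (a term $\binom{r-2}{0} = 1$ on our side and a term $\binom{r-2}{r-3} = r - 2 \equiv 1 \mod p$ on the lemma's side) cancel.

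For the $n = 0$ and $n = 1$ congruences, the identity $j\binom{r}{j} = r\binom{r-1}{j-1}$ combined with Lemma~\ref{comb3a} applied to $(r-1, b-1)$ yields $\sum j\binom{r}{j} \equiv 0 \mod p$, while Lemma~\ref{comb3a} itself gives $\sum\binom{r}{j} \equiv 0 \mod p$. Writing these two sums as $pu$ and $pv$ for integers $u, v$, the $n = 0$ and $n = 1$ conditions translate into the linear system
\[
\gamma_0 + \gamma_1 \equiv -u \mod p^2, \qquad j_0\gamma_0 + j_1\gamma_1 \equiv -v \mod p,
\]
whose ``Vandermonde'' determinant $j_0 - j_1 = -(p-1) \equiv 1 \mod p$ is a unit. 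The system therefore admits a solution, with $\gamma_0$ determined mod $p$ after substituting the first equation into the second, and $\gamma_1 \equiv -u - \gamma_0 \mod p^2$ obtained from the first, completing the construction.

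The hard part will be the bookkeeping in the $n = 2$ verification, especially the exceptional case $b = 3$, where Lemma~\ref{comb3a} cannot be applied to $(r-2, b-2) = (r-2, 1)$ because its hypothesis requires $b_{\mathrm{new}} \geq 2$; one must instead invoke it with $b_{\mathrm{new}} = p$ and track boundary terms carefully. The remaining cases $4 \leq b \leq p$ are uniform, provided one checks that the summation ranges of $\Sigma$ and the lemma coincide, which they do since the minimal index $b-3$ already lies in the residue class $b_{\mathrm{new}}-1 \mod (p-1)$.
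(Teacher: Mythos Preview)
Your approach is correct and follows the same template as the paper's: fix $\beta_j=\binom{r}{j}$ for all but two indices, then adjust those two to force the $n=0,1$ congruences, while checking that the $n=2$ congruence holds automatically. The paper chooses the pair $(b-1,(b-1)p)$ and writes down explicit formulas for $\beta_{b-1}$ and $\beta_{(b-1)p}$ (mirroring the proof of Lemma~\ref{comb2}), whereas you choose the pair $(b-1,b+p-2)$ and argue existence via a $2\times 2$ linear system. Either pair works, and your linear-algebra formulation is arguably cleaner; the paper's version has the advantage of giving formulas one can plug into later computations.

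There is, however, a bookkeeping slip in your $b=3$ paragraph. Applying Lemma~\ref{comb3a} with $r_{\mathrm{new}}=r-2$ and $b_{\mathrm{new}}=p$ gives a sum over $0<l<r-3$, so $l=r-3$ is \emph{not} included on the lemma's side; the only discrepancy between your $\Sigma$ and $T_{r-2}$ is the term $l=0$. The correct computation is simply
\[
\Sigma \;=\; \binom{r-2}{0}+T_{r-2}\;\equiv\; 1+\bigl(p-(r-2)\bigr)\;\equiv\; 3-r\;\equiv\;0\pmod p,
\]
using $p\mid(3-r)$. So your conclusion stands, but the sentence about the two boundary terms cancelling should be replaced by this one-line computation. (For $p=3$, $b=3$, the factor $\binom{r}{2}$ already vanishes mod $p$, so the issue is moot there.)
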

\begin{proof}
 %
 As $p\mid (b-r)$, we have $r\equiv b\mod (p^2-p)$, so we may assume $r\geq p^2-p+b$. Thus we have $j=b-1, (b-1)p$ are two of the $j$'s in the 
 expression for $T_r$ in Lemma \ref{comb3a}. Let us choose
 \begin{eqnarray}
  \beta_j &=&\binom{r}{j}, \:\text{ for all } j\neq b-1, (b-1)p,\\
  \beta_{b-1} &=&-\sum_{\substack{b-1\,<j\,<\,r-1,\\ j\equiv b-1\mod (p-1)}} b^\prime j\binom{r}{j},\\
  \beta_{(b-1)p} &=&-\sum_{\substack{j\neq b-1,\\j\neq (b-1)p}}\binom{r}{j}-\beta_{b-1},
 \end{eqnarray} where $b^\prime$ is any integer satisfying $(b-1)b^\prime\equiv 1\mod p^2$. 
 
 One can now check that this choice of the integers $\beta_j$ satisfy the required 
 properties. The proof uses the congruence in Lemma \ref{comb3a} and is similar to that of Lemma \ref{comb2}, so we leave it as an exercise for 
 the reader. 
\end{proof}

\begin{lemma}\label{comb5}
 Let $p\geq 3$,  $r\equiv 1\mod (p-1)$, i.e., $b=p$ and let $p^2\mid (p-r)$. Then
 \begin{enumerate}\item[(i)] One can choose 
 $\alpha_j\,\in\mathbb{Z}$, for all $j\equiv 1\mod (p-1)$ with $p\leq j< r$, satisfying:
 
 {\em (1)} $\alpha_j\equiv\binom{r}{j} \mod p^2$, for all $j$ as above,
  
 {\em (2)} $\underset{j\geq n}\sum \,\binom{j}{n}\alpha_j\equiv 0\mod p^{4-n}$, for $n=0,1,\,\text{ and } 2$,
 
 {\em (3)} $\underset{j\geq 3}\sum\binom{j}{3}\alpha_j\equiv\begin{cases}
                                                          0\mod p, & \text{if } p\geq 5\\
                                                          1\mod p, & \text{if } p=3.
                                                         \end{cases}$

\item[(ii)]  One can choose 
 $\gamma_j\,\in\mathbb{Z}$, for all $j\equiv 0\mod (p-1)$ with $p-1\leq j< r-1$, satisfying:
 
 {\em (1)} $\gamma_j\equiv\binom{r}{j} \mod p^2$, for all $j$ as above,
  
 {\em (2)} $\underset{j\geq n}\sum \,\binom{j}{n}\gamma_j\equiv 0\mod p^{4-n}$, for $n=0,1,\,\text{ and }2$,
 
 {\em (3)} $\underset{j\geq 3}\sum\binom{j}{3}\gamma_j\equiv\begin{cases}
                                                          0\mod p, & \text{if } p\geq 5\\
                                                          -1\mod p, & \text{if } p=3.
                                                         \end{cases}$
                                                         \end{enumerate}
\end{lemma}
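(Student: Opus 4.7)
The proof of Lemma~\ref{comb5} follows the same template as Lemmas~\ref{comb2} and~\ref{comb3}: start with $\alpha_j:=\binom{r}{j}$ for all admissible $j$, so condition~(1) holds for free, then perturb a few selected entries by multiples of $p^2$ (which preserves (1)) to enforce the higher-order sum conditions~(2), and finally verify that~(3) is automatic.

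For part~(i), observe that the hypothesis $p^2\mid(p-r)$ combined with $r\equiv 1\mod(p-1)$ forces $r\equiv p\mod p^2(p-1)$, so $r\geq p^3-p^2+p$ and $j=p,p^2,p^3$ all lie in the indexing set. I would set $\alpha_{p^i}:=\binom{r}{p^i}+p^2e_i$ for $i=1,2,3$, with integers $e_i$ to be determined, and $\alpha_j:=\binom{r}{j}$ otherwise. The contribution of the perturbation $p^2e_i$ to $\sum\binom{j}{n}\alpha_j$ is $p^2\binom{p^i}{n}e_i$; a short valuation check using $v_p\bigl(\binom{p^i}{n}\bigr)\geq i$ for $1\leq n<p$ shows that these contributions all vanish modulo the target power of $p$ in conditions~(2) for $n=1,2$ and in~(3). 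The only genuine constraint is thus the $n=0$ equation $\sum e_i\equiv -S_0/p^2\mod p^2$, where $S_0:=\sum\binom{r}{j}$, which has a two-parameter family of integer solutions.

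Consistency of the remaining conditions with the unadjusted sums requires $S_0\equiv 0\mod p^2$, $S_1:=\sum j\binom{r}{j}\equiv 0\mod p^3$, and $S_2:=\sum\binom{j}{2}\binom{r}{j}\equiv 0\mod p^2$. The first is Lemma~\ref{comb4}. Writing $S_1=r\,T$ with $T=\sum\binom{r-1}{k}$ over $k\equiv 0\mod p-1$, $p-1\leq k<r-1$, Lemma~\ref{comb1} applied to $r-1$ in class $p-1$ gives $T\equiv p(p-r)/(p-1)\mod p^2$; combining with $p\mid r$ and $p^2\mid(p-r)$ yields $S_1\equiv 0\mod p^3$. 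An analogous computation with $r-2$ in class $p-2$ and the factor $\binom{r}{2}$ of valuation $1$ gives $S_2\equiv 0\mod p^2$. For condition~(3), expand $\sum\binom{j}{3}\binom{r}{j}=\binom{r}{3}\sum\binom{r-3}{j-3}$. When $p\geq 5$, $\binom{r}{3}\equiv p(p-1)(p-2)/6\equiv 0\mod p$, giving residue~$0$. When $p=3$, a direct calculation with $r\equiv 3\mod 9$ yields $\binom{r}{3}\equiv 1\mod 3$, while the inner sum equals $\sum_{k\text{ even},\,0\leq k\leq r-5}\binom{r-3}{k}=2^{r-4}-1\equiv 1\mod 3$ (since $r-4$ is odd); the product is $1\mod 3$, matching the claim.

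Part~(ii) is proved by a parallel construction perturbing $\gamma_{p-1},\gamma_{p(p-1)},\gamma_{p^2(p-1)}$. The lowest perturbation (at $j=p-1$) now contributes nontrivially at each level since $v_p\bigl(\binom{p-1}{n}\bigr)=0$, while the higher two only influence $S_0$; the resulting linear system remains uniquely solvable. The analogue of the $p=3$ calculation (with $k=j-3$ now odd in $[1,r-5]$) produces residue $-1\mod 3$, again matching. The \emph{main obstacle} in this argument is the fine $p$-adic bookkeeping in paragraph three: Lemma~\ref{comb1} supplies sums only modulo $p^2$, so each refined vanishing $S_i\equiv 0\mod p^{i+2}$ must be extracted by combining that lemma with the hypothesis $p^2\mid(p-r)$ and the factor of $p$ appearing in the prefactors $r$, $\binom{r}{2}$, or $\binom{r}{3}$. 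The exceptional $p=3$ residues in~(3) likewise demand their own elementary but delicate combinatorial verification.
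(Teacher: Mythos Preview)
Your approach is essentially what the paper intends (it defers the proof to Lemmas~\ref{comb2}, \ref{comb3} and the congruences in Lemma~\ref{comb4}), and your verification of the unadjusted sums $S_0,S_1,S_2,S_3$ is correct and carefully done, including the $p=3$ residues. Two small points are worth tightening.

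First, in part~(i) you claim $j=p^3$ lies in the index set because $r\geq p^3-p^2+p$, but in fact $p^3>p^3-p^2+p$, so for the minimal admissible $r$ the index $p^3$ is \emph{not} available. This is harmless: since your analysis shows the perturbations at $p$ and $p^2$ (indeed already at $p$ alone) do not disturb the $n=1,2,3$ conditions, a single perturbation $\alpha_p=\binom{r}{p}+p^2e$ with $e\equiv -S_0/p^2\bmod p^2$ already suffices. You should simply drop the reference to $p^3$.

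Second, in part~(ii) your description of the linear system is slightly off. Perturbing at $j=p-1$ \emph{does} affect the $n=1$ condition (since $v_p(p-1)=0$), so the system is not ``only the $n=0$ equation'' as in part~(i): one must first fix $e_1\bmod p$ from the $n=1$ constraint $(p-1)e_1\equiv -S_1/p^2\bmod p$ (using $v_p(S_1)\geq 2$, which follows from $S_1=r\,T_{r-1}$ with $T_{r-1}\equiv p-r\equiv 0\bmod p$ by Lemma~\ref{comb3a}), and then fix $e_1+e_2\bmod p^2$ from $n=0$. The system is solvable but not ``uniquely'' so; two perturbation indices $p-1$ and $p(p-1)$ already suffice. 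Your verification that $S_2\equiv 0\bmod p^2$ and of condition~(3) carries over, though for $p=3$ the $S_2$ check requires the explicit computation $2^{r-3}-(r-2)\equiv 0\bmod 9$ (using $r\equiv 3\bmod 18$) rather than Lemma~\ref{comb3a}.
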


\begin{proof} 
The proof is  similar to that of Lemma \ref{comb2} or  \ref{comb3}, once we use the congruences given in Lemma \ref{comb4}.
\end{proof}

\begin{remark}
 The integers $\alpha_j$, $\beta_j$, $\gamma_j$'s in the lemmas above are not unique, but the existence of such integers is crucial. We will use them in \S \ref{elimination} and \S \ref{sep} to construct
 functions to eliminate JH factors of $Q$ and to compute the reduction $\bar V_{k,a_p}$, which is the main goal of this paper.
\end{remark}

\section{Elimination of JH factors}\label{elimination}

For the rest of this paper, we will work under the assumption that $1<v(a_p)<2$.

Let us recall  Proposition 3.3 in \cite{[BG09]}: If $\bar{\Theta}_{k,a_p}$ is a quotient of $\mathrm{ind}_{KZ}^{G}(V_s\otimes D^n)$ for some $0\leq s\leq p-1$, then 
$\bar{V}_{k,a_p}\cong \mathrm{ind}(\omega_2^{s+1+(p+1)n})$, unless $s=p-2$,  where one has the additional possibility that $\bar{V}_{k,a_p}$ is reducible and 
isomorphic to $\omega^n\oplus\omega^n$ on $I_p$, the inertia subgroup at $p$. Using this result one can specify the shape of $\bar{V}_{k,a_p}$ when $Q$ 
is irreducible as a $\Gamma$-module, up to the fact that $\bar{V}_{k,a_p}$ may be occasionally reducible, as mentioned above. 
For example, we have 

\begin{theorem}
 \label{thm1}
 Let $p\geq 3$ and $r>2p$. 
 \begin{enumerate}
 \item [(i)]  If $r\equiv 1\mod (p-1)$ and $p\nmid r$, then $\bar{V}_{k,a_p}\cong \mathrm{ind}(\omega_2^2)$. 
 If $p=3$, then  $\bar V_{k,a_p}$ can also possibly be reducible and trivial on $I_p$. 
 \item [(ii)] If $r\equiv 2\mod (p-1)$ and $p\nmid r(r-1)$, then $\bar{V}_{k,a_p}\cong \mathrm{ind}(\omega_2^3)$.
 \end{enumerate}
\end{theorem}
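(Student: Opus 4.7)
The plan is to reduce both parts to a direct application of \cite[Prop. 3.3]{[BG09]}, leveraging the fact that under the assumption $1 < v(a_p) < 2$ the surjection $\mathrm{ind}_{KZ}^G V_r \twoheadrightarrow \bar\Theta_{k,a_p}$ factors through $\mathrm{ind}_{KZ}^G Q$, since both $\mathrm{ind}_{KZ}^G X_{r-1}$ and $\mathrm{ind}_{KZ}^G V_r^{**}$ lie in the kernel (by the Buzzard-Gee observations recalled in the introduction). The whole game is therefore to extract the shape of $Q$ as a $\Gamma$-module, and these shapes have already been computed in the preceding sections.

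For part (i), since $r \equiv 1 \mod (p-1)$ and $r > 2p$, the quantity $\Sigma := \Sigma_p(r-1)$ is a positive multiple of $p-1$, so is either equal to $p-1$ or strictly larger. In the first case Proposition \ref{prop1}(iii) gives $Q \cong V_1$, and in the second Proposition \ref{prop2}(iv) gives $Q \cong V_1$ as well (the hypothesis $p \nmid r$ rules out Proposition \ref{prop4}). Applying \cite[Prop. 3.3]{[BG09]} with $s=1$ and $n=0$ yields $\bar V_{k,a_p} \cong \mathrm{ind}(\omega_2^{1+1+0}) = \mathrm{ind}(\omega_2^2)$. The exceptional $s = p-2$ caveat in \cite[Prop. 3.3]{[BG09]} occurs exactly when $p = 3$, forcing inclusion of the reducible possibility in the statement.

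For part (ii), Proposition \ref{prop3} identifies $Q \cong V_{p-3} \otimes D^2$. Since $0 \leq p-3 \leq p-1$, this is an irreducible weight, so $\bar\Theta_{k,a_p}$ is a quotient of $\mathrm{ind}_{KZ}^G (V_{p-3} \otimes D^2)$. Applying \cite[Prop. 3.3]{[BG09]} with $s = p-3$ and $n = 2$ gives
\[
\bar V_{k,a_p} \;\cong\; \mathrm{ind}\bigl(\omega_2^{(p-3)+1+2(p+1)}\bigr) \;=\; \mathrm{ind}(\omega_2^{3p}),
\]
which, by the Galois-conjugacy of $\omega_2$ and $\omega_2^p$ (giving $\mathrm{ind}(\omega_2^a) \cong \mathrm{ind}(\omega_2^{ap})$ for any $a$), is the same representation as $\mathrm{ind}(\omega_2^{3p^2}) = \mathrm{ind}(\omega_2^3)$, using $\omega_2^{p^2-1}=1$. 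The exceptional case $s = p-2$ does not arise here, since $p-3 \neq p-2$.

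There is no genuine obstacle: all the heavy lifting on the $\Gamma$-module side has been carried out in Sections~\ref{a=1} and~\ref{a=2}, and \cite[Prop. 3.3]{[BG09]} converts the irreducibility of $Q$ into a statement about $\bar V_{k,a_p}$. The only minor bookkeeping is the identification $\mathrm{ind}(\omega_2^{3p}) \cong \mathrm{ind}(\omega_2^3)$ and the tracking of the single anomalous sub-case $p = 3$ in part (i).
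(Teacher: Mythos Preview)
Your proof is correct and follows exactly the same approach as the paper: invoke the surjection $\mathrm{ind}_{KZ}^G Q \twoheadrightarrow \bar\Theta_{k,a_p}$, identify $Q$ as $V_1$ (via Propositions~\ref{prop1} and~\ref{prop2}) or $V_{p-3}\otimes D^2$ (via Proposition~\ref{prop3}), and apply \cite[Prop.~3.3]{[BG09]}. You have simply spelled out the $\Sigma$-case split in part~(i), the exponent computation $\mathrm{ind}(\omega_2^{3p}) \cong \mathrm{ind}(\omega_2^3)$ in part~(ii), and the reason the $s=p-2$ caveat is relevant only when $p=3$.
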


\begin{proof} 
 As $1<v(a_p)<2$, there exists a surjection $\mathrm{ind}_{KZ}^G Q\twoheadrightarrow\bar{\Theta}_{k,a_p}$. 
 In part (i), we have $Q\cong V_1$ by  Propositions \ref{prop1} and \ref{prop2}.
 In part (ii), $Q\cong V_{p-3}\otimes D^2$ by Proposition \ref{prop3}. 
 Since $Q$ is irreducible in both cases, we apply \cite[Prop. 3.3]{[BG09]} to determine $\bar{V}_{k,a_p}$.
\end{proof}

\begin{remark}
 In fact, Theorem \ref{reducible1} in the next section  will imply that the reducible possibility in part (i) above never occurs. Note that for $p=3$, the condition $p\nmid r$
 implies that $\binom{r-1}{2}\equiv 0\mod p$  and therefore the hypothesis of
 Theorem \ref{reducible1} is automatically satisfied.
\end{remark}

As we have already seen, $Q$ is usually not irreducible. It can have two and sometimes even three JH factors as a $\Gamma$-module depending on the 
congruence class of $r$ modulo both $p-1$ and $p$. In these cases we will use the 
explicit formula for the Hecke operator $T$ acting on the space $\mathrm{ind}_{KZ}^G\mathrm{Sym}^r\bar{\mathbb{Q}}_p^2$, to eliminate one or 
two JH factors of $Q$, so that we can use \cite[Prop. 3.3]{[BG09]}. 

To work explicitly with the Hecke operator $T $, we need to recall some
well-known formulas involving $T$ from \cite{[Br03]}. For $m = 0$, set $I_0 = \{0\}$, and 
for $m >0$, let
$$I_m = \{ [\lambda_0] + [\lambda_1] p + \cdots + [\lambda_{m-1}]p^{m-1}  \> : \>  \lambda_i \in \F_p \} 
              \subset \Z_p,$$
where the square brackets denote Teichm\"uller representatives. For $m \geq 1$, 
there is a truncation map
$[\quad]_{m-1}: I_{m} \rightarrow I_{m-1}$ given by taking the first $m-1$ terms in the $p$-adic expansion above;
for $m = 1$, $[\quad]_{m-1}$ is the $0$-map.
Let $\alpha =  \left( \begin{smallmatrix} 1 & 0 \\ 0  & p \end{smallmatrix} \right)$. 
For $m \geq 0$ and $\lambda \in I_m$, let
\begin{eqnarray*}
  g^0_{m, \lambda} =  \left( \begin{matrix} p^m & \lambda \\ 0 & 1 \end{matrix} \right) & \quad \text{and} \quad 
  g^1_{m, \lambda} = \left( \begin{matrix} 1 & 0  \\ p \lambda  & p^{m+1} \end{matrix} \right),
\end{eqnarray*}
noting that $g^0_{0,0}=\mathrm{Id}$ is the identity matrix and $g_{0,0}^1=\alpha$ in $G$. 
Recall the decomposition
\begin{eqnarray*}
    G & = & \coprod_{\substack{m\geq 0,\,\lambda \in I_m,\\ i\in\{0,1\}}} {KZ} (g^i_{m, \lambda})^{-1}. 
\end{eqnarray*}
Thus,  a general element in $\mathrm{ind}_{KZ}^G V$, for a $KZ$-module $V$, is a finite sum of elementary functions of the form $[g,v]$, 
with $g=g_{m,\lambda}^0$ or $\,g_{m,\lambda}^1$, for some $\lambda\in I_m$ and $v\in V$.
For a $\Z_p$-algebra $R$, let $v = \sum_{i=0}^r c_i X^{r-i} Y^i \in V = \mathrm{Sym}^r R^2\otimes D^s$.  
Expanding the formula \eqref{T} for the Hecke operator $T$ one may write
$T  = T^+ + T^-,$ 
where 
\begin{eqnarray*}
  T^+([g^0_{n,\mu},v]) & = & \sum_{\lambda \in I_1} \left[ g^0_{n+1, \mu +p^n\lambda},    
         \sum_{j=0}^r \left( p^j \sum_{i=j}^r c_i {i \choose j}(-\lambda)^{i-j} \right) X^{r-j} Y^j \right], \\
  T^-([g^0_{n,\mu},v]) & = & \left[ g^0_{n-1, [\mu]_{n-1}},    
         \sum_{j=0}^r \left( \sum_{i=j}^r p^{r-i} c_i {i \choose j} 
         \left( \frac{\mu - [\mu]_{n-1}}{p^{n-1}} \right)^{i-j} \right) X^{r-j} Y^j \right] \quad (n > 0), \\
  T^-([g^0_{n,\mu},v]) & = &  [ \alpha,  \sum_{j=0}^r  p^{r-j}  c_j  X^{r-j} Y^j ] \quad  (n=0). 
\end{eqnarray*}
The formulas for $T^+$ and $T^-$ will be used to calculate the $(T-a_p)$-image of the functions $f \in \mathrm{ind}_{KZ}^G \Sym^r\bar\Q_p^2$. 
Though $T$ is a $G$-linear operator, note that $T^+$ and $T^-$ are not $G$-linear. 
Formulas similar to those above describe how $T$ acts on functions of the form $[g_{n,\mu}^1,v]$ but we will not use these functions 
in this article.

An integral function, i.e., an element of $\mathrm{ind}_{KZ}^G\mathrm{Sym}^r\bar\Z_p^2$ will be said to ``die mod $p$'', if it maps to zero in 
$\mathrm{ind}_{KZ}^G\mathrm{Sym}^r\bar\F_p^2$ under the standard reduction map.

\begin{theorem} 
\label{elimination1}
 Let $p\geq 5$, $r>2p$, $r\equiv a\mod (p-1)$ with $3\leq a\leq p-1$. If $r\not\equiv a\mod p$, 
 then there exists a surjection $$\mathrm{ind}_{KZ}^{G}(V_{p-a+1}\otimes
 D^{a-1})\twoheadrightarrow \bar{\Theta}_{k,a_p}.$$
 As a consequence, $\bar{V}_{k,a_p}\cong \mathrm{ind}(\omega_2^{a+p})\:$, if $a>3$. 
 For $a=3$, we have the additional possibility that $\bar{V}_{k,a_p}$ is reducible and its restriction to $I_p$ is
 $\omega^2\oplus\omega^2$.
\end{theorem}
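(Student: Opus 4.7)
The plan is to combine the $\Gamma$-equivariant splitting $Q \iso J_1 \oplus J_2$, with $J_1 := V_{p-a+1}\otimes D^{a-1}$ and $J_2 := V_{p-a-1}\otimes D^{a}$, supplied by Proposition~\ref{prop5}(i), with the Buzzard-Gee surjection \eqref{natural-map} factored through $\mathrm{ind}_{KZ}^G Q$, to obtain a $G$-equivariant surjection
$\mathrm{ind}_{KZ}^G J_1 \,\oplus\, \mathrm{ind}_{KZ}^G J_2 \twoheadrightarrow \bar\Theta_{k,a_p}$.
To finish it suffices to kill $\mathrm{ind}_{KZ}^G J_2$, which will be done by producing a function $f \in \mathrm{ind}_{KZ}^G \Sym^r \bar\Q_p^2$ with $(T-a_p)f$ integral and such that its mod $p$ reduction, taken modulo $\mathrm{ind}_{KZ}^G(X_{r-1}+V_r^{**})$, is a nonzero element of $\mathrm{ind}_{KZ}^G J_2$ generating this latter module over $G$. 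Since $(T-a_p)f$ lies in the kernel $X_{k,a_p}$ of \eqref{natural-map} by the description of $\Theta_{k,a_p}$ in \eqref{definetheta}, such a reduction must map to zero in $\bar\Theta_{k,a_p}$, forcing $\mathrm{ind}_{KZ}^G J_2$ to die.

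The eliminator $f$ will be constructed from the integers $\alpha_j$ of Lemma~\ref{comb2}, in the spirit of the functions built in \cite{[Br03]} and \cite[\S3]{[GG15]}. A natural candidate is
$$f \;=\; \sum_{\substack{0<j<r\\j\equiv a\mod(p-1)}} \frac{\alpha_j}{p^2}\sum_{\lambda\in I_1}\bigl[g^0_{1,\lambda},\,(X-[\lambda]Y)^{r-j}Y^j\bigr],$$
or a minor variant thereof. Applying the $T^+/T^-$ formulas recalled at the beginning of this section, the three nontrivial congruences of Lemma~\ref{comb2} are calibrated precisely to the cancellations one needs: (ii) $\sum_j\alpha_j\equiv 0\mod p^3$ makes the $X^r$-coefficient of $(T-a_p)f$ integral, (iii) $\sum_j j\alpha_j\equiv 0\mod p^2$ does the same for the $X^{r-1}Y$-coefficient, and (iv) $\sum_j\binom{j}{2}\alpha_j\equiv 0\mod p$ (which holds for $a\geq 3$) annihilates the $X^{r-2}Y^2$-coefficient modulo $p$. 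Combined with $v(a_p)>1$, which absorbs the denominator $p^2$ in $a_p f$ once the above cancellations are in place, this yields $(T-a_p)f \in \mathrm{ind}_{KZ}^G \Sym^r\bar\Z_p^2$. Passing to $\mathrm{ind}_{KZ}^G Q$ kills the monomials $X^r$, $X^{r-1}Y$ and $X^{r-2}Y^2$ together with everything in $X_{r-1}+V_r^{**}$, isolating a single nonzero element of the $J_2$-summand; irreducibility of $J_2$ as a $\Gamma$-module then ensures this reduction generates $\mathrm{ind}_{KZ}^G J_2$ as a $G$-module.

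Granting the surjection $\mathrm{ind}_{KZ}^G(V_{p-a+1}\otimes D^{a-1})\twoheadrightarrow\bar\Theta_{k,a_p}$, I invoke \cite[Prop.~3.3]{[BG09]} with $s=p-a+1$ (which lies in $\{2,\ldots,p-2\}$ for $3\leq a\leq p-1$) and twist $\eta=\omega^{a-1}$. The mod $p$ Local Langlands Correspondence then gives
$$\bar V_{k,a_p}\iso \mathrm{ind}\bigl(\omega_2^{(p-a+2)+(p+1)(a-1)}\bigr)=\mathrm{ind}(\omega_2^{pa+1})=\mathrm{ind}(\omega_2^{a+p}),$$
the last equality because $\mathrm{ind}(\omega_2^m)\iso\mathrm{ind}(\omega_2^{pm})$ and $p(a+p)\equiv pa+1\pmod{p^2-1}$. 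The single exceptional case $s=p-2$ in \cite[Prop.~3.3]{[BG09]} occurs precisely when $a=3$, giving rise to the additional reducible possibility $\bar V_{k,a_p}|_{I_p}\iso\omega^{a-1}\oplus\omega^{a-1}=\omega^2\oplus\omega^2$ stated in the theorem. The main obstacle is the middle step: pinning down the candidate $f$ so that the three calibrated congruences of Lemma~\ref{comb2} yield exactly the right integrality, and verifying that the reduction truly lands in $J_2$ rather than $J_1$ or the already quotiented-out $X_{r-1}+V_r^{**}$; this is a concrete but delicate bookkeeping with the $T^\pm$-formulas and the explicit decomposition of $Q$.
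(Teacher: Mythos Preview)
Your overall strategy is exactly the paper's: use the splitting $Q\cong J_1\oplus J_2$ from Proposition~\ref{prop5}(i), eliminate $J_2$ by exhibiting an $f$ with $(T-a_p)f$ integral whose reduction generates $\mathrm{ind}_{KZ}^G J_2$, and finish with \cite[Prop.~3.3]{[BG09]}. The computation with $\omega_2$-exponents at the end is also correct.

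The gap is in the construction of $f$. Your single-radius candidate
$f=\sum_j\frac{\alpha_j}{p^2}\sum_{\lambda}[g^0_{1,\lambda},(X-[\lambda]Y)^{r-j}Y^j]$
does not do the job: with denominator $p^2$ and $1<v(a_p)<2$, the term $a_pf$ has valuation $v(a_p)-2<0$ on each monomial, and the congruences (ii)--(iv) of Lemma~\ref{comb2} govern only the $T^+$-side, not $a_pf$ itself. The paper's function is instead a three-level sum $f=f_2+f_1+f_0$ supported at radii $2,1,0$, with $f_1=[g^0_{1,0},\,\frac{p-1}{pa_p}\sum_j\alpha_j X^{r-j}Y^j]$ carrying the denominator $pa_p$ (not $p^2$); the combinatorial congruences then force $T^+f_1\equiv 0\bmod p$, while $a_pf_1$ is paired against $T^-f_2+T^+f_0$ so that the difference is $\sum_j\frac{p-1}{p}(\binom{r}{j}-\alpha_j)X^{r-j}Y^j$, integral by property~(i). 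You cannot obtain this telescoping with a function supported at a single radius.

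Two further points. First, $X^{r-2}Y^2$ is not in $X_{r-1}+V_r^{**}$ in general, so it is not killed in $Q$; you cannot discard it that way. Second, and more importantly, you never identify where the hypothesis $r\not\equiv a\bmod p$ enters. In the paper it appears at the very end: the image of $\overline{(T-a_p)f}$ under $\mathrm{Pr}_2:Q\twoheadrightarrow J_2$ is computed to be $c\cdot X^{p-1-a}$ with $c\equiv\frac{1}{p}\sum_j\binom{r}{j}\equiv\frac{a-r}{a}\bmod p$ by Lemma~\ref{comb1}, and it is precisely $r\not\equiv a\bmod p$ that makes $c\neq 0$. Without locating this step your argument cannot distinguish the cases $p\mid r-a$ and $p\nmid r-a$, which (by Proposition~\ref{prop5} and Theorems~\ref{elimination3},~\ref{elimination6}) lead to different answers.
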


\begin{proof} 
 By Proposition \ref{prop5} (i), we have $Q\cong J_1\oplus J_2$ as a $\Gamma$-module, where 
 $J_1=V_{p-a+1}\otimes D^{a-1}$ and
 $J_2=V_{p-a-1}\otimes D^a$. Let $F_1$ be the image of $\mathrm{ind}_{KZ}^G J_1$ in 
 $\bar{\Theta}_{k,a_p}$ 
 under the surjection $\mathrm{ind}_{KZ}^G Q\twoheadrightarrow \bar{\Theta}_{k,a_p}$, and let  
 $F_2$ denote
 the quotient $\bar{\Theta}_{k,a_p}/F_1$. 
 Then we have the  commutative diagram: 
 \begin{equation*}
    \xymatrix{0 \ar[r] & \mathrm{ind}_{KZ}^G J_1\ar@{>>}[d]\ar[r] & \mathrm{ind}_{KZ}^G Q \ar@{>>}[d] \ar[r] &
       \mathrm{ind}_{KZ}^G J_2 \ar@{>>}[d]\ar[r] & 0 \\
               0 \ar[r] & F_{1}\ar[r]  & \bar{\Theta}_{k,a_p}\ar[r] & F_2 \ar[r] & 0 .}
 \end{equation*}
 We will construct a function in $X_{k,a_p}=\ker (\mathrm{ind}_{KZ}^{G}V_r\twoheadrightarrow 
 \bar{\Theta}_{k,a_p})$, which
 maps to a function $[g,v]\in \mathrm{ind}_{KZ}^{G}J_2$ under the surjection  in the top row above, for some $g\in G$ and $0\neq v\in J_2$. 
 The $G$-span of $[g,v]$ is the whole 
 of $\mathrm{ind}_{KZ}^{G}J_2$, so this implies that $F_2=0$ and $\bar{\Theta}_{k,a_p}\cong F_1$ is a quotient of $\mathrm{ind}_{KZ}^{G}J_1$. 
 Once this is proved, the final conclusion will follow by applying \cite[Prop. 3.3]{[BG09]}. 
 
 Consider the function $f\in \mathrm{ind}_{KZ}^{G}\mathrm{Sym}^r\bar{\mathbb{Q}}_p^2$ defined by $f:=f_2+f_1+f_0$ with
 \begin{eqnarray}\label{f_2}f_2 &=&\underset{\lambda\in\mathbb{F}_p}\sum \left[\:g_{2,p[\lambda]}^0, \frac{1}{p}\cdot\left(Y^r-X^{p-1}Y^{r-p+1}\right)\:\right],\\
 \label{f_1}f_1 &=& \left[\:g_{1,0}^0,\:\:\: \frac{(p-1)}{pa_p}\cdot\sum_{\substack{0<j<r, \\j\equiv a\mod (p-1)}}\:\alpha_j\cdot\, X^{r-j}Y^j\:\right],\\
 \label{f_0}f_0 &=&\begin{cases}0, &\text{if }  a<p-1,\\
 \left[\mathrm{Id}, \frac{(1-p)}{p}\cdot \left(X^r-X^{r-p+1}Y^{p-1}\right)\right], &\text{if } a=p-1,
                \end{cases}
 \end{eqnarray}
 where the $\alpha_j\in\mathbb{Z}$ are integers satisfying the four properties stated in Lemma \ref{comb2}.
 
 Applying the explicit formulas for $T^+$ and $T^-$, using Lemma \ref{comb2} and the facts  $r>2p$, $3\leq p-1$ and $1<v(a_p)<2$, 
 we get that the functions $T^-f_0,\, T^-f_1,\,T^+f_1,\,a_pf_0,\,a_pf_2$ are all integral 
 and die mod $p$. We compute that 
 $T^+f_2\in\mathrm{ind}_{KZ}^{G}\langle X^{r-1}Y\rangle_{\bar{\mathbb{Z}}_p}+p\cdot\mathrm{ind}_{KZ}^{G}\mathrm{Sym}^r\bar{\mathbb{Z}}_p^2$, 
 hence it maps to $0\in\mathrm{ind}_{KZ}^G Q$. 
 Next we use the formulas for $T^-$, $T^+$ and the identity \eqref{identity}  to compute that
 \begin{equation}
  \label{magic} 
    T^-f_2-a_pf_1+T^+f_0=\left[g_{1,0}^0,\:\: \underset{\substack{0<j<r,\\j\equiv a\mod (p-1)}}
    \sum\dfrac{(p-1)}{p}\left(\binom{r}{j}-\alpha_j\right)\cdot X^{r-j}Y^j +Y^r\:\right].
 \end{equation}
 This is also integral as we have $\alpha_j\equiv\binom{r}{j}\mod p$, for each $j$, by Lemma \ref{comb2}.

 All the  information above together implies that $(T-a_p)f\in\mathrm{ind}_{KZ}^{G}\mathrm{Sym}^r\bar{\mathbb{Z}}_p^2$, so its reduction lies in $X_{k,a_p}$. 
 Moreover, the reduction $\overline{(T-a_p)f}$ maps to  $[g_{1,0}^0,,\:\mathrm{Pr}_2(F)]\in\mathrm{ind}_{KZ}^G J_2$, where 
 $$F(X,Y)=\underset{\substack{0<j<r,\\j\equiv a\mod (p-1)}}\sum\dfrac{(p-1)}{p}\left(\binom{r}{j}-\alpha_j\right)\cdot X^{r-j}Y^j +Y^r,$$
 and $\mathrm{Pr}_2:Q\twoheadrightarrow J_2$ is the projection map. 
 It is clear from Diagram \eqref{bigdiadram} that $\mathrm{Pr}_2$ is induced by the map $V_r\twoheadrightarrow\dfrac{V_r}{V_r^*}\twoheadrightarrow J_2$. 
 Noting that the monomial $Y^r$ maps to $0$  in $Q$, we get  that
 $$\mathrm{Pr_2}(F) = \mathrm{Pr}_2  \left( \underset{j} \sum \frac{1}{p} \left(\binom{r}{j}-\alpha_j \right) \cdot X^{r-j}Y^j \right) 
 =\underset{j}\sum\dfrac{1}{p}\left(\binom{r}{j}-\alpha_j\right)\cdot\mathrm{Pr}_2( X^{r-a}Y^a),$$
 since  all the mixed monomials  in $F(X,Y)$ are congruent to  scalar multiples of $X^{r-a}Y^a$ modulo $V_r^*$.
 The composition $V_r/V_r^*\overset{\sim}\longrightarrow V_{a+p-1}/V_{a+p-1}^*\twoheadrightarrow V_{p-a-1}\otimes D^a=J_2$, where the first isomorphism is $\psi^{-1}$ of \cite[(4.2)]{[G78]}
 and the next surjection is induced from \cite[Lem. 5.3]{[Br03]}, sends 
 $$X^{r-a}Y^a\mod V_r^*\mapsto X^{p-1}Y^a\mod V_{a+p-1}^*\mapsto X^{p-1-a}\neq 0.$$
 Therefore $\mathrm{Pr}_2(F)=c\cdot \mathrm{Pr}_2(X^{r-a}Y^a)=c\cdot X^{p-1-a}$, where $c$ is the mod $p$ reduction of the sum  
 $\underset{j}\sum\dfrac{1}{p}\cdot\left(\binom{r}{j}-\alpha_j\right)\in \Z$.
 By  property (ii) of the integers $\alpha_j$ stated in Lemma \ref{comb2},   $c$ is the reduction of
 $ \dfrac{1}{p}\cdot\underset{j}\sum\binom{r}{j}\in\Z$, which is congruent to $\dfrac{a-r}{a}\mod p$ by Lemma \ref{comb1}.
  By hypothesis $r\not\equiv a \mod p$, so we get $c\in\bar\F_p^*$. 
 Thus the reduction  $\overline{(T-a_p)f}$ maps to $[g, v]\in\mathrm{ind}_{KZ}^G J_2$, where $g=g_{1,0}^0\in G$ and $v=c\cdot
 X^{p-1-a}$ is a non-zero element in $J_2$, and we are done. 
\end{proof}

The following theorem is complementary to Theorem \ref{thm1} (ii).

\begin{theorem}\label{thm2}
 Let $p\geq 3$, $r>2p$, $r\equiv 2\mod (p-1)$. If $ p\mid r(r-1)$, then $\bar{V}_{k,a_p}\cong \mathrm{ind}(\omega_2^{2+p})$.
\end{theorem}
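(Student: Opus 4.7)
The plan is to mirror the proof of Theorem \ref{elimination1}, adapted to the case $a = 2$. By Propositions \ref{pdr1} and \ref{pdr2}, the hypothesis $p\mid r(r-1)$ forces $Q$ to have exactly two Jordan--H\"older factors as a $\Gamma$-module: $J_2:=V_{p-3}\otimes D^2$ together with $J_1:=V_{p-1}\otimes D$ (a direct summand, if $p\mid r-1$) or $J_1:=V_0\otimes D$ (a submodule of a non-split extension, if $p\mid r$). In either case $J_1$ is not of the form $V_{p-2}\otimes D^n$, so once I show $\bar\Theta_{k,a_p}$ is a quotient of $\mathrm{ind}_{KZ}^G J_1$, the unconditional part of \cite[Prop.~3.3]{[BG09]} will immediately give $\bar V_{k,a_p}\cong\mathrm{ind}(\omega_2^{p+2})$: both $V_0\otimes D$ and $V_{p-1}\otimes D$ map under $LL$ to $\mathrm{ind}(\omega_2^{p+2})$, since $\omega_2^{2p+1}$ and $\omega_2^{p+2}$ differ only by a Frobenius twist. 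The task therefore reduces to eliminating $J_2$.

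To achieve this, I will take the integers $\alpha_j\in\Z$ given by Lemma \ref{comb2} for $a = 2$ and define $f:=f_2+f_1+f_0\in\mathrm{ind}_{KZ}^G\Sym^r\bar\Q_p^2$ by the formulas \eqref{f_2}, \eqref{f_1}, \eqref{f_0} with $a$ replaced by $2$; in particular $f_0=0$ for $p\geq 5$, while $f_0$ is the non-trivial term of \eqref{f_0} when $p=3$, since then $2=p-1$. The verifications that $T^-f_0$, $T^-f_1$, $a_p f_0$, $a_p f_2$ are integral and die mod $p$, and that $T^+f_2\in\mathrm{ind}_{KZ}^G\langle X^{r-1}Y\rangle_{\bar\Z_p}+p\cdot\mathrm{ind}_{KZ}^G\Sym^r\bar\Z_p^2$, will transfer verbatim from the proof of Theorem \ref{elimination1}. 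The only step where $a=2$ departs from the $a\geq 3$ case is the integrality of the $j=2$ coefficient inside $T^+f_1$: after summing over $\lambda\in I_1$ via \eqref{identity}, this coefficient involves $\sum_{j\geq 2}\binom{j}{2}\alpha_j$, which by Lemma \ref{comb2}(iv) equals $\binom{r}{2}\bmod p$. The hypothesis $p\mid r(r-1)$, together with $p$ odd, forces $\binom{r}{2}\equiv 0\bmod p$, which is precisely what is needed for this coefficient to die mod $p$.

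The remaining combination $T^-f_2-a_p f_1+T^+f_0$ will then simplify, exactly as in \eqref{magic}, to an integral function supported at $g^0_{1,0}$, of the form $\tfrac{p-1}{p}\sum_j(\binom{r}{j}-\alpha_j)X^{r-j}Y^j+Y^r$. I will project its reduction onto $\mathrm{ind}_{KZ}^G J_2$ via the composition $V_r\twoheadrightarrow V_r/V_r^*\xrightarrow{\psi^{-1}}V_{p+1}/V_{p+1}^*\twoheadrightarrow V_{p-3}\otimes D^2$ of \cite[(4.2)]{[G78]} and \cite[Lem.~5.3]{[Br03]}, just as in Theorem \ref{elimination1}: the monomial $Y^r$ vanishes (since $Y^r\in X_{r-1}$) and each mixed monomial $X^{r-j}Y^j$ maps to a fixed non-zero multiple of $X^{p-3}$. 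The overall coefficient becomes $-\tfrac{1}{p}\sum_j(\binom{r}{j}-\alpha_j)\bmod p$, which by Lemma \ref{comb2}(ii) reduces to $-\tfrac{1}{p}\sum_j\binom{r}{j}\equiv-\tfrac{2-r}{2}\bmod p$ via Lemma \ref{comb1}. This evaluates to $-\tfrac{1}{2}$ if $p\mid r-1$ and to $-1$ if $p\mid r$, hence is non-zero in $\bar\F_p$ for every $p\geq 3$. Therefore $\overline{(T-a_p)f}$ maps to a $G$-generator $[g^0_{1,0},c\cdot X^{p-3}]$ of $\mathrm{ind}_{KZ}^G J_2$ with $c\in\bar\F_p^\times$, showing that $J_2$ does not contribute to $\bar\Theta_{k,a_p}$, and the theorem then follows from \cite[Prop.~3.3]{[BG09]}. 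The main obstacle is locating where the hypothesis $p\mid r(r-1)$ is essential: it is precisely the condition making $\binom{r}{2}\equiv 0\bmod p$, ensuring integrality of the $j=2$ contribution in $T^+f_1$, the single check that fails in the $a=2$ case without it.
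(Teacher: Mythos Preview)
Your approach matches the paper's almost exactly: eliminate $J_2=V_{p-3}\otimes D^2$ using the same function $f=f_2+f_1+f_0$ from Theorem~\ref{elimination1} with $a=2$, then apply \cite[Prop.~3.3]{[BG09]} to the surviving factor. You also correctly isolate the $T^+f_1$ issue: property~(iv) of Lemma~\ref{comb2} now gives $\sum_j\binom{j}{2}\alpha_j\equiv\binom{r}{2}\bmod p$ rather than $0$, and the hypothesis $p\mid r(r-1)$ is exactly what forces $\binom{r}{2}\equiv 0\bmod p$.

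There is one small gap. You assert that the check $T^-f_1\equiv 0\bmod p$ transfers verbatim, but this is false when $p=3$. In the proof of Theorem~\ref{elimination1} that verification used the bound $3\le p-1$, which fails for $p=3$ (a prime now allowed since $a=2$). Concretely, the coefficient of $X^{p-1}Y^{r-p+1}$ in $T^-f_1$ has valuation $p-2-v(a_p)+v(\alpha_{r-p+1})$; for $p=3$ this is $1-v(a_p)+v(\alpha_{r-2})$, which is negative unless $p\mid\alpha_{r-2}$. The paper patches this by observing that $\alpha_{r-p+1}\equiv\binom{r}{r-p+1}=\binom{r}{p-1}\equiv 0\bmod p$ via Lucas' theorem and the hypothesis $p\mid r(r-1)$. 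Note this is the \emph{same} congruence $p\mid\binom{r}{2}$ you already invoked for $T^+f_1$ (since $\binom{r}{p-1}=\binom{r}{2}$ when $p=3$), so your argument is easily repaired---but ``the only step where $a=2$ departs'' is not quite accurate: there are two such steps, both resolved by $p\mid\binom{r}{2}$.
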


\begin{proof}
 If $p\mid r (r-1)$, then $Q$ has two JH factors by  Propositions \ref{pdr1} and \ref{pdr2}. We will eliminate the JH factor 
 $J_2=V_{p-3}\otimes D^2$. We take the  function $f=f_0+f_1+f_2$ as in the proof of Theorem \ref{elimination1}, for $a=2$.
 Note that  property (iv) in Lemma \ref{comb2} implies that $\underset{j}\sum \binom{j}{2}\alpha_j\equiv\binom{r}{2}\equiv 0\mod p$, by hypothesis. 
 Now by the same argument as in
 Theorem \ref{elimination1}, we eliminate the JH factor $J_2$, except for the following subtlety: to show $T^-f_1\equiv 0\mod p$, 
 we used the bound $3\leq p-1$ in Theorem \ref{elimination1}. This cannot be used 
 in the present case
 as $a=2$ and so $p=3$ is allowed. But $p\mid r(r-1)$ implies that $\alpha_{r-p+1}\equiv \binom{r}{r-p+1}\equiv 0\mod p$ by Lucas' theorem. This ensures that $T^-f_1$ dies mod $p$ even in this case.
 
 Therefore if $p\mid r-1$, then we have a surjection $\mathrm{ind}_{KZ}^G (V_{p-1}\otimes D)\twoheadrightarrow \bar{\Theta}_{k,a_p}$ by Proposition \ref{pdr1}, 
 and if $p\mid r$, then we have $\mathrm{ind}_{KZ}^G (V_0\otimes D)\twoheadrightarrow \bar{\Theta}_{k,a_p}$ by Proposition \ref{pdr2}. 
 Now we use \cite[Prop. 3.3]{[BG09]} to draw the final conclusion.
\end{proof}

Next we treat some cases where $Q$ has three JH factors and those coming from $V_r^*/V_r^{**}$ are to be eliminated. We begin by stating the following easy lemma. 
Note that we already have a complete solution to the problem in
the case $b=2$ by Theorem \ref{thm1} (ii) and Theorem \ref{thm2}. Therefore we assume $ b\geq 3$ from now on. 

\begin{lemma}
\label{easylemma}
 Let $p\geq 3$, $r>2p$, $ r\equiv b\mod (p-1)$ with $3\leq b\leq p$. Then we have the non-split short exact sequence of $\,\Gamma$-modules 
 $$0\rightarrow J_0:=V_{b-2}\otimes D\rightarrow V_r^*/V_r^{**}\rightarrow J_1:=V_{p-b+1}\otimes D^{b-1}\rightarrow 0,\text{ where }$$
 \begin{enumerate}\item[(i)] The monomials $Y^{b-2},X^{b-2}\in J_0$ map to  $\theta Y^{r-p-1}$ and $\theta X^{r-p-1}$ respectively in $V_r^*/V_r^{**}$. 
  \item[(ii)] The polynomials $\theta Y^{r-p-1},\,\theta X^{r-p-1}\in V_r^*/V_r^{**}$ map to $0\in J_1$ and  $\,\theta X^{r-p-b+1}Y^{b-2}$ maps to $X^{p-b+1}\in J_1$.
 \end{enumerate}
\end{lemma}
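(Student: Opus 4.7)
My first move will be to note that the displayed short exact sequence is exactly what Proposition \ref{es2} produces: the case $3\leq b\leq p-1$ is \eqref{ses**3} with $a=b$, and the case $b=p$ is \eqref{ses**} with $a=1$, once I observe that $D^{p-1}$ is trivial on $\Gamma$ so that $V_1\otimes D^{p-1}\cong V_1 = V_{p-b+1}\otimes D^{b-1}$. The non-splitness in both cases is stated in that proposition. Thus the real content of the lemma lies in (i) and (ii), which pin down the embedding and the surjection on specific elements.

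The main tool I will use is the canonical $M$-linear isomorphism $V_r^*/V_r^{**}\cong (V_{r-p-1}/V_{r-p-1}^*)\otimes D$ from \cite[(4.1)]{[G78]}, induced by division by $\theta$. Since $r-p-1\equiv b-2\mod (p-1)$, applying Proposition \ref{es1} to $V_{r-p-1}$ and tensoring by $D$ realizes the sequence of the lemma as the $D$-twist of
$$0\to V_{b-2}\to V_{r-p-1}/V_{r-p-1}^*\to V_{p-b+1}\otimes D^{b-2}\to 0.$$
Concretely, the embedding $J_0\hookrightarrow V_r^*/V_r^{**}$ then comes from the embedding $V_{b-2}\hookrightarrow V_{r-p-1}/V_{r-p-1}^*$ of Proposition \ref{es1}, and the surjection $V_r^*/V_r^{**}\twoheadrightarrow J_1$ from the corresponding surjection in that proposition.

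For part (i), I will trace Borel eigenvectors. The monomials $Y^{b-2}$ and $X^{b-2}$ generate the two highest-weight lines of $V_{b-2}$ for the upper and lower Borel, respectively, and by Fermat's little theorem their diagonal characters coincide modulo $p$ with those of $Y^{r-p-1}$ and $X^{r-p-1}$ in $V_{r-p-1}/V_{r-p-1}^*$. The explicit embedding furnished by \cite[Lem. 5.3]{[Br03]} must therefore send $Y^{b-2}\mapsto Y^{r-p-1}$ and $X^{b-2}\mapsto X^{r-p-1}$, with the standard normalization. Multiplying through by $\theta$ yields $Y^{b-2}\mapsto \theta Y^{r-p-1}$ and $X^{b-2}\mapsto \theta X^{r-p-1}$ in $V_r^*/V_r^{**}$, as claimed.

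For part (ii), the vanishing of $\theta Y^{r-p-1}$ and $\theta X^{r-p-1}$ in $J_1$ is immediate from part (i), since both lie in the image of $J_0$ and are therefore killed by the projection to $J_1$. For the third claim, the monomial $X^{r-p-b+1}Y^{b-2}$ is a torus eigenvector whose character, computed via Fermat from $r\equiv b\mod (p-1)$, matches that of $X^{p-b+1}$ in $V_{p-b+1}\otimes D^{b-2}$; hence the explicit quotient map of \cite[Lem. 5.3]{[Br03]} sends it to a nonzero multiple of $X^{p-b+1}$, and twisting by $D$ gives the stated image in $J_1$. I expect the main bookkeeping obstacle to be matching the normalizations across \cite[(4.1), (4.2)]{[G78]} and \cite[Lem. 5.3]{[Br03]} so that scalars come out exactly as displayed; any overall nonzero scalars can be absorbed into the choice of isomorphism witnessing the short exact sequence.
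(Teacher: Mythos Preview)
Your proposal is correct and follows essentially the same approach as the paper. Both arguments invoke Proposition~\ref{es2} for the exact sequence, use Glover's isomorphisms \cite[(4.1),(4.2)]{[G78]} to identify $V_r^*/V_r^{**}$ with $(V_{p+b-3}/V_{p+b-3}^*)\otimes D$ (the paper does this in one step via $\psi^{-1}\otimes\mathrm{id}$, whereas you first divide by $\theta$ and then implicitly use $\psi$ through Proposition~\ref{es1}), and then apply the explicit formulas of \cite[Lem.~5.3]{[Br03]} to read off the images of the specified polynomials; your Borel-eigenvector heuristic is a helpful guide but, as you note, the actual verification rests on those explicit formulas, exactly as in the paper.
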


\begin{proof}
 The exact sequence is given by Proposition \ref{es2}. By \cite[(4.1), (4.2)]{[G78]}, we have an isomorphism 
 $V_r^*/V_r^{**}\xrightarrow{\psi^{-1}\otimes \mathrm{id}} (V_{p+b-3}/V_{p+b-3}^*)\otimes D$. 
 Then one computes the images of the polynomials mentioned above under the $\Gamma$-maps $V_{b-2}\otimes D\hookrightarrow (V_{p+b-3}/V_{p+b-3}^*)\otimes D$ and 
 $(V_{p+b-3}/V_{p+b-3}^*)\otimes D\twoheadrightarrow V_{p-b+1}\otimes D^{b-1}$ respectively,  using the explicit formulas from \cite[Lem. 5.3]{[Br03]} .
\end{proof}

The next two theorems are complementary to Theorem \ref{elimination1} above.

\begin{theorem}\label{elimination3}
  Let $p\geq 5$, $r>2p$ and $r\equiv b\mod (p-1)$ with $4\leq b\leq p-1$. If $r\equiv b\mod p$, then $\bar{V}_{k,a_p}\cong \mathrm{ind}(\omega_2^{b+1})$.  
\end{theorem}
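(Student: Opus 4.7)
By Proposition~\ref{prop5}(ii), since $r\equiv b\bmod p$ the $\Gamma$-module $Q$ has three Jordan--H\"older factors $J_0=V_{b-2}\otimes D$, $J_1=V_{p-b+1}\otimes D^{b-1}$ and $J_2=V_{p-b-1}\otimes D^b$, with $J_0\hookrightarrow V_r^*/V_r^{**}\twoheadrightarrow J_1$ non-split and $J_2=Q/(V_r^*/V_r^{**})$ the top quotient. My strategy is to produce two elements of $X_{k,a_p}\subset\mathrm{ind}_{KZ}^G V_r$ whose $G$-translates cover the images of $\mathrm{ind}_{KZ}^G J_0$ and $\mathrm{ind}_{KZ}^G J_1$ inside $\bar\Theta_{k,a_p}$. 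This yields a surjection $\mathrm{ind}_{KZ}^G J_2\twoheadrightarrow\bar\Theta_{k,a_p}$, and since $4\leq b\leq p-1$ forces $p-b-1\neq p-2$, \cite[Prop.~3.3]{[BG09]} delivers
\[
\bar V_{k,a_p}\cong\mathrm{ind}(\omega_2^{(p-b-1)+1+(p+1)b})=\mathrm{ind}(\omega_2^{p(b+1)})=\mathrm{ind}(\omega_2^{b+1}),
\]
with no reducible alternative to separate out.

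\textbf{Killing $J_1$.} I would mimic the construction of Theorem~\ref{elimination1}, but substitute the integers $\beta_j$ of Lemma~\ref{comb3} (indexed by $j\equiv b-1\bmod(p-1)$) for the $\alpha_j$ of Lemma~\ref{comb2}; crucially, the existence of such $\beta_j$ requires precisely the hypothesis $p\mid r-b$. Explicitly, take $f=f_0+f_1+f_2$ with
\[
f_1=\left[g_{1,0}^0,\ \frac{p-1}{p\,a_p}\sum_{\substack{b-1\leq j<r-1\\ j\equiv b-1\,\bmod(p-1)}}\beta_j\,X^{r-j}Y^j\right],
\]
together with a $\theta$-divisible $f_2$ supported on $\{g_{2,p[\lambda]}^0\}_{\lambda\in\F_p}$ that cancels the leading terms of $T^-f_2-a_pf_1$, and an integrality correction $f_0$ at the identity (non-zero only when $b=p-1$). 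The three congruences of Lemma~\ref{comb3}(2), namely $\sum\beta_j\equiv 0\bmod p^3$, $\sum j\beta_j\equiv 0\bmod p^2$ and $\sum\binom{j}{2}\beta_j\equiv 0\bmod p$, make $(T-a_p)f$ land in $\mathrm{ind}_{KZ}^G\mathrm{Sym}^r\bar\Z_p^2$, and the mod $p$ reduction is a function of the form $[g_{1,0}^0,F]$ with $F\in V_r^*$. Projecting $F$ via Lemma~\ref{easylemma}(ii) onto $J_1$ returns a non-zero scalar multiple of $X^{p-b+1}$ (the scalar encoded by the first-order correction to $T_r$ beyond the vanishing given by Lemma~\ref{comb3a}), whose $G$-orbit spans all of $\mathrm{ind}_{KZ}^G J_1$.

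\textbf{Killing $J_0$ and the main obstacle.} For the submodule $J_0\subset V_r^*/V_r^{**}$ I would construct a second function $h\in\mathrm{ind}_{KZ}^G\mathrm{Sym}^r\bar\Q_p^2$ whose $(T-a_p)$-image reduces, modulo $V_r^{**}$, to $[g_{1,0}^0,\theta Y^{r-p-1}]=[g_{1,0}^0,X^pY^{r-p}-XY^{r-1}]$ (or a $G$-translate); by Lemma~\ref{easylemma}(i) this lands in the $\Gamma$-span of $J_0$ inside $V_r^*/V_r^{**}$ and generates $\mathrm{ind}_{KZ}^G J_0$ under $G$. The coefficients in the $f_1$-component of $h$ must be adapted to isolate precisely those two monomials while sending the image in the quotient $J_1$ to zero; once more the hypothesis $p\mid r-b$ is decisive, as it forces the relevant sums (again controlled by Lemma~\ref{comb3a}) to vanish to the needed order of $p$. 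The principal technical obstacle is exactly this second construction: one must simultaneously enforce integrality of $(T-a_p)h$ to the right depth in $p$ and guarantee that the reduction lies in the $J_0$-piece rather than in the kernel of $Q\twoheadrightarrow J_2$. Concretely, this forces a choice of auxiliary integers $\gamma_j$ with controlled $0$th, $1$st and $2$nd moments, parallel to Lemmas~\ref{comb2}--\ref{comb3} but now keyed to the embedded submodule $J_0$ instead of the quotient $J_1$. With both eliminations in place, $\bar\Theta_{k,a_p}$ is a quotient of $\mathrm{ind}_{KZ}^G J_2$ and \cite[Prop.~3.3]{[BG09]} completes the proof.
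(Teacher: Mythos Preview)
Your overall plan (eliminate the factors coming from $V_r^*/V_r^{**}$, keep $J_2$, then apply \cite[Prop.~3.3]{[BG09]}) is the paper's plan, but you miss the main simplification and your explicit construction differs from the paper's.

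The key point you overlook is that the short exact sequence $0\to J_0\to V_r^*/V_r^{**}\to J_1\to 0$ is \emph{non-split} (Proposition~\ref{es2}). Hence $J_1$ is not a submodule of $Q$, and any $v\in V_r^*/V_r^{**}$ whose image in $J_1$ is non-zero already generates all of $V_r^*/V_r^{**}$ over $\Gamma$. Consequently a single function $f$ with $\overline{(T-a_p)f}\mapsto[g,v]\in\mathrm{ind}_{KZ}^G(V_r^*/V_r^{**})$, $v$ projecting non-trivially to $J_1$, kills the image $F_{0,1}$ of $\mathrm{ind}_{KZ}^G(V_r^*/V_r^{**})$ in $\bar\Theta_{k,a_p}$ in one stroke. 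Your ``Killing $J_1$'' step, if it works as you describe ($F\in V_r^*$ with non-zero $J_1$-projection), would therefore already eliminate both $J_0$ and $J_1$; your entire ``Killing $J_0$'' step, with its auxiliary $\gamma_j$, is redundant.

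The paper's concrete function is also simpler than yours: there is no depth-$2$ piece $f_2$. One takes $f=f_0+f_1$ with the $\beta_j$ of Lemma~\ref{comb3} placed in $f_0=[\mathrm{Id},\frac{p(p-1)}{a_p^2}\sum\beta_jX^{r-j}Y^j]$ (not in $f_1$), and
\[
f_1=\sum_{\lambda\in\F_p^*}\Bigl[g_{1,[\lambda]}^0,\tfrac{p}{a_p}[\lambda]^{p-2}(Y^r-X^{r-b}Y^b)\Bigr]
+\Bigl[g_{1,0}^0,\tfrac{r(1-p)}{a_p}(XY^{r-1}-X^{r-b+1}Y^{b-1})\Bigr].
\]
One checks $T^+f_1,T^-f_0,T^+f_0$ and $T^-f_1-a_pf_0$ die mod $p$ (here $b\geq 4$ is used for $T^+f_0$), so $(T-a_p)f\equiv -a_pf_1\bmod p$. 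Since $r\equiv b\bmod p$, $-a_pf_1$ reduces to $[g_{1,0}^0,-b\,\theta(X^{r-p-b+1}Y^{b-2}-Y^{r-p-1})]$, which by Lemma~\ref{easylemma}(ii) has non-zero image $-bX^{p-b+1}$ in $J_1$. By non-splitness this element generates $\mathrm{ind}_{KZ}^G(V_r^*/V_r^{**})$ over $G$, so $F_{0,1}=0$ and the conclusion follows.
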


\begin{proof} 
 By Proposition \ref{prop5} (ii),  $Q$ contains $V_r^*/V_r^{**}$ as a submodule, which is an extension of $J_1$ by $J_0$, 
 with the notation of Lemma \ref{easylemma}. Let $F_{0,1}$ denote the image of $\mathrm{ind}_{KZ}^G(V_r^*/V_r^{**})$ inside $\bar{\Theta}_{k,a_p}$, and let 
 $F_2:=\bar{\Theta}_{k,a_p}/F_{0,1}$.  Then we have the following commutative diagram of $G$-maps
 \begin{equation*}
    \xymatrix{0 \ar[r] & \mathrm{ind}_{KZ}^G(V_r^*/V_r^{**})\ar@{>>}[d]\ar[r] & \mathrm{ind}_{KZ}^G Q \ar@{>>}[d] \ar[r] & \mathrm{ind}_{KZ}^G J_2 \ar@{>>}[d]\ar[r] & 0 \\
               0 \ar[r] & F_{0,1}\ar[r]  & \bar{\Theta}_{k,a_p}\ar[r] & F_2 \ar[r] & 0 ,}
 \end{equation*} where $J_2=V_{p-b-1}\otimes D^{b}$.
 We will show that $F_{0,1}=0$, under the hypothesis $r \equiv b \mod p$.

 Consider $f=f_0+f_1\in\mathrm{ind}_{KZ}^G\mathrm{Sym}^r\bar{\mathbb{Q}}_p^2$, given by
 \begin{eqnarray*}
  f_1 &=&\underset{\lambda\in\mathbb{F}_p^*}\sum\left[\,g_{1,[\lambda]}^0,\:\frac{p}{a_p}[\lambda]^{p-2}\cdot\left(Y^r-X^{r-b}Y^b\right)\right]
               +\left[\,g_{1,0}^0,\:\frac{r(1-p)}{a_p}\cdot\left(XY^{r-1}-X^{r-b+1}Y^{b-1}\right)\right],\\
  f_0 &=&\left[\mathrm{Id},\:\sum_{\substack{0<j<r-1, \\j\equiv b-1\mod (p-1)}}\:\frac{p(p-1)}{a_p^2}\cdot\beta_j\, X^{r-j}Y^j\:\right],
 \end{eqnarray*}
 where the $\beta_j$ are the integers from Lemma \ref{comb3}.

 Using $b>2$ and the fact that $p\mid r-b$, we check that $T^+f_1\equiv 0\mod p$. Similarly, $T^-f_0\equiv 0\mod p$, since $v(a_p^2/p)<3\leq p$. 
 We use the fact  $b>3$, together with
 the properties satisfied by the integers $\beta_j$ in Lemma \ref{comb3} to conclude that $T^+f_0\equiv 0\mod p$ as well. Next we compute that 
 $$T^-f_1-a_pf_0\equiv \left[\mathrm{Id}, \underset{\substack{0<j<r-1\\j\equiv b-1\mod(p-1)}}\sum\frac{(p-1)p}{a_p}\left(\binom{r}{j}-\beta_j\right)\cdot X^{r-j}Y^j\right],$$
 which again dies mod $p$, since $\beta_j\equiv \binom{r}{j}\mod p$ for each $j$, by Lemma \ref{comb3}. 
 Finally we get  $(T-a_p)f$ is integral and $(T-a_p)f\equiv -a_pf_1\mod p$. As $r\equiv b\mod p$, we have
 \begin{eqnarray*} 
   (T-a_p)f &\equiv & -\left[g_{1,0}^0,\: r(1-p)\cdot(XY^{r-1}-X^{r-b+1}Y^{b-1})\right]\\
            &\equiv &-\left[g_{1,0}^0,\:-b\cdot\theta\,\left(\dfrac{r-p-b+1}{p-1}\cdot X^{r-p-b+1}Y^{b-2}+Y^{r-p-1}\right)\mod V_r^{**}\right]\\
            &\equiv &\left[g_{1,0}^0,\:-b\cdot\theta\,\left(X^{r-p-b+1}Y^{b-2}-Y^{r-p-1}\right)\right]\mod p.
 \end{eqnarray*}
 Let $v$ be the image of $-b\cdot \theta\left(X^{r-p-b+1}Y^{b-2}-Y^{r-p-1}\right)$ in $V_r^*/V_r^{**}$. Then the reduction $\overline{(T-a_p)f}$ maps to
 $\left[g_{1,0}^0,\: v\right]\in \mathrm{ind}_{KZ}^G(V_r^*/V_r^{**})\subseteq 
 \mathrm{ind}_{KZ}^G Q$.
 By Lemma \ref{easylemma}, $v$ maps to the non-zero element $-b\cdot X^{p-b+1}\in J_1=V_{p-b+1}\otimes D^{b-1}$. 
 As the short exact sequence \eqref{ses**3} is non-split, $v$ generates the whole module $V_r^*/V_r^{**}$ over
 $\Gamma$, so the element $\left[g_{1,0}^0,\: v\right]$ generates $\mathrm{ind}_{KZ}^G(V_r^*/V_r^{**})$ over $G$. 
 Thus $F_{0,1}=0$ and hence $\bar{\Theta}_{k,a_p}\cong F_2$ is a quotient of 
 $\mathrm{ind}_{KZ}^G(V_{p-b-1}\otimes D^{b})$. Finally we apply \cite[Prop. 3.3]{[BG09]} to get the structure of $\bar{V}_{k,a_p}$.
\end{proof}

However, for  $b=3\leq p-1$, we do not have a complete solution to the problem of computing $\bar{V}_{k,a_p}$ when $r\equiv b\mod p(p-1)$. 
But we have the following theorem, which gives a generic answer. 
The theorem below is applicable whenever $v(a_p)\neq \frac{3}{2}$. 
It is also applicable if $v(a_p)=\frac{3}{2}$, unless the unit $\frac{a_p^2}{p^3}$ reduces to $1$ in $\bar\F_p$.

\begin{theorem}
  \label{elimination6}
  Let $p\geq 5$, $r>2p$ and $r\equiv 3\mod p(p-1)$. If $v(a_p) = \frac{3}{2}$, then assume that
  $\,v(a_p^2-p^3) = 3$. Then $\bar{V}_{k,a_p}\cong \mathrm{ind}(\omega_2^4)$.
\end{theorem}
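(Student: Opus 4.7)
By Proposition~\ref{prop5}(ii) the module $Q$ has three Jordan--H\"older factors $J_0 = V_1 \otimes D$, $J_1 = V_{p-2} \otimes D^2$, $J_2 = V_{p-4} \otimes D^3$, with $V_r^*/V_r^{**}$ a non-split extension of $J_1$ by $J_0$ sitting inside $Q$ as a submodule. The plan is to construct $f \in \mathrm{ind}_{KZ}^G \Sym^r \bar{\mathbb{Q}}_p^2$ with $(T-a_p)f$ integral and reducing to a $G$-generator of $\mathrm{ind}_{KZ}^G(V_r^*/V_r^{**})$; then $\bar{\Theta}_{k,a_p}$ is a quotient of $\mathrm{ind}_{KZ}^G J_2$, and \cite[Prop.~3.3]{[BG09]} gives $\bar V_{k,a_p} \cong \mathrm{ind}(\omega_2^{(p-4)+1+3(p+1)}) \cong \mathrm{ind}(\omega_2^4)$.

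The starting point is $f = f_0 + f_1$ from Theorem~\ref{elimination3}, specialised to $b = 3$, with the $\beta_j$ provided by Lemma~\ref{comb3}. The same estimates as there, relying on $b > 2$, $p \mid r-b$, $v(a_p^2/p) < 3 \leq p$, and Lemma~\ref{comb3}, show that $T^+ f_1$, $T^- f_0$, $T^- f_1 - a_p f_0$, and the piece of $a_p f_1$ supported off $[g^0_{1,0}, \cdot]$ all die mod $p$. The one term that behaves differently for $b = 3$ is the $\mu = 0$, $i = j = 2$ summand of $T^+ f_0$: at $[g^0_{1,0}, X^{r-2}Y^2]$ it equals $\frac{p^3(p-1)\beta_2}{a_p^2}$, of valuation $3 - 2v(a_p)$, where $\beta_2 \equiv \binom{r}{2} \equiv 3 \pmod p$ is a unit for $p \geq 5$. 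For $b \geq 4$ the analogous summand has valuation $\geq b - 2v(a_p) > 0$ and dies, which is exactly what Theorem~\ref{elimination3} used.

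Three cases must then be considered. If $v(a_p) < \tfrac{3}{2}$, the summand has positive valuation, dies mod $p$, and the argument of Theorem~\ref{elimination3} carries over verbatim: $\overline{(T-a_p)f}$ at $[g^0_{1,0}, \cdot]$ equals $-3(XY^{r-1} - X^{r-2}Y^2)$, and using the congruence $(XY^{r-1} - X^{r-2}Y^2) \equiv -\theta\bigl(\tfrac{r-p-2}{p-1}X^{r-p-2}Y + Y^{r-p-1}\bigr) \pmod{V_r^{**}}$ together with Lemma~\ref{easylemma}, its image in $J_1$ is a nonzero scalar multiple of $X^{p-2}$, so it generates $V_r^*/V_r^{**}$ by non-splitness of the extension. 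If $v(a_p) = \tfrac{3}{2}$, the offending summand survives mod $p$ with residue $\overline{-3\, p^3/a_p^2} \cdot X^{r-2}Y^2$. Writing $u = \overline{p^3/a_p^2} \in \bar{\mathbb{F}}_p^\times$, the reduction at $[g^0_{1,0}, \cdot]$ becomes $-3\bigl(XY^{r-1} - (1-u)X^{r-2}Y^2\bigr)$, whose image in $J_1$ works out, by the same Lemma~\ref{easylemma} calculation, to a nonzero scalar multiple of $(1-u)X^{p-2}$. The hypothesis $v(a_p^2 - p^3) = 3$ is exactly $u \neq 1$ in $\bar{\mathbb{F}}_p$, which guarantees this image is nonzero, so the generator property is preserved. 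If $v(a_p) > \tfrac{3}{2}$, the offending summand has negative valuation and $f$ is no longer $(T-a_p)$-integral; I modify $f$ by removing the $\beta_2 X^{r-2}Y^2$-summand from $f_0$ and adding carefully chosen compensating terms at levels $0$, $1$, and if needed $2$ to restore the cancellation $T^- f_1 - a_p f_0 \equiv 0 \pmod p$ at $[g^0_{0,0}, X^{r-2}Y^2]$ without introducing any new non-integralities. The requisite sums are controlled using the identity~\eqref{identity} together with $(-[\lambda])^{r-b} = 1$ for $\lambda \in \mathbb{F}_p^\times$ (since $r \equiv b \pmod{p-1}$ and $(p-1)$ is even), ensuring the $T^+$-image of each compensating piece at level $2$ vanishes mod $p$. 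The reduction at $[g^0_{1,0}, \cdot]$ then remains $-3(XY^{r-1} - X^{r-2}Y^2)$, and no genericity condition on $a_p$ is needed.

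The main obstacle is the boundary case $v(a_p) = \tfrac{3}{2}$: the hypothesis $v(a_p^2 - p^3) = 3$ is indispensable to prevent the $J_1$-image of the reduction from vanishing; without it, the elimination of $V_r^*/V_r^{**}$ would fail, as anticipated in the introductory remarks on~\eqref{hyp}. The case $v(a_p) > \tfrac{3}{2}$ requires the most technical work but proceeds by careful bookkeeping of Hecke contributions at levels $0$, $1$, and $2$, with no new conceptual input beyond Theorem~\ref{elimination3} and the combinatorial identities of Section~\ref{combinatorial}.
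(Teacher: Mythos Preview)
Your treatment of the cases $v(a_p)<\tfrac{3}{2}$ and $v(a_p)=\tfrac{3}{2}$ is correct and matches the paper's argument: you use the same function $f=f_0+f_1$ from Theorem~\ref{elimination3}, correctly isolate the new $j=2$ contribution $\frac{p^3(p-1)\beta_2}{a_p^2}X^{r-2}Y^2$ in $T^+f_0$, and correctly identify the hypothesis $v(a_p^2-p^3)=3$ as precisely what is needed to keep the image in $J_1$ nonzero when $v(a_p)=\tfrac32$.

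The case $v(a_p)>\tfrac{3}{2}$, however, has a real gap. You propose to delete the $\beta_2 X^{r-2}Y^2$ summand from $f_0$ and then ``add carefully chosen compensating terms'' to restore integrality. But deleting that summand destroys not only the cancellation in $T^-f_1-a_pf_0$ but also the congruences $\sum_j\beta_j\equiv 0\pmod{p^3}$ and $\sum_j j\beta_j\equiv 0\pmod{p^2}$ of Lemma~\ref{comb3} that make the $j'=0,1$ pieces of $T^+f_0$ integral: for instance, the $j'=0$ term of $T^+$ of the modified $f_0$ at $[g^0_{1,[\mu]},X^r]$ becomes $\frac{p(p-1)}{a_p^2}[\mu]^2\bigl(\sum_j\beta_j-\beta_2\bigr)$, which has valuation $1-2v(a_p)<-2$. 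You do not specify the compensating terms or explain why the cascade of new non-integralities they would themselves create (via $T^\pm$) can be controlled; ``careful bookkeeping'' is not a proof here.

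The paper bypasses all of this with a one-line trick: for $v(a_p)>\tfrac{3}{2}$ it replaces $f$ by the rescaled function $f'=\dfrac{a_p^2}{p^3}\,f$. All the relations among the $\beta_j$ are preserved; the problematic $j=2$ term of $T^+f_0'$ now has coefficient $(p-1)\beta_2$, which is integral; and $a_pf_1'$ dies mod $p$ since $v(a_p^2/p^3)>0$. One then reads off $(T-a_p)f'\equiv T^+f_0'\equiv[g_{1,0}^0,\,(p-1)\beta_2 X^{r-2}Y^2]\pmod p$, which maps to a generator of $\mathrm{ind}_{KZ}^G(V_r^*/V_r^{**})$ exactly as before, with no genericity hypothesis on $a_p$. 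This rescaling is the missing idea in your argument.
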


\begin{proof} 
 If $v(a_p)\leq 3/2$, then we consider  $f=f_0+f_1$, where $f_0$ are $f_1$ are as in the proof of Theorem \ref{elimination3} with $b=3$. 
 The formula for the Hecke operator shows that 
 $(T-a_p)f$ is still integral. As $b=3$, now $T^+f_0$ does not necessarily die mod $p$.  In fact we have
 $T^+f_0\equiv \left[g_{1,0},\dfrac{p^3(p-1)}{a_p^2}\binom{2}{2}\beta_2\cdot X^{r-2}Y^2\right]\mod p$, which is integral because 
 $v(a_p^2)\leq 3$, 
 and we have
 $$ (T-a_p)f\equiv T^+f_0-a_pf_1\equiv \left[g_{1,0},\:\dfrac{p^3(p-1)}{a_p^2}\,\beta_2\cdot X^{r-2}Y^2-r(1-p)\left(XY^{r-1}-X^{r-2}Y^2\right)\right]\mod p.$$
 Note that $r\equiv 3\mod p$ and also $\beta_2\equiv\binom{r}{2}\equiv\binom{3}{2}=3\mod p$ by Lucas' theorem. As $XY^{r-1}$ vanishes in $Q$, 
 the reduction $\overline{(T-a_p)f}$ maps to the image of  $\left[g_{1,0}^0,\,3\left(1-{p^3}/{a_p^2}\right)\left(X^{r-2}Y^2-XY^{r-1}\right)\right]$ 
 in $\mathrm{ind}_{KZ}^G Q$. The hypothesis implies that $1-p^3/a_p^2$ is a $p$-adic
 unit. So, the module $\mathrm{ind}_{KZ}^G(V_r^*/V_r^{**})$ maps to $0$ in $\bar{\Theta}_{k,a_p}$, as explained in the proof of Theorem \ref{elimination3}. 

 If $v(a_p)>3/2$, then we consider the new function $f^\prime=\dfrac{a_p^2}{p^3}\cdot f=f_0^\prime+f_1^\prime,\,$ given by
 \begin{eqnarray*}
  f_1^\prime &=&\underset{\lambda\in\mathbb{F}_p^*}\sum\left[\,g_{1,[\lambda]}^0,\:\frac{a_p}{p^2}\,[\lambda]^{p-2}\cdot\left(Y^r-X^{r-3}Y^3\right)\right]
   +\left[\,g_{1,0}^0,\:\frac{(1-p)r a_p}{p^3}\cdot\left(XY^{r-1}-X^{r-2}Y^{2}\right)\right],\\
   f_0^\prime &=& \left[\mathrm{Id},\:\sum_{\substack{0<j<r-1, \\j\equiv 2\mod (p-1)}}\:\frac{(p-1)}{p^2}\cdot\beta_j\, X^{r-j}Y^j\:\right],
 \end{eqnarray*}
 where the $\beta_j$ are the integers from Lemma \ref{comb3}. The computations are very similar to the previous case, except that 
 now we have $a_pf_1^\prime\equiv 0\mod p$, since $v(a_p^2/p^3)>0$. Finally we get 
 $(T-a_p)f^\prime\equiv T^+f_0^\prime\equiv\left[g_{1,0}^0,\,(p-1)\beta_2\cdot X^{r-2}Y^2\right]\mod p$. So the
 reduction $\overline{(T-a_p)f^\prime}$ maps to the image of $ \left[g_{1,0}^0,\,3(XY^{r-1}-X^{r-2}Y^2)\right]$ in $\mathrm{ind}_{KZ}^G Q$. The rest of the proof follows 
 as in the previous case. 
\end{proof}
\begin{remark}
        Note that when $b = 3$ and $p \> | \> r-b$,  
        the hypothesis \eqref{hyp} in Theorem \ref{theorem-intro} is equivalent to the condition $\,v(a_p^2-p^3) = 3$ above.
        If $v(a_p^2-p^3) > 3$, so necessarily $v(a_p) = \frac{3}{2}$, we can only show that 
        $\bar V_{k,a_p}$ is  either $\mathrm{ind}(\omega_2^{4})$ or
        $\mathrm{ind}(\omega_2^{3+p})$, 
        or it is reducible
        of the form $\omega^2\oplus \omega^2$ or $\omega^3\oplus \omega$ on $I_p$. 
\end{remark}

The following theorem is complementary to Theorem~\ref{thm1} (i).  
It treats the case $p\mid r\equiv 1\mod p-1$, where $Q$ has two JH factors. 
With the notation of Lemma \ref{comb3}, $a=1$ is equivalent to $b=p$, and so the condition $p\mid r$ can also be stated as 
$r\equiv b\mod p$.  

\begin{theorem}
 \label{elimination4}
 For $p\geq 3$, let $p<r\equiv 1\mod (p-1)$  and suppose $p \mid r$. If $p=3$ and $v(a_p)= \frac{3}{2}$, then further assume that $v(a_p^2-p^3)=3$.
 \begin{enumerate}\item[(i)] If $p^2\nmid (p-r)$, then there is a surjection $\mathrm{ind}_{KZ}^G V_{1}\twoheadrightarrow\bar{\Theta}_{k,a_p}$. As a consequence, 
 $\bar{V}_{k,a_p}\cong \mathrm{ind}(\omega_2^2)$ unless $p=3$, in which case $\bar{V}_{k,a_3}$ may be
 reducible and trivial on $I_3$.
  \item[(ii)] If  $p^2\mid (p-r)$, then there is a surjection $\mathrm{ind}_{KZ}^G(V_{p-2}\otimes D)\twoheadrightarrow\bar{\Theta}_{k,a_p}$. As a consequence, 
  either $\bar{V}_{k,a_p}\cong \mathrm{ind}(\omega_2^{2})$ or $\bar{V}_{k,a_p}$ is reducible with the shape $\omega\oplus\omega$ on $I_p$.
 \end{enumerate}
\end{theorem}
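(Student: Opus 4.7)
The proof adapts the elimination strategy of Theorem~\ref{elimination6}, which handled the analogous situation $b = 3$ with $p \mid (r-b)$. By Proposition~\ref{prop4}(iii), $Q \cong V_r^*/V_r^{**}$ fits into the non-split sequence $0 \to J_0 \to Q \to J_1 \to 0$ of Proposition~\ref{es2}, with $J_0 = V_{p-2}\otimes D$ and $J_1 = V_1$. The proof thus amounts to constructing explicit functions $f \in \mathrm{ind}_{KZ}^G \Sym^r \bar\Q_p^2$ such that $(T-a_p)f$ is integral and its reduction (automatically lying in $X_{k,a_p}$) maps, via $\mathrm{ind}_{KZ}^G V_r \twoheadrightarrow \mathrm{ind}_{KZ}^G Q$, to an element generating either $\mathrm{ind}_{KZ}^G J_0$ (for (i)) or, modulo the submodule, the quotient $\mathrm{ind}_{KZ}^G J_1$ (for (ii)).

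For case (i), the direct analog of the function in Theorem~\ref{elimination3} (with $b = p$ and integers $\beta_j$ from Lemma~\ref{comb3}) would yield $(T-a_p)f \equiv -a_pf_1 \equiv 0 \mod p$, since the coefficient $b - r \equiv p - r$ is divisible by $p$ and the $\lambda$-sum contributes an additional factor of $p$. Instead, take $f = f_0 + f_1$ along the lines of Theorem~\ref{elimination6}, with $f_0$ carrying an $a_p^{-2}$ prefactor so that $T^+f_0$ produces a second-order contribution of the same order as $-a_pf_1$. Using the properties of $\beta_j$ in Lemma~\ref{comb3}, one verifies that $T^-f_0$, $T^+f_1$ and $T^-f_1 - a_pf_0$ die mod $p$, while the surviving combination $T^+f_0 - a_pf_1$ (by the sharper congruence $T_r \equiv p - r \mod p^2$ from Lemma~\ref{comb4}) reduces to $[g_{1,0}^0,\ c \cdot \theta(X^{r-p-1} - Y^{r-p-1})]$, where the constant $c \in \bar\F_p$ is a non-zero multiple of $(p-r)/p \bmod p$ exactly when $p^2 \nmid (p-r)$. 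By Lemma~\ref{easylemma}(i), this element lies in the submodule $\mathrm{ind}_{KZ}^G J_0$, and by irreducibility of $J_0$ it generates $\mathrm{ind}_{KZ}^G J_0$ as a $G$-module, eliminating it.

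For case (ii), when $p^2 \mid (p-r)$ the constant $c$ above vanishes, so one must replace $\beta_j$ by the finer integers $\gamma_j$ of Lemma~\ref{comb5}(ii), which satisfy congruences sharpened to mod $p^{4-n}$. A rescaled version of the construction still yields integral $(T-a_p)f$, but the leading term of the reduction now acquires a non-trivial component proportional to $\theta XY^{p-2}$; via Lemma~\ref{easylemma}(ii), this maps to the non-zero element $X \in V_1 = J_1$. Hence the reduction generates $\mathrm{ind}_{KZ}^G J_1$ in the quotient, forcing the image of $\mathrm{ind}_{KZ}^G J_1$ in $\bar\Theta_{k,a_p}$ to vanish and producing the surjection $\mathrm{ind}_{KZ}^G(V_{p-2}\otimes D) \twoheadrightarrow \bar\Theta_{k,a_p}$. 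The conclusions for $\bar V_{k,a_p}$ then follow from \cite[Prop. 3.3]{[BG09]}: in case (i), the module $V_1 = V_s$ with $s = 1$ gives $\mathrm{ind}(\omega_2^2)$, with the additional reducible possibility (trivial on $I_3$) arising only for $p = 3$ since there $s = 1 = p-2$; in case (ii), $V_{p-2}\otimes D = V_s\otimes D^n$ with $s = p-2$, $n = 1$ always permits both $\mathrm{ind}(\omega_2^{2})$ and the reducible $\omega \oplus \omega$ on $I_p$.

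The main obstacle is the case $p = 3$ with $v(a_p) = \frac{3}{2}$: then $v(p^3) = v(a_p^2) = 3$, so the dominant denominators $a_p^{-2}$ and $p^{-3}$ in $f_0$ become equal in valuation and a near-cancellation occurs. The hypothesis $v(a_p^2 - p^3) = 3$ (equivalently, $\overline{a_p^2/p^3} \neq 1$ in $\bar\F_3$) ensures that a specific unit combination survives and that the relevant leading coefficient is actually non-zero. A further subtlety is that property~(3) of Lemma~\ref{comb5} gives non-trivial $\binom{j}{3}\gamma_j$-contributions only at $p = 3$; one handles these exactly as in the proof of Theorem~\ref{elimination6}, by splitting into the subcases $v(a_p) \leq \frac{3}{2}$ and $v(a_p) > \frac{3}{2}$ and rescaling $f$ by $a_p^2/p^3$ in the latter.
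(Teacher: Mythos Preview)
Your overall strategy is sound, but the specific construction you propose for part~(i) does not produce a non-zero surviving term. You correctly note that the direct analogue of Theorem~\ref{elimination3} fails because $p \mid r$ kills $-a_pf_1$. However, the Theorem~\ref{elimination6} fix does not transfer to $b = p$: there, the survival of $T^+f_0$ hinges on the $n = 2$ term $\tfrac{p^3(p-1)}{a_p^2}\,\beta_2\, X^{r-2}Y^2$ with $\beta_2 \equiv \binom{r}{2} \equiv 3 \not\equiv 0 \bmod p$. For $b = p$ the indices $j$ in $f_0$ satisfy $j \equiv 0 \bmod (p-1)$ with $j \geq p-1$, so for $p \geq 5$ there is no $j = 2$ term at $\lambda = 0$, while for $\lambda \neq 0$ the $n = 2$ contribution is proportional to $\sum_j \binom{j}{2}\beta_j$, which vanishes mod $p$ by Lemma~\ref{comb3}(2). (For $p = 3$ one has $j = 2 = p-1$, but then $\beta_2 \equiv \binom{r}{2} \equiv 0 \bmod 3$ since $3 \mid r$.) Thus both $T^+f_0$ and $-a_pf_1$ die mod $p$, and your two-level $f$ yields $(T - a_p)f \equiv 0$, eliminating nothing. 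The appeal to Lemma~\ref{comb4} cannot rescue this: that congruence controls $\sum_j\binom{r}{j}$ mod $p^2$, but with the $\beta_j$ already summing to $0$ mod $p^3$ there is no place in a two-level computation where the difference $(p-r)/p$ can surface.

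The paper instead follows the three-level template of Theorem~\ref{elimination1}: one takes $f = f_2 + f_1 + f_0$ with $f_1$ carrying the more aggressive prefactor $(p-1)/(pa_p)$, so that $-a_pf_1$ by itself is \emph{not} integral. The point of $f_2$ (supported at level~2) is precisely that $T^-f_2$ lands at level~1, and the combination $T^-f_2 + T^+f_0 - a_pf_1$ becomes
\[
\Bigl[g_{1,0}^0,\ \tfrac{p-1}{p}\Bigl(\textstyle\sum_j\bigl(\tbinom{r}{j}-\beta_j\bigr)X^{r-j}Y^j + r\,XY^{r-1}\Bigr)\Bigr],
\]
which is integral since $\beta_j \equiv \binom{r}{j}\bmod p$ and $p \mid r$. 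Subtracting a polynomial lying in $V_r^{**}$ (checked via Lemmas~\ref{elelemma}, \ref{comb3a}, \ref{comb3} and~\ref{comb4}) then isolates $\tfrac{r-p}{p}\cdot[g_{1,0}^0,\,\theta Y^{r-p-1}]$, non-zero in $J_0$ exactly when $p^2 \nmid (p - r)$. Part~(ii) likewise requires a three-level function, now with the $\gamma_j$ of Lemma~\ref{comb5}(ii) and prefactors of order $1/a_p$, $1/a_p^2$, $1/a_p$; the survivor is $-a_pf_2 - a_pf_0$, supported at levels~0 and~2, whose image in $J_1$ is $[g^0_{2,0},-X]$. The rescaling by $a_p^2/p^3$ and the extra hypothesis enter only when $p = 3$ and $v(a_p) \geq 3/2$, as you say. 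Finally, your expression $\theta XY^{p-2}$ should read $\theta X^{r-2p+1}Y^{p-2}$.
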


\begin{proof}
 By Proposition \ref{prop4} (iii), we have $Q\cong V_r^*/V_r^{**}$ is an extension of $J_1=V_1$ by $J_0=V_{p-2}\otimes D$. 
 Let $F_0\subseteq\bar{\Theta}_{k,a_p}$ be the image of $\mathrm{ind}_{KZ}^G J_0$ under the 
 map $\mathrm{ind}_{KZ}^G Q\twoheadrightarrow\bar{\Theta}_{k,a_p}$. Then  $F_1:=\bar{\Theta}_{k,a_p}/F_0$ is a quotient of $\mathrm{ind}_{KZ}^G J_1$ 
 and we have the following commutative diagram:
 \begin{eqnarray*}
    \xymatrix{0 \ar[r] & \mathrm{ind}_{KZ}^G J_0\ar@{>>}[d]\ar[r] & \mathrm{ind}_{KZ}^G Q \ar@{>>}[d] \ar[r] & \mathrm{ind}_{KZ}^G J_1 \ar@{>>}[d]\ar[r] & 0 \\
               0 \ar[r] & F_0\ar[r]  & \bar{\Theta}_{k,a_p}\ar[r] & F_1 \ar[r] & 0 .}
 \end{eqnarray*}
 We will show that $F_0=0$ if $p^2\nmid (p-r)$, and $F_1=0$ if $p^2\mid (p-r)$.
 \begin{enumerate}
 \item[(i)] Consider $f=f_2+f_1+f_0\in\mathrm{ind}_{KZ}^G\mathrm{Sym}^r\bar{\mathbb{Q}}_p^2$, given by
 \begin{eqnarray*}
 f_2 &=&\underset{\lambda\in\mathbb{F}_p}\sum \left[\:g_{2,p[\lambda]}^0, \frac{[\lambda]^{p-2}}{p}\cdot(Y^r-X^{r-p}Y^{p})\:\right],\\
 f_1 &=& \left[\:g_{1,0}^0,\:\:\: \frac{p-1}{pa_p}\cdot\sum_{\substack{0<j<r-1, \\j\equiv 0\mod (p-1)}}\:\beta_j\cdot\, X^{r-j}Y^j\:\right],\\
 f_0 &=& \left[\mathrm{Id}, \frac{1-p}{p}\cdot (X^r-X^{p}Y^{r-p})\right],
 \end{eqnarray*} 
 where the $\beta_j$ are the integers from Lemma \ref{comb3}. 
 Using the formula for the Hecke operator, one checks that $T^+f_2,T^-f_1,-a_pf_2, T^-f_0, -a_pf_0$ are all integral and die mod $p$.
 Also $T^+f_1\equiv 0\mod p$ by Lemma \ref{comb3}, i.e., by the properties satisfied by the integers $\beta_j$.
 Moreover, 
 $$\label{T-a_p}T^-f_2+T^+f_0-a_pf_1\equiv\left[\:g_{1,0}^0,\:\: \dfrac{(p-1)}{p}\cdot\left(\underset{\substack{0<j<r-1\\j\equiv 0\mod (p-1)}}\sum \left(\binom{r}{j}-\beta_j\right)\cdot X^{r-j}Y^j+r XY^{r-1}\right)\:\right]\mod p.$$
 Note that the function above is integral because each $\beta_j\equiv \binom{r}{j}\mod p$ by Lemma \ref{comb3}, and  $p\mid r$ by hypothesis. 
 Now modifying the polynomial above by a suitable $XY^{r-1}$-term, we can see that $\overline{(T-a_p)f}$ 
 has the same image as $\left[g_{1,0}^0,\:\: (p-1)\left(F(X,Y)+\dfrac{(p-r)\theta Y^{r-p-1}}{p}\right)\right]$ in $\mathrm{ind}_{KZ}^G Q$, where 
 $$F(X,Y)=\underset{\substack{0<j<r-1\\j\equiv 0\mod (p-1)}}\sum \frac{1}{p}\left(\binom{r}{j}-\beta_j\right)\cdot X^{r-j}Y^j-\frac{(p-r)}{p}\cdot X^pY^{r-p}.$$
 Now we apply Lemma \ref{elelemma} to conclude that $F(X,Y)\in V_r^{**}$, using Lemma \ref{comb3},  \ref{comb4} and 
 \ref{comb3a}. 
 Hence $\overline{(T-a_p)f}$ maps to the image of 
 $\dfrac{(r-p)}{p}\cdot\left[g_{1,0}^0,\:\theta Y^{r-p-1}\right]$ in $\mathrm{ind}_{KZ}^G Q$. By hypothesis, $p^2\nmid (p-r)$. Thus 
 $c=\overline{(r-p)/p}$ is a non-zero element in $\bar\F_p$.
 By Lemma \ref{easylemma} (i), the element  $c\cdot[g_{1,0}^0,Y^{p-2}]\in \mathrm{ind}_{KZ}^G J_0$ maps to 
 $0 \in F_0\subseteq \bar{\Theta}_{k,a_p}$.
 Since $c\cdot[g_{1,0}^0,Y^{p-2}]$ generates all of $\mathrm{ind}_{KZ}^G J_0$ as a $G$-module, we have $F_0=0$ and 
 $\bar{\Theta}_{k,a_p}\cong F_1$ is a quotient of $\mathrm{ind}_{KZ}^G J_1$. Now the result follows from  \cite[Prop. 3.3]{[BG09]}.

 \item[(ii)] If $p^2\mid (p-r)$, we consider the function $f=f_2+f_1+f_0\in\mathrm{ind}_{KZ}^G\mathrm{Sym}\,\bar{\mathbb{Q}}_p^2$, given by
 \begin{eqnarray*}
 f_2 &=& \underset{\lambda\in\mathbb{F}_p^*}\sum \left[\:g_{2,p[\lambda]}^0, \frac{[\lambda]^{p-2}}{a_p}\cdot(Y^r-X^{r-p}Y^{p})\:\right]+\left[g_{2,0}^0,\:\frac{(1-p)r}{pa_p}\cdot(XY^{r-1}-X^{r-p+1}Y^{p-1})\right],\\
 f_1 &=& \left[\:g_{1,0}^0,\:\:\: \frac{p-1}{a_p^2}\cdot\sum_{\substack{0<j<r-1, \\j\equiv 0\mod (p-1)}}\:\gamma_j\cdot\, X^{r-j}Y^j\:\right],\\
 f_0 &=& \left[\mathrm{Id},\: \frac{1-p}{a_p}\cdot (X^r-X^{p}Y^{r-p})\right],
 \end{eqnarray*} 
 where the $\gamma_j$ are  integers from Lemma \ref{comb5} (ii). Using Lemma \ref{comb5} we check that if either $p\geq 5$ or if $p=3$ and $v(a_p)<3/2$, then
 $T^+f_2, T^+f_0+T^-f_2-a_pf_1, T^-f_1, T^+f_1, T^-f_0$ are all integral 
 and die mod $p$. That leaves us with $(T-a_p)f \equiv  -a_pf_0 - a_pf_2 \equiv$
 \begin{eqnarray*}
 \underset{\lambda\in\mathbb{F}_p^*}\sum \left[\:g_{2,p[\lambda]}^0, -[\lambda]^{p-2}\cdot(Y^r-X^{r-p}Y^{p})\:\right] 
 + \left[g_{2,0}^0,\:\dfrac{(p-1)r}{p}(XY^{r-1}-X^{r-p+1}Y^{p-1})\right] \\
  \qquad\qquad  +\left[\mathrm{Id},\: (p-1)\cdot (X^r-X^{p}Y^{r-p})\right]\mod p.
 \end{eqnarray*}
 Therefore the image of the reduction $\overline{(T-a_p)f}$ in $\mathrm{ind}_{KZ}^G Q$ is the same as that of
 \begin{eqnarray*}
  \underset{\lambda\in\mathbb{F}_p^*}\sum [\:g_{2,p[\lambda]}^0, -[\lambda]^{p-2}\cdot(X^{r-1}Y-X^{r-p}Y^{p})\:]
 &+& [\:g_{2,0}^0,\:\dfrac{(p-1)r}{p}(XY^{r-1}-X^{r-p+1}Y^{p-1})]\\
 & +& [\mathrm{Id},\: (p-1)\cdot (XY^{r-1}-X^{p}Y^{r-p})].\end{eqnarray*}
 As we have $r/p\equiv 1\mod p$ by hypothesis, the function above is congruent to 
 $$ \underset{\lambda\in\mathbb{F}_p^*}\sum [\:g_{2,p[\lambda]}^0, -[\lambda]^{p-2}\theta X^{r-p-1}\:]
 +[g_{2,0}^0,\,X^{r-p+1}Y^{p-1}-XY^{r-1}]
 +[\mathrm{Id},\, \theta Y^{r-p-1}]\mod p.$$
 Since  $X^{r-p+1}Y^{p-1}-XY^{r-1}=\theta\cdot(X^{r-2p+1}Y^{p-2}+\cdots +Y^{r-p-1})$,  we have
 $$X^{r-p+1}Y^{p-1}-XY^{r-1}\equiv\theta\cdot\left(\dfrac{(r-2p+1)}{p-1}\cdot X^{r-2p+1}Y^{p-2}+Y^{r-p-1}\right) \mod V_r^{**}.$$
 Applying Lemma \ref{easylemma} (ii) we get that $\overline{(T-a_p)f}$ maps to $\left[g_{2,0}^0,\:-X\right] \in\mathrm{ind}_{KZ}^G J_1$ under the map
 $\mathrm{ind}_{KZ}^G Q\twoheadrightarrow \mathrm{ind}_{KZ}^G \, J_1$. 
 As $[g_{2,0}^0,-X]$ generates all of $\mathrm{ind}_{KZ}^{G}J_1$ as a $G$-module, we get $F_1=0$ and so $\bar{\Theta}_{k,a_p}\cong F_0$ 
 is a quotient of $\mathrm{ind}_{KZ}^G J_0$. 

 If $p=3$ and $v(a_p)\geq 3/2$, then note that $T^-f_1$ and $T^+f_1$ do not die mod $p$ any more, and $(T-a_p)f$ is not necessarily integral. 
 In this case we consider the modified new function $f^\prime:=(a_p^2/p^3)\cdot f$, with $f$ as above. Then one checks $(T-a_p)f^\prime$ is integral and 
 maps to $c\cdot [g_{2,0}^0, X]\in\mathrm{ind}_{KZ}^G J_1$, where $c=\overline{1-a_p^2/p^3}$. By the extra hypothesis in the case $v(a_p)=3/2$,  
 $c$ is always a non-zero element in $\bar\F_p$, and thus the JH factor $J_1$ is killed again. 
 
 Now the result
 follows from \cite[Prop. 3.3]{[BG09]}.
\end{enumerate}
\end{proof}

\begin{remark}
 In the next section we will show that in part (i) above the reducible case does not occur even if $p = 3$, and that in part (ii) $\bar{V}_{k,a_p}$ is always reducible.
\end{remark}

\section{Separating out reducible and irreducible cases}
\label{sep}

If $\bar{\Theta}_{k,a_p}$ is a quotient of $\mathrm{ind}_{KZ}^G (V_{p-2}\otimes D^n)$, then  \cite[Prop. 3.3]{[BG09]} fails to 
determine $\bar{V}_{k,a_p}$. In this case, either $\bar{V}_{k,a_p}\cong \mathrm{ind}(\omega_2^{p-1+n(p+1)})$ is irreducible or
it is reducible with $\bar{V}_{k,a_p}|_{I_p}\cong\omega^n\oplus\omega^n$. We have faced this problem in the following cases, 
cf. Theorems \ref{thm1} (i),  \ref{elimination1} and  \ref{elimination4}.
\begin{enumerate}
 \item If $b=3$ and $p^{1+v(b)}\nmid (r-b)$, then we have $\mathrm{ind}_{KZ}^G\left(V_{p-2}\otimes D^2\right)\twoheadrightarrow \bar{\Theta}_{k,a_p}$, 
 hence $$\bar{V}_{k,a_p}|_{I_p}\cong \begin{cases}
                                   \mathrm{ind}(\omega_2^{p+3}),  \text{ or }\\
                                     \omega^2\oplus\omega^2.
                                      \end{cases}$$
 \item If  $b=p$ and $p^2\mid (r-b)$,  then  under the extra hypothesis `$v(a_p)=3/2 \Rightarrow v(a_p^2-p^3)=3$ when $p=3$', 
 we have $\mathrm{ind}_{KZ}^G\left(V_{p-2}\otimes D\right)\twoheadrightarrow \bar{\Theta}_{k,a_p}$, 
 hence $$\bar{V}_{k,a_p}|_{I_p}\cong \begin{cases}
                                   \mathrm{ind}(\omega_2^{2}),  \text{ or }\\
                                   \omega\oplus\omega.\end{cases}$$
 \end{enumerate}

In this section we mostly 
separate out the reducible and irreducible possibilities above. 
We will show that $\bar{V}_{k,a_p}$ is \textquoteleft almost always\textquoteright $\,$ 
irreducible in the first case whereas it is always
reducible in the second case above. 

In the first case, we work under the mild hypothesis \eqref{hyp} in 
Theorem \ref{theorem-intro}. Note that \eqref{hyp} holds trivially 
if $p \mid {r-1 \choose 2}$. In particular, it holds for the smallest new weight 
treated  in this paper, namely, $k = 2p+3$. 

\begin{theorem}
 \label{reducible1}
 Let $p\geq 3$, $r>2p$, $r\equiv 3\mod (p-1)$ and  $p^{1+v(3)}\nmid(r-3)$. If $v(a_p)= \frac{3}{2}$,  then further assume that  
 $v\left(a_p^2-\binom{r-1}{2}(r-2)p^3\right)= 3$. Then 
 $\bar{V}_{k,a_p}\cong\mathrm{ind}(\omega_2^{p+3})$ is irreducible. 
\end{theorem}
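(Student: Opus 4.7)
The plan is to reduce the problem to proving that the Hecke operator $T$ acts as zero on $\bar\Theta_{k,a_p}$; once this holds, Breuil's mod $p$ Local Langlands Correspondence immediately gives $\bar V_{k,a_p}\simeq\mathrm{ind}(\omega_2^{p+3})$, excluding the alternative $\omega^2\oplus\omega^2$ on inertia. The opening step is to combine the results already obtained: Theorem~\ref{elimination1} (for $p\geq 5$, $b=3$, $p\nmid r-3$) yields a $G$-equivariant surjection $\mathrm{ind}_{KZ}^G(V_{p-2}\otimes D^2)\twoheadrightarrow\bar\Theta_{k,a_p}$, and for $p=3$ (which forces $b=p=3$) the same conclusion follows from Theorem~\ref{elimination4}(i), noting that $D^2$ is trivial and $V_1=V_{p-2}$ when $p=3$. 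By the Barthel--Livné classification, any such surjection factors through $\pi(p-2,\lambda,\omega^2)^{\mathrm{ss}}$ for a unique $\lambda\in\bar\F_p$, and the LL dictionary identifies $\lambda=0$ with $\mathrm{ind}(\omega_2^{3p+1})=\mathrm{ind}(\omega_2^{p+3})$ while $\lambda\neq 0$ gives the reducible shape $\omega^2\oplus\omega^2$ on $I_p$. Hence the goal is to force $\lambda=0$.

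To establish $\lambda=0$, I would show directly that $T$ kills the image: since $\mathrm{ind}_{KZ}^G(V_{p-2}\otimes D^2)$ is generated as a $G$-module by $[\mathrm{Id},v_0]$ for any non-zero $v_0$, it suffices to construct a function $f^\star\in\mathrm{ind}_{KZ}^G\Sym^r\bar\Q_p^2$ such that $(T-a_p)f^\star$ is integral and its mod $p$ reduction, projected through
\[
\mathrm{ind}_{KZ}^G V_r\twoheadrightarrow\mathrm{ind}_{KZ}^G Q\twoheadrightarrow\mathrm{ind}_{KZ}^G(V_{p-2}\otimes D^2),
\]
equals a non-zero scalar multiple of $T\cdot[\mathrm{Id},v_0]$. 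The reduction of $(T-a_p)f^\star$ automatically lies in $X_{k,a_p}$, so this forces $T\cdot[\mathrm{Id},v_0]=0$ in $\bar\Theta_{k,a_p}$, whence $T$ acts as zero on the entire image and $\lambda=0$ follows.

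Mimicking the template of Theorems~\ref{elimination1}, \ref{elimination3}, \ref{elimination4}, I would build $f^\star$ as a sum $f_3+f_2+f_1+f_0$, each $f_n$ a linear combination of elementary functions $[g_{n,\mu}^0,v_{n,\mu}]$ indexed by $\mu\in I_n$, with polynomials $v_{n,\mu}$ assembled from the integers furnished by the combinatorial lemmas of \S\ref{combinatorial} (in particular Lemma~\ref{comb2} with $a=3$ when $p\geq 5$, and Lemma~\ref{comb5} when $p=3$), scaled by appropriate negative powers of $a_p$ to secure integrality after applying $T-a_p$. Using the explicit formulas for $T^\pm$ in \S\ref{Hecke} together with the identity~\eqref{identity}, almost every contribution to $(T-a_p)f^\star$ will either cancel among itself or die mod $p$; the residue localizes at the $g_{2,0}^0$-coset, and its $X^{r-2}Y^2$-component (which, modulo $V_r^{**}$, represents the $V_{p-2}\otimes D^2$-direction by Lemma~\ref{easylemma}) produces a scalar of the form
\[
\frac{a_p^2-\binom{r-1}{2}(r-2)\,p^3}{p^3}\cdot(\text{unit}),
\]
where the $a_p^2$-piece arises from a second iteration of $T^-$ in the $f_2$-$f_0$ interaction and the $\binom{r-1}{2}(r-2)p^3$-piece comes from the Lucas-theorem reductions of $\binom{r}{j}$ for small $j\equiv 2\pmod{p-1}$. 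Hypothesis~\eqref{hyp} is precisely the assertion that this scalar is a $p$-adic unit.

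The main obstacle is the simultaneous calibration of $f_0,f_1,f_2,f_3$ so that (i) every $T^\pm f_n$ is integral throughout the allowed range $v(a_p)\in(1,2)$, (ii) all undesired terms land in $X_{r-1}+V_r^{**}$ and hence vanish in $Q$, and (iii) the one surviving piece is not accidentally absorbed into $V_r^{**}$ --- forcing one to track the $X^{r-1}Y$ and $X^{r-2}Y^2$ coefficients separately via Lemma~\ref{elelemma}. The bookkeeping is sharpest exactly at the boundary slope $v(a_p)=\tfrac{3}{2}$, where the two terms in the numerator above have equal $p$-adic valuation, and hypothesis~\eqref{hyp} is the precise non-degeneracy condition that keeps the resulting scalar nonzero. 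The special prime $p=3$, which collapses the twist $D^2$ and necessitates the rescaled function $(a_p^2/p^3)\cdot f^\star$ (as in the proof of Theorem~\ref{elimination4}(ii)), must be treated by a separate but structurally parallel calculation.
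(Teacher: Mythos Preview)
Your strategy is correct and matches the paper's: show that the surjection $\mathrm{ind}_{KZ}^G(V_{p-2}\otimes D^2)\twoheadrightarrow\bar\Theta_{k,a_p}$ factors through the cokernel of $T$ by producing an integral $(T-a_p)f$ whose reduction maps to a nonzero multiple of $T[g,v_0]$. However, several of your implementation choices diverge from what the paper does and at least one is an actual error.

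First, the paper uses a \emph{three}-level function $f=f_2+f_1+f_0$, not four. The surviving term after reduction is not localized at a single coset $g_{2,0}^0$ but is a full sum $\sum_{\lambda\in\F_p}[g_{2,p[\lambda]}^0,\,c\cdot X^{p-2}]$, which is already equal to $c\cdot T[g_{1,0}^0,X^{p-2}]$; no $f_3$ is needed. Second, the combinatorial input is Lemma~\ref{comb2} applied to $r-1$ with $a=2$ (the $\alpha_j$ run over $0<j<r-1$, $j\equiv 2\bmod(p-1)$), and this works uniformly for all $p\geq 3$. Your invocation of Lemma~\ref{comb5} for $p=3$ is incorrect: that lemma requires $p^2\mid(p-r)$, which is precisely the opposite of the hypothesis $p^{1+v(3)}\nmid(r-3)$ here. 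Third, the case split in the paper is not $p=3$ versus $p\geq 5$; it is $v(a_p^2)\leq v\!\left(\binom{r-1}{2}\right)+3$ versus $v(a_p^2)> v\!\left(\binom{r-1}{2}\right)+3$, and the rescaling $f'=(a_p^2/p^3)f$ is applied in the latter case for \emph{any} $p$. The distinction between $p=3$ and $p\geq 5$ appears only in the auxiliary term $f_0$, which is zero for $p\geq 5$ and a single correction at level $0$ for $p=3$. The resulting scalar in the first case is $c=\overline{\frac{p^3}{a_p^2}\binom{r-1}{2}(r-2)-1}$, with the $-1$ coming from $-a_pf_2$ and the other piece from $T^+f_1$; your description of the $a_p^2$-piece as arising from ``a second iteration of $T^-$'' does not match this mechanism.
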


\begin{proof} Assuming  the hypothesis, we will show that the $G$-map  
 $\mathrm{ind}_{KZ}^G J_1 \twoheadrightarrow \bar{\Theta}_{k,a_p}$ given by Propositions \ref{prop1}, \ref{prop2} and Theorem
 \ref{elimination4} (i) for $p=3$, and by 
 Theorem \ref{elimination1} for $p\geq 5$, factors through the cokernel of 
 the Hecke operator $T$ acting on $\mathrm{ind}_{KZ}^G J_1 $, 
 where $J_1=V_{p-2}\otimes D^2$.

 If $v(a_p^2)\leq v\left(\binom{r-1}{2}\right)+3$, 
 we consider the function $f=f_2+f_1+f_0$, where
 \begin{eqnarray*}
  f_2 &=& \underset{\lambda\in\mathbb{F}_p}\sum\left[g_{2,p[\lambda]}^0,\:\frac{1}{a_p}\cdot\theta(X^{r-p-2}Y-Y^{r-p-1})\right],\\
  f_1 &=& \left[g_{1,0}^0, \:\underset{\substack{0<j<r-1\\j\equiv 2\mod (p-1)}}\sum\frac{(p-1)p}{a_p^2}\cdot\alpha_j X^{r-j}Y^j\right],\\
  f_0 &=& \begin{cases}
              0, & \text{ if } p\geq 5\\
              \left[\mathrm{Id},\:\frac{(1-p)p}{a_p}\cdot(X^r-X^{r-p+1}Y^{p-1})\right], & \text{ if } p=3.
           \end{cases}
 \end{eqnarray*}
 where the $\alpha_j$ are integers from Lemma \ref{comb2} for $r-1\equiv 2\mod (p-1)$. 

 It is easy to see using the formula for the Hecke operator that $T^+f_2$, $T^-f_1$ are integral and die mod $p$.
 Moreover $T^-f_2-a_pf_1+T^+f_0\equiv \left[g_{1,0}^0,\, \underset{\substack{0<j<r-1\\j\equiv 2\mod (p-1)}}\sum\dfrac{(p-1)p}{a_p}\cdot\left(\binom{r-1}{j}-\alpha_j\right)X^{r-j}Y^j\right]$ also dies mod $p$, since $v(a_p)<2$
 and each $\alpha_j\equiv \binom{r-1}{j}\mod p$, by Lemma \ref{comb2}. Also, $T^-f_0$ and $a_pf_0$ dies mod $p$, which are relevant only when $p=3$.  
 We use the four properties of the $\alpha_j$ in Lemma \ref{comb2}, and the fact that $v(a_p^2)\leq v\left(\binom{r-1}{2}\right)+3$ to conclude that $T^+f_1$ 
 is also integral and that
 $T^+f_1\equiv\underset{\lambda\in\mathbb{F}_p}\sum\left[g_{2,p[\lambda]}^0,\:\dfrac{p^3(p-1)}{a_p^2}\binom{r-1}{2}\cdot X^{r-2}Y^2\right]\mod p$. 
 Thus $(T-a_p)f$ is integral and
$$(T-a_p)f\equiv \underset{\lambda\in\mathbb{F}_p}\sum\left[g_{2,p[\lambda]}^0,\:-\theta(X^{r-p-2}Y-Y^{r-p-1})+\dfrac{p^3(p-1)}{a_p^2}\binom{r-1}{2}\cdot X^{r-2}Y^2\right]\mod p.$$
 Note that the image of $X^{r-2}Y^2$ in $Q$ is the same as that of  
 $$ X^{r-2}Y^2-XY^{r-1}=\theta\cdot(X^{r-p-2}Y+\cdots +Y^{r-p-1})\equiv\theta\cdot\left(\frac{r-p-2}{p-1}\cdot X^{r-p-1}Y+Y^{r-p-1}\right)\mod V_r^{**}.$$
 Hence   $\overline{(T-a_p)f}$ maps to  
 $\underset{\lambda\in\mathbb{F}_p}\sum\left[g_{2,p[\lambda]}^0,\:-X^{p-2}+\frac{p^3}{a_p^2}\binom{r-1}{2}(r-2)\cdot X^{p-2}\right]\in \mathrm{ind}_{KZ}^G (V_{p-2}\otimes D^2)$ by Lemma \ref{easylemma}. 
 This equals $c \cdot T([g_{1,0}^0,\, X^{p-2}])$, with $c=\overline{\frac{p^3}{a_p^2}\binom{r-1}{2}(r-2)-1}$. 
 Using the hypothesis one checks that the constant $c\in\F_p$ is non-zero, 
  hence the map 
 $\mathrm{ind}_{KZ}^G \, J_1 \twoheadrightarrow \bar{\Theta}_{k,a_p}$ factors through $\pi(p-2, 0,\omega^2)$. Therefore the reducible case cannot occur and
 $\bar{V}_{k,a_p}\cong \mathrm{ind}\left(\omega_2^{p+3}\right)$.

 Now let $v(a_p^2)> v\left(\binom{r-1}{2}\right)+3$. As $v(a_p^2)<4$, this forces  $p\nmid\binom{r-1}{2}$ and so  $v(a_p^2)>3$. 
 Note that in this case  $(T-a_p)f$ is not integral for the $f$ above. However, we can use the following  
 modified function 
 $f^\prime=f_2^\prime+f_1^\prime+f_0^\prime$ :
 \begin{eqnarray*}
  f_2^\prime=\frac{a_p^2}{p^3}\cdot f_2 &=& \underset{\lambda\in\mathbb{F}_p}\sum\left[g_{2,p[\lambda]}^0,\:\frac{a_p\cdot\theta(X^{r-p-2}Y-Y^{r-p-1})}{p^3}\right],\\
  f_1^\prime=\frac{a_p^2}{p^3}\cdot f_1 &=& \left[g_{1,0}^0, \:\underset{\substack{0<j<r-1\\j\equiv 2\mod (p-1)}}\sum\frac{(p-1)}{p^2}\cdot\alpha_j X^{r-j}Y^j\right],\\
  f_0^\prime =\frac{a_p^2}{p^3}\cdot f_0 &=& \begin{cases}
                                                 0, & \text{ if } p\geq 5\\
                                                 \left[\mathrm{Id},\:\frac{(1-p)a_p}{p^2}\cdot(X^r-X^{r-p+1}Y^{p-1})\right], & \text{ if } p=3.
                                             \end{cases}
 \end{eqnarray*}
 with the same $\alpha_j$'s as before. 
 Now  $a_pf_2^\prime$, $a_pf_0^\prime$ dies mod $p$, as $v(a_p^2)>3$. Also $T^+f_2^\prime$, $T^-f_1^\prime$, $T^-f_0^\prime$ 
 and $T^-f_2^\prime-a_pf_1^\prime+T^+f_0^\prime$  die mod $p$ as before, 
 and hence 
 $$(T-a_p)f^\prime\equiv T^+f_1^\prime\equiv\underset{\lambda\in\mathbb{F}_p}\sum\left[g_{2,p[\lambda]}^0,\:(p-1)\binom{r-1}{2}X^{r-2}Y^2\right]\mod p.$$ 
 This maps to 
 $\underset{\lambda\in\mathbb{F}_p}\sum\left[g_{2,p[\lambda]}^0,\:\overline{(r-2)\binom{r-1}{2}}\cdot X^{p-2}\right]= \overline{(r-2)\binom{r-1}{2}}\cdot \,T([g_{1,0}^0,\, X^{p-2}]$
 under the map 
 $\mathrm{ind}_{KZ}^G V_r\twoheadrightarrow\mathrm{ind}_{KZ}^G J_1$,
 as shown above. Since $(r-2)\binom{r-1}{2}$ is
 a $p$-adic unit, the map $\mathrm{ind}_{KZ}^G J_1 \twoheadrightarrow\bar\Theta_{k,a_p}$ factors through the image of $T$, 
 and we have $\bar{V}_{k,a_p}\cong \mathrm{ind}\left(\omega_2^{p+3}\right)$.
\end{proof}


Next we consider the  case $b=p$ and $p^2\mid (p-r)$. Surprisingly, $\bar{V}_{k,a_p}$ is always reducible in this case, at least if $p \geq 5$. 
This is the first time we have obtained a family of examples where  $\bar{V}_{k,a_p}$ is reducible,  under the hypothesis $1<v(a_p)<2$. 
The following theorem describes the action of both inertia and Frobenius elements.

\begin{theorem}
 \label{reducible2}
 Let $p\geq 3$, $r>2p$ and $r\equiv 1\mod(p-1)$, i.e., $b=p$. If $p=3$ and $v(a_p)= \frac{3}{2}$, then further assume that 
 $v(a_p^2-p^3)=3$. If $p^2\mid (p-r)$, then $\bar{V}_{k,a_p}$ is reducible and 
 $$\bar{V}_{k,a_p}\cong \mathrm{unr}\left(\sqrt{-1}\right)\omega\oplus\mathrm{unr}\left(-\sqrt{-1}\right)\omega.$$
\end{theorem}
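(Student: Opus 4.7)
The plan is to strengthen Theorem~\ref{elimination4}(ii), which already provides a surjection $\mathrm{ind}_{KZ}^G J_0 \twoheadrightarrow \bar{\Theta}_{k,a_p}$ with $J_0 = V_{p-2} \otimes D$. By injectivity of the mod $p$ Local Langlands Correspondence, it suffices to identify $\bar\Theta_{k,a_p}^{\mathrm{ss}}$. Under the second bullet of the $LL$ correspondence applied with $r = p-2$, $\eta = \omega$ and $\lambda = \sqrt{-1}$ (so $\lambda^{-1} = -\sqrt{-1}$), the target representation $\mathrm{unr}(\sqrt{-1})\omega \oplus \mathrm{unr}(-\sqrt{-1})\omega$ corresponds to $\pi(p-2,\sqrt{-1},\omega)^{\mathrm{ss}} \oplus \pi(p-2,-\sqrt{-1},\omega)^{\mathrm{ss}}$, which is the semisimplification of $(\mathrm{ind}_{KZ}^G J_0)/(T^2+1)$ since $\lambda + \lambda^{-1} = 0$ and $\lambda \cdot \lambda^{-1} = 1$. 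The goal is therefore to upgrade the surjection above to a factorization through the cokernel of $T^2+1$ acting on $\mathrm{ind}_{KZ}^G J_0$.

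Following the template of Theorem~\ref{reducible1}, I would construct a rational function $f = f_3 + f_2 + f_1 + f_0 \in \mathrm{ind}_{KZ}^G \Sym^r \bar\Q_p^2$, with each $f_n$ a sum of elementary functions supported on the cosets $KZ(g^0_{n,\mu})^{-1}$ for $\mu \in I_n$, scaled by appropriate powers of $a_p$. One extra ``level'' is needed relative to the proof of Theorem~\ref{elimination4}(ii), because the target Hecke relation is of second order in $T$. The coefficients will be chosen using the combinatorial integers $\alpha_j$ and $\gamma_j$ supplied by Lemma~\ref{comb5}, whose congruences mod $p^4, p^3, p^2$ are tailored precisely so that $(T-a_p)f$ is integral despite the additional division by $a_p^2$. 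As in Theorem~\ref{reducible1}, the subrange $v(a_p) \geq \tfrac{3}{2}$ will require rescaling $f$ by $a_p^2/p^3$ to preserve integrality, giving two parallel computations.

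Next, I would compute $(T-a_p)f$ through $T = T^+ + T^-$: by construction, most cross terms $T^\pm f_n$ die modulo $p$ upon invoking the Teichm\"uller identity \eqref{identity} together with the defining congruences of $\alpha_j, \gamma_j$. What survives should reduce to an explicit element of $\mathrm{ind}_{KZ}^G J_0$ of the form $c \cdot (T^2+1)h$, where $h$ is a generator of $\mathrm{ind}_{KZ}^G J_0$ as a $G$-module and $c \in \bar\F_p^\times$. The hypothesis $p^2 \mid (p-r)$ is used to cancel the would-be $T$-level contribution (the coefficient of $XY^{r-1}$), so that the interesting contribution first appears at the $T^2$-level. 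The main obstacles are twofold: first, ensuring that the scalar $c$ is non-zero, which is where the exceptional hypothesis $v(a_p^2 - p^3) = 3$ when $p = 3$ and $v(a_p) = \tfrac{3}{2}$ enters, exactly as in Theorems~\ref{elimination6} and \ref{reducible1}; and second, verifying that the resulting relation is precisely $T^2 + 1$ and not $T^2 - cT + 1$ for some nonzero $c$, nor $T^2 - 1$. The former should fall out of part~(3) of Lemma~\ref{comb5}, which vanishes for $p \geq 5$ and produces the exotic term that explains the special treatment needed for $p = 3$; the latter reflects the fact that the Frobenius eigenvalues have product $+1$, and should emerge from a careful bookkeeping of signs in the explicit coefficient computation.
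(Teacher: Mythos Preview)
Your overall strategy is exactly the paper's: show that the surjection $\mathrm{ind}_{KZ}^G J_0 \twoheadrightarrow \bar\Theta_{k,a_p}$ from Theorem~\ref{elimination4}(ii) factors through the cokernel of $T^2+1$, then read off $\bar V_{k,a_p}$ via the mod $p$ Local Langlands Correspondence.

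Where you diverge from the paper is in the proposed construction of $f$. You claim that ``one extra level is needed relative to the proof of Theorem~\ref{elimination4}(ii), because the target Hecke relation is of second order in $T$'', and so you plan a four-term function $f_3+f_2+f_1+f_0$ using both families $\alpha_j$ and $\gamma_j$ from Lemma~\ref{comb5}. The paper shows this is a misconception: three levels $f_2+f_1+f_0$ still suffice, and only the $\alpha_j$ from Lemma~\ref{comb5}(i) are used. The key trick is not to go one level higher, but to \emph{broaden} the support of $f_2$ from the cosets $g^0_{2,p[\lambda]}$ (as in Theorem~\ref{elimination4}(ii)) to the full set $g^0_{2,p[\mu]+[\lambda]}$ with $\lambda,\mu\in\F_p$. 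This is precisely what $T^2$ applied to $[\mathrm{Id},v]$ produces at level~2, so that the surviving terms $-a_pf_2 - a_pf_0$ of $(T-a_p)f$ reduce directly to the image of $(T^2+1)[\mathrm{Id},-X^{p-2}]$ in $\mathrm{ind}_{KZ}^G J_0$. Your four-level scheme could perhaps be pushed through, but it is more laborious and rests on a misdiagnosis of why the function from Theorem~\ref{elimination4}(ii) does not already yield the quadratic relation.

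Two smaller corrections: the rescaling by $a_p^2/p^3$ is needed only when $p=3$ and $v(a_p)\geq\tfrac32$, not for all $p$ in that slope range (for $p\geq 5$, property~(3) of Lemma~\ref{comb5}(i) makes $T^+f_1$ die mod $p$ outright); and the vanishing of the linear $T$-term in the relation comes from checking that $T^-f_2 - a_pf_1 + T^+f_0$ dies mod $p$ via property~(1) of the $\alpha_j$, rather than from property~(3) as you suggest.
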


\begin{proof}
 We  claim that the map $\mathrm{ind}_{KZ}^G(V_{p-2}\otimes D)\twoheadrightarrow\bar{\Theta}_{k,a_p}$ given by Theorem \ref{elimination4} (ii)  factors through 
 $\dfrac{\mathrm{ind}_{KZ}^G(V_{p-2}\otimes D)}{(T^2+1)}\cong\pi\left(p-2,\sqrt{-1},\omega\right)\oplus \pi\left(p-2,-\sqrt{-1},\omega\right)$. 
 Once this claim is proved, the result follows as we know that $\bar{\Theta}_{k,a_p}$ lies in the image of the mod $p$ Local Langlands Correspondence.

 \underline{Proof of the claim}: Let us consider $f=f_2+f_1+f_0\in\mathrm{ind}_{KZ}^G\,\mathrm{Sym}^r\bar{\mathbb{Q}}_p^2$, given by
 \begin{eqnarray*}
  f_2 &=& \underset{\substack{\lambda\in\mathbb{F}_p,\\ \mu\in\mathbb{F}_p^*}}\sum \left[\:g_{2,\,p[\mu]+[\lambda]}^0\,, \frac{1}{a_p}\cdot(Y^r-X^{r-p}Y^{p})\:\right]
  +\underset{\lambda\in\mathbb{F}_p}\sum\left[g_{2,[\lambda]}^0\,,\:\frac{(1-p)}{a_p}\cdot(Y^{r}-X^{r-p}Y^{p})\right],\\
  f_1 &=& \underset{\lambda\in\mathbb{F}_p}\sum\left[\:g_{1,[\lambda]}^0,\:\:\: \frac{(p-1)}{a_p^2}\cdot\sum_{\substack{1<j<r, \\j\equiv 1\mod (p-1)}}\:\alpha_j\cdot\, X^{r-j}Y^j\:\right],\\
  f_0 &=& \left[\mathrm{Id},\: \frac{r}{pa_p}\cdot(X^{r-1}Y-X^{r-p}Y^p)\right],
 \end{eqnarray*}
 where the $\alpha_j$ are integers from Lemma \ref{comb5} (i). If either $p\geq 5$ or $p=3$ and $v(a_p)<3/2$, then we use  the fact  
 that $p^2\mid (p-r)$ 
 and the properties satisfied by the  $\alpha_j$
 to conclude that all of $T^+f_2$, $T^+f_1$, $T^-f_1$, $T^-f_0$, $T^-f_2-a_pf_1+T^+f_0$ are integral and die mod $p$. 
 Thus $(T-a_p)f$ is integral, and is congruent mod $p$ to
 $$ -a_pf_0-a_pf_2=\underset{\substack{\lambda,\, \mu\in\mathbb{F}_p}}\sum \left[\:g_{2,\,p[\mu]+[\lambda]}^0\,,X^{r-p}Y^{p}-Y^r\:\right]
 -\left[\mathrm{Id},\: \dfrac{r}{p}\cdot(X^{r-1}Y-X^{r-p}Y^p)\right].$$ 
 Since $Y^r$, $X^{r-1}Y$ map to $0$ in $Q$, and as $r/p\equiv 1\mod p$ by hypothesis, the image of the  integral function above in $\mathrm{ind}_{KZ}^G Q$ 
 is the same as that of 
 $$-\underset{\lambda,\, \mu\in\mathbb{F}_p}\sum \left[\:g_{2,\,p[\mu]+[\lambda]}^0,\,\theta X^{r-p-1}\right]-\left[\mathrm{Id},\:\theta X^{r-p-1}\right],$$
 which, by the formula for $T^2$ and by Lemma \ref{easylemma}, is the image of 
 $$(T^2+1)[\mathrm{Id},-X^{p-2}]= \underset{\lambda,\, \mu\in\mathbb{F}_p}\sum \left[\:g_{2,\,p[\mu]+[\lambda]}^0,\,-X^{p-2}\right]+\left[\mathrm{Id},\:-X^{p-2}\right]\in\mathrm{ind}_{KZ}^G J_0 = \mathrm{ind}_{KZ}^G(V_{p-2}\otimes D)$$
 in $\mathrm{ind}_{KZ}^G Q$. 
 As the element $[\mathrm{Id}, -X^{p-2}]$ generates the  $G$-module $\mathrm{ind}_{KZ}^G J_0$, the image $(T^2+1)(\mathrm{ind}_{KZ}^G J_0)$ 
 must map to $0$ under the $G$-map $\mathrm{ind}_{KZ}^G J_0\twoheadrightarrow\bar{\Theta}_{k,a_p}$. 

 When $p=3$ and $v(a_p)\geq 3/2$, then $(T-a_p)f$ is not necessarily  integral for the function $f$ above. However, 
 if we consider $f^\prime:=({a_p^2}/{p^3})\cdot f$,  
 then $(T-a_p)f^\prime$ is integral with reduction equal to the image of $c\cdot (T^2+1)[\mathrm{Id}, X]\in\mathrm{ind}_{KZ}^G J_0$ inside  $\mathrm{ind}_{KZ}^G Q$, 
 for $c=\overline{1-a_p^2/p^3}\in\bar \F_p$. By the extra hypothesis when $v(a_p)=3/2$, we see $c$ is  non-zero, and  the result follows as before. 
\end{proof}

\end{document}